\title{Optional Intervals Event and Two n-ary Finitary Operations: An Algebraic Framework for Unifying Parallel-Serial Execution and Axiomatizing Simultaneity from an Epistemological Perspective}
\author[1]{Zhongyuan.Li\thanks{First author, \texttt{li.zhong.yuan@outlook.com}}}
\author[1]{Yanlei.Gong\thanks{\texttt{g.goodian@gmail.com}}}
\author[1]{Lei.Yu\thanks{\texttt{yuleen\_@outlook.com}}}
\author[1]{Yue.Cao\thanks{\texttt{athen\_george@outlook.com}}}
\author[2]{Bo.Yin\thanks{Corresponding author, \texttt{yinbo@bistu.edu.cn}}}
\affil[1]{Independent Researcher}
\affil[2]{Beijing Information Science and Technology University}
\date{Date: \today}
\newcommand{\RNum}[1]{\textbf{\uppercase\expandafter{\romannumeral #1\relax}}}
\newcommand{\EBL}{\mbox{}}
\newcommand{\BF}[1]{\textbf{#1}}                                        
\newcommand{\MXBF}[1]{\textbf{\color{black}#1}}
\newcommand{\CAL}[1]{\mathcal{#1}}                                      
\newcommand{\TSF}[1]{\textsf{#1}}                                        
\newcommand{\RM}[1]{\mathrm{#1}}                                        
\newcommand{\TT}[1]{\textit{#1}}                                        
\newcommand{\Pred}{\mathbb{P}}                                          
\newcommand{\pPre}{\mathfrak{p}}                                        
\newcommand{\fPre}{\mathfrak{f}}                                        
\newcommand{\CARDI}[1]{\left|{#1}\right|}                               
\newcommand{\ISoOieS}{\CAL{IS}}
\newcommand{\bIdxT}{\RM{Idx}\CAL{T}}                                    
\newcommand{\IdxT}{\RM{idx}\CAL{T}}                                     
\newcommand{\idxT}{\RM{idx}\CAL{T}}                                     
\newcommand{\idx}{\RM{idx}}                                             
\newcommand{\Dim}[1]{\dim{\left(#1\right)}}                                             
\newcommand{\TwoTpl}{\RM{2tuple}}                                       
\newcommand{\bTwoTplS}{\RM{2Tuple}\CAL{S}}                                 
\newcommand{\TwoTplS}{\RM{2tuple}\CAL{S}}                                  
\newcommand{\fMinFstOfTwoTplT}[1]{\fPre\RM{Min1of2tuple}\CAL{T}(#1)}    
\newcommand{\fMaxSndOfTwoTplT}[1]{\fPre\RM{Max2of2tuple}\CAL{T}(#1)}    
\newcommand{\bTwoTplSS}{\RM{2Tuple}\CAL{SS}}                               
\newcommand{\TwoTplSS}{\RM{2tuple}\CAL{SS}}                                 
\newcommand{\bTwoTplT}{\RM{2Tuple}\CAL{T}}                                                 
\newcommand{\TwoTplT}{\RM{2tuple}\CAL{T}}                                                        
\newcommand{\bBoundTwoT}{\RM{Bound2Tuple}}                                                 
\newcommand{\fBoundTwoT}[1]{\fPre\RM{Bound2tuple}(#1)}                                      
\newcommand{\bTwoTplTS}{\RM{2Tuple}\mathcal{TS}}                                           
\newcommand{\TwoTplTS}{\RM{2tuple}\mathcal{TS}}                                             
\newcommand{\bDomainFilteredSubTwoTplTS}{\bTwoTplTS_{\BF{Dom}}}                 
\newcommand{\domainFilteredSubTwoTplTS}{\TwoTplTS_{dom}}                    
\newcommand{\fDomainFilteredSubTwoTupleTS}[3]{\fPre\RM{DomFlt}\TwoTplTS(#1, #2, #3)}                                      
\newcommand{\bDomainFilterTwoTpl}{\RM{DomFlt2Tuple}}                                 
\newcommand{\bCompleteAscOrderFilterSubTwoTplTS}{\bTwoTplTS^{ASC}} 
\newcommand{\completeAscOrderFilterSubTwoTplTS}{\TwoTplTS^{asc}}         
\newcommand{\fCompleteAscOrderFilterSubTwoTplTS}[1]{\fPre\RM{AscOrderFlt2tupleTS}(#1)}                                       
\newcommand{\fNatIsoToCP}[2]{\fPre\RM{NatIso2CP}(#1,\ #2)}            
\newcommand{\bE}{\TSF{Event}}                                           
\newcommand{\E}{\TSF{event}}                                             
\newcommand{\bES}{\RM{Event}\CAL{S}}                                   
\newcommand{\ES}{\RM{event}\CAL{S}}                                     
\newcommand{\bEStar}{\RM{Event}^*}                                      
\newcommand{\EStar}{\RM{event}^*}                                       
\newcommand{\bAtomEStar}{\TSF{A}\RM{Event}^*}                              
\newcommand{\bCombEStar}{\TSF{C}\RM{Event}^*}                              
\newcommand{\bOperandS}{\RM{Operand}\CAL{S}}                            
\newcommand{\OperandS}{\RM{operand}\CAL{S}}                             
\newcommand{\bOperationS}{\RM{Operation}\CAL{S}}                            
\newcommand{\OperationS}{\RM{operation}\CAL{S}}                             
\newcommand{\UNSTARTED}{\textit{unstarted}}                                
\newcommand{\RUNNING}{\textit{running}}                                    
\newcommand{\FINISHED}{\textit{finished}}                                  
\newcommand{\bTSs}{\RM{TS}}                                       
\newcommand{\TSs}{\RM{ts}}                                         
\newcommand{\bTSe}{\RM{TE}}                                         
\newcommand{\TSe}{\RM{te}}                                           
\newcommand{\TS}{\RM{t}}
\newcommand{\bAtomE}{\TSF{Atom}\TSF{Event}}                                       
\newcommand{\atomE}{\TSF{atom}\TSF{Event}}                                       
\newcommand{\bCompE}{\TSF{Comp}\TSF{Event}}                                       
\newcommand{\compE}{\TSF{comp}\TSF{Event}}                                         
\newcommand{\C}{\CAL{C}}
\newcommand{\F}{\CAL{F}}
\newcommand{\I}{\CAL{I}}
\newcommand{\A}{\CAL{A}}
\newcommand{\M}{\CAL{M}}
\newcommand{\bOIE}{\RM{OIE}}               
\newcommand{\oie}{\RM{oie}}                
\newcommand{\Oie}{\RM{Oie}}                
\newcommand{\bOIES}{\RM{OIE}\CAL{S}}       
\newcommand{\oieS}{\RM{oie}\CAL{S}}        
\newcommand{\bAtomOIE}{\RM{Atom}\bOIE}                               
\newcommand{\atomOie}{\RM{atom}\Oie}                                  
\newcommand{\bCombOIE}{\RM{Comp}\bOIE}                               
\newcommand{\combOie}{\RM{comp}\Oie}                                  
\newcommand{\bVoidError}{\bOIE_\RM{void}}
\newcommand{\voidError}{\oie_\RM{void}}
\newcommand{\PIsInfeasIntTwoTplT}{\Pred\RM{IsInf2tuple}\CAL{T}}
\newcommand{\bInfeasIntTwoTplTS}{\RM{2Tuple}\CAL{TS}^{Inf}}               
\newcommand{\infeasIntTwoTplTS}{\RM{2tuple}\CAL{TS}^{inf}}                      
\newcommand{\fInfeasIntTwoTplTS}[2]{\fPre\RM{Inf2tupleTS}(#1, #2)}                        
\newcommand{\PIsFeasibleIntvlTwoTplT}{\Pred\RM{IsFeas2tupleT}}
\newcommand{\bFeasibleIntvlTwoTplTS}{\RM{2Tuple}\CAL{TS}^{feas}}                   
\newcommand{\feasibleIntvlTwoTplTS}{\RM{2tuple}\CAL{TS}^{feas}}                     
\newcommand{\fFeasibleIntvlTwoTplTS}[2]{\fPre\RM{Feas2tuple}\CAL{TS}(#1, #2)}     
\newcommand{\OrbitAdd}{\CAL{O}\left(oieS,\ {\oplus|_{\alpha}^{\beta}}\right)}
\newcommand{\OrbitMulti}{\CAL{O}\left(oieS,\ {\otimes}\right)}
\newtheorem{defi}{\BF{Definition}}
\newtheorem{pty}{\BF{Property}}
\newtheorem{cor}{\BF{Corollary}}
\newtheorem{axm}{\BF{Axiom}}
\newif\ifincludeFile
\begin{document}

\maketitle
\thispagestyle{firststyle}

\begin{abstract}
\ifincludeFile
    
\indent
This paper proposes an algebraic framework for analyzing event execution intervals and sequences,
introducing ``Optional Intervals Event ($\bOIE$)'' as a 4-tuple abstraction $(\C,\ \F,\ \I,\ \A)$
that serves as a pre-execution planning tool for real-world events.
The $\bOIE$ establishes a mapping to real-world events and stores all feasible execution intervals together with dependency relationships among sub-events.
Based on this abstraction,
we define two $n$-ary finitary operations:
(i) ``Complete Sequence Addition ($\oplus|_{\alpha}^{\beta}$)'',
which models concurrent events
with a certain degree of equal opportunity within a shared time domain;
and (ii) ``Complete Sequence Multiplication ($\otimes$)'',
which models strictly ordered sequential events.
We analyze the algebraic properties of these operations,
including closure, non-commutativity, permutational equivalence, and orbit spaces.
We prove that, for any non-degenerate finite $\bOIE$ set,
$\oplus|_{\alpha}^{\beta}$ yields a single-orbit space due to permutational equivalence,
whereas $\otimes$ may yield multiple orbits.
This orbital divergence rigorously captures the fundamental symmetry gap between concurrent and sequential execution.

In computer science,
this framework establishes an axiomatic algebraic system that
formally unifies parallel and serial execution as $n$-ary finitary operations.
It enables constraint-aware pre-execution planning and characterizes concurrent symmetry
via orbit-space analysis and permutational equivalence.
We also discuss applications to probability theory and physics,
including the distinction between process symmetry and outcome symmetry and a novel axiomatization of simultaneity from an epistemological perspective.
\fi

\keywords{Optional Intervals Event; Unifying parallel-serial execution; Process symmetry and outcome symmetry; Axiomatizing simultaneity from an epistemological perspective}
\end{abstract}


\section{Introduction}\label{sec:intro}\EBL
\ifincludeFile
    
This paper opens with two foundational questions that expose critical formal and epistemological gaps
in the analysis of temporal events across physics and computer science:

1. \TT{How can we rigorously formalize the so-called ``simultaneous start of two events''?}

The phrase ``Event X and Event Y started at the same time'' is ubiquitous
in academic and practical discourse, yet it raises an essential question:
to what precision can this simultaneity be verified?
Are the starting timestamps of the two events exactly equal to any number of decimal places?
As of the end of 2024, the world's most advanced atomic clock has a systematic uncertainty of $8 \times 10^{-19}$ seconds~\cite{PhysRevLett.133.023401},
meaning it cannot guarantee the accuracy of the so-called simultaneous starting moments of two observed events to \(10^{-40}\) seconds.
Strictly speaking, therefore, this scenario cannot be deemed truly ``simultaneous'' in a pointwise sense,
necessitating a more rigorous formalization.
Can we define an $n$-ary finitary operation $*$, expressed as
\begin{align}
    *(event_1,\ event_2,\ event_3)
\end{align}
to characterize the ``simultaneous start'' of events with an error below the detection threshold of current technology?

2. \TT{Why do both concurrent head-to-head races and sequential one-by-one races reliably determine a first-place finisher?}

The 100 metres sprint is a head-to-head race typically featuring 8 competing athletes.
In this paper, we use the $n$-ary finitary operation \(\oplus\) to represent a race with all athletes participating concurrently.
For 8 athletes ordered by lane, with the \(i\)-th athlete \(athlete_i\) in the \(i\)-th lane, a 100 metres race can be formalized as
\begin{align}
    \oplus\left( athlete_{1}, athlete_{2}, athlete_{3}, \cdots, athlete_{8} \right).
\end{align}
If we swap the lanes of the athletes in lane 1 and lane 2, the race is expressed as
\begin{align}
    \oplus\left( athlete_{2}, athlete_{1}, athlete_{3}, \cdots, athlete_{8} \right).
\end{align}
Lane swapping does not alter the starting conditions for each athlete,
all of whom start upon hearing the starting gun.
Intuitively, this operation thus exhibits a specific form of transformation invariance with respect to the race time window.

In contrast, downhill skiing is a winter sport where skiers compete sequentially on a single track,
with only one skier permitted on the course at a time.
We use the $n$-ary finitary operator \(\otimes\) to represent a race with all skiers participating in strict sequence.
For a race with 6 skiers, this is formalized as
\begin{align}
    \otimes\left( skier_{1}, skier_{2}, skier_{3}, \cdots, skier_{6} \right)
\end{align}
where the skier indexed on the left of the parameter list starts the race earlier than the skier on the right.
Intuitively, this operation does not exhibit the same transformation invariance as the concurrent race operation with respect to the race time window.

Despite these fundamental differences,
both the 100 metres sprint and downhill skiing are speed races that can definitively rank participants and award medals.
What is the root cause of this equivalence in outcome determinacy?
The essence of a speed race is the sorting of time costs from smallest to largest, a goal achieved by both race formats.
Is there an underlying set of mathematical properties that enables this temporal comparison,
which has yet to be formally characterized in existing literature?

Question 1 is a fundamental problem of great importance to event relationships in both computer science and physics.
In computer science, many theories formalizing parallel and serial execution rely on a global clock.
For example,
Allen's interval algebra~\cite{Allen83} and temporal constraint networks~\cite{DechterMP91} provide powerful inference mechanisms for fixed temporal relations.
They inherently assume intervals are precisely known a priori.
However, limitations of clock timing introduce uncertainty into this global clock,
which limits its ability to fully express physical time.
This gap in the theoretical foundation poses challenges for practical scenarios at small time scales.
The two types of racing competitions in Question 2 precisely mirror parallel and serial models in computer science.
Traditional parallel theories and certain frameworks (e.g., process algebra~\cite{BergstraK84}) abstract away from the uncertainty
arising from the difficulty of using global clocks to precisely formalize physical time,
directly resorting to algebraic symmetry
\begin{align}
    ab \sim ba.
\end{align}
While this approach successfully introduces algebraic methods and enriches both interpretive theory and practical methodology,
it neglects actual physical clock synchronization,
leaving certain aspects unresolved in the formalization of many derived problems.
Both problems essentially point to the need for an algebraic structure in traditional theories
that formalizes physical simultaneity while possessing robust mathematical properties,
an aspect not fully addressed by traditional theories.

To reconcile these two incompatible lines of thought,
this paper introduces sets to encompass the reality of physical clocks and proposes the abstract structure of the \emph{Optional Intervals Event} ($\bOIE$).
From the perspective of planning and prediction,
we then perform $n$-ary finitary operations to realize computations on event ordering at the planning level.
This framework not only covers clock synchronization at the level of physical reality from an epistemological standpoint,
but also possesses strong algebraic properties that enable richer extensions, thereby achieving powerful capabilities in event planning and scheduling.

In computer science, we analyze in detail the algebraic properties of $\bOIE$ and the two $n$-ary finitary operations,
with particular emphasis on permutational equivalence and orbit spaces,
and further demonstrate its tremendous practical potential through an algebraic analysis of sorting parallelization.

Motivated by this, we apply the algebraic system to a foundational question in probability theory:
Why do simultaneous drawing and sequential drawing without replacement yield identical results?
The former relies on combinatorial counting, while the latter depends on conditional probability.
Nevertheless, their outcomes coincide.
We propose the concepts of \emph{process symmetry} and \emph{outcome symmetry}
that are neglected in traditional probability theory.

In physics, from an epistemological perspective,
we use this framework to resolve the dilemma faced by both classical and relativistic mechanics
when axiomatizing simultaneity from the observer’s viewpoint,
and propose an axiomatization of simultaneity that overcomes the limitations and errors of observational technology.

Finally, from a group-theoretic perspective,
we further abstract this algebraic system into a finite commutative semigroup with an absorbing element and reflexive absorption property,
construct its Cayley table, and analyze its widespread manifestations in practical applications including computer science, linguistics and industrial domains.

The remainder of this paper is structured as follows:
\begin{itemize}
    \item \BF{Section~\ref{sec:event}. Analysis of Event Properties} analyzes the fundamental properties of events.
    We formalize the concept of an $\bE$ instance as an entity with three execution states ($\UNSTARTED$, $\RUNNING$, $\FINISHED$),
    define its starting and ending timestamps, introduce the notions of atomic and composite events,
    and establish the core property of persistence of execution status.
    \item \BF{Section~\ref{sec:oie}. Optional Intervals Event and Analysis} proposes the core abstraction of the Optional Intervals Event ($\bOIE$),
    a 4-tuple \((\C, \F, \I, \A)\) that maps to real-world events.
    This 4-tuple stores all feasible execution intervals and dependency relationships among sub-events,
    serving as a formal tool for pre-execution planning.
    We also define the atomic and composite classifications of $\bOIE$ instances.
    \item \BF{Section~\ref{sec:ops}. Two Sequence Operations on the Elements of a Finite Set of $\bOIE$ Instances} introduces two $n$-ary finitary operations on $\bOIE$ instances: Complete Sequence Addition (\(\oplus|_{\alpha}^{\beta}\)),
    which models concurrent events where all participants have a degree of equal opportunity to start and end within a specified time domain;
    and Complete Sequence Multiplication (\(\otimes\)), which models strictly ordered sequential events.
    We detail the operational rules, helper functions, and handling of infeasible combinations.
    \item \BF{Section~\ref{sec:algbrProp}. Algebraic Properties of Sequence Operations}
    analyzes the algebraic properties of the two sequence operations,
    proving core properties including closure, general non-commutativity, and permutational equivalence.
    We define each operation’s orbit space:
    permutation equivalence makes $\oplus|_{\alpha}^{\beta}$ produce a single orbit for finite $\bOIES$ instance,
    whereas ordered constraints make $\otimes$ generate multiple distinct orbits.
    \item \BF{Section~\ref{sec:impl}. The Implementation of Optional Intervals Event}
    formalizes two standardized implementation modes of $\bOIE$ instances,
    establishes a projection mechanism that maps sequence operations to general $n$-ary finitary operations.
    We also prove the uniqueness of the operation set within each equivalence class of the orbit space.
    \item \BF{Section~\ref{sec:app-comp}. Applications and Comparisons in Computer Science, Probability Theory and Physics}
    validates the $\bOIE$ framework and its two proposed sequence operations across three core disciplines,
    and presents a systematic comparative analysis with mainstream conventional methodological frameworks in corresponding fields.
    In computer science, this framework formally unifies parallel and serial execution and enables the modeling of hybrid computing scenarios.
    We perform comparative analyses against mainstream classic concurrency models, namely Process Algebra, Petri nets, and Allen’s interval algebra.
    Based on these comparisons, we further delineate the core strengths of the $\bOIE$ framework relative to conventional formal methods,
    specifically in terms of temporal expression completeness and pre-execution planning capability.
    In probability theory, the framework differentiates between process symmetry and outcome symmetry within classical probability by means of orbit space.
    Focusing on simultaneous and sequential sampling cases,
    it contrasts the paper’s unified algebraic modeling framework against the disjoint analytical approaches of combinatorial counting and conditional probability adopted in conventional classical probability,
    Additionally, this work delivers a novel algebraic interpretation for fundamental probabilistic notions.
    In physics,
    it reconstructs the epistemology of ``simultaneity'' to resolve the dilemma of unprovable pointwise temporal equality under observational limits.
    We conduct a systematic comparison between the equipotential domain based simultaneity perspective proposed in this paper and the absolute simultaneity in Newtonian mechanics as well as the relativity of simultaneity in relativistic mechanics,
    unifies the algebraic nature of concurrent and sequential speed races.
    We also analyze the legitimacy of the repeatability of physical experiments.
    \item \BF{Section~\ref{sec:cayley}. A Finite Commutative Semigroup with Absorbing Element and Reflexive Absorption Property}
    further abstracts the $\bOIE$ algebraic system from a group-theoretic perspective.
    It formalizes a special finite commutative semigroup equipped with an absorbing element and the reflexive absorption property,
    constructs its N-dimensional Full CSA Cayley table via Newton's binomial combinatorial structure,
    and establishes a bijective mapping between this special semigroup and the orbit space of a special class of $\bOIES$ instances under $\oplus|_{\alpha}^{\beta}$.
    It further demonstrates the universal applicability of this algebraic structure by
    revealing its formal correspondence with the referential distinctness constraint of natural language conjunction ``And'' and with composition/inheritance semantics
    in object-oriented programming, thereby unifying linguistic, computational, and physical feasibility constraints under a single algebraic grammar.
    \item \BF{Section~\ref{sec:impli}. Derived Implications}
    outlines promising directions for future work, including:
    the integration of category theory;
    extensions of sequence operations (e.g., ``incomplete'' sequence operations);
    applications to discrete time models;
    generalizations beyond temporal variables;
    the incorporation of probability measures;
    the exploration of additional operation types (subtraction and division);
    an in-depth study of the Full CSA Diagram structure;
    the extension of applicability to non-classical mechanics;
    and analysis of time complexity.
    \item The Appendix provides links to the simulation code for the $\bOIE$ framework(Appendix~\ref{sec:appendix.simul}),
    generation code for the full CSA diagram(Appendix~\ref{sec:appendix.lzyGraph}) and time complexity analysis(Appendix~\ref{sec:appendix.complex}).
\end{itemize}
\fi

\section{Analysis of Event Properties}\label{sec:event}\EBL
\ifincludeFile
Event is a noun with complex meanings in both practical and philosophical aspects.
It is a fundamental concept frequently used
in the fields of physics, computer science, engineering and social sciences~\cite{sep-events}.
Generally, we refer to the event's class as ``$\bE$'' —
the abstract category of events, analogous to a blueprint —
while an $\bE$ instance denotes a specific, concrete instance of this class (a single real-world event built from that blueprint).
In this section, we discuss some basic properties of $\bE$,
which lay the formal foundation for the core $\bOIE$ abstraction and sequence operations proposed in subsequent sections.

\begin{itemize}
\item Subsection~\ref{subsec:event.status}
clarifies the three execution states of an event,
the definitions and core properties of starting and ending timestamps,
and standardizes the representation form of time intervals.

\item Subsection~\ref{subsec:event.atomAndCombEvent}
defines the concepts of atomic events and composite events,
and provides the calculation method for the starting and ending timestamps of composite events.

\item Subsection~\ref{subsec:event.eventStar} proposes the abstract concept of Event with Undetermined Interval ($\bEStar$).
This core intermediate abstraction connects fixed real-world events to the flexible $\bOIE$ framework.
By substituting fixed time intervals with undetermined variables,
it achieves flexible mapping of real-world events and further differentiates atomic events from composite events.
\end{itemize}
\fi
    \subsection{Execution Status, Start and End Timestamps of Events}\label{subsec:event.status}\EBL
    \ifincludeFile
According to the execution situation,
an $\bE$ instance(denoted as $\E$) has three execution statuses:
``$\UNSTARTED$'', ``$\RUNNING$'' and ``$\FINISHED$''.
These statuses transition deterministically based
on the initiation, progress, and completion of the event, as follows:

\begin{itemize}
\item Before $\E$ is executed, it remains in the $\UNSTARTED$ status indefinitely.
\item After $\E$ is executed and completed, it stays in the $\FINISHED$ status permanently.
\item At the moment $\E$ starts execution, its status transitions from $\UNSTARTED$ to $\RUNNING$.
\item At the moment $\E$ ends, its status transitions from $\RUNNING$ to $\FINISHED$.
\end{itemize}

Since the $\bE$ inherently represents dynamic motion rather than static state,
the following core properties of $\bE$ are formalized below.

\begin{pty}[\BF{Persistence of event execution statuses}]\label{prop:event.status.persistence}\EBL

Suppose there exists an $\bE$ instance $\E$.
If $\E$ is in one of the three execution statuses at timestamp $t_0$,
then there must be a real number $\epsilon > 0$,
such that among the left punctured neighborhood set
\begin{align}
    \CAL{S}_{left} = \{ t|t \in (t_0 - \epsilon, t_0) \},
\end{align}
and the right punctured neighborhood set
\begin{align}
    \CAL{S}_{right} = \{ t|t \in (t_0, t_0 + \epsilon) \},
\end{align}
at least one of the two sets contains all real numbers corresponding to timestamp instants
where the event maintains the same status as it had at $t_0$.
\end{pty}

\indent
Property~\ref{prop:event.status.persistence} precludes the following situation:
\begin{quote}
    \TT{$\E$ is in one of the three execution statuses at timestamp $t_0$,
    and there exists a real number $\epsilon > 0$ such that the event is in different status at all time
    instants corresponding to the real numbers in the set $\{ t|t \in (t_0 - \epsilon, t_0) \cup (t_0, t_0 + \epsilon)\}$.}
\end{quote}

To indicate that an $\bE$ instance $\E$ is in a certain status during a time interval,
we use left-closed and right-open intervals for bounded real-number timestamps.
For the special cases of $-\infty$ and $+\infty$, we use open intervals.
For example:
\begin{quote}
    \TT{An $\bE$ instance $\E_i$ is in the $\UNSTARTED$ status in the interval $(-\infty, t_1)$,
    in the $\RUNNING$ status in the interval $[t_1, t_2)$,
    and in the $\FINISHED$ status in the interval $[t_2, +\infty)$.
    This indicates that $\E_i$ undergoes status transitions at timestamps $t_1$ and $t_2$.}
\end{quote}

This paper only focuses on $\bE$ instances with bounded starting and ending timestamps.
That is, for the events we consider,
it is impossible for them to be in the $\RUNNING$ status within the following three types of time intervals:
    \begin{align}
        & (-\infty, t_\alpha),\ [t_\beta, +\infty),\ (-\infty, +\infty),
    \end{align}
where $t_\alpha \neq +\infty \wedge t_\beta \neq -\infty$.

\begin{defi}[\textbf{Starting and ending timestamp of an event}]\label{def:event.startAndEndTS}\EBL

Given an $\bE$ instance named $\E$.
If the status of $\E$ at the timestamp $t_0$ is $\RUNNING$,
and for all real numbers $t$ in the set $\{ t|t < t_0\}$, the status of the event at the corresponding moment is $\UNSTARTED$,
then $t_0$ is defined as the ``starting timestamp'' of $\E$.
We denote it as $\bTSs$.
For an instance of $\bTSs$, we use ``$\TSs$'' to represent it.

If for all real numbers $t$ in the set $\{ t|t \geq t_0^{'}\}$,
the status of $\E$ at the corresponding moment is $\FINISHED$,
and there exists a real number $\epsilon > 0$ such that for all real numbers $t$ in the set $\{t|t \in (t_0^{'} - \epsilon, t_0^{'} )\}$,
the status of $\E$ at the corresponding moment is $\RUNNING$,
then $t_0^{'}$ is defined as the ``ending timestamp'' of $\E$.
We denote it as $\bTSe$.
For an instance of $\bTSe$, we use ``$\TSe$'' to represent it.
\end{defi}

\begin{defi}[\textbf{The interval 2-tuple of an event}]\label{def:event.IntvlTwoTpl}\EBL
\indent Given an $\bE$ instance named $\E$,
$\TSs$ is the starting timestamp of $\E$,
$\TSe$ is the ending timestamp of $\E$.
Then, $[\TSs, \TSe)$ is called ``The interval of $\E$'',
the 2-tuple $(\TSs, \TSe)$ is called ``the interval 2-tuple of $\E$''.
\end{defi}

\indent We adopt the naming convention from Definition~\ref{def:event.startAndEndTS} throughout this paper.
A noun definition specifies the nature of a class of instances.
The question of ``what it is'' operates at the abstract level,
while an instance is a specific object created using the noun definition.
Therefore, instances are usually used in specific practical actions.
For example, consider the following expression:
\begin{quote}
    There is an $\bE$ instance named $\E$, whose $\bTSs$ instance is $\TSs$.
\end{quote}
In this sentence, $\bE$ and $\bTSs$ are both nouns defined earlier, and they are abstract concepts.
In computer science, specifically in object-oriented programming, they are called classes.
By contrast, $\E$ and $\TSs$ are instances, which are concrete.
In object-oriented programming, they are called objects.

\indent The following are several properties of $\bE$ that we consider.
\begin{pty}[\textbf{Uniqueness of starting timestamp and ending timestamp}]\label{prop:event.start.exist}\EBL

An $\bE$ instance $\E$ has and must have exactly one starting timestamp and one ending timestamp.
There cannot be multiple starting or ending timestamps for $\E$.
\end{pty}

\begin{pty}[\textbf{Ordering of starting timestamp and ending timestamp}]\label{prop:event.startEndTs.seq}\EBL

The starting timestamp of an $\bE$ instance must be less than its ending timestamp.
\end{pty}

\begin{pty}[\textbf{Stability of event execution status}]\label{prop:event.status.stability}\EBL

An $\bE$ instance is in the $\RUNNING$ status at every moment within the interval $[\TSs, \TSe)$.
It is in the $\UNSTARTED$ status at every moment within the interval $(-\infty, \TSs)$.
It is in the $\FINISHED$ status at every moment within the interval $[\TSe, +\infty)$.
\end{pty}

\fi
    \subsection{Atomic and Composite Events}\label{subsec:event.atomAndCombEvent}\EBL
    \ifincludeFile
Multiple $\bE$ instances may be combined into one $\bE$ instance.
However, in many cases,
we analyze $\bE$ instances under the assumption that they cannot be split.

Atomic events and composite events are fundamental concepts in the fields
of probability theory, mathematical statistics, and computer science.
Due to the extensive application of probability theory and statistics in physics
and many engineering disciplines, these concepts also appear in many other areas.

The atomicity and compositionality of an event are defined
with respect to the behavior of a chosen quantifiable indicator.
If the selected quantifiable indicator remains unchanged
during the execution of an event,
the event is classified as atomic with respect to that indicator.
Conversely, if the indicator changes,
this event is a composite event.
A composite event can be decomposed according to each stable value of
a certain indicator during its change process,
with each value corresponding to an atomic event.

In this paper,
the type for atomic $\bE$ instances is
denoted as $\bAtomE$, and an instance of it is denoted as $\atomE$.
The type for composite $\bE$ instances is
denoted as $\bCompE$, and an instance of it is denoted as $\compE$.

A $\bCompE$ instance $\compE$ can be expressed as follows:
\[
    \compE = \{ \E_1, \E_2, \E_3, \ldots, \E_n \}.
\]

\begin{defi}[\textbf{Composite event's starting timestamp and ending timestamp}]\label{def:TSsAndTSeOfCombE}\EBL

\indent Given a $\bCompE$ instance $\compE$, with the form
\[
    \compE = \{ \E_1, \E_2, \E_3, \ldots, \E_n \}.
\]
We define the minimum starting timestamp of all its sub-events as the starting timestamp of $\compE$
\begin{align}
    \TSs_{\compE} = \min\{ \TSs_{\E_i} | i \in \{1, 2, \cdots, n\} \},
\end{align}
and the maximum ending timestamp of all its sub-events as the ending timestamp of $\compE$
\begin{align}
    \TSe_{\compE} = \max\{ \TSe_{\E_i} | i \in \{1, 2, \cdots, n\} \}
\end{align}
\end{defi}
\fi
    \subsection{Event with Undetermined Interval}\label{subsec:event.eventStar}\EBL
    \ifincludeFile
In this subsection,
we introduce the first abstraction of real-world events in this paper,
the concept of ``Event with Undetermined Interval'' (denoted as $\bEStar$).
This abstraction is achieved by replacing the fixed time interval of an event with undetermined variables,
and thereby creating a bijective mapping between the abstract entity and the original event.
While preserving other attributes of the event (such as subject and space),
this approach introduces variability into the temporal property,
enabling flexible assignment or combination of different time intervals in subsequent analysis and operations.
Using the example of ``A doctor submitting a paper'',
we illustrate how a real-world event can be abstracted into an $\bEStar$ instance and expressed mathematically,
laying the groundwork for the concepts in later chapters.

\begin{defi}[\textbf{Event with undetermined interval}]\label{def:eventStar}\EBL

An abstract entity derived from an $\bE$ instance by specifying its intervals with undetermined variables,
is defined as an ``Event with Undetermined Interval'', denoted as ``$\bEStar$''.
There exists a bijective mapping between $\bEStar$ and $\bE$, denoted as
\begin{align}
    \bEStar \leftrightarrow \bE.
\end{align}
\end{defi}

Events are typically abstracted to include attributes such as subject and spatial location.
In particular, it should be emphasized that an event has a clear and fixed interval.
Definition~\ref{def:eventStar} precisely focuses on variabilizing the ``interval'' property.
In many cases, we can provide multiple candidate intervals for the event.

For example, suppose there is an $\bE$ instance
\[
    \begin{aligned}
        & \qquad A\ doctor\ submitted\ a\ paper\ to\ a\ journal\ at\ a\ university\ in\ Beijing \\
        & on\ December\ 1,\ 2025,\ between\ 10:00\ a.m.\ and\ 12:00\ p.m\ (UTC+8).
    \end{aligned}
\]
We denote this $\bE$ instance as $\E_{Dr}$,
the interval of $\E_{Dr}$ is ``December 1, 2025, between 10:00 a.m and 12:00 p.m.'',
denoted as
\[
    [\ 1764583200,\ 1764590400\ ).
\]

If we set this interval as ``to be determined'' while keeping all other properties unchanged,
the expression thus becomes as follows:
\[
    \begin{aligned}
        & \qquad A\ doctor\ submitted\ a\ paper\ to\ a\ journal\ at\ a\ university\ in\ Beijing \\
        & on\ December\ 1,\ 2025,\ between\ [\ ts,\ te\ ).
    \end{aligned}
\]

By definition~\ref{def:eventStar},
we can abstract this expression as an $\bEStar$ instance $\EStar_{Dr}$,
and
\[
    \EStar_{Dr} \leftrightarrow \E_{Dr}.
\]

\begin{defi}[\textbf{Atomic and composite $\bEStar$}]\label{def:eventStar.atomic}\EBL

An $\bEStar$ instance is an atomic $\bEStar$ instance if
it is in bijective correspondence with an $\bAtomE$ instance,
denoted as ``$\bAtomEStar$''; while an $\bEStar$ instance is a composite $\bEStar$ instance
if it bijectively maps to a $\bCompE$ instance, denoted as ``$\bCombEStar$''.
\end{defi}

\fi

\section{Optional Intervals Event and analysis}\label{sec:oie}\EBL
\ifincludeFile
Before an event occurs, is it prearranged?
Does the planner fully grasp every moment of its execution?
What pre-execution planning is required for such arranged events,
and can we extract and mathematically formalize key elements from these plans?
This section explores these questions and introduces novel event-related concepts,
with the following subsections organized to elaborate the core framework systematically:

\begin{itemize}
\item Subsection~\ref{subsec:oie.oie}
formally defines Optional Intervals Event ($\bOIE$) as a 4-tuple (\(\C, \F, \I, \A\))
that maps to real-world events.
It stores the feasible execution intervals of the mapped event and the dependency relationships among its sub-events,
and also specifies the properties and classifications ($\bAtomOIE$ and $\bCombOIE$) of $\bOIE$ instances.

\item Subsection~\ref{subsec:oie.oieSet}
introduces $\bOIES$(a set of $\bOIE$ instances) and
focuses on the set of $\I$ derived from its members.

\item Subsection~\ref{subsec:oie.voidOie}
defines the void $\bOIE$ instance ($\bVoidError$),
which cannot map to any real-world event.
\end{itemize}

To standardize the description of complex mathematical types involving 2-tuples
throughout the subsequent formal definition of $\bOIE$,
the following naming conventions are adopted
(with ``Type Name'' referring to the abstract category with the first letter capitalized,
and ``Type Instance'' referring to the specific object with the first letter in lowercase):

\begin{center}
    \begin{tabular}{|l|l|l|} 
        \hline
        Type Name & Type Instance & Core Meaning \\
        \hline
        $\bTwoTplS$ & $\TwoTplS$ & Set of 2-tuples \\
        $\bTwoTplSS$ & $\TwoTplSS$ & Set of sets of 2-tuples \\
        $\bTwoTplT$ & $\TwoTplT$ & Tuple of 2-tuples \\
        $\bTwoTplTS$ & $\TwoTplTS$ & Set of tuples of 2-tuples \\
        \hline
    \end{tabular}
\end{center}
The following are examples for each type
\begin{itemize}
    \item $\bTwoTplS$
    \begin{align}
        \begin{aligned}
            & \TwoTplS_1 = \{(1, 2), (3, 4), (5, 6)\},\\
            & \TwoTplS_2 = \{(7, 8), (9, 10), (11, 12)\},\\
            & \TwoTplS_3 = \emptyset
        \end{aligned}
    \end{align}
    \item $\bTwoTplSS$
    \begin{align}
        \begin{aligned}
            & \TwoTplSS_a = \{ \{(1, 2), (3, 4)\},\ \{(9, 10), (11, 12)\} \}, \\
            & \TwoTplSS_b = \{ \{(1, 2), (3, 4)\},\ \{(9, 10), (11, 12)\},\ \{(5, 6), (7, 8)\} \}, \\
            & \TwoTplSS_c = \emptyset
        \end{aligned}
    \end{align}
    \item $\bTwoTplT$
    \begin{align}
        \begin{aligned}
            & \TwoTplT_x = ( (1, 2), (3, 4), (5, 6), (7, 8) ),\\
            & \TwoTplS_y = ( (9, 10), (11, 12) ),\\
            & \TwoTplS_z = ()
        \end{aligned}
    \end{align}
    \item $\bTwoTplTS$
    \begin{align}
        \begin{aligned}
            & \TwoTplTS_{\phi} = \{ ( (1, 2), (3, 4) ), ( (5, 6), (7, 8) ) \},\\
            & \TwoTplTS_{\chi} = \{ ( (9, 10)), ((11, 12)), ((13, 14)) \},\\
            & \TwoTplTS_{\psi} = \emptyset
        \end{aligned}
    \end{align}
\end{itemize}

Here we emphasize that:

``\BF{\TT{For all 2-tuples presented in this paper,
    the first element denotes the starting timestamp of an event and the second refers to its ending timestamp.
    Accordingly, in this paper the first item in a 2-tuple is always smaller than the second item}}''.

With the above naming conventions clarified,
we now proceed to the formal definition and analysis of $\bOIE$.\fi
    \subsection{Optional Intervals Event}\label{subsec:oie.oie}\EBL
    \ifincludeFile
Building on the concept of $\bE$, we propose an important new abstract concept,
a 4-tuple.
This 4-tuple stores all optional intervals of the mapped $\bE$ instance and guides the starting and ending timestamps.
\fi
        \subsubsection{Definition}\label{subsubsec:oie.oie.def}\EBL
        \ifincludeFile
\begin{defi}[\textbf{Optional Intervals Event}]\label{def:oie}\EBL

An Optional Intervals Event ($\bOIE$) is a 4-tuple that may map to at most one $\bE$ instance.
For an $\bOIE$ instance $\oie$, if there exists an $\bE$ instance $\E$
such that $\oie$ maps to $\E$,
we denote this mapping as:
\begin{align}
    \oie \rightarrow \E.
\end{align}
The formal structure of an $\bOIE$ is defined as the 4-tuple:
\begin{align}
    & \quad \mathlarger{\mathlarger{(\C, \F, \I, \A)}}
\end{align}
where the four constituent elements are specified as following table
\begin{center}
    \begin{tabular}{|l|l|}
        \hline
        $\C$(1st element): & \makecell[l]{A tuple of constituent $\bOIE$ instances that compose the current instance; \\ may be an empty tuple ().} \\ \hline
        $\F$(2nd element): & \makecell[l]{A $\bTwoTplTS$ instance(set of tuples of 2-tuples) storing the details\\ of the feasible interval combinations of the $\bE$ instances mapped by\\ the elements of $\CAL{C}$.} \\ \hline
        $\I$(3rd element): & \makecell[l]{A $\bTwoTplS$ instance(set of 2-tuples) storing the overall intervals\\ which can be derived via $\CAL{F}$.} \\ \hline
        $\A$(4th element): & \makecell[l]{A set of $\bAtomEStar$ instances that map all atomic events of the\\ real-world event corresponding to the current $\bOIE$ instance.} \\ \hline
    \end{tabular}
\end{center}
\end{defi}

As a novel abstraction for pre-execution event planning,
an $\bOIE$ instance can be intuitively analogized to a schedule planner that precomputes
and stores all feasible execution windows for a real world event prior to its occurrence.
Below, we elaborate on the semantic meaning of each 4-tuple element,
with formal examples to illustrate their structure.

\begin{itemize}

\item \BF{1. Component tuple $\C$ - A bill of immediate components}

\indent The first element $\C$ acts as a bill of materials for the $\bOIE$ instance,
enumerating its immediate constituent sub $\bOIE$ instances.
For an $\bOIE$ instance mapping to an atomic event (indivisible in the target context),
$\C$ is an empty tuple (), as no subcomponents exist.
Formal examples include:
\begin{align}
    \begin{aligned}
        & \C_{\oie_A} = (\oie_1, \oie_2), \\
        & \C_{\oie_B} = (\oie_2, \oie_3, \oie_1), \\
        & \C_{\oie_1} = \C_{\oie_2} = \C_{\oie_3} = ().
    \end{aligned}
\end{align}

\item \BF{2. Feasible schedule set $\F$ - A detailed schedule}

\indent The second element $\F$ is a set of feasible execution plans,
where each element is a tuple of 2-tuples specifying the execution interval
of each constituent $\bOIE$ instance in that plan.
The cardinality of $\F$ equals the number of valid planning schemes.
For a plan index $\theta$, the element
\begin{align}
    \left(\ (\TSs_{{\theta}1},\ \TSe_{{\theta}1}),\ (\TSs_{{\theta}2},\ \TSe_{{\theta}2}),\ \cdots,\ (\TSs_{{\theta}n},\ \TSe_{{\theta}n})\ \right) \in \F
\end{align}
denotes that the i-th constituent $\bOIE$ instance executes in the interval $[\TSs_{\theta{i}}, \TSe_{\theta{i}})$ under plan $\theta$,
with $\TSs_{\theta{i}} < \TSe_{\theta{i}}$ for all i.
The formal structure of $\F$ is:
\begin{align}
    \begin{aligned}
        & \CAL{F}_{\oie} = \{ \\
        & \qquad \left(\ (\TSs_{11},\ \TSe_{11}),\ (\TSs_{12},\ \TSe_{12}),\ \cdots,\ (\TSs_{1n},\ \TSe_{1n})\ \right), \\
        & \qquad \left(\ (\TSs_{21},\ \TSe_{21}),\ (\TSs_{22},\ \TSe_{22}),\ \cdots,\ (\TSs_{2n},\ \TSe_{2n})\ \right), \\
        & \qquad \cdots, \\
        & \qquad \left(\ (\TSs_{\lambda{1}},\ \TSe_{\lambda{1}}),\ (\TSs_{\lambda{2}},\ \TSe_{\lambda{2}}),\ \cdots,\ (\TSs_{\lambda{n}},\ \TSe_{\lambda{n}})\ \right) \\
        & \}.
    \end{aligned}
\end{align}

\item \BF{3. An overall schedule - Aggregated Interval Set $\I$}

\indent
The 3rd element $\I$ is the aggregated set of overall execution intervals for the $\bOIE$ instance,
derived by computing the temporal bound of each feasible plan in $\F$.
Each element $(\TSs_i, \TSe_i) \in \I$ corresponds to a valid overall execution window $[\TSs_i, \TSe_i)$
for the mapped $\bE$ instance, with $\TSs_i < \TSe_i$.
The formal structure is:
\begin{align}
    \begin{aligned}
        & \I_{\oie} = \{\ (\TSs_{{1}},\ \TSe_{{1}}),\ (\TSs_{{2}},\ \TSe_{{2}}),\ \cdots,\ (\TSs_{\Omega},\ \TSe_{\Omega})\ \}, \\
        \land\ & \forall (\TSs_{i}, \TSe_{i}) \in \I_{\oie}:\ \TSs_{i} < \TSe_{i}.
    \end{aligned}
\end{align}
Some formal examples of the 3rd item are as follows:
\begin{align}
    \begin{aligned}
        & \CAL{I}_{\oie_1} = \{\ (1759740600,\ 1759741200),\ (1759741200,\ 1759741800)\ \}, \\
        & \CAL{I}_{\oie_2} = \{\ (1759741200,\ 1759741800)\ \}, \\
        & \CAL{I}_{\oie_\Omega} = \emptyset.
    \end{aligned}
\end{align}
1759740600, 1759741200 and 1759741800 are all timestamps,
they are respectively 08:50, 09:00 and 09:10 on November 6, 2025 (Beijing Time).
For $\oie_1$, suppose there is an $\bE$ instance $\E_1$ in the real world that satisfies
\begin{align}
    \oie_1 \rightarrow \E_1.
\end{align}
Then, $\E_1$ has two feasible intervals, which are
\begin{align}
    [1759740600,\ 1759741200),\ [1759741200,\ 1759741800).
\end{align}
Similarly, for $\oie_2$, suppose there is an $\bE$ instance $\E_2$ in the real world that satisfies
\begin{align}
    \oie_2 \rightarrow \E_2,
\end{align}
and $\E_2$ has only one feasible interval, which is
\begin{align}
    [1759741200, 1759741800).
\end{align}
$\oie_\Omega$ is a special case, whose 3rd element $\CAL{I}_{\oie_\Omega}$ is $\emptyset$.
However, it cannot map to any real-world event.

\item 4. \BF{Atomic event set $\A$ - All practical tasks guided by the atomic components}

The 4th element $\A$ is the set of atomic undetermined-interval events ($\bAtomEStar$ instances)
corresponding to the indivisible sub-events of the mapped real-world event.
This element reinforces the mapping between the abstract $\bOIE$ and real-world events,
and enables feasibility validation for composite event planning:
if any atomic event in $\A$ is infeasible,
the entire $\bOIE$ instance is invalid.
Formal examples include:
\begin{align}
    \begin{aligned}
        & \A_{\oie_1} = \{ \EStar_{1} \}, \\
        & \A_{\oie_2} = \{ \EStar_{2} \}, \\
        & \A_{\oie_\Omega} = \emptyset.
    \end{aligned}
\end{align}
\end{itemize}

\BF{Running Example: Three Doctors’ Conference Submissions}

To concretize the $\bOIE$ definition,
we introduce a running example that will be used throughout the remainder of the paper.
Consider a conference submission window spanning 00:00 August 1 to 00:00 August 2, 2025,
denoted as the interval $[0,\ 24)$ (hour-based timestamps for simplicity).
Three doctors ($Dr_A$, $Dr_B$, $Dr_C$) have the following feasible submission windows
(each submission requires exclusive use of the full window):
\begin{itemize}
    \item $Dr_A$: [0, 1), [21, 22),
    \item $Dr_B$: [0, 1), [13, 14), [20, 22),
    \item $Dr_C$: [0, 1), [19, 22).
\end{itemize}

Feasibility constraints:
Only two doctors may submit simultaneously in any window;
three concurrent submissions are infeasible (e.g., all three submitting in $[0,\ 1)$ is invalid, while any two are valid).

We use 3 $\bE$ instances to represent the submission of each doctor
\begin{align}
    \E_{Dr_A},\ \E_{Dr_B},\ \E_{Dr_C}.
\end{align}
And we use 4 $\bOIE$ instances to map the corresponding events
\begin{align}
    \begin{aligned}
        & \oie_A \rightarrow \E_{Dr_A}, \\
        & \oie_B \rightarrow \E_{Dr_B}, \\
        & \oie_C \rightarrow \E_{Dr_C}, \\
        & \oie_{A_\theta} \rightarrow \E_{Dr_A}. \\
    \end{aligned}
\end{align}
$\oie_{A_\theta}$ is a special $\bOIE$ instance that requires $Dr_A$ to submit their papers exclusively within the time slot 21:00–22:00.

In this scenario,
the $\C$(1st element) of each $\bOIE$ instance can be expressed as:
\begin{align}
    \CAL{C}_{\oie_A} = (), \CAL{C}_{\oie_B} = (), \CAL{C}_{\oie_C} = (), \CAL{C}_{\oie_{A_\theta}} = ().
\end{align}
The empty tuple indicates that the submission event is atomic and cannot be decomposed into smaller sub-events.
Subsection~\ref{subsec:oie.oieSet} will elaborate on this concept.

In our analogy, their $\I$ (3rd element) values reflect the final summary of the intervals corresponding to each sub-event.
They can be expressed as follows:
\begin{align}
    \begin{aligned}
        & \CAL{I}_{\oie_A} = \{\ (0,\ 1),\ (21,\ 22)\ \}, \\
        & \CAL{I}_{\oie_B} = \{\ (0,\ 1),\ (13,\ 14),\ (20,\ 22)\ \}, \\
        & \CAL{I}_{\oie_C} = \{\ (0,\ 1),\ (19,\ 22)\ \}, \\
        & \CAL{I}_{\oie_{A_\theta}} = \{\ (21,\ 22)\ \}.
    \end{aligned}
\end{align}

In this example, each doctor's paper submission does not need to be divided into several smaller events.
Therefore, each tuple in $\F$(2nd element) only requires one element.
They can be expressed as follows:
\begin{align}
    \begin{aligned}
        & \CAL{F}_{\oie_A} = \{\ ((0,\ 1)),\ ((21,\ 22))\ \}, \\
        & \CAL{F}_{\oie_B} = \{\ ((0,\ 1)),\ ((13,\ 14)),\ ((20,\ 22))\ \}, \\
        & \CAL{F}_{\oie_C} = \{\ ((0,\ 1)),\ ((19,\ 22))\ \}, \\
        & \CAL{F}_{\oie_{A_\theta}} = \{\ ((21,\ 22))\ \}.
    \end{aligned}
\end{align}

The $\A$ (4th element) of each $\bOIE$ instance reflects
the atomic practical $\bE$ instances with undetermined execution time,
mapped by each atomic $\bOIE$ component.
They can be expressed as follows:
\begin{align}
    \begin{aligned}
        & \CAL{A}_{\oie_A} = \{ \EStar_{Dr_A} \}, \\
        & \CAL{A}_{\oie_B} = \{ \EStar_{Dr_B} \}, \\
        & \CAL{A}_{\oie_C} = \{ \EStar_{Dr_C} \}, \\
        & \CAL{A}_{\oie_{A_\theta}} = \{ \EStar_{Dr_A} \}.
    \end{aligned}
\end{align}

Finally, the forms of these 4 $\bOIE$ instances are as follows:
\begin{align}
    \begin{aligned}
        & \oie_A = \mathlarger{\mathlarger{(}} \\
        & \quad (), \\
        & \quad \{\ (\ (0,\ 1)\ ),\ (\ (21,\ 22)\ )\ \}, \\
        & \quad \{\ (0,\ 1),\ (21,\ 22)\ \}, \\
        & \quad \{\ \EStar_{Dr_A}\ \} \\
        & \mathlarger{\mathlarger{)}}, \\
        & \oie_B = \mathlarger{\mathlarger{(}} \\
        & \quad (), \\
        & \quad \{\ (\ (0,\ 1)\ ),\ (\ (13,\ 14)\ ),\ (\ (20,\ 22)\ )\ \}, \\
        & \quad \{\ (0,\ 1),\ (13,\ 14),\ (20,\ 22)\ \}, \\
        & \quad \{\ \EStar_{Dr_B}\ \} \\
        & \mathlarger{\mathlarger{)}}, \\
        & \oie_C = \mathlarger{\mathlarger{(}} \\
        & \quad (), \\
        & \quad \{\ (\ (0,\ 1)\ ),\ (\ (19,\ 22)\ )\ \}, \\
        & \quad \{\ (0,\ 1),\ (19,\ 22)\ \}, \\
        & \quad \{\ \EStar_{Dr_C}\ \} \\
        & \mathlarger{\mathlarger{)}}, \\
        & \oie_{A_\theta} = \mathlarger{\mathlarger{(}} \\
        & \quad (), \\
        & \quad \{\ (\ (21,\ 22)\ )\ \}, \\
        & \quad \{\ (21,\ 22)\ \}, \\
        & \quad \{\ \EStar_{Dr_A}\ \} \\
        & \mathlarger{\mathlarger{)}}. \\
    \end{aligned}
\end{align}
\fi
        \subsubsection{Properties}\label{subsubsec:oie.oie.prop}\EBL
        \ifincludeFile
In this subsubsection, we analyze some properties of $\bOIE$.
We begin with the deterministic mapping from the feasible schedule set $\F$ (2nd element) to the aggregated interval set $\I$ (3rd element).
First, we define an auxiliary function to describe the temporal bounds of interval tuples.
This function serves as the basis for the above mapping.

\begin{defi}[\BF{The bound 2-tuple of a finite non-empty $\bTwoTplT$ instance \& helper function \& permutation invariance}]\label{def:oie.BoundTwoTuple}\EBL

1) \TSF{Definition:}

Let $\TwoTplT_\theta$ be a finite non-empty $\bTwoTplT$ instance,
presented in the ordered enumeration form
\begin{align}
    \TwoTplT_\theta = (\ (x_1,\ y_1),\ (x_2,\ y_2),\ \dots,\ (x_n,\ y_n)\ ).
\end{align}

Define
\begin{align}
        & L = min\{ x_i | i \in \{1, 2, \cdots, n\} \}, \\
        & R = max\{ y_i | i \in \{1, 2, \cdots, n\} \}.
\end{align}
The 2-tuple $(L, R)$ is called ``The bound 2-tuple of a finite non-empty $\bTwoTplT$ instance'',
denoted as ``$\bBoundTwoT$''.

2) \TSF{Helper function:}

We define the helper function:
\begin{align}
    & \fBoundTwoT{\pPre\TwoTplT} \longmapsto \text{The }\ \bBoundTwoT \text{ instance of } \pPre\TwoTplT,
\end{align}
where
\begin{center}
    \begin{tabular}{|l|l|}
        \hline
        $\pPre\TwoTplT$ & A finite non-empty $\bTwoTplT$ instance \\ \hline
        Return & The bound 2-tuple of $\pPre\TwoTplT$ \\ \hline
    \end{tabular}
\end{center}

3) \TSF{permutation invariance:}

Suppose there are 2 finite non-empty $\bTwoTplT$ instances $\TwoTplT$ and $\TwoTplT^{'}$
with the same length $n \in \mathbb{N}^{+}$,
and they satisfy permutational equivalence via a permutation matrix $\CAL{M}$

\begin{align}
    \TwoTplT \stackrel{\CAL{M}}{\sim} \TwoTplT^{'}.
\end{align}
Then the following holds
\begin{align}
    \fBoundTwoT{\TwoTplT} = \fBoundTwoT{\TwoTplT^{'}}.
\end{align}
\end{defi}

\begin{proof}[\BF{Proof of the permutation invariance in Definition~\ref{def:oie.BoundTwoTuple}}]\EBL

    Let
    \begin{align}
        & \TwoTplT = (\ (x_1,\ y_1),\ (x_2,\ y_2),\ \cdots,\ (x_n,\ y_n)\ ), \\
        & \TwoTplT^{'} = (\ (x_{1}^{'},\ y_{1}^{'}),\ (x_{2}^{'},\ y_{2}^{'}), \cdots,\ (x_{n}^{'},\ y_{n}^{'})\ ))
    \end{align}
    be two finite non-empty $\bTwoTplT$ instances with finite length $n \in \mathbb{N}^{+}$,
    that satisfy permutational equivalence via a permutation matrix $\CAL{M}$
    \begin{align}
        \TwoTplT \stackrel{\CAL{M}}{\sim} \TwoTplT^{'}
    \end{align}

    This means $\TwoTplT$ and $\TwoTplT^{'}$ have same elements,
    i.e.:
    \begin{align}
        \{ (x_1, y_1),\ (x_2, y_2),\ \cdots,\ (x_n, y_n)\} = \{ (x_{1}^{'}, y_{1}^{'}), (x_{2}^{'}, y_{2}^{'}), \cdots, (x_{n}^{'}, y_{n}^{'})\}.
    \end{align}
    Consequently, the set of all first items is preserved:
    \begin{align}
        \{ x_1, x_2, \cdots, x_n \} = \{ x_{1}^{'}, x_{2}^{'}, \cdots, x_{n}^{'}\}.
    \end{align}
    And the set of all second items is preserved:
    \begin{align}
        \{ y_1, y_2, \cdots, y_n \} = \{ y_{1}^{'}, y_{2}^{'}, \cdots, y_{n}^{'}\}.
    \end{align}
    Since the bound 2-tuple $(L,\ R)$  is defined as:
    \begin{align}
        & L = \min\{ x_i | i \in \{1, 2, \cdots, n\} \}, \\
        & R = \max\{ y_i | i \in \{1, 2, \cdots, n\} \},
    \end{align}
    then
    \begin{align}
        & \min\{ x_i^{'} | i \in \{1, 2, \cdots, n\} \} = \min\{ x_i | i \in \{1, 2, \cdots, n\} \} = L, \\
        & \max\{ y_i^{'} | i \in \{1, 2, \cdots, n\} \} = \max\{ y_i | i \in \{1, 2, \cdots, n\} \} = R,
    \end{align}
    i.e.:
    \begin{align}
        & \fBoundTwoT{\TwoTplT} = \fBoundTwoT{\TwoTplT^{'}}.
    \end{align}
    this concludes the proof.
\end{proof}

With this helper function in Definition~\ref{def:oie.BoundTwoTuple},
we formalize the core relationship between the $\F$(2nd element) and $\I$(3rd element) of an $\bOIE$ instance.

\begin{pty}[\BF{The relationship between the $\F$ and $\I$ of $\bOIE$}]\label{prop:oie.2and3}\EBL

    For an $\bOIE$ instance $\oie$ with the form
    \begin{align}
        \oie = \mathlarger{\mathlarger{(}} \CAL{C},\ \CAL{F},\ \CAL{I},\ \CAL{A} \mathlarger{\mathlarger{)}}.
    \end{align}
    If $\CAL{F} \ne \emptyset$:
    \begin{align}
        \CAL{I} = \bigg\{\ \fBoundTwoT{\TwoTplT_i}\ \bigg|\ \TwoTplT_i \in \CAL{F}\ \bigg\},
    \end{align}
    else if $\CAL{F} = \emptyset$:
    \begin{align}
        \CAL{I} = \emptyset.
    \end{align}
\end{pty}

We use the scenario of ``Three doctors' submitting papers'' to give an example that illustrates Property~\ref{prop:oie.2and3}.
We construct the following $\bOIE$ instance.
\begin{align}
    \begin{aligned}
        & \oie_{{Dr_B}\_and\_{Dr_A}} = \mathlarger{\mathlarger{(}} \\
        & \quad (\ \oie_B,\ \oie_A\ ), \\
        & \quad \{ \\
        & \qquad (\ (0,\ 1),\ (0,\ 1)\ ), \\
        & \qquad (\ (0,\ 1),\ (21,\ 22)\ ), \\
        & \qquad (\ (13,\ 14),\ (0,\ 1)\ ), \\
        & \qquad (\ (13,\ 14),\ (21,\ 22)\ ), \\
        & \qquad (\ (20,\ 22),\ (0,\ 1)\ ), \\
        & \qquad (\ (20,\ 22),\ (21,\ 22)\ ), \\
        & \quad \}, \\
        & \quad \{\ (0,\ 1),\ (0,\ 22),\ (0,\ 14),\ (13,\ 22),\ (20,\ 22)\ \}, \\
        & \quad \{\ \EStar_{Dr_B},\ \EStar_{Dr_A}\ \} \\
        & \mathlarger{\mathlarger{)}}. \\
    \end{aligned}
\end{align}


The practical meaning of $\oie_{{Dr_B}\_and\_{Dr_A}}$ is ``$Dr_B$ and $Dr_A$ submit papers''.
This sentence is a textual description of an event, which involves two participants.
Each participant, as a subject, leads an event—specifically,
the respective paper submissions of the two doctors.

The $\F$ of $\oie_{{Dr_B}\_and\_{Dr_A}}$ is:
\begin{align}
    \begin{aligned}
        & \{ \\
        & \quad (\ (0, 1),\ (0, 1)\ ), \\
        & \quad (\ (0, 1),\ (21, 22)\ ), \\
        & \quad (\ (13, 14),\ (0, 1)\ ), \\
        & \quad (\ (13, 14),\ (21, 22)\ ), \\
        & \quad (\ (20, 22),\ (0, 1)\ ), \\
        & \quad (\ (20, 22),\ (21, 22)\ ) \\
        & \}, \\
    \end{aligned}
\end{align}
it indicates
\begin{align}
    \begin{aligned}
        & \E_{Dr_B}\ runs\ 00:00-01:00,\ while\ \E_{Dr_A}\ runs\ 00:00-01:00; \\
        & \E_{Dr_B}\ runs\ 00:00-01:00,\ while\ \E_{Dr_A}\ runs\ 21:00-22:00; \\
        & \E_{Dr_B}\ runs\ 13:00-14:00,\ while\ \E_{Dr_A}\ runs\ 00:00-01:00; \\
        & \E_{Dr_B}\ runs\ 13:00-14:00,\ while\ \E_{Dr_A}\ runs\ 21:00-22:00; \\
        & \E_{Dr_B}\ runs\ 20:00-22:00,\ while\ \E_{Dr_A}\ runs\ 00:00-01:00; \\
        & \E_{Dr_B}\ runs\ 20:00-22:00,\ while\ \E_{Dr_A}\ runs\ 21:00-22:00.
    \end{aligned}
\end{align}
This demonstrates how $F$ serves as the feasible details of plans.
In the real world, one of these plans will be selected to occur.

The $\I$ of $\oie_{{Dr_B}\_and\_{Dr_A}}$ is:
\begin{align}
    \{\ (0,\ 1),\ (0,\ 22),\ (0,\ 14),\ (13,\ 22),\ (20,\ 22)\ \},
\end{align}
it indicates
\begin{align}
    \begin{aligned}
        & The\ entire\ event,\ which\ consists\ of \ \E_{Dr_B}\ and\ \E_{Dr_A},\ runs\ 00:00-01:00; \\
        & The\ entire\ event,\ which\ consists\ of \ \E_{Dr_B}\ and\ \E_{Dr_A},\ runs\ 00:00-22:00; \\
        & The\ entire\ event,\ which\ consists\ of \ \E_{Dr_B}\ and\ \E_{Dr_A},\ runs\ 00:00-14:00; \\
        & The\ entire\ event,\ which\ consists\ of \ \E_{Dr_B}\ and\ \E_{Dr_A},\ runs\ 13:00-22:00; \\
        & The\ entire\ event,\ which\ consists\ of \ \E_{Dr_B}\ and\ \E_{Dr_A},\ runs\ 20:00-22:00.
    \end{aligned}
\end{align}

We further analyze this example from the perspectives of formalization and internal logic.
$\oie_{{Dr_B}\_and\_{Dr_A}}$ maps a real-world $\bE$ instance:
\begin{align}
    \begin{aligned}
        & \oie_{{Dr_B}\_and\_{Dr_A}} \rightarrow \\
        & \qquad ''The\ real-world\ event\ that\ consists\ of\ {Dr_B}'s\ and\ {Dr_A}'s\ submitting\ papers''.
    \end{aligned}
\end{align}

The $\C$(1st element) of this $\bOIE$ instance $(\oie_B, \oie_A)$ expresses
``$Dr_B$ and $Dr_A$'' in the declarative sentence.

By Definition~\ref{def:oie.BoundTwoTuple},
we process each item of the $\F$(2nd element) of $\oie_{{Dr_B}\_and\_{Dr_A}}$
to implement the logic process of Property~\ref{prop:oie.2and3}.
\begin{align}
    \begin{aligned}
        & \fBoundTwoT{\ ((0,\ 1),\ (0,\ 1))\ } \longmapsto (0, 1), \\
        & \fBoundTwoT{\ ((0,\ 1),\ (21,\ 22))\ } \longmapsto (0, 22), \\
        & \fBoundTwoT{\ ((13,\ 14),\ (0,\ 1))\ } \longmapsto (0, 14), \\
        & \fBoundTwoT{\ ((13,\ 14),\ (21,\ 22))\ } \longmapsto (13, 22), \\
        & \fBoundTwoT{\ ((20,\ 22),\ (0,\ 1))\ } \longmapsto (0, 22), \\
        & \fBoundTwoT{\ ((20,\ 22),\ (21,\ 22))\ } \longmapsto (20, 22).
    \end{aligned}
\end{align}
Put all the result 2-tuples on the right into a single set, the result is
\begin{align}
    \{\ (0,\ 1),\ (0,\ 22),\ (0,\ 14),\ (13,\ 22),\ (20,\ 22)\ \}.
\end{align}
This is precisely the $\I$(3rd element) of $\oie_{{Dr_B}\_and\_{Dr_A}}$.

In fact, a careful observation reveals that $\oie_{{Dr_B}\_and\_{Dr_A}}$ is constructed
from $\oie_{Dr_B}$ and $\oie_{Dr_A}$ through certain methods,
analogous to how the real-world event ``Dr. A and Dr. B submit papers'' is composed of
the two individual events ``Dr. A submits a paper'' and ``Dr. B submits a paper''.
Both our mathematical description and people's verbal expression elaborate on this from a formalization perspective.
How $\oie_{{Dr_B}\_and\_{Dr_A}}$ is constructed from $\oie_{Dr_B}$ and $\oie_{Dr_A}$ will be
elaborated in detail in the next chapter, which focuses on operations.
In this section, we concentrate on the $\bOIE$ itself.

Here we present other two $\bOIE$ instances as examples
\begin{align}
    \begin{aligned}
        & \oie_{{Dr_A}\_and\_{Dr_B}} = \mathlarger{\mathlarger{(}} \\
        & \quad (\ \oie_A,\ \oie_B\ ), \\
        & \quad \{ \\
        & \qquad (\ (0,\ 1),\ (0,\ 1)\ ), \\
        & \qquad (\ (0,\ 1),\ (13,\ 14)\ ), \\
        & \qquad (\ (0,\ 1),\ (20,\ 22)\ ), \\
        & \qquad (\ (21,\ 22),\ (0,\ 1)\ ), \\
        & \qquad (\ (21,\ 22),\ (13,\ 14)\ ), \\
        & \qquad (\ (21,\ 22),\ (20,\ 22)\ ), \\
        & \quad \}, \\
        & \quad \{\ (0,\ 1),\ (0,\ 22),\ (0,\ 14),\ (13,\ 22),\ (20,\ 22)\ \}, \\
        & \quad \{\ \EStar_{Dr_A},\ \EStar_{Dr_B}\ \} \\
        & \mathlarger{\mathlarger{)}},
    \end{aligned}
\end{align}
and
\begin{align}
    & \oie_{void} = \mathlarger{\mathlarger{(}}\ (),\ \emptyset,\ \emptyset,\ \emptyset\ \mathlarger{\mathlarger{)}}.
\end{align}
They both satisfy Property~\ref{prop:oie.2and3} when subjected to the same verification method.

Next, we specify the equality condition for two $\bOIE$ instances.
\begin{defi}[\BF{Equality of $\bOIE$ instances}]\label{defi:OPs.oie.Equal}\EBL

    Suppose $\oie_1$ and $\oie_2$ are $\bOIE$ instances, with the forms
    \begin{align}
            & \oie_1 = (\C_1,\ \F_1,\ \I_1,\ \A_1), \\
            & \oie_2 = (\C_2,\ \F_2,\ \I_2,\ \A_2).
    \end{align}
    If
    \begin{align}
        \C_1 = \C_2\ \land\ \F_1 = \F_2\ \land\ \I_1 = \I_2\ \land\ \A_1 = \A_2
    \end{align}
    holds, then $\oie_1$ and $\oie_2$ are equal, denoted as
    \begin{align}
        \oie_1 = \oie_2,
    \end{align}
    else if
    \begin{align}
        \C_1 \ne \C_2\ \lor\ \F_1 \ne \F_2\ \lor\ \I_1 \ne \I_2\ \lor\ \A_1 \ne \A_2
    \end{align}
    holds, they are not equal, denoted as
    \begin{align}
        \oie_1 \ne \oie_2.
    \end{align}
\end{defi}

\fi
        \subsubsection{$\bAtomOIE$ and $\bCombOIE$}\label{subsubsec:oie.oie.atomAndComb}\EBL
        \ifincludeFile
Mirroring the atomic/composite classification of $\bE$ instances in subsection~\ref{subsec:event.atomAndCombEvent},
we categorize $\bOIE$ instances into two disjoint classes based on the type of $\bE$ instance they map to.

\begin{defi}[\textbf{Atomic $\bOIE$}]\label{def:OIE.AtomOIE}\EBL

$\bOIE$ instances that map to $\bAtomE$ instances are called ``atomic $\bOIE$''
and are denoted as ``$\bAtomOIE$''.
An instance is written as ``$\atomOie$''.
\end{defi}

\begin{defi}[\textbf{Composite $\bOIE$}]\label{def:OIE.CombOIE}\EBL

$\bOIE$ instances that map to $\bCompE$ instances are called ``composite $\bOIE$''
and are denoted as ``$\bCombOIE$''.
An instance is written as ``$\combOie$''.
\end{defi}

$\bAtomOIE$ is also a type of $\bOIE$, and its instances cannot be further split due to its corresponding real-world event.
Similarly, $\bCombOIE$ is also a type of $\bOIE$, and their instances can be further split.

In the example ``Submitting papers of 3 doctors'' in subsubsection~\ref{subsubsec:oie.oie.def},
\begin{align}
    \begin{aligned}
        & \oie_A = \mathlarger{\mathlarger{(}} \\
        & \quad (), \\
        & \quad \{\ (\ (0,\ 1)\ ),\ (\ (21,\ 22)\ )\ \}, \\
        & \quad \{\ (0,\ 1),\ (21,\ 22)\ \}, \\
        & \quad \{\ \EStar_{Dr_A}\ \} \\
        & \mathlarger{\mathlarger{)}} \\
        & \oie_B = \mathlarger{\mathlarger{(}} \\
        & \quad (), \\
        & \quad \{\ (\ (0,\ 1)\ ),\ (\ (13,\ 14)\ ),\ (\ (20,\ 22)\ )\ \}, \\
        & \quad \{\ (0,\ 1),\ (13,\ 14),\ (20,\ 22)\ \}, \\
        & \quad \{\ \EStar_{Dr_B}\ \} \\
        & \mathlarger{\mathlarger{)}} \\
        & \oie_C = \mathlarger{\mathlarger{(}} \\
        & \quad (), \\
        & \quad \{\ (\ (0,\ 1)\ ),\ (\ (19,\ 22)\ )\ \}, \\
        & \quad \{\ (0,\ 1),\ (19,\ 22)\ \}, \\
        & \quad \{\ \EStar_{Dr_C}\ \} \\
        & \mathlarger{\mathlarger{)}},
    \end{aligned}
\end{align}
all are $\bAtomOIE$ instance.

The $\bOIE$ instance
\begin{align}
    \begin{aligned}
        & \oie_{{Dr_B}\_and\_{Dr_A}} = \mathlarger{\mathlarger{(}} \\
        & \quad (\ \oie_B,\ \oie_A\ ), \\
        & \quad \{ \\
        & \qquad (\ (0,\ 1),\ (0,\ 1)\ ), \\
        & \qquad (\ (0,\ 1),\ (21,\ 22)\ ), \\
        & \qquad (\ (13,\ 14),\ (0,\ 1)\ ), \\
        & \qquad (\ (13,\ 14),\ (21,\ 22)\ ), \\
        & \qquad (\ (20,\ 22),\ (0,\ 1)\ ), \\
        & \qquad (\ (20,\ 22),\ (21,\ 22)\ ), \\
        & \quad \}, \\
        & \quad \{\ (0,\ 1),\ (0,\ 22),\ (0,\ 14),\ (13,\ 22),\ (20,\ 22)\ \}, \\
        & \quad \{\ \EStar_{Dr_B},\ \EStar_{Dr_A}\ \} \\
        & \mathlarger{\mathlarger{)}}
    \end{aligned}
\end{align}
is a $\bCombOIE$ instance.

$\bAtomOIE$ and $\bCombOIE$ have the following properties,
where ``dim'' denotes the tuple length.

\begin{pty}[\BF{The property of $\bAtomOIE$}]\label{prop:oie.atomOie}\EBL

    Suppose there is an $\bAtomOIE$ instance $\atomOie$,
    it satisfies:
    \begin{align}
        \begin{aligned}
        & \Dim{\C_{\atomOie}} = 0 \\
        \land\ & \forall\ \TwoTplT_i \in \F_{\atomOie}: \Dim{\TwoTplT_i} = 1 \\
        \land\ & \CARDI{\A_{\atomOie}} = 1.
        \end{aligned}
    \end{align}
\end{pty}

\begin{pty}[\BF{The property of $\bCombOIE$}]\label{prop:oie.combOie}\EBL

    Suppose there is a $\bCombOIE$ instance $\combOie$,
    it satisfies:
    \begin{align}
        \begin{aligned}
            & \Dim{\CAL{C}_{\combOie}} > 0 \\
            \land\ & \forall\ \TwoTplT_i \in \CAL{F}_{\combOie}: \Dim{\TwoTplT_i} > 1\ \land\ \Dim{\TwoTplT_i} = \Dim{\CAL{C}_{\combOie}} \\
            \land\ & \CARDI{\A_{\atomOie}} \geq \Dim{\CAL{C}_{\combOie}}.
        \end{aligned}
    \end{align}
\end{pty}
\fi
    \subsection{Set of Optional Intervals Events}\label{subsec:oie.oieSet}\EBL
    \ifincludeFile
In the previous subsection,
we introduced the Optional Intervals Event ($\bOIE$) abstraction.
As analyzed earlier regarding the properties of events,
a single event can be composed of multiple sub-events.
Therefore, this subsection investigates the properties of sets consisting of multiple $\bOIE$ instances.

First, we supplement a fundamental type definition: a set composed of several $\bTwoTplS$ instances
(i.e., sets of 2-tuples) is denoted as $\bTwoTplSS$,
with a single instance denoted as $\TwoTplSS$.
Its ordered enumeration form is as follows:
\begin{align}
    \TwoTplSS = \{\ \TwoTplS_1,\ \TwoTplS_2,\ \cdots,\ \TwoTplS_n\ \}.
\end{align}
For example,
\begin{align}
    \begin{aligned}
        & \{ \\
        & \qquad \{\ (0,\ 1),\ (21,22)\ \}, \\
        & \qquad \{\ (0,\ 1),\ (13,14),\ (20,22)\ \}, \\
        & \qquad \{\ (0,\ 1),\ (19,22)\ \} \\
        & \}.
    \end{aligned}
\end{align}
This set exactly corresponds to the set formed
by the $\I$(third elements) of the three $\bOIE$ instances $\oie_A$, $\oie_B$ and $\oie_C$
in the earlier example of three doctors submitting manuscripts in

Returning to the core topic:
a set consisting of $n$ $\bOIE$ instances with ordered enumeration form
\begin{align}
    \{ \oie_1, \oie_2, \cdots, \oie_n \}
\end{align}
is called $\bOIES$,
which can be $\emptyset$.

Two key conventions need to be clarified here:
\begin{itemize}
    \item This paper only analyzes finite $\bOIES$ instances.
    When the cardinality of an $\bOIES$ instance is equal to $+\infty$,
    we would have to deal with an infinite number of $\bE$ instances.
    That is an extremely special case, and this paper will not discuss such a situation.
    \item Although an $\bOIES$ instance is denoted by an ordered enumeration for convenience,
    it represents an unordered set.
\end{itemize}

For $\bOIES$ instances,
this paper focuses on the set formed by the $\I$(3rd element) of all their elements.
This set will be frequently used in the two sequence operations defined in the next section,
so we first give its formal definition.

\begin{defi}[Set of $\I$ of elements of a finite $\bOIES$ instance]\label{def:oie.oieS.int2tupleSS}\EBL

Given a finite $\bOIES$ instance $\oieS$ with the ordered enumeration form
\begin{align}
    \oieS = \{ \oie_1, \oie_2, \cdots, \oie_n \},
\end{align}
then the set
\begin{align}
    & \{ \I_{\oie_1}, \I_{\oie_2}, \cdots, \I_{\oie_n} \}
\end{align}
is called ``The set of $\I$ of $\oieS$'', abbreviated as $\ISoOieS$ of $\oieS$.
$\ISoOieS$ is a $\bTwoTplSS$ instance.
If $\oieS$ is $\emptyset$, then its $\ISoOieS$ is $\emptyset$.
\end{defi}

Again taking the scenario of three doctors submitting manuscripts as an example.
We place $\oie_{Dr_A}$, $\oie_{Dr_B}$ and $\oie_{Dr_C}$ into a set to construct the $\bOIES$ instance $\oieS_{Drs}$
\begin{align}
    \oieS_{Drs} = \{ \oie_{Dr_A}, \oie_{Dr_B}, \oie_{Dr_C} \}.
\end{align}
Its corresponding set of $\I$ elements is:
\begin{align}
    \begin{aligned}
        & \ISoOieS_{\oieS_{Drs}} \\
        & \quad = \{\ \I_{\oie_{Dr_A}},\ \I_{\oie_{Dr_B}},\ \I_{\oie_{Dr_C}}\ \} \\
        & \quad = \\
        & \qquad \{ \\
        & \quad\qquad \{\ (0,\ 1),\ (21,\ 22)\ \}, \\
        & \quad\qquad \{\ (0,\ 1),\ (13,\ 14),\ (20,\ 22)\ \}, \\
        & \quad\qquad \{\ (0,\ 1),\ (19,\ 22)\ \} \\
        & \qquad \}.
    \end{aligned}
\end{align}
\fi
    \subsection{Void Optional Intervals Event}\label{subsec:oie.voidOie}\EBL
    \ifincludeFile
In this subsection,
we supplement a special class of $\bOIE$ instances that cannot be mapped to any real-world event.

\begin{defi}[Void $\bOIE$]\label{def:pdie.void}\EBL

An $\bOIE$ instance whose $\CAL{C}$(1st element) is $()$,
$\CAL{F}$(2nd element) is $\emptyset$, $\CAL{I}$(3rd element) is $\emptyset$,
and $\CAL{A}$(4th element) is $\emptyset$,
is a void $\bOIE$ instance.
We call its type ``$\bVoidError$''.
$\bVoidError$ is a subclass of $\bOIE$,
and there exists exactly one unique $\bVoidError$ instance globally,
with the canonical form:
\begin{align}
    \voidError = \left(\ (),\ \emptyset,\ \emptyset,\ \emptyset\ \right).
\end{align}
\end{defi}

A void $\bOIE$ instance contains no event subject,
no set of optional execution intervals,
and no other structural information,
hence it cannot establish a mapping to any real-world event.
In the subsequent sections,
we will define sequence operations on $\bOIE$ instances
to plan and guide the execution of real-world events.
If the result of a sequence operation is $\voidError$,
it indicates that the corresponding event planning scheme is infeasible in reality.
\fi

\section{Two Sequence Operations on the Elements of a Finite Set of $\bOIE$ Instances}\label{sec:ops}\EBL
\ifincludeFile
This section details the second core contribution of our work:
a formal system of sequence operations.
We define two $n$-ary finitary operations over the elements of a finite $\bOIES$ instance,
both of which satisfy closure: each operation outputs an $\bOIE$ instance,
and restricts optional interval information of all input $\bOIE$ instances to comply with the inherent algebraic constraints of the operation.
Notably, these operations offer powerful abstraction capabilities
with direct applications across physics, computer science, and probability,
as we contextualize in subsequent discussions.
The primary focus of this section is to formalize the operational rules of these two sequence operations.

\begin{itemize}
    \item \textbf{Subsection~\ref{subsec:ops.natIsoCP}}
    introduces index tuples to represent the operation order of set elements,
    and defines a natural isomorphism function for the Cartesian product of interval sets,
    which flattens the parenthesized structure of the standard Cartesian product
    and lays a formal foundation for subsequent sequence operations.
    \item \textbf{Subsection~\ref{subsec:ops.infea}}
    distinguishes between infeasible and feasible interval combinations,
    proposes the structure-preserving permutational isomorphism axiom
    for infeasible subsets.
    We prove that feasible subsets also satisfy the same isomorphism property.
    We define mutually independent $\bOIES$ as a constraint-free baseline model,
    and proposes a dynamic progressive regression analysis mechanism.
    \item \textbf{Subsection~\ref{subsec:ops.add}}
    defines ``Complete Sequence Addition''($\oplus|_{\alpha}^{\beta}$),
    which models concurrent events with a certain degree of equal opportunity
    within a shared time domain through a four-step operational process,
    and intuitively illustrates its ``fairness'' characteristic using the 100 metres race scenario.
    \item \textbf{Subsection~\ref{subsec:ops.multi}}
    defines ``Complete Sequence Multiplication''($\otimes$),
    which models strictly ordered sequential events by enforcing that
    the ending timestamp of the preceding event is no later than the starting timestamp of the subsequent one,
    and proves that it does not satisfy permutation isomorphism invariance,
    using downhill skiing as an intuitive example.
    \item \textbf{Subsection~\ref{subsec:ops.voidRes}}
    discusses typical cases where the result of sequence operations is the void $\bOIE$ instance ($\voidError$),
    including operations involving identical $\bOIE$ instances (leading to duplicate planning of real-world events)
    and operations involving $\voidError$ itself.
    \item \textbf{Subsection~\ref{subsec:ops.permu}}
    formalizes the permutational equivalence relation for $\bOIE$ instances,
    proves that $\oplus|_{\alpha}^{\beta}$ yields a single-orbit space
    (results of different operand orders are permutationally equivalent),
    while $\otimes$ may yield multiple distinct orbits.
    \item \textbf{Subsection~\ref{subsec:OPs.add.nat}}
    defines ``Natural Complete Sequence Addition'',
    a special case of $\oplus|_{\alpha}^{\beta}$
    that uses the largest common time domain of all participating $\bOIE$ instances as the filtering domain,
    eliminating the subjectivity of artificial domain specification
    and achieving notational unification with $\otimes$.
\end{itemize}

\fi
    \subsection{\textit{Flattened Cartesian Products of Interval Sets with Respect to Index Ordering}}\label{subsec:ops.natIsoCP}\EBL
    \ifincludeFile
We first introduce index tuples to represent the ordering of operands in a set.

\begin{defi}[\BF{An index tuple of a finite index set}]\label{def:ops.idxT}\EBL

Given a finite index set with cardinality $n$ ($n \in \mathbb{N}^+, n > 1$)
\begin{align}
    \{\ 1,\ 2,\ 3,\ \cdots,\ n\ \}.
\end{align}
A tuple $(i_1, i_2, i_3, \cdots, i_n)$ is called ``an index tuple for $n$''(denoted as $\bIdxT$) if and only if
\begin{align}
    \begin{aligned}
        & \forall\ k \in [1,\ n]:\ i_k \in [1,\ n] \\
        \land \ & \forall\ p,q \in [1,\ n] (p \neq q):\ i_p \neq i_q.
    \end{aligned}
\end{align}
\end{defi}

\begin{pty}[\BF{Permutation equivalence of two $\bIdxT$ instances}]\label{def:ops.idxT.Permu}\EBL

For any two distinct $\bIdxT$ instances of length $n(n \in \mathbb{N}^+)$,
\begin{align}
    & \idxT_1 = \{\ i_1,\ i_2,\ i_3,\ \cdots,\ i_n\ \}, \\
    & \idxT_2 = \{\ j_1,\ j_2,\ j_3,\ \cdots,\ j_n\ \}
\end{align}
there always exists a permutation matrix $\CAL{M}$ such that
\begin{align}
    \idxT_1 \cdot \CAL{M} = \idxT_2.
\end{align}
That is, any two $\bIdxT$ instances with the same length are permutationally equivalent, denoted as:
\begin{align}
    \idxT_1 \stackrel{\CAL{M}}{\sim} \idxT_2.
\end{align}
\end{pty}

We emphasize that Definition~\ref{def:ops.idxT} is uniformly applied
throughout this paper to map all sets to their ordered enumeration form.
For any $\bIdxT$ instance,
the order of its elements denotes the traversal order of elements in the set;
consequently, the length of this $\bIdxT$ instance equals the cardinality of the set.

To formalize and refine the sequence operation rules in subsequent sections,
we design an auxiliary function to compute the flattened Cartesian product
of all elements in a finite $\bTwoTplSS$
(set of sets of 2-tuples) instance.

\begin{defi}[\BF{
Natural isomorphism to the Cartesian product of all members of a finite non-empty $\bTwoTplSS$ instance with respect to a given index order
}]\label{def:op.fNItCPo2TplSS}\EBL

\begin{align}
    \begin{aligned}
        & \fNatIsoToCP{\pPre\TwoTplSS}{\pPre\IdxT} \longmapsto \\
        & \quad The\ set\ naturally\ isomorphic\ to\ the\ Cartesian\ product\ \prod_{i = 1}^{n} \pPre\TwoTplS_{\pPre{idx_i}}.
    \end{aligned}
\end{align}

\begin{tabular}{|l|l|}
    \hline
    $\pPre\TwoTplSS$ & \makecell[l]{\indent A finite set of sets of 2-tuples excluding $\emptyset$ with size $n > 0$, in the form of\\ $\pPre\TwoTplSS = \{\ \pPre\TwoTplS_1,\ \pPre\TwoTplS_2,\ \cdots,\ \pPre\TwoTplS_n\ \}$ } \\ \hline
    $\pPre\IdxT$ & \makecell[l]{\indent An index tuple, in the form of $(\ \pPre{idx_1},\ \pPre{idx_2},\ \cdots,\ \pPre{idx_n}\ )$, indicating \\
    which element of $\pPre\TwoTplSS$ is the $i$-th operand in the expression when\\ performing the Cartesian product } \\ \hline
    Return & \makecell[l]{\indent The set naturally isomorphic to the Cartesian product of all elements in\\ $\pPre\TwoTplSS$, with the order of operands constructed using $\pPre\IdxT$ as the index,\\
    and the result is a $\bTwoTplTS$ instance} \\
    \hline
\end{tabular}

We stipulate that, in the special case where $\pPre\TwoTplSS = \emptyset$, the result is $\emptyset$.
\end{defi}

We now explain the rationale for using ``natural isomorphism''.
The standard Cartesian product is left-associative,
which imposes artificial parenthesization on the result.
What we require is not a result constrained by operation-order parentheses,
but rather the flattened tuple form of each element in the resulting set,
because our purpose is to reveal the intrinsic equivalence among different constructions.
This is the essence of natural isomorphism in the present context.

For the Cartesian product operation
\begin{align}
    \pPre\TwoTplS_{\pPre{idx_1}} \cdot \pPre\TwoTplS_{\pPre{idx_2}} \cdot \cdots \cdot \pPre\TwoTplS_{\pPre{idx_n}}.
\end{align}
Indeed, if we directly calculate the Cartesian product,
its rigorous formulation is
\begin{align}
    ( \cdots ((\pPre\TwoTplS_{\pPre{idx_1}} \cdot \pPre\TwoTplS_{\pPre{idx_2}}) \cdot \pPre\TwoTplS_{\pPre{idx_3}}) \cdots ) \cdot \pPre\TwoTplS_{\pPre{idx_n}}.
\end{align}
What we require is not a result constrained by operation-order parentheses,
but rather the flattened form of each element in the resulting set.
This is why we use natural isomorphism.
Thus, it is not a standard Cartesian product but a flattened Cartesian product.
As a $\bTwoTplTS$ instance, it can be expressed as follows:
\begin{align}
    \TwoTplTS^{NatIso2CP} = \{\ \TwoTplT_1,\ \TwoTplT_2,\ \cdots,\ \TwoTplT_k\ \}.
\end{align}

Suppose there is a finite non-empty $\bTwoTplSS$ instance
\begin{align}
    \TwoTplSS = \{\ \TwoTplS_1,\ \TwoTplS_2,\ \cdots,\ \TwoTplS_n\ \},
\end{align}
then according to Definition~\ref{def:op.fNItCPo2TplSS},
we can get the flattened Cartesian product as follows:
\begin{align}
    \TwoTplTS^{NatIso2CP} = \fNatIsoToCP{\TwoTplSS}{\IdxT}.
\end{align}

Its cardinality is
\begin{align}
    \CARDI{\fNatIsoToCP{\TwoTplSS}{\IdxT}} = \prod_{i = 1}^{n} \CARDI{\TwoTplS_{\IdxT_i}}.
\end{align}

We now apply this helper function in Definition~\ref{def:op.fNItCPo2TplSS} to $\bOIES$ instances. \\
For a finite non-empty $\bOIES$ instance $\oieS$ containing no $\voidError$,  with ordered enumeration form
\begin{align}
    \oieS = \{\ \oie_1,\ \oie_2,\ \cdots,\ \oie_n\ \}.
\end{align}
Indexing the flattened Cartesian product of all 3rd-element $\I$ values
for $\oieS$ members under a given $\bIdxT$ instance
yields a $\bTwoTplTS$ instance naturally isomorphic to that product.
By Definition~\ref{def:oie.oieS.int2tupleSS} and Definition~\ref{def:op.fNItCPo2TplSS},
the result can be expressed as
\begin{align}
    \TwoTplTS^{NatIso2CP} = \fNatIsoToCP{\ISoOieS_{\oieS}}{\idxT}.
\end{align}
\fi
    \subsection{Feasible and Infeasible Subsets of Flattened Cartesian Products of $\bOIES$ Interval Sets}\label{subsec:ops.infea}\EBL
    \ifincludeFile
The core purpose of constructing the flattened Cartesian product of all 3rd-element $\I$ values
is to obtain the complete combinatorial space of interval combinations for subsequent sequence operation modeling.
While the flattened Cartesian product enumerates all mathematically possible combinations of execution intervals across constituent $\bOIE$ instances,
$\bOIE$ instances are by design mapped to real-world Event instances,
which are inherently bound by physical laws, logical causality, and domain-specific constraints.
These constraints render a subset of the mathematically valid interval combinations practically infeasible in real-world execution,
necessitating a systematic framework to distinguish feasible and infeasible combinations.

The overarching goal of this subsection is to formalize this feasibility screening process:
we first examine the full combinatorial set generated by the natural isomorphism to the Cartesian product,
then formalize the criteria for identifying infeasible interval combinations,
exclude these invalid elements from the full set,
and ultimately derive the maximum feasible subset that adheres to all real-world constraints.
This subsection is divided into four subsubsections,
whose core research content and contributions are outlined as follows:

\begin{itemize}
\item Subsubsection~\ref{subsubsec:OPs.unfsbleDI} formalizes the definition of infeasible/feasible interval combinations
and their supporting helper functions,
and illustrates the feasibility screening logic with a practical case,
establishing the executable judgment rules for interval combination validity.
\item Subsubsection~\ref{subsubsec:OPs.unfsbleDI.equival} proposes the structure-preserving permutational isomorphism axiom for infeasible subsets,
and proves that the feasible subset also satisfies the same isomorphism property,
providing the core axiomatic support for the algebraic property analysis of subsequent sequence operations.
\item Subsubsection~\ref{subsubsec:OPs.unfsbleDI.independentOIES} defines the mutually independent $\bOIES$ with no cross-event constraints,
gives its formal judgment conditions and applicable scenarios,
and builds a constraint-free baseline model for the sequence operation modeling in subsequent sections.
\item Subsubsection~\ref{subsubsec:OPs.infsbleDI.relativity} develops the epistemological perspective of planning granularity relativity and progressive constraint discovery.
It formalizes the subject's right to switch between isolated and joint planning granularities,
analyzes the hard feedback produced by incomplete models when planners fail to incorporate cross-event constraints,
and proposes a dynamic progressive regression analysis mechanism that iteratively expands $\bOIES$ instances
from local feasibility to global feasibility through structural correction and convergence.
\end{itemize}\fi
        \subsubsection{Infeasible and Feasible Interval Combinations}\label{subsubsec:OPs.unfsbleDI}\EBL
        \ifincludeFile
When an $\bOIE$ instance $\oie$ is used in isolation to plan the execution of its mapped $\bE$ instance,
the feasibility of each candidate interval in its $\I$(3rd element)
is evaluated solely with respect to the internal constraints of that single event.
However, when $\oie$ is brought into a joint planning context with other $\bOIE$ instances,
additional cross-event constraints—stemming from physical laws, logical causality or resource contention
among the underlying $\E$ instances—come into play.
These external constraints are inherited by the $\bOIE$ abstractions,
imposing additional restrictions that are absent when the $\oie$ is planned in isolation.
Consequently, the same interval that is feasible under $\oie$'s individual planning may become infeasible in the joint planning scenario.
In other words, the set $\I_{\oie}$ still records all candidate intervals originally associated with the mapped event,
but the operational feasibility of selecting any particular interval from $\I_{\oie}$ now depends on
whether the chosen interval can coexist with the intervals selected for the other participating $\bOIE$ instances.
Crucially, the set $\I_{\oie}$ itself remains unchanged; rather,
the planning context has shifted from a single-event constraint system to a multi-event constraint system,
rendering some previously viable intervals unrealizable.

To illustrate, consider a screenwriter designing a script about two men.

We abstract their entire lives into two $\bOIE$ instances:
$\oie_{old}$ and $\oie_{young}$, which form an $\bOIES$ instance named $\oieS_{old-young}$.
When only considering their own individual situations,
the screenwriter first makes the following design for their lifespans using $\CAL{I}$:
\begin{align}
    \begin{aligned}
        & \CAL{I}_{\oie_{old}} = \{\ (1830,\ 1900),\ (1910,\ 1990),\ (2050,\ 2140)\ \}, \\
        & \CAL{I}_{\oie_{young}} = \{\ (1860,\ 1930),\ (1930,\ 2010),\ (2077,\ 2277)\ \}.
    \end{aligned}
\end{align}

\indent
Here, for ease of notation, we use years rather than precise timestamps,
with each interval representing the period starting at 00:00 on January 1 of the start year.
If the script only features one member of the old-young pair
and the other is not mentioned at all,
the screenwriter can directly select an interval as lifespan from $\CAL{I}_{\oie_{old}}$ or from $\CAL{I}_{\oie_{young}}$
to arrange the birth/death moments of the protagonist.

The complete sets of life intervals for the two individuals are determined,
and the screenwriter needs to assign their relationship.
These two characters can appear in multiple different scripts,
and any relationship can be assigned to them across these scripts;
they can be strangers, colleagues, master and apprentice, or any combination thereof.
However,
as the screenwriter is currently working on an important script,
he decides to set their relationship in that script as ``\TT{father and son}''.

Then, physical and biological constraints render certain combinations impossible:
\begin{itemize}
    \item \BF{Causal precedence:} The father must be born before the son (with a reasonable biological minimum age difference).
    \item \BF{Biological continuity:} The father's death must occur after conception unless assisted reproduction technology exists in the historical setting.
    \item \BF{Lifespan bounds:} Each lifespan must remain within human biological limits (unless specified as science fiction).
\end{itemize}

According to these 3 restrictions, it is obvious that if the father's lifespan is from 1830 to 1900,
then it is unreasonable for the son's lifespan to be $[1930, 2010)$ or $[2077, 2277)$.

By Definition~\ref{def:oie.oieS.int2tupleSS} and Definition~\ref{def:op.fNItCPo2TplSS},
with the father as the first operand and the son as the second,
we take the flattened Cartesian product of
the $\I$ sets of both $\oie_{father}$ and $\oie_{son}$
\begin{align}
    & \TwoTplTS^{NatIso2CP} = \fNatIsoToCP{\ISoOieS_{\oieS_{father-son}}}{(\idx_{father}, \idx_{son})}.
\end{align}
Then we get the result:
\begin{align}
    \begin{aligned}
        & \TwoTplTS^{NatIso2CP} = \{ \\
        & \qquad (\ (1830,\ 1900),\ (1860,\ 1930)\ ),\ (\ (1830,\ 1900),\ (1930,\ 2010)\ ), \\
        & \qquad (\ (1830,\ 1900),\ (2077,\ 2277)\ ),\ (\ (1910,\ 1990),\ (1860,\ 1930)\ ), \\
        & \qquad (\ (1910,\ 1990),\ (1930,\ 2010)\ ),\ (\ (1910,\ 1990),\ (2077,\ 2277)\ ), \\
        & \qquad (\ (2050,\ 2140),\ (1860,\ 1930)\ ),\ (\ (2050,\ 2140),\ (1930,\ 2010)\ ), \\
        & \qquad (\ (2050,\ 2140),\ (2077,\ 2277)\ ) \\
        & \}.
    \end{aligned}
\end{align}

%

Of the nine mathematically possible combinations,
only three satisfy all real-world constraints:
\begin{itemize}
    \item Modern-era film: Father $[1830,1900)$, Son $[1860,1930)$
    \item Contemporary film: Father $[1910,1990)$, Son $[1930,2010)$
    \item Science-fiction film: Father $[2050,2140)$, Son $[2077,2277)$
\end{itemize}
The remaining six combinations form the infeasible set
\begin{align}
    \begin{aligned}
    & \CAL{S}_{infeasible} = \{ \\
    & \qquad (\ (1830,\ 1900),\ (1930,\ 2010)\ ),\ (\ (1830,\ 1900),\ (2077,\ 2277)\ ), \\
    & \qquad (\ (1910,\ 1990),\ (1860,\ 1930)\ ),\ (\ (1910,\ 1990),\ (2077,\ 2277)\ ), \\
    & \qquad (\ (2050,\ 2140),\ (1860,\ 1930)\ ),\ (\ (2050,\ 2140),\ (1930,\ 2010)\ ) \\
    & \},
    \end{aligned}
\end{align}
which contains biologically impossible scenarios such as $((1830,1900),(1930,2010))$
(a son born in the 1930s, decades after his father's death).

These impossibilities are not introduced by the Cartesian product operation itself,
but by the biological and causal relationship between the father and son entities.
They are determined by $\oie_{father}$ and $\oie_{son}$,
and have nothing to do with the associated Cartesian product operation.
It should be clarified that the computation of the flattened Cartesian product is handled natively by the framework itself.
While the framework defines the combinatorial space,
whether the operational results align with the expected constraints
is determined by the intrinsic properties of the operands themselves.

We now formalize this intuition.

\begin{defi}[\textbf{Infeasible interval $\bTwoTplTS$ instance of the $\I$ of all members in a finite non-empty $\bOIES$ instance excluding $\emptyset$ under an index order \& helper function}]\label{def:OPs.infsbleDI}\EBL

1) \TSF{Definition:}

\indent For a finite non-empty $\bOIES$ instance excluding $\emptyset$, $\oieS$,
with the cardinality of $n \in \mathbb{N}^+$, with the ordered enumeration form
\begin{align}
    \oieS = \{ \oie_1, \oie_2, \oie_3, \cdots, \oie_n \},
\end{align}
and an $\bIdxT$ instance $\idxT$ with the form
\begin{align}
    \idxT = ( \idx_1, \idx_2, \cdots, \idx_n ).
\end{align}
By Definition~\ref{def:oie.oieS.int2tupleSS},
take the flattened Cartesian product of
all the $\I$ of $\oieS$ members under the index order $\idxT$:
\begin{align}
    & \TwoTplTS^{NatIso2CP} = \fNatIsoToCP{\ISoOieS_{\oieS}}{\idxT}.
\end{align}
Define a predicate for any $\bTwoTplT$ instance $\TwoTplT_{\lambda}$
\begin{align}
    \begin{aligned}
        & \PIsInfeasIntTwoTplT(\TwoTplT_{\lambda}): \\
        & \qquad \TwoTplT_{\lambda} \in \TwoTplTS^{NatIso2CP} \\
        & \quad \land The\ allocation\ of\ intervals\ where \\
        & \qquad\qquad (\forall\ i \in \idxT:\ the\ ith\ 2tuple\ in\ \TwoTplT_{\lambda}\ is\ the \\
        & \qquad\qquad \quad interval\ of\ the\ corresponding\ \bE\ instance\ of\ \oie_i\ ) \\
        & \qquad is\ infeasible.
    \end{aligned}
\end{align}
(
\BF{Remark}:
The value of this predicate depends on physical and logical constraints in practical scenarios.
If such constraints keep permutation isomorphism unchanged as stated in Axiom~\ref{axm:OPs.infsble.isom},
relevant algebraic structures can be derived accordingly.
)

We take the maximal set consisting of all $\bTwoTplT$ instances that satisfy the predicate $\PIsInfeasIntTwoTplT$.
The type of this set is called ``Infeasible interval $\bTwoTplTS$ of the $\I$ of all members in $\oieS$ under the index order $\idxT$'',
abbreviated as ``$\bInfeasIntTwoTplTS$''.

2) \TSF{Helper function:}
\begin{align}
    & \fInfeasIntTwoTplTS{\pPre\oieS}{\pPre\idxT} \longmapsto \\
    & \quad The\ \bInfeasIntTwoTplTS\ instance\ of\ \pPre\oieS\ under\ index\ order\ \idxT. \notag
\end{align}

\begin{tabular}{|l|l|}
    \hline
    $\pPre\oieS$ & A finite non-empty $\bOIES$ instance excluding $\emptyset$ \\ \hline
    $\pPre\IdxT$ & An index tuple \\ \hline
    Return & \makecell[l]{The $\bInfeasIntTwoTplTS$ instance of $\pPre\oieS$ under the index order $\pPre\IdxT$} \\ \hline
\end{tabular}
\end{defi}

By Definition~\ref{def:op.fNItCPo2TplSS}, we have already obtained
the flattened Cartesian product for all the $\I$ of an $\bOIES$ instance members,
while Definition~\ref{def:OPs.infsbleDI} gives the set to which the infeasible elements belong.
Accordingly, we can reduce the total set by removing the infeasible combinations.

\begin{defi}[\textbf{Maximum feasible subset of the set naturally isomorphic to the Cartesian product of the $\I$ of all members in a finite non-empty $\bOIES$ instance excluding $\emptyset$ under an index order \& helper function}]\label{def:OPs.feasibleCP}\EBL

1) \TSF{Definition:}

\indent For a finite non-empty $\bOIES$ instance excluding $\emptyset$ with ordered enumeration form
\begin{align}
    \oieS = \{\ \oie_1,\ \oie_2,\ \cdots,\ \oie_n\ \},
\end{align}
and an $\bIdxT$ instance with length $n$
\begin{align}
    \idxT = ( idx_1, idx_2, \cdots, idx_n ).
\end{align}
We take the flattened Cartesian product
of the $\I$ of all members in $\oieS$ under the index order specified by $\idxT$
\begin{align}
    & \TwoTplTS^{NatIso2CP} = \fNatIsoToCP{\ISoOieS_{\oieS}}{\idxT}.
\end{align}
For any $\bTwoTplT$ instance with length $n$ and under an index order $\idxT$
\begin{align}
    \TwoTplT_{\lambda} = (\ \TwoTpl_{idx_1},\ \TwoTpl_{idx_2},\ \cdots,\ \TwoTpl_{idx_n}\ ),
\end{align}
define the predicate:
\begin{align}
    \begin{aligned}
        & \PIsFeasibleIntvlTwoTplT(\TwoTplT_{\lambda}): \\
        & \qquad \ \TwoTplT_{\lambda} \in \TwoTplTS^{NatIso2CP} \\
        & \quad \land \ \TwoTplT_{\lambda} \notin \fInfeasIntTwoTplTS{\oieS}{\idxT}.
    \end{aligned}
\end{align}
\indent Take the maximal set composed of all $\bTwoTplT$ instances that satisfy
the predicate $\PIsFeasibleIntvlTwoTplT$
\begin{align}
    \mathlarger{\{}\ \TwoTplT \mid \PIsFeasibleIntvlTwoTplT(\TwoTplT)\ \mathlarger{\}}.
\end{align}
The type of this set is called
``The maximum feasible subset of the set naturally isomorphic to the Cartesian product of the $\I$ of all members
in $\oieS$ under the index order specified by $\idxT$'',
denoted as ``$\bFeasibleIntvlTwoTplTS$''.

2) \TSF{Helper function:}
\begin{align}
    & \fFeasibleIntvlTwoTplTS{\pPre\oieS}{\pPre\IdxT} \longmapsto \\
    & \quad the\ \bFeasibleIntvlTwoTplTS\ instance\ of\ \pPre\oieS\ under\ the\ index\ order\ \pPre\IdxT. \notag
\end{align}
\begin{center}
    \begin{tabular}{|l|l|}
        \hline
        $\pPre\oieS$ & a finite non-empty $\bOIES$ instance excluding $\emptyset$ \\ \hline
        $\pPre\IdxT$ & An index tuple \\ \hline
        Return & \makecell[l]{The maximum feasible subset of the flattened Cartesian product of the $\I$ of $\pPre\oieS$\\ members under the index order specified by $\pPre\IdxT$} \\ \hline
    \end{tabular}
\end{center}
\end{defi}

Through Definition~\ref{def:OPs.infsbleDI} and Definition~\ref{def:OPs.feasibleCP},
we establish a formal filtering mechanism
that bridges the gap between the pure mathematical flattened Cartesian product and physically feasible execution schemes:
the helper function $\fPre\RM{Inf2tupleTS}$ precisely captures interval combinations that violate physical, logical, or causal constraints.
The helper function $\fPre\RM{Feas2tuple}\CAL{TS}$ extracts the maximum feasible subset satisfying all real-world restrictions.
This screening process bridges abstract algebra and engineering practice within the $\bOIE$ framework.
It also establishes a sound combinatorial foundation for the subsequent definition of sequence operations $\oplus|_{\alpha}^{\beta}$ and $\otimes$.
Invalid temporal configurations are ruled out in advance,
enabling the transition from passive observation to constraint-oriented proactive planning.
\fi
        \subsubsection{The Permutational Isomorphism Relationship}\label{subsubsec:OPs.unfsbleDI.equival}\EBL
        \ifincludeFile
By Definition~\ref{def:OPs.infsbleDI},
in our father-son movie script,
we can obtain formalized infeasible data under index tuple $(1, 2)$ in the following form:
\begin{align}
    \begin{aligned}
    & \fInfeasIntTwoTplTS{\oieS_{father\_son}}{(1, 2)} = \\
    & \quad \{ \\
    & \qquad\quad (\ (1830,\ 1900),\ (1930,\ 2010)\ ),\ (\ (1830,\ 1900),\ (2077,\ 2277)\ ), \\
    & \qquad\quad (\ (1910,\ 1990),\ (1860,\ 1930)\ ),\ (\ (1910,\ 1990),\ (2077,\ 2277)\ ), \\
    & \qquad\quad (\ (2050,\ 2140),\ (1860,\ 1930)\ ),\ (\ (2050,\ 2140),\ (1930,\ 2010)\ ) \\
    & \quad \}.
    \end{aligned}
\end{align}
For the reversed index tuple $(2, 1)$ (son first, father second),
the corresponding infeasible set is:
\begin{align}
    \begin{aligned}
    & \fInfeasIntTwoTplTS{\oieS_{father\_son}}{(2, 1)} = \\
    & \quad \{ \\
    & \qquad\quad ((1930, 2010), (1830, 1900)), ((2077, 2277), (1830, 1900)), \\
    & \qquad\quad ((1860, 1930), (1910, 1990)), ((2077, 2277), (1910, 1990)), \\
    & \qquad\quad ((1860, 1930), (2050, 2140)), ((1930, 2010), (2050, 2140)) \\
    & \quad \}.
    \end{aligned}
\end{align}
For a set of cardinality 2,
the index tuples $(1, 2)$ and $(2, 1)$ are permutationally equivalent via a permutation matrix.
This permutational equivalence extends naturally to the $n$-ary case.
By Definition~\ref{def:ops.idxT}, for any finite index set of size $n$,
all $\bIdxT$ instances are permutationally equivalent via some $n \times n$ permutation matrix.
Formalized examples are as follows:
\begin{align}
    \begin{aligned}
        & \idxT_1 = (1,\ 2),\ \idxT_2 = (2,\ 1), \\
        & \idxT_3 = (1,\ 2,\ 3),\ \idxT_4 = (2,\ 3,\ 1), \\
        & \CAL{M}_1 = \begin{pmatrix} 0 & 1 \\ 1 & 0 \end{pmatrix},\ \CAL{M}_2 = \begin{pmatrix} 0 & 0 & 1 \\ 1 & 0 & 0 \\ 0 & 1 & 0 \end{pmatrix}.
    \end{aligned}
\end{align}
They satisfy the following equations
\begin{align}
    (1,\ 2) \cdot \begin{pmatrix} 0 & 1 \\ 1 & 0 \end{pmatrix} = (2,\ 1),\ \ \ (1,\ 2,\ 3) \cdot \begin{pmatrix} 0 & 0 & 1 \\ 1 & 0 & 0 \\ 0 & 1 & 0 \end{pmatrix} = (2,\ 3,\ 1),
\end{align}
denoted as
\begin{align}
    & \idxT_1 \stackrel{\CAL{M}_1}{\sim} \idxT_2,\ \idxT_3 \stackrel{\CAL{M}_2}{\sim} \idxT_4.
\end{align}

The elements of the two sets
\begin{align}
    & \fInfeasIntTwoTplTS{\oieS_{father\_son}}{(1, 2)}, \\
    & \fInfeasIntTwoTplTS{\oieS_{father\_son}}{(2, 1)}
\end{align}
also exhibit a permutationally equivalent relationship:
\begin{align}
    \begin{aligned}
        & \exists \text{ a permutation matrix }\CAL{M}, \\
        & \quad \forall tuple_{i} \in \fInfeasIntTwoTplTS{\oieS_{father\_son}}{(1, 2)}, \\
        & \qquad \exists tuple_{j} \in \fInfeasIntTwoTplTS{\oieS_{father\_son}}{(2, 1)}: \\
        & \quad\qquad tuple_{i} \stackrel{\CAL{M}}{\sim} tuple_{j}\ (\text{i.e. } tuple_{i} \cdot \CAL{M} = tuple_{j}).
    \end{aligned}
\end{align}
For example,
\begin{align}
    \begin{aligned}
    & ((1830, 1900), (1930, 2010)) \stackrel{\CAL{M}_1}{\sim} ((1930, 2010), (1830, 1900)), \\
    & ((1910, 1990), (1860, 1930)) \stackrel{\CAL{M}_1}{\sim} ((1860, 1930), (1910, 1990)), \\
    & ((2050, 2140), (1860, 1930)) \stackrel{\CAL{M}_1}{\sim} ((1860, 1930), (2050, 2140)). \\
    & \cdots
    \end{aligned}
\end{align}
Intuitively, this phenomenon can be summarized as follows:
\TT{the inherent connections among multiple events will not undergo essential changes due to the order in which these events are stated}.
Based on the above analysis, we can state the following axiom.

\begin{axm}[\textbf{Structure-preserving permutational isomorphism of $\bInfeasIntTwoTplTS$ instances}]\label{axm:OPs.infsble.isom}\EBL

Suppose there is a finite $\bOIES$ instance $\oieS$ with cardinality $n$($n \in \mathbb{N}^{+}, n > 1$),
two $\bIdxT$ instances $\idxT_1$ and $\idxT_2$ for $\oieS$.
Then the following relation must hold
\begin{align}
    \begin{aligned}
    & \exists\ a\ permutation\ matrix\ \CAL{M}: \\
    & \qquad \idxT_1 \cdot \CAL{M} = \idxT_2 \\
    & \quad \land \fInfeasIntTwoTplTS{\oieS}{\idxT_1} \text{ and } \fInfeasIntTwoTplTS{\oieS}{\idxT_2} \\
    & \qquad\quad \text{are structure-preserving permutationally equivalent} \\
    & \qquad\quad \text{with respect to the matrix } \CAL{M}.
    \end{aligned}
\end{align}
which is denoted as
\begin{align}
    \idxT_1 \stackrel{\CAL{M}}{\sim} \idxT_2 \Rightarrow \fInfeasIntTwoTplTS{\oieS}{\idxT_1} \stackrel{\CAL{M}}{\cong} \fInfeasIntTwoTplTS{\oieS}{\idxT_2}.
\end{align}
\end{axm}

The essence of Axiom~\ref{axm:OPs.infsble.isom} is that
the infeasibility constraints among the events mapped by an $\bOIES$ instance constitute intrinsic physical and logical properties
that are independent of the descriptive order used to enumerate the constituent $\bOIE$ instances.
In Subsection~\ref{subsec:ops.natIsoCP},
we have discussed the flattened Cartesian product of
the $\I$ of a finite $\bOIES$ instance.
Indeed, two results with the same $\bTwoTplS$ instances as operands but different operand orders
are structure-preserving permutationally equivalent.
Combining this axiom with the well-established permutational invariance of the Cartesian product,
we derive the corresponding property for feasible interval subsets.

\begin{pty}[\textbf{Structure-preserving permutational isomorphism of $\bFeasibleIntvlTwoTplTS$ instances}] \label{prop:oieOP.fsble.isom}\EBL

Suppose there is an $\bOIES$ instance $\oieS$ with cardinality $n > 1$
\begin{align}
    \oieS = \{ \oie_1, \oie_2, \cdots, \oie_n\},
\end{align}
which satisfies $\forall i \in [1, n], \oie_i \ne \voidError$,
and two $\bIdxT$ instances $\idxT_1$ and $\idxT_2$ for $\oieS$.
Then
\begin{align}
    \begin{aligned}
    & \exists\ a\ permutation\ matrix\ \CAL{M}: \\
    & \qquad \idxT_1 \cdot \CAL{M} = \idxT_2 \\
    & \quad \land \fFeasibleIntvlTwoTplTS{\oieS}{\idxT_1} \ and\ \fFeasibleIntvlTwoTplTS{\oieS}{\idxT_2} \\
    & \qquad\quad are\ structure-preserving\ permutationally\ equivalent\ \\
    & \qquad\quad with\ respect\ to\ matrix\ \CAL{M},
    \end{aligned}
\end{align}
denoted as
\begin{align}
    \idxT_1 \stackrel{\CAL{M}}{\sim} \idxT_2 \Rightarrow \fFeasibleIntvlTwoTplTS{\oieS}{\idxT_1} \stackrel{\CAL{M}}{\cong} \fFeasibleIntvlTwoTplTS{\oieS}{\idxT_2}.
\end{align}
\end{pty}

\begin{proof}[\BF{Proof of Property~\ref{prop:oieOP.fsble.isom}}]\EBL

Let $\oieS$ be a finite $\bOIES$ instance with cardinality $n > 1$,
let $\idxT_1$ and $\idxT_2$ be two index tuples such that $\idxT_1 \stackrel{\CAL{M}}{\sim} \idxT_2$ via permutation matrix $M$.
According to Definition~\ref{def:OPs.feasibleCP},
the maximum feasible subset is constructed by removing the infeasible subset from the flattened Cartesian product:
\begin{align}
    \fFeasibleIntvlTwoTplTS{\oieS}{\idxT} = \fNatIsoToCP{\ISoOieS_{\oieS}}{\idxT} \setminus \fInfeasIntTwoTplTS{\oieS}{\idxT}.
\end{align}

We establish the permutational isomorphism by examining both components:

\BF{Step I}: Permutational isomorphism of the flattened Cartesian product.

By Definition~\ref{def:op.fNItCPo2TplSS},
the product $\fNatIsoToCP{\ISoOieS_{\oieS}}{\idxT}$ flattens the Cartesian product of interval sets from $\oieS$ ordered by $\idxT$.
When the index order changes from $\idxT_1$ to $\idxT_2$ via permutation $M$,
the resulting sets are structure-preserving permutationally equivalent via $M$:
\begin{align}
    \fNatIsoToCP{\ISoOieS_{\oieS}}{\idxT_1} \overset{M}{\cong} \fNatIsoToCP{\ISoOieS_{\oieS}}{\idxT_2}.
\end{align}
This follows because the flattened Cartesian product operation is invariant under operand reordering up to permutation isomorphism,
each tuple in the resulting set merely has its internal elements permuted according to $M$.
This is a trivial conclusion in mathematics.

\BF{Step II}: Permutational isomorphism of the infeasible subset.

By Axiom~\ref{axm:OPs.infsble.isom}, the infeasible subsets satisfy:
\begin{align}
    \fInfeasIntTwoTplTS{\oieS}{\idxT_1} \overset{M}{\cong} \fInfeasIntTwoTplTS{\oieS}{\idxT_2}.
\end{align}
This holds because infeasibility constraints are intrinsic properties determined by the internal relationships among participating $\bOIE$ instances,
independent of the descriptive order used to enumerate the $\bOIE$ instances.

\BF{Step III}: Set difference preserves isomorphism.

Since both the superset and the removed subset
exhibit structure-preserving permutational isomorphism via the same matrix $M$
when transitioning from $\idxT_1$ to $\idxT_2$,
their set difference (the feasible subset) also exhibits structure-preserving permutational isomorphism under $M$.
Specifically, for any $\TwoTplT \in \fFeasibleIntvlTwoTplTS{\oieS}{\idxT_1}$,
its permuted form $\TwoTplT \cdot M$ belongs to $\fFeasibleIntvlTwoTplTS{\oieS}{\idxT_2}$,
and this mapping constitutes a bijection that preserves the internal structure of constraints.
Therefore:
\begin{align}
    \fFeasibleIntvlTwoTplTS{\oieS}{\idxT_1} \overset{M}{\cong} \fFeasibleIntvlTwoTplTS{\oieS}{\idxT_2}.
\end{align}

this concludes the proof.
\end{proof}

It is worth emphasizing that the infeasibility constraints discussed in this subsubsection
are inherent properties determined by the internal relationships among the participating $\bOIE$ instances.
These constraints reflect the physical, logical, or causal limitations between the corresponding real-world events.
Axiom~\ref{axm:OPs.infsble.isom} serves as the formal expression of these constraints.
An innovative aspect of this paper lies in ``quantifying event execution from the perspective of subjective planning''.
During subjective planning,
it is essential to effectively screen out invalid results that violate physical laws, logical constraints, or causal relationships.
Therefore, this subsubsection serves as a crucial bridge connecting the purely formalized theory of this paper with practical applications.
\fi
        \subsubsection{Mutually Independent $\bOIES$}\label{subsubsec:OPs.unfsbleDI.independentOIES}\EBL
        \ifincludeFile
The foregoing subsubsections have established a general formal framework
for distinguishing feasible and infeasible interval combinations within an $\bOIES$ instance,
grounded in the intrinsic physical, logical, and causal constraints between the real-world events
mapped by its constituent $\bOIE$ instances.
In this general framework,
the infeasible subset is non-empty in most practical scenarios,
as inter-event dependencies inevitably invalidate certain interval combinations.
However, there also exists a broad class of canonical scenarios
where the participating events are entirely free of mutual constraints,
with no causal precedence, resource contention, or physical interference between one another.
For such scenarios, we define a baseline, constraint-free $\bOIES$ model
with a trivial infeasible subset.
This special class of $\bOIES$ is formally defined below as Mutually Independent $\bOIES$.

\begin{defi}[Mutually independent $\bOIES$]\label{def:OPs.feasibleCP.independentOIES}\EBL

\indent Suppose there is an $\bOIES$ instance $\oieS$ with cardinality $n \in \mathbb{N}^+$, with the form
\begin{align}
    \oieS = \{ \oie_1, \oie_2, \cdots, \oie_n\},
\end{align}
and every element therein must not be $\voidError$.
Let
\begin{align}
    \idxT_{asc} = (1, 2, 3, \cdots, n).
\end{align}
If
\begin{align}
    \fInfeasIntTwoTplTS{\oieS}{\idxT_{asc}} = \emptyset,
\end{align}
then the type of $\oieS$ is called ``The mutually independent $\bOIES$'',
which is denoted as $\bOIES^{\text{indep}}$.
\end{defi}

Scenarios including 3 doctors submitting their manuscripts via their respective computers,
multiple computing nodes in a supercomputing center executing tasks without mutual interference,
and several athletes competing in their individual lanes without affecting one another,
can all be characterized by Definition~\ref{def:OPs.feasibleCP.independentOIES}.
And many subsequent examples in this paper will also be based on Definition~\ref{def:OPs.feasibleCP.independentOIES}.

This definition of mutually independent $\bOIES$ isolates the pure algebraic behavior from the confounding effects
of inter-event constraints,
making $\bOIES^{\text{indep}}$ the ideal foundational model for the theoretical analysis that follows.
For an $\bOIES^{\text{indep}}$ instance,
the maximum feasible subset of interval combinations
is exactly the full set naturally isomorphic to the Cartesian product
of the interval sets of its constituent $\bOIE$ instances,
with no elements excluded due to cross-event constraints.
This property ensures that the structure-preserving permutational isomorphism
of the feasible subset (Property~\ref{prop:oieOP.fsble.isom}) holds in its most general
and unconstrained form for mutually independent $\bOIES$.

Beyond its core theoretical role as a constraint-free baseline,
the mutually independent $\bOIES$ abstraction also directly maps to
a wide range of real-world application scenarios across the disciplines discussed in this paper.
In concurrent and distributed computing, for example,
it formalizes the execution of non-interfering parallel tasks across distributed nodes,
where the execution interval of one task imposes no causal or resource constraints on another.
In classical probability theory, it aligns with the core assumption of independent and identically distributed trials,
modeling sampling events with no cross-interference between individual draws.
In the physical modeling of competitive speed events,
it captures the idealized scenario of athletes competing in separate lanes with no mutual disruption,
ensuring that each athlete’s feasible execution intervals remain unaffected by the others.
All of these scenarios will be revisited and formally modeled using the sequence operations defined in the following sections,
with the mutually independent $\bOIES$ as their core formal building block.

\fi
        \subsubsection{Planning Granularity Relativity and Progressive Constraint Discovery\label{subsubsec:OPs.infsbleDI.relativity}\EBL}
        \ifincludeFile
The preceding subsections have established that infeasibility constraints
arise from intrinsic physical, logical, or causal relationships among participating events,
and that these constraints are independent of the descriptive order
used to enumerate the constituent $\bOIE$ instances (Axiom~\ref{axm:OPs.infsble.isom}).
However, such constraints are not always transparent to the planner a priori.
This subsection presents two complementary perspectives on isolated and joint planning.
We further derive a dynamic cognitive mechanism,
named ``$\bOIES$ progressive regression analysis''.

\paragraph{Reference frame switching: The subject's right to choose analytical granularity.}
Isolated planning and joint planning are not irreversible stages in a temporal sequence.
Rather, they are alternative configurations of the analytical framework.
Just as one may study the motion of the Earth around the Sun while temporarily ignoring the galactic orbit,
a planner may choose to ``decouple'' a particular $\bOIE$ instance
from a joint context and activate only its internal constraints.
The legitimacy of this switch is guaranteed by the algebraic structure of the $\bOIE$ framework:
the original $\bOIE$ instance is embedded as an immutable component in the $\C$ element of a $\bCombOIE$ instance (Definition~\ref{def:OIE.CombOIE}),
and its $\I$ and $\F$ remain intact as mathematical objects,
subject only to re-filtering at the higher level of composition.

In particular, when a set of $\bOIE$ instances constitutes a \emph{mutually independent $\bOIES$} (Definition~\ref{def:OPs.feasibleCP.independentOIES}),
the cross-event infeasible set is empty:
\begin{align}
    \fInfeasIntTwoTplTS{\oieS^{\RM{indep}}}{\idxT} = \emptyset.
\end{align}
In this case, the maximum feasible subset under joint planning is naturally isomorphic to the Cartesian product of the individually planned interval sets,
and the two granularities are equivalent in terms of feasibility.
This confirms that the choice of planning granularity is fundamentally an \emph{epistemological right} of the subject:
whether a constraint can be observed, although taking its objective existence as a necessary prerequisite,
is not determined by its existence itself,
and its core determinant is whether the planner includes it into the scope of selective activation.

\paragraph{Information asymmetry and planning failure: Hard feedback from incomplete models.}
Conversely, when a joint reality already holds objectively (i.e., cross-event constraints are intrinsic),
yet the planner fails to incorporate them into the $\bOIES$ due to incomplete information,
isolated planning inevitably produces \emph{structural failure}.
For example, the planner holds only
\begin{align}
    \oieS_{planner} = \{\oie_1\},
\end{align}
whereas the objective world contains
\begin{align}
    \oieS_{reality} = \{\oie_1, \oie_2, \ldots, \oie_n\},
\end{align}
where the unmodeled instances $\{\oie_2, \ldots, \oie_n\}$
stand in physical, logical, or causal relations with $\oie_1$.
When subjected to joint filter,
an interval selected from $\I_{\oie_1}$ under isolated planning may
fall into the infeasible subset (Definition~\ref{def:OPs.infsbleDI}).

This failure is not a mathematical error of $\bOIE$,
but a \emph{hard feedback} produced by the gap between the model and reality.
The infeasibility constraints are intrinsic properties of the relationships among $\bOIE$ instances (Subsubsection~\ref{subsubsec:OPs.unfsbleDI.equival});
the planner's mistake lies in the fact that the algebraic structure ($\bOIES$)
subjectively introduced by the planner does not cover the objective constraints.
The $\bOIE$ framework converts the vague predicament of being ``kept in the dark''
into a \emph{diagnosable model incompleteness}:
When the planner finds that the interval of the real-world event
is not contained in its own planning intervals,
the formalism signals that $\C$ and $\A$ should be fixed,
rather than that the operation itself is at fault.

\paragraph{Progressive regression analysis of $\bOIES$: From model failure to structural expansion.}
The two perspectives above jointly motivate a dynamic epistemological mechanism that
we call ``Progressive regression analysis of $\bOIES$''.
The planner may not possess all constraints at once;
rather, new participants and their relationships are discovered in practice,
and the $\bOIES$ instance is expanded iteratively.
Formally, define an $\bOIES$ expansion sequence:
\begin{align}
    \oieS^{(0)} \subset \oieS^{(1)} \subset \oieS^{(2)} \subset \cdots \subset \oieS^{(k)} \approx \oieS_{reality},
\end{align}
where $\oieS^{(0)}$ is the initial isolated-planning set.
If it's not $\oieS_{reality}$, at each step, some newly discovered $\bOIE$ instances are adjoined.
Because the original $\bOIE$ instances are immutable,
each expansion merely activates additional cross-event constraints based on the new $\bOIES$ instance,
while leaving previously verified local structures intact.

Through this process, the planner undergoes repeated \emph{constraint regression}:

\begin{enumerate}
    \item \textbf{Isolated planning stage:}
    Obtain a feasible interval set based on the current $\bOIES$ instance.
    \item \textbf{Joint validation stage:}
    Validate the plan against the expanded $\bOIE$, detect infeasible combinations.
    \item \textbf{Structural correction stage:}
    If it is needed, explicitly model the source of conflict as new $\bOIE$ instances, and recompute the feasible set.
    \item \textbf{Convergence stage:}
    When $\oieS^{(t)}$ stabilizes and the infeasible combinations conform to reality,
    subjective planning and objective constraints have achieved a match.
\end{enumerate}

This mechanism elevates the $\bOIE$ framework from a static algebraic tool
to a \emph{dynamic cognitive methodology}:
Driven by the discovery of constraints,
the subject actively adjusts the analytical granularity,
completing an epistemological transition from local feasibility to global feasibility.
\fi
    \subsection{Complete Sequence Addition}\label{subsec:ops.add}\EBL
    \ifincludeFile
Building on the previous content, in this section,
we propose a class of operations for $\bOIE$ instances.
The result of these operations has a significant impact
on the intervals of the events mapped by the participating $\bOIE$ instances.
Since this impact is related to
the execution order of the $\bE$ instances which the participating $\bOIE$ instances are mapped to,
we introduce the term ``Sequence Operation''.
Before delving into the details of the concept,
we will first prepare several mathematical helper functions.

\begin{defi}[\textbf{Function for finding the minimum 1st item and the maximum 2nd item of a tuple of 2-tuples
\& permutation invariance}]\label{def:OPs.add.min1max2ofTwoTplT}\EBL

1) \TSF{Definition \& Helper functions:}

Suppose there is a $\bTwoTplT$ instance $\TwoTplT_{\lambda}$ with length $n \geq 1$, with the form
\begin{align}
    (\ (x_1,\ y_1),\ (x_2,\ y_2),\ \cdots,\ (x_n,\ y_n)\ ).
\end{align}
We define two extremal functions:
\begin{align}
    & \fMinFstOfTwoTplT{\TwoTplT_{\lambda}} \longmapsto min\{x_i | i \in \{1, 2, \cdots, n\} \}.
\end{align}
\begin{center}
    \begin{tabular}{|l|l|}
        \hline
        $\TwoTplT_{\lambda}$ & A tuple of 2-tuples(a $\bTwoTplT$ instance)\\ \hline
        Return & \makecell[l]{The minimum 1st item among all elements of $\TwoTplT_{\lambda}$} \\ \hline
    \end{tabular}
\end{center}
\begin{align}
    & \fMaxSndOfTwoTplT{\TwoTplT_{\lambda}} \longmapsto \max\{y_i | i \in \{1, 2, \cdots, n\} \}.
\end{align}
\begin{center}
    \begin{tabular}{|l|l|}
        \hline
        $\pPre\TwoTplT$ & A tuple of 2-tuples \\ \hline
        Return & \makecell[l]{The maximum 2nd item among all elements of $\TwoTplT_{\lambda}$} \\ \hline
    \end{tabular}
\end{center}

2) \TSF{permutation invariance:}

Suppose there are two $\bTwoTplT$ instances $\TwoTplT$ and $\TwoTplT^{'}$,
with the same length $n$, and with forms
\begin{align}
    & \TwoTplT = (\ (x_1,\ y_1),\ (x_2,\ y_2),\ \cdots,\ (x_n,\ y_n)\ ), \\
    & \TwoTplT^{'} = (\ (x_{1}^{'},\ y_{1}^{'}),\ (x_{2}^{'},\ y_{2}^{'}), \cdots,\ (x_{n}^{'},\ y_{n}^{'})\ ),
\end{align}
and there exists a permutation matrix $\CAL{M}$ such that
\begin{align}
    \TwoTplT \stackrel{\CAL{M}}{\sim} \TwoTplT^{'}.
\end{align}
Since the two $\bTwoTplT$ instances differ only in the order of their internal elements,
without altering the existence or values of the elements,
we conclude that:
\begin{align}
    \begin{aligned}
        & \TwoTplT \stackrel{\CAL{M}}{\sim} \TwoTplT^{'} \Rightarrow \\
        & \qquad\quad\fMinFstOfTwoTplT{\TwoTplT} = \fMinFstOfTwoTplT{\TwoTplT^{'}} \\
        & \qquad \land\ \fMaxSndOfTwoTplT{\TwoTplT} = \fMaxSndOfTwoTplT{\TwoTplT^{'}}.
    \end{aligned}
\end{align}
\end{defi}

\begin{defi}[\textbf{Domain-filtered subset of a non-empty set of tuples of 2-tuples under a domain-filtering 2-tuple
\& Helper functions \& Permutation isomorphism invariance}]\label{def:OPs.Add.DomainFilteredSubTwoTplTS}\EBL

1) \TSF{Definition:}

\indent Consider a non-empty $\bTwoTplTS$ instance $\TwoTplTS$
\begin{align}
    \TwoTplTS = \{\ \TwoTplT_1,\ \TwoTplT_2,\ \cdots,\ \TwoTplT_{m}\ \},
\end{align}
where every element has the same length $0 < n < +\infty$,
and two real numbers $\alpha$ and $\beta$($\alpha < \beta$),
we build a set
\begin{align}
    \CAL{S} = \{\ \TwoTplT \in \TwoTplTS\ |\ \forall (x, y) \in \TwoTplT:\ x \geq \alpha \land y \leq \beta \}.
\end{align}
Then the set
\begin{align}
    & \TwoTplTS_{dom} = \label{eq:TwoTplTS_dom} \\
    & \begin{cases} \notag
    \CAL{S}, & \text{if } \makecell{(\forall k \in [1, n], \exists \TwoTplT \in \CAL{S}: the\ 1st\ item\ of\ the\ \text{k-th}\ elem\ of\ \TwoTplT = \alpha)\ \land \\ \quad (\forall k \in [1, n], \exists \TwoTplT \in \CAL{S}: the\ 2nd\ item\ of\ the\ \text{k-th}\ elem\ of\ \TwoTplT = \beta)} \notag \\
    \emptyset, & \text{if } \makecell{(\exists k \in [1, n], \nexists \TwoTplT \in \CAL{S}: the\ 1st\ item\ of\ the\ \text{k-th}\ elem\ of\ \TwoTplT = \alpha)\ \lor \\ \quad (\exists k \in [1, n], \nexists \TwoTplT \in \CAL{S}: the\ 2nd\ item\ of\ the\ \text{k-th}\ elem\ of\ \TwoTplT = \beta)} \notag \\
    \end{cases}. \notag
\end{align}
is called ``The domain-filtered subset of $\TwoTplTS$ under the domain-filtering 2-tuple $(\alpha, \beta)$'',
denoted as ``$\bDomainFilteredSubTwoTplTS|_\alpha^\beta$''.
$(\alpha, \beta)$ is a domain-filtering 2-tuple of $\TwoTplTS$, written as ``$\bDomainFilterTwoTpl$''.

2) \TSF{Helper function:}
\begin{align}
    & \fDomainFilteredSubTwoTupleTS{\pPre\TwoTplTS}{\pPre\alpha}{\pPre\beta} \longmapsto A\ \bDomainFilteredSubTwoTplTS|_{\pPre\alpha}^{\pPre\beta}\ instance.
\end{align}
\begin{center}
    \begin{tabular}{|l|l|}
        \hline
        $\pPre\TwoTplTS$ & A non-empty set of tuples of 2-tuples \\ \hline
        $\pPre\alpha$ & \makecell[l]{The left boundary of the domain-filtered subset} \\ \hline
        $\pPre\beta$ & \makecell[l]{The right boundary of the domain-filtered subset} \\ \hline
        Return & \makecell[l]{The domain-filtered subset of $\pPre\TwoTplTS$ under $(\pPre\RM{A}, \pPre\RM{B})$} \\ \hline
    \end{tabular}
\end{center}

3) \TSF{permutation isomorphism invariance:}

Keeping $\pPre\alpha$ and $\pPre\beta$ ($\pPre\alpha < \pPre\beta$) fixed,
if there are two $\bTwoTplTS$ instances $\TwoTplTS_1$ and $\TwoTplTS_2$ with the same cardinality,
and satisfying isomorphic via permutation matrix $\CAL{M}$,
then the two return values of this function are isomorphic via permutation matrix $\CAL{M}$ too.
i.e.
\begin{align}
    & \TwoTplTS_1 \stackrel{\CAL{M}}{\cong} \TwoTplTS_2 \Rightarrow \\
    & \qquad \fDomainFilteredSubTwoTupleTS{\TwoTplTS_1}{\pPre\alpha}{\pPre\beta} \stackrel{\CAL{M}}{\cong} \fDomainFilteredSubTwoTupleTS{\TwoTplTS_2}{\pPre\alpha}{\pPre\beta}. \notag
\end{align}
\end{defi}

\begin{proof}[\BF{Proof of the permutation isomorphism invariance in Definition~\ref{def:OPs.Add.DomainFilteredSubTwoTplTS}}]\EBL

Suppose there are two non-empty $\bTwoTplTS$ instances $\TwoTplTS_1$ and $\TwoTplTS_2$ with the same cardinality
and satisfying isomorphic via permutation matrix $\CAL{M}$
\begin{align}
    \TwoTplTS_1 \stackrel{\CAL{M}}{\cong} \TwoTplTS_2.
\end{align}
Let $\sigma$ be the permutation on $\{1,2,\dots,n\}$ induced by $\M$.
For any
\begin{align}
    \TwoTplT = \big((x_1,y_1),\,(x_2,y_2),\,\dots,\,(x_n,y_n)\big),
\end{align}
define its $\sigma$-rearrangement as
\begin{align}
    & \sigma(\TwoTplT) \;=\; \big((x_{\sigma(1)},y_{\sigma(1)}),\,\dots,\,(x_{\sigma(n)},y_{\sigma(n)})\big), \\
    & \sigma^{-1}(\TwoTplT) \;=\; \big((x_{\sigma^{-1}(1)},y_{\sigma^{-1}(1)}),\,\dots,\,(x_{\sigma^{-1}(n)},y_{\sigma^{-1}(n)})\big),
\end{align}
which can also be denoted as
\begin{align}
    & \sigma(\TwoTplT) = \TwoTplT \cdot \CAL{M}, \\
    & \sigma^{-1}(\TwoTplT^{'}) = \TwoTplT^{'} \cdot \CAL{M}^{-1}.
\end{align}

Then $\TwoTplTS_1$ and $\TwoTplTS_2$ form a bijection
\begin{align}
    \forall \TwoTplT \in \TwoTplTS_1: \sigma(\TwoTplT) \in \TwoTplTS_2, \label{eq:TwoTplT_trans_a} \\
    \forall \TwoTplT^{'} \in \TwoTplTS_2: \sigma^{-1}(\TwoTplT^{'}) \in \TwoTplTS_1. \label{eq:TwoTplT_trans_b}
\end{align}

For fixed $\alpha$ and $\beta$ ($\alpha < \beta$),
we examine the computation processes of \\ $\fDomainFilteredSubTwoTupleTS{\TwoTplTS_1}{\alpha}{\beta}$
and $\fDomainFilteredSubTwoTupleTS{\TwoTplTS_2}{\alpha}{\beta}$ separately.

\BF{Step I.} Construct sets $\TwoTplS_{1}^{dom}$ and $\TwoTplS_{2}^{dom}$. \\
\begin{align}
    & \TwoTplS_{1}^{dom} = \{\ \TwoTplT \in \TwoTplTS_1\ |\ \forall (x, y) \in \TwoTplT:\ x \geq \alpha \land y \leq \beta\ \}, \\
    & \TwoTplS_{2}^{dom} = \{\ \TwoTplT \in \TwoTplTS_2\ |\ \forall (x, y) \in \TwoTplT:\ x \geq \alpha \land y \leq \beta\ \}.
\end{align}
For any
\begin{align}
    \TwoTplT_{\lambda} \in \TwoTplS_{1}^{dom},
\end{align}
let
\begin{align}
    \TwoTplT_{\mu} = \sigma(\TwoTplT_{\lambda}) = \TwoTplT_{\lambda} \cdot \CAL{M}.
\end{align}
Since $\CAL{M}$ only permutes the 2-tuples inside $\TwoTplT_{\lambda}$ structure
without altering the set of these 2-tuples, we have:
\begin{align}
    & \forall (x_{\lambda}, y_{\lambda}) \in \TwoTplT_{\lambda} \Longleftrightarrow \exists (x_{\mu}, y_{\mu}) \in \TwoTplT_{\mu}: x_{\lambda} = x_{\mu} \land y_{\lambda} = y_{\mu}.
\end{align}
Thus, the condition
\begin{align}
    \forall (x_{\lambda}, y_{\lambda}) \in \TwoTplT_{\lambda}: x_{\lambda} \geq \alpha \land y_{\lambda} \leq \beta,
\end{align}
holds if and only if the condition
\begin{align}
    \forall (x_{\mu}, y_{\mu}) \in \TwoTplT_{\mu}: x_{\mu} \geq \alpha \land y_{\mu} \leq \beta
\end{align}
holds.

This indicates that
\begin{align}
    \TwoTplT_{\lambda} \in \TwoTplS_{1}^{dom} \Longleftrightarrow \TwoTplT_{\mu} \in \TwoTplS_{2}^{dom}
\end{align}
Thus, \(\sigma\) restricts to a bijection between $\TwoTplS_{1}^{dom}$ and $\TwoTplS_{2}^{dom}$,
meaning these two filtered subsets are structure-preserving permutationally isomorphic via matrix $\M$.

\BF{Step II.} Permutation invariance of boundary conditions.

Since we have~\eqref{eq:TwoTplT_trans_a} and~\eqref{eq:TwoTplT_trans_b}, so the following equivalence holds:
\begin{align}
    & \forall x \in [1, n], \exists \TwoTplT \in \TwoTplS_{1}^{dom}: the\ 1st\ item\ of\ the\ \text{x-th}\ elem\ of\ \TwoTplT = \alpha \Longleftrightarrow \\
    & \qquad \forall y \in [1, n], \exists \sigma(\TwoTplT) \in \TwoTplS_{2}^{dom}: the\ 1st\ item\ of\ the\ \text{y-th}\ elem\ of\ \sigma(\TwoTplT) = \alpha, \notag \\
    &\forall x \in [1, n], \exists \TwoTplT \in \TwoTplS_{1}^{dom}: the\ 2nd\ item\ of\ the\ \text{x-th}\ elem\ of\ \TwoTplT = \beta \Longleftrightarrow \\
    & \qquad \forall y \in [1, n], \exists \sigma(\TwoTplT) \in \TwoTplS_{2}^{dom}: the\ 2nd\ item\ of\ the\ \text{y-th}\ elem\ of\ \sigma(\TwoTplT) = \beta, \notag \\
    & \exists x \in [1, n], \nexists \TwoTplT \in \TwoTplS_{1}^{dom}: the\ 1st\ item\ of\ the\ \text{x-th}\ elem\ of\ \TwoTplT = \alpha \Longleftrightarrow \\
    & \qquad \exists y \in [1, n], \nexists \sigma(\TwoTplT) \in \TwoTplS_{2}^{dom}: the\ 1st\ item\ of\ the\ \text{y-th}\ elem\ of\ \sigma(\TwoTplT) = \alpha, \notag \\
    & \exists x \in [1, n], \nexists \TwoTplT \in \TwoTplS_{1}^{dom}: the\ 2nd\ item\ of\ the\ \text{x-th}\ elem\ of\ \TwoTplT = \beta \Longleftrightarrow \\
    & \qquad \exists y \in [1, n], \nexists \sigma(\TwoTplT) \in \TwoTplS_{2}^{dom}: the\ 2nd\ item\ of\ the\ \text{y-th}\ elem\ of\ \sigma(\TwoTplT) = \beta. \notag
\end{align}
The construction of the above conditions makes full use of~\eqref{eq:TwoTplTS_dom} in Definition~\ref{def:OPs.Add.DomainFilteredSubTwoTplTS}.

\BF{Step III.} Conclusion

From Step II, it follows that, $\TwoTplS_{1}^{dom}$ satisfies the condition of being non-empty under the boundary rule
if and only if $\TwoTplS_{2}^{dom}$ satisfies the condition of being non-empty under the boundary rule.
According to Definition~\ref{def:OPs.Add.DomainFilteredSubTwoTplTS},
if both $\TwoTplS_{1}^{dom}$ and $\TwoTplS_{2}^{dom}$ satisfy the condition and are non-empty
\begin{align}
    & \fDomainFilteredSubTwoTupleTS{\TwoTplTS_1}{\alpha}{\beta} = \TwoTplS_{1}^{dom}, \\
    & \fDomainFilteredSubTwoTupleTS{\TwoTplTS_2}{\alpha}{\beta} = \TwoTplS_{2}^{dom} = \{\sigma: \TwoTplS_{1}^{dom} \}.
\end{align}
Since $\sigma$ is precisely the permutation mapping defined by $\CAL{M}$,
the two results are isomorphic via permutation matrix $\CAL{M}$.

Otherwise, if neither satisfies the condition, both are $\emptyset$,
$\emptyset$ is isomorphic to itself via any permutation matrix.

To conclude, we have proven the permutation isomorphism invariance in Definition~\ref{def:OPs.Add.DomainFilteredSubTwoTplTS}.

\end{proof}

According to Definition~\ref{def:OPs.Add.DomainFilteredSubTwoTplTS},
the domain-filtered subset of a $\bTwoTplTS$ instance $\TwoTplTS$ with respect to
the domain-filtering 2-tuple $(\alpha, \beta)$ can be expressed as
\begin{align}
    \domainFilteredSubTwoTplTS|_\alpha^\beta = \fDomainFilteredSubTwoTupleTS{\TwoTplTS}{\alpha}{\beta},
\end{align}
which can be $\emptyset$.

The characteristic of the subset $\domainFilteredSubTwoTplTS|_\alpha^\beta$ is that:
Each element of this subset is restricted within the filtering domain.
Moreover, there must be an element in this subset such that for a certain 2-tuple within this element,
its first item is equal to the left boundary $\alpha$.
And there must be an element such that for a certain 2-tuple within this element,
its second item is equal to the right boundary $\beta$.

To utilize this subset, we further define several variable types and related helper functions.

Using the helper function in Definition~\ref{def:oie.BoundTwoTuple},
we can take $\domainFilteredSubTwoTplTS|_\alpha^\beta$ obtained from Definition~\ref{def:OPs.Add.DomainFilteredSubTwoTplTS}
and calculate a set of bound 2-tuples.
These will be used when performing sequence operations on $\bOIES$
to generate the $\I$(3rd element) for the operation result.

Based on the domain-filtering mechanism established above,
now, we propose a sequence operation for $\bOIES$:
``Complete Sequence Addition of members of a finite non-empty $\bOIES$ under a domain-filtering 2-tuple in an index order''.
All members within an $\bOIES$ instance participate in this operation,
and the result of the operation is a new $\bOIE$ instance.

\begin{defi}[\BF{Complete Sequence Addition of members of a finite non-empty $\bOIES$ instance under a domain-filtering 2-tuple in an index order}]\label{def:OPs.Add}\EBL

Suppose there is a finite $\bOIES$ instance $\oieS$ containing $n$ ($n \in \mathbb{N}^{+}, n > 1$) elements in the ordered enumeration form
\begin{align}
    \oieS = \{\oie_1, \oie_2, \oie_3, \cdots, \oie_n\},
\end{align}
an $\bIdxT$ instance with length $n$
\begin{align}
    \idxT = ( \idx_1, \idx_2, \idx_3, \cdots, \idx_n ),
\end{align}
and a $\bDomainFilterTwoTpl$ instance $( \alpha, \beta )$.
For $\oieS$ and $\bIdxT$ instance
\begin{align}
    \idxT_{asc} = (1,\ 2,\ 3,\ \cdots,\ n),
\end{align}
the $\bInfeasIntTwoTplTS$ instance of $\oieS$ in the index order $\idxT_{asc}$ is
\begin{align}
    \infeasIntTwoTplTS_{\oieS} = \fInfeasIntTwoTplTS{\oieS}{\idxT_{asc}}.
\end{align}

Define a finitary operation ``${\oplus|_{\alpha}^{\beta}}$'', whose operational rules are as follows:

\BF{Step 1.}
If
\begin{align}
    & \bigcap_{i=1}^n \CAL{A}_{\oie_{\idx_i}} \neq \emptyset \\
    \lor\ & \exists\ k\ \in\ [1, n]:\ \oie_{\idx_k} = \voidError, \notag
\end{align}
then the result is directly set to the unique $\bVoidError$ instance, with the form
\begin{align}
    \voidError = \mathlarger{\mathlarger{(}}\ (),\ \emptyset,\ \emptyset,\ \emptyset\ \mathlarger{\mathlarger{)}},
\end{align}
and the operation will terminate.
Otherwise, initialize a new $\bOIE$ instance $\oie_{result}$ as the result.
Temporarily set the $\CAL{C}$(1st element) of $\oie_{result}$ to
the result of tuple flattening for all the operands into a tuple in the order of $\idxT$
\begin{align}
    \C_{\oie_{result}} = (\oie_{\idx_1},\ \oie_{\idx_2},\ \oie_{\idx_3},\ \cdots,\ \oie_{\idx_n}).
\end{align}
Then temporarily set the $\CAL{A}$(4th element) of $\oie_{result}$ to the set
\begin{align}
    \A_{\oie_{result}} = \bigcup_{i=1}^n \CAL{A}_{\oie_{\idx_i}}.
\end{align}
Then proceed to Step 2.

\BF{Step 2.}
Using $\infeasIntTwoTplTS_{\oieS}$ and Definition~\ref{def:OPs.feasibleCP},
take the maximal feasible subset of
the flattened Cartesian product of all the $\I$ of members in
$\oieS$ under the index order $\idxT$, denoted as $\feasibleIntvlTwoTplTS$
\begin{align}
    \feasibleIntvlTwoTplTS = \fFeasibleIntvlTwoTplTS{\oieS}{\IdxT}.
\end{align}
If
\begin{align}
    \feasibleIntvlTwoTplTS = \emptyset,
\end{align}
then $\oie_{result}$ is instead set to $\voidError$, with the form
\begin{align}
    \voidError = \mathlarger{\mathlarger{(}}\ (),\ \emptyset,\ \emptyset,\ \emptyset\ \mathlarger{\mathlarger{)}},
\end{align}
and the operation ends without performing the subsequent steps.
Otherwise, proceed to Step 3.

\BF{Step 3.}
Using Definition~\ref{def:OPs.Add.DomainFilteredSubTwoTplTS},
get the corresponding $\bDomainFilteredSubTwoTplTS\big|_{\alpha}^{\beta}$ instance
\begin{align}
    \begin{aligned}
        \domainFilteredSubTwoTplTS\big|_{\alpha}^{\beta} & = \fDomainFilteredSubTwoTupleTS{\feasibleIntvlTwoTplTS}{\alpha}{\beta} \\
        & = \fDomainFilteredSubTwoTupleTS{\fFeasibleIntvlTwoTplTS{\oieS}{\IdxT}}{\alpha}{\beta},
    \end{aligned}
\end{align}
with the form
\begin{align}
    \domainFilteredSubTwoTplTS\big|_{\alpha}^{\beta} = \{ \TwoTplT_1, \TwoTplT_2, \cdots, \TwoTplT_\lambda \}.
\end{align}
If
\begin{align}
    \domainFilteredSubTwoTplTS\big|_{\alpha}^{\beta} \neq \emptyset,
\end{align}
then the $\F$(2nd element) of $\oie_{result}$ is assigned to $\domainFilteredSubTwoTplTS\big|_{\alpha}^{\beta}$:
\begin{align}
    \F_{\oie_{result}} = \domainFilteredSubTwoTplTS\big|_{\alpha}^{\beta},
\end{align}
and then Step 4 is executed.
Otherwise, if
\begin{align}
    \domainFilteredSubTwoTplTS\big|_{\alpha}^{\beta} = \emptyset,
\end{align}
$\oie_{result}$ is instead set to $\voidError$, with the form
\begin{align}
    \voidError = \mathlarger{\mathlarger{(}}\ (),\ \emptyset,\ \emptyset,\ \emptyset\ \mathlarger{\mathlarger{)}}.
\end{align}
And the operation ends without performing the subsequent steps.

\BF{Step 4.}
Using Property~\ref{prop:oie.2and3},
get the $\CAL{I}$(3rd element) of $\oie_{result}$
\begin{align}
    \begin{aligned}
        \I_{\oie_{result}} & = \bigg\{\ \fBoundTwoT{\TwoTplT_\lambda}\ \bigg|\ \TwoTplT_\lambda \in \domainFilteredSubTwoTplTS\big|_{\alpha}^{\beta}\ \bigg\} \\
                           & = \bigg\{\ \fBoundTwoT{\TwoTplT_\lambda}\ \bigg|\ \\
                           & \qquad\quad  \TwoTplT_i \in \fDomainFilteredSubTwoTupleTS{\fFeasibleIntvlTwoTplTS{\oieS}{\IdxT}}{\alpha}{\beta}\ \bigg\}.
    \end{aligned}
\end{align}
At this juncture, the operation is completed.

Finally, the form of $\oie_{result}$ is
\begin{align}
    \oie_{result} = \mathlarger{\mathlarger{(}}\ (),\ \emptyset,\ \emptyset,\ \emptyset\ \mathlarger{\mathlarger{)}},
\end{align}
or
\begin{align}
    \begin{aligned}
        & \oie_{result} = \\
        & \quad \mathlarger{\mathlarger{(}} \\
        & \qquad (\oie_{\idx_1},\ \oie_{\idx_2},\ \oie_{\idx_3},\ \cdots,\ \oie_{\idx_n}), \\
        & \qquad \fDomainFilteredSubTwoTupleTS{\fFeasibleIntvlTwoTplTS{\oieS}{\IdxT}}{\alpha}{\beta}, \\
        & \qquad \bigg\{\ \fBoundTwoT{\TwoTplT_\lambda}\ \bigg|\ \\
        & \qquad\qquad  \TwoTplT_\lambda \in \fDomainFilteredSubTwoTupleTS{\fFeasibleIntvlTwoTplTS{\oieS}{\IdxT}}{\alpha}{\beta}\ \bigg\}, \\
        & \qquad \bigcup_{i=1}^n \CAL{A}_{\oie_{\idx_i}}, \\
        & \quad \mathlarger{\mathlarger{)}}.
    \end{aligned}
\end{align}

The process described above,
including cases where the operation yields the $\bVoidError$ instance, is referred to as
``The Complete Sequence Addition of members of a finite non-empty $\bOIES$ instance
under domain-filtering 2-tuple $(\alpha, \beta)$ in index order $\IdxT$'',
denoted as
\begin{align}
    \overset{n}{\underset{\substack{i=1 \\ {\oplus|_{\alpha}^{\beta}}}}{\sum}} \oie_{\idx_i}
\end{align}
or
\begin{align}
    {\oplus|_{\alpha}^{\beta}}(\oie_{\idx_1},\ \oie_{\idx_2},\ \cdots,\ \oie_{\idx_n})
\end{align}
\end{defi}

The meanings of the four steps of the Complete Sequence Addition are as follows.

\BF{Step 1}: ``
Calculate the 1st and 4th elements of the result,
excluding cases where the same $\bAtomEStar$ instances are involved or $\voidError$ is involved''.

Cases where the same $\bAtomEStar$ instances are involved in operations
lead to duplicate planning of real-world events,
and are invalid under our operation rule.
Additionally, if any operand is $\voidError$,
this means $\emptyset$ will participate in the Cartesian product operation of the sets,
and the result will still be $\emptyset$.
This will directly result in the Complete Sequence Addition being $\voidError$ under our operation rule.
Intuitively, the operation of an entity that cannot plan real-world events
with other plan-capable entities renders the integrated plan invalid.
Therefore, both in terms of operational logic and practical requirements,
we directly stipulate that the results of the $\oplus|_{\alpha}^{\beta}$ in these two cases are $\voidError$.

For other valid conditions, additional constraints will apply in subsequent steps.
If these constraints are triggered,
the operation's result may be set to $\voidError$ too.
Thus, after temporarily assigning the $\CAL{C}$(the 1st element) and $\CAL{A}$(the 4th element)
of $\oie_{result}$, proceed to Step 2.

\BF{Step 2}: ``Obtain all feasible combinations of the flattened Cartesian product''.
This is achieved by performing the flattened Cartesian product,
getting the result, and then selecting the feasible subset.
For detailed procedures, refer to function $\fPre\RM{Feas2tuple}\CAL{TS}$
in Definition~\ref{def:OPs.feasibleCP} and Definition~\ref{def:OPs.feasibleCP}.
If the maximal feasible subset is $\emptyset$,
it indicates that there are no possible intervals for the event to execute in the real world.
Then the result of the operation is directly determined as $\voidError$.

\BF{Step 3}: ``Filter all feasible combinations within the domain''.
The filtering rule is characterized by defining a range and selecting all combinations that fall within this range,
with some combinations necessarily reaching the left and right boundaries of the defined range.
If the subset after filtering is $\emptyset$, it indicates that there is no subset that can fulfill the requirements of the $\oplus|_{\alpha}^{\beta}$ operation.
Then the operation result $\oie_{result}$ is directly set to an $\bVoidError$ instance.
Otherwise, the $\CAL{F}$(the 2nd element) of $\oie_{result}$ is assigned to the subset after filtering.

\BF{Step 4}: ``Summarize the feature set of the subset remaining after the filtering in Step 3''.
This feature set is precisely the $\CAL{I}$(the 3rd element) of $\oie_{result}$.
It's a set of 2-tuples,
storing all possible intervals for the event mapped by the result of $\oplus|_{\alpha}^{\beta}$.
This step enforces the structural relationship defined in Property~\ref{prop:oie.2and3}.

Intuitively, this is a certain level of ``equal opportunity''.
This algebraic fairness can be illustrated by a familiar sports scenario.

Imagine a 100-meter race.
All the athletes have the opportunity to start running after the effective natural reaction time following the gunshot.
But within the scope of classical physics,
no physical phenomenon of all athletes starting at exactly the same moment has ever been observed and recorded to date.
Technically, as of 2026, humans are still unable to perfectly observe a single moment without any error.
Even precision instruments have errors,
and most athletes are physically less capable than machines.
Since the athletes rely only on their ears to hear the sound and with no coordination among themselves,
we find it practically impossible for them to start running at exactly the same moment.
However, they all have this opportunity,
and whether they can achieve it depends on themselves.
This achieves a certain level of opportunity fairness.

\paragraph{Example: Three Doctors' Submissions via $\oplus|_{\alpha}^{\beta}$}\label{ex:csa_three_doctors}

\indent Consider three doctors submitting papers within a 24-hour window $[0,24)$,
with the physical constraint that only two doctors may submit simultaneously.
The atomic $\bOIE$ instances are defined as:
\begin{align}
    \begin{aligned}
        \oie_A &= \big((), \{((0,1)),((21,22))\}, \{(0,1),(21,22)\}, \{\EStar_{Dr_A}\}\big), \\
        \oie_B &= \big((), \{((0,1)),((13,14)),((20,22))\}, \{(0,1),(13,14),(20,22)\}, \{\EStar_{Dr_B}\}\big), \\
        \oie_C &= \big((), \{((0,1)),((19,22))\}, \{(0,1),(19,22)\}, \{\EStar_{Dr_C}\}\big).
    \end{aligned}
\end{align}

We apply Complete Sequence Addition $\oplus|_{0}^{22}$ to $\oieS = \{\oie_A, \oie_B, \oie_C\}$ with $\idxT=(1,2,3)$ and domain $(\alpha,\beta)=(0,22)$.

First, validity checking confirms $\bigcap_{i=1}^{3}\A_{\oie_i} = \emptyset$,
so the operation proceeds with $\C_{result}=(\oie_A, \oie_B, \oie_C)$ and $\A_{result}=\{\EStar_{Dr_A}, \EStar_{Dr_B}, \EStar_{Dr_C}\}$.

Next, computing the flattened Cartesian product yields 12 interval combinations.
The infeasible subset $\CAL{S}_{infeasible} = \{((0,1),(0,1),(0,1))\}$
is excluded (three concurrent submissions violate the resource limit),
leaving 11 feasible tuples in $\fPre\RM{Feas2tupleTS}(\oieS, \idxT)$.

Domain filtering via $\fPre\RM{DomFlt2tupleTS}$ verifies boundary reachability:
some combinations start at $t=0$ and end at $t=22$,
satisfying the domain constraints.
Thus $\F_{result}$ retains all 11 feasible scheduling schemes
\begin{align}
    \begin{aligned}
        & ((0,1),(0,1),(19,22)) \longmapsto (0,22) \\
        & ((0,1),(13,14),(0,1)) \longmapsto (0,14) \\
        & ((0,1),(13,14),(19,22)) \longmapsto (0,22) \\
        & ((0,1),(20,22),(0,1)) \longmapsto (0,22) \\
        & ((0,1),(20,22),(19,22)) \longmapsto (0,22) \\
        & ((21,22),(0,1),(0,1)) \longmapsto (0,22) \\
        & ((21,22),(0,1),(19,22)) \longmapsto (0,22) \\
        & ((21,22),(13,14),(0,1)) \longmapsto (0,22) \\
        & ((21,22),(13,14),(19,22)) \longmapsto (13,22) \\
        & ((21,22),(20,22),(0,1)) \longmapsto (0,22) \\
        & ((21,22),(20,22),(19,22)) \longmapsto (19,22).
    \end{aligned}
\end{align}
Then, applying $\mathfrak{f}\RM{Bound2tuple}$ to each feasible tuple produces the aggregated intervals
$\I_{result} = \{(0,14),(0,22),(13,22),(19,22)\}$
after deduplication.

The resulting $\bOIE$ instance is:
\begin{align}
    \oie_{result} = \big((\oie_A, \oie_B, \oie_C), \F_{result}, \I_{result}, \{\EStar_{Dr_A}, \EStar_{Dr_B}, \EStar_{Dr_C}\}\big).
\end{align}

This demonstrates the key properties of $\oplus|_{\alpha}^{\beta}$:
a certain degree of equal opportunity
(each doctor can access the full domain boundaries),
and constraint awareness (automatic exclusion of invalid three-way concurrency).
\fi
    \subsection{Complete Sequence Multiplication}\label{subsec:ops.multi}\EBL
    \ifincludeFile
The addition operation proposed in subsection~\ref{subsec:ops.add} is characterized by the fact
that after screening out invalid data for each operand participating in the operation,
each participant has an interval affording the opportunity both to start and to end at the same moment.
The emphasis of the Complete Sequence Addition is on ``Fairness''.

In this subsection, we propose another type of sequence operation.
Its concept is contrary to the fairness of the Complete Sequence Addition.
From a certain perspective, it emphasizes ``unfairness'' (i.e., strict sequential ordering).
The operation rules directly constrain the temporal ordering of starting and ending timestamps of all participants.
Under the sequence operation,
each event must finish before the next begins.
We call this ``Complete Sequence Multiplication''.

Before giving the full definition, let's first present some mathematical helper functions.

\begin{defi}[\BF{Ascending ordered filtered subset of the set of tuples of 2-tuples
 \& helper function \& Need not satisfy permutation isomorphism invariance}]\label{def:OPs.Multi.CompleteAscOrderFilteredSubTwoTplTS}\EBL

1) \TSF{Definition:}

Consider a non-empty finite $\bTwoTplTS$ instance $\TwoTplTS$ with length $n > 0$,
with the form
\begin{align}
    \TwoTplTS = \{\ \TwoTplT_1,\ \TwoTplT_2,\ \cdots,\ \TwoTplT_n\ \}.
\end{align}
Then the set
\begin{align}
    \TwoTplTS^{asc} & = \\
    & \left\{ \ \TwoTplT \in \TwoTplTS \Biggr|
    \makecell[l]{ \forall\ \TwoTpl_i,\ \TwoTpl_j \in \TwoTplT\ \land\ i < j: \notag \\
    \ \ \ the\ 2nd\ item\ of\ \TwoTpl_i \leq the\ 1st\ item\ of\ \TwoTpl_j} \right\} \notag
\end{align}
is called ``The ascending ordered filtered subset of $\TwoTplTS$'',
denoted as $\bCompleteAscOrderFilterSubTwoTplTS$.

2) \TSF{Helper function:}
\begin{align}
    & \fCompleteAscOrderFilterSubTwoTplTS{\pPre\TwoTplTS} \longmapsto \\
    & \qquad The\ \bCompleteAscOrderFilterSubTwoTplTS\ instance\ of\ \pPre\TwoTplTS. \notag
\end{align}
\begin{center}
    \begin{tabular}{|l|l|}
        \hline
        $\pPre\TwoTplTS$ & A $\bTwoTplTS$ instance \\ \hline
        Return & \makecell[l]{The $\bCompleteAscOrderFilterSubTwoTplTS$ instance of $\pPre\TwoTplTS$} \\ \hline
    \end{tabular}
\end{center}

3) \TSF{need not satisfy permutation isomorphism invariance:}

If there are two $\bTwoTplTS$ instances $\TwoTplTS_1$ and $\TwoTplTS_2$ with the same length $n$
and satisfying isomorphism via permutation matrix $\CAL{M}$,
then the $\bCompleteAscOrderFilterSubTwoTplTS$ instances of  $\TwoTplTS_1$ and $\TwoTplTS_2$ need not satisfy isomorphism via permutation matrix $\CAL{M}$.
\end{defi}

\begin{proof}[\BF{Proof of need not satisfy permutation isomorphism invariance in Definition~\ref{def:OPs.Multi.CompleteAscOrderFilteredSubTwoTplTS}}]\EBL

Suppose there are two non-empty $\bTwoTplTS$ instances $\TwoTplTS_1$ and $\TwoTplTS_2$ with the same length $n$
and satisfying isomorphism via permutation matrix $\CAL{M}$
\begin{align}
    \TwoTplTS_1 \stackrel{\CAL{M}}{\cong} \TwoTplTS_2.
\end{align}
Let
\begin{align}
    & \TwoTplTS^{asc}_1 = \fCompleteAscOrderFilterSubTwoTplTS{\TwoTplTS_1}, \\
    & \TwoTplTS^{asc}_2 = \fCompleteAscOrderFilterSubTwoTplTS{\TwoTplTS_2}.
\end{align}

\BF{(1) The condition of ``$\TwoTplTS^{asc}_1 = \TwoTplTS^{asc}_2 = \emptyset$''}

In this situation
\begin{align}
    \TwoTplTS^{asc}_1 = \TwoTplTS^{asc}_2 = \emptyset.
\end{align}
So, they are permutation isomorphic via any permutation matrix.

\BF{(2) The condition of ``$\TwoTplTS_1 = \TwoTplTS_2 \land \forall \TwoTplT \in \TwoTplTS_1: \Dim{\TwoTplT}$ = 1''}

For example, all elements of $\TwoTplTS_1$ and $\TwoTplTS_2$
are $\bTwoTplT$ instances with length 1.
In this situation, the permutation matrix $\CAL{M}$ is an identity permutation matrix.
Then we use an ascending ordered filter to get a subset from the same $\bTwoTplTS$ instance.
So, they are permutation isomorphic via an identity permutation matrix.

\BF{(3) The condition of excluding (1) and (2), ``$\TwoTplTS^{asc}_1 \ne \emptyset \lor \TwoTplTS^{asc}_2 \ne \emptyset$'',
    and ``$\TwoTplTS_1 \stackrel{\CAL{M}}{\cong} \TwoTplTS_2$''}

In this situation,
the permutation matrix $\CAL{M}$ isn't an identity permutation matrix.
Thus $\TwoTplTS_1 \stackrel{\CAL{M}}{\cong} \TwoTplTS_2$
means that, for any two $\bTwoTplT$ instances under a bijection
\begin{align}
    \begin{aligned}
        & \TwoTplT_{\lambda} \in \TwoTplTS_1, \TwoTplT_{\mu} \in \TwoTplTS_2 \\
        \land\ & \TwoTplT_{\lambda} \stackrel{\CAL{M}}{\sim} \TwoTplT_{\mu},
    \end{aligned}
\end{align}
there may be two 2-tuples satisfying
\begin{align}
    \begin{aligned}
        & (x_{i}, y_{i}), (x_{j}, y_{j}) \in \TwoTplT_{\lambda} \\
        \land\ & (x_{i}, y_{i}), (x_{j}, y_{j}) \in \TwoTplT_{\mu} \\
        \land\ & x_{i} < y_{i}, x_{j} < y_{j},
    \end{aligned}
\end{align}
their orders in $\TwoTplT_{\lambda}$ and $\TwoTplT_{\mu}$ are swapped with the form
\begin{align}
    & \TwoTplT_{\lambda} = (\ \cdots,\ (x_{i}, y_{i}),\ \cdots,\ (x_{j}, y_{j}),\ \cdots\ ), \\
    & \TwoTplT_{\mu} = (\ \cdots,\ (x_{j}, y_{j}),\ \cdots,\ (x_{i}, y_{i}),\ \cdots\ ).
\end{align}

If
\begin{align}
    x_{i} < y_{i} \leq x_{j} < y_{j} \land \TwoTplTS^{asc}_1 \ne \emptyset
\end{align}
then according to Definition~\ref{def:OPs.Multi.CompleteAscOrderFilteredSubTwoTplTS},
$\TwoTplT_{\lambda}$ satisfies the rule, so it remains in $\TwoTplTS^{asc}_1$.
But $\TwoTplT_{\mu}$ doesn't satisfy the rule, so it doesn't remain in $\TwoTplTS^{asc}_2$.
No instance in which $(x_{j}, y_{j})$ precedes $(x_{i}, y_{i})$
can exist in $\TwoTplTS^{asc}_2$;
thus, no element in that set is permutationally equivalent to $\TwoTplT_{\lambda}$.

Otherwise if
\begin{align}
    x_{j} < y_{j} \leq x_{i} < y_{i} \land \TwoTplTS^{asc}_2 \ne \emptyset,
\end{align}
this is consistent with the analytical approach just employed.

We can conclude that in this situation,
$\TwoTplTS_1$ and $\TwoTplTS_2$ are not isomorphic via permutation matrix $\CAL{M}$.

Combining the conclusions of (1), (2) and (3), we complete the proof.

\end{proof}

Now, we propose another sequence operation for $\bOIES$.
The elements participating in the operation are all the elements within an $\bOIES$ instance,
and the result of the operation is a new $\bOIE$ instance.

\begin{defi}[\textbf{Complete Sequence Multiplication of members of a finite $\bOIES$ instance in an index order}]\label{def:Ops.Multi}\EBL

Suppose there is a finite $\bOIES$ instance $\oieS$ containing $n$ ($n \neq \infty$) elements in the ordered enumeration form
\begin{align}
    \oieS = \{\oie_1, \oie_2, \oie_3, \cdots, \oie_n\},
\end{align}
an $\bIdxT$ instance with length $n$
\begin{align}
    \idxT = ( \idx_1, \idx_2, \idx_3, \cdots, \idx_n ).
\end{align}
For $\oieS$ and the ascend order $\bIdxT$ instance
\begin{align}
    \idxT_{asc} = (1,\ 2,\ 3,\ \cdots,\ n),
\end{align}
the $\bInfeasIntTwoTplTS$ instance of $\oieS$ under the index order $\idxT_{asc}$ is
\begin{align}
    \infeasIntTwoTplTS_{\oieS} = \fInfeasIntTwoTplTS{\oieS}{\idxT_{asc}}.
\end{align}

Define a finitary operation ``$\otimes$'', whose operational rules are as follows:

\BF{Step 1.}
If
\begin{align}
    & \bigcap_{i=1}^n \CAL{A}_{\oie_{\idx_i}} \neq \emptyset \\
    \lor\ & \exists\ k\ \in\ [1, n]:\ \oie_{\idx_k} = \voidError,
\end{align}
then the result is directly set to the unique $\bVoidError$ instance with the form
\begin{align}
    & \voidError = \mathlarger{\mathlarger{(}}\ (),\ \emptyset,\ \emptyset,\ \emptyset\ \mathlarger{\mathlarger{)}},
\end{align}
and the operation will terminate.
Otherwise, initialize a new $\bOIE$ instance $\oie_{result}$ as the result.
Temporarily set the $\CAL{C}$(1st element) of $\oie_{result}$ to tuple
\begin{align}
    \C_{\oie_{result}} = (\oie_{\idx_1},\ \oie_{\idx_2},\ \oie_{\idx_3},\ \cdots,\ \oie_{\idx_n}),
\end{align}
and temporarily set the $\CAL{A}$(4th element) of $\oie_{result}$ to be the set
\begin{align}
    \A_{\oie_{result}} = \bigcup_{i=1}^n \CAL{A}_{\oie_{\idx_i}}.
\end{align}
Then proceed to Step 2.

\BF{Step 2.}
Using $\infeasIntTwoTplTS_{\oieS}$ and Definition~\ref{def:OPs.feasibleCP},
take the maximal feasible subset of
the flattened Cartesian product of all the $\I$ of members in
$\oieS$ under the index order $\idxT$, denoted as $\feasibleIntvlTwoTplTS$
\begin{align}
    & \feasibleIntvlTwoTplTS = \fFeasibleIntvlTwoTplTS{\oieS}{\IdxT}.
\end{align}
If
\begin{align}
    \feasibleIntvlTwoTplTS = \emptyset,
\end{align}
then $\oie_{result}$ is instead set to $\voidError$ with the form
\begin{align}
    & \voidError = \mathlarger{\mathlarger{(}}\ (),\ \emptyset,\ \emptyset,\ \emptyset\ \mathlarger{\mathlarger{)}},
\end{align}
and the operation ends without performing the subsequent steps.
Otherwise, proceed to Step 3.

\BF{Step 3.}
Using Definition~\ref{def:OPs.Multi.CompleteAscOrderFilteredSubTwoTplTS},
get the corresponding $\bCompleteAscOrderFilterSubTwoTplTS$ instance from $\feasibleIntvlTwoTplTS$,
and let $\completeAscOrderFilterSubTwoTplTS$ denote it
\begin{align}
    \begin{aligned}
        \completeAscOrderFilterSubTwoTplTS & = \fCompleteAscOrderFilterSubTwoTplTS{\feasibleIntvlTwoTplTS} \\
        & = \fCompleteAscOrderFilterSubTwoTplTS{\fFeasibleIntvlTwoTplTS{\oieS}{\IdxT}},
    \end{aligned}
\end{align}
whose form is
\begin{align}
    & \completeAscOrderFilterSubTwoTplTS = \{ \TwoTplT_1, \TwoTplT_2, \cdots, \TwoTplT_\lambda \}.
\end{align}

If
\begin{align}
    \completeAscOrderFilterSubTwoTplTS \neq \emptyset,
\end{align}
then the $\F$(2nd element) of $\oie_{result}$ is set to $\completeAscOrderFilterSubTwoTplTS$
\begin{align}
    & \F_{\oie_{result}} = \completeAscOrderFilterSubTwoTplTS,
\end{align}
and then Step 4 is executed.
Otherwise, if
\begin{align}
    \completeAscOrderFilterSubTwoTplTS = \emptyset,
\end{align}
$\oie_{result}$ is instead set to $\voidError$ with the form
\begin{align}
    & \voidError = \mathlarger{\mathlarger{(}}\ (),\ \emptyset,\ \emptyset,\ \emptyset\ \mathlarger{\mathlarger{)}},
\end{align}
and the operation ends without performing the subsequent steps.

\BF{Step 4.}
Using Property~\ref{prop:oie.2and3},
get the $\CAL{I}$(3rd element) of $\oie_{result}$
\begin{align}
    \begin{aligned}
    \I_{\oie_{result}} & = \bigg\{\ \fBoundTwoT{\TwoTplT_i}\ \bigg|\ \TwoTplT_i \in \completeAscOrderFilterSubTwoTplTS\ \bigg\} \\
                       & = \bigg\{\ \fBoundTwoT{\TwoTplT_i}\ \bigg|\ \\
                       & \qquad\quad  \TwoTplT_i \in \fCompleteAscOrderFilterSubTwoTplTS{\fFeasibleIntvlTwoTplTS{\oieS}{\IdxT}}\ \bigg\}.
    \end{aligned}
\end{align}
At this juncture, the operation is completed.

Finally, the form of $\oie_{result}$ is
\begin{align}
    & \oie_{result} = \mathlarger{\mathlarger{(}}\ (),\ \emptyset,\ \emptyset,\ \emptyset\ \mathlarger{\mathlarger{)}},
\end{align}
or
\begin{align}
    \begin{aligned}
        & \oie_{result} = \\
        & \quad \mathlarger{\mathlarger{(}} \\
        & \qquad (\oie_{\idx_1},\ \oie_{\idx_2},\ \oie_{\idx_3},\ \cdots,\ \oie_{\idx_n}), \\
        & \qquad \fCompleteAscOrderFilterSubTwoTplTS{\fFeasibleIntvlTwoTplTS{\oieS}{\IdxT}}, \\
        & \qquad \bigg\{\ \fBoundTwoT{\TwoTplT_i}\ \bigg|\ \\
        & \qquad\qquad  \TwoTplT_i \in \fCompleteAscOrderFilterSubTwoTplTS{\fFeasibleIntvlTwoTplTS{\oieS}{\IdxT}}\ \bigg\}, \\
        & \qquad \bigcup_{i=1}^n \CAL{A}_{\oie_{\idx_i}}, \\
        & \quad \mathlarger{\mathlarger{)}}.
    \end{aligned}
\end{align}

The process described above,
including cases where the operation yields the $\bVoidError$ instance, is referred to as
``Complete Sequence Multiplication of members of a finite $\bOIES$ under the index order $\IdxT$,
denoted as
\begin{align}
    \overset{n}{\underset{\substack{i=1 \\ {\otimes}}}{\prod}} \oie_{\idx_i}
\end{align}
or
\begin{align}
    {\otimes}(\oie_{\idx_1},\ \oie_{\idx_2},\ \cdots,\ \oie_{\idx_n}).
\end{align}
\end{defi}

Similar to the addition operation (Definition~\ref{def:OPs.Add}),
the condition for successfully completing this type of operation
(where the result is not an $\bVoidError$ instance) is that
the specified restrictions must be met in Step 1, Step 2 and Step 3.

The significance of the restriction in Step 1 and Step 2 of $\otimes$
is the same as the Step 1 and Step 2 of ${\oplus|_{\alpha}^{\beta}}$.
The significance of the restriction in Step 3 is that after the screening,
there are valid optional intervals left.
Unlike ${\oplus|_{\alpha}^{\beta}}$,
for each element in the $\F$(2nd element) of $\oie_{result}$,
each interval follows the order of $\IdxT$,
and the starting timestamp of the subsequent interval
is greater than or equal to the ending timestamp of the previous interval segment.
That is to say, the characteristic of multiplication lies in sorting,
rather than emphasizing fairness within a domain like addition.

Intuitively, this is a kind of ``unfairness'' in terms of sequence.
Downhill skiing is an official sport in the Winter Olympics.
It requires skiers to compete alone on the track in a certain order,
and those with shorter times are ranked higher.
This sport is one of the few racing competitions where athletes have exclusive use of the track.
The order of who goes first and who goes later is not random;
it is determined by the preliminary round results.
Skiers with better preliminary round results compete later,
so before the finals, each skier already knows their starting position.

Let $n$ skiers participate in the final,
and $\oie_{skier_i}$ is the $\bOIE$ instance corresponding to the event in which
the $i$-th skier participates in the final.
Suppose the best skiing time of the $i$-th skier is $time^{min}_i(time^{min}_i > 0)$,
and the slowest skiing time is $time^{max}_i(0 < time^{min}_i \leq time^{max}_i)$.
The starting timestamp of the first skier in the final
(that is, the start time of the whole competition) is $\TS_{\alpha}$,
and the latest ending timestamp of the last skier
(that is, the time when the whole competition must end) is $\TS_{\beta}$.

Therefore, before the results of the preliminary round are announced,
when the starting order of the $i$-th skier in the final is uncertain,
the $\I$ of $\oie_{skier_i}$ is
\begin{align}
    \left \{
    (\TSs_i, \TSe_i) \, \middle| \,
    \begin{aligned}
        & \TSe_i - \TSs_i \geq time^{min}_i \\
        \land \ & \TSe_i - \TSs_i \leq time^{max}_i \\
        \land \ & \TSs_i \geq \TS_{\alpha}\\
        \land \ & \TSe_i \leq \TS_{\beta}\\
    \end{aligned}
    \right \}.
\end{align}

When the preliminary round is over,
the order of each contestant in the final is also determined.
Use
\begin{align}
    \idxT = (\idx_1, \idx_2, \idx_3, \cdots, \idx_n)
\end{align}
to represent the starting order.
Then, according to Definition~\ref{def:Ops.Multi}, this final can be expressed as
\begin{align}
    \oie_{final} = \overset{n}{\underset{\substack{i=1 \\ {\otimes}}}{\prod}} \oie_{skier_i}.
\end{align}
or
\begin{align}
    \oie_{final} = \otimes(\oie_{skier_1}, \oie_{skier_2}, \cdots, \oie_{skier_n})
\end{align}
The $\F$ of $\oie_{final}$ contains all possible combinations of the intervals of all skiers.
Once all combinations are mapped onto real events,
the skiers' starting sequence is uniquely fixed and determined by the $\otimes$ expression,
and only one combination ultimately occurs in reality.

\paragraph{Example: Three Doctors' Sequential Processing via $\otimes$.}

\indent
Consider the same three doctors,
but now the conference system mandates strictly sequential processing:
only one submission can be handled at a time,
and the next doctor cannot begin until the previous one finishes.
We apply $\otimes$ to
\begin{align}
    \oieS = \{\oie_{Dr_A}, \oie_{Dr_B}, \oie_{Dr_C}\}
\end{align}
with
\begin{align}
    \idxT=(1, 2, 3).
\end{align}

After validity checking confirms disjoint atomic event sets,
we initialize
\begin{align}
    \C_{result}=(\oie_{Dr_A}, \oie_{Dr_B}, \oie_{Dr_C}).
\end{align}
The Cartesian product of the $\I$ sets again yields 12 theoretical combinations,
but now the critical filtering occurs in Step 3:
the ascending ordered filter $\fPre\RM{AscOrderFlt2tupleTS}$
enforces strict temporal precedence.
For any tuple $((ts_1,te_1),(ts_2,te_2),(ts_3,te_3))$,
the constraint $te_1 \leq ts_2 \land te_2 \leq ts_3$ must hold.

Examining the feasible combinations,
$((0,1),(13,14),(19,22))$ satisfies the chain $1 \leq 13 \leq 14 \leq 19$,
whereas $((0,1),(0,1),(19,22))$ is eliminated due to overlapping starts.
The filtered set $\F_{result}$ contains strictly ordered schedules
such as $((0,1),(13,14),(19,22))$ and $((21,22),\ \cdot,\ \cdot)$.
Though notably, once the first doctor occupies $[21,22)$, no valid intervals remain for subsequent doctors within the 22-hour window, rendering those starting points infeasible for the chain.

Applying $\fPre\RM{Bound2tuple}$ to the valid sequential tuples yields $\I_{result} = \{(0,22), (13,22), (19,22)\}$ depending on the chosen starting intervals.
The final result is:
\begin{align}
    \oie_{result} = \big((\oie_{Dr_A}, \oie_{Dr_B}, \oie_{Dr_C}), \F_{result}, \I_{result}, \{\EStar_{Dr_A}, \EStar_{Dr_B}, \EStar_{Dr_C}\}\big).
\end{align}

This illustrates the defining characteristics of $\otimes$: \emph{strict sequentiality} (no temporal overlap permitted) and \emph{order sensitivity}.\fi
    \subsection{Void Result of Sequence Operations}\label{subsec:ops.voidRes}\EBL
    \ifincludeFile
According to Definition~\ref{def:OPs.Add} and Definition~\ref{def:Ops.Multi},
in ``$\oplus|_{\alpha}^{\beta}$'' and ``$\otimes$'',
if the specified conditions are not met at certain steps during operations,
the result is the only instance of $\bVoidError$, with the form
\begin{align}
    & \voidError = \mathlarger{\mathlarger{(}}\ (),\ \emptyset,\ \emptyset,\ \emptyset\ \mathlarger{\mathlarger{)}}
\end{align}
In this subsection, we will discuss several typical cases
where the result of sequence operations is this special instance.

\begin{pty}[Sequence operations involving identical $\bOIE$ instances result in an $\bVoidError$ instance]\label{pty:OPs.ErrorOIE.sameOIE}
\end{pty}

Property~\ref{pty:OPs.ErrorOIE.sameOIE} serves as a more intuitive scenario for the condition
\begin{align}
    & \bigcap_{i=1}^n \CAL{A}_{\oie_{\idx_i}} \neq \emptyset,
\end{align}
which triggers $\voidError$ result in Step 1 of both $\oplus|_{\alpha}^{\beta}$ and $\otimes$.
From a philosophical perspective,
an $\bE$ instance $\E$ has its own entity;
an entity cannot have an identical copy
existing in the same status within the same time interval.
For example, an athlete cannot simultaneously compete against himself in a sprint race
on two different tracks.
And the sentence ``$Dr_A$ and $Dr_A$ submitted a paper during a certain interval''
is also an obvious ungrammatical statement.
We believe that $\bOIE$ also has the similar property.

To extend Property~\ref{pty:OPs.ErrorOIE.sameOIE}:
Once an event instance becomes a component of two other complex event instances,
it is also irrational to repeatedly plan these two complex events.
The check for whether there is an intersection of the $\CAL{A}$ of $\bOIE$ instances
in Step1 of two sequence operations is a formalized expression of this line of thought.

\begin{pty}[The sequence operations involving $\bVoidError$ instances result $\voidError$]\label{pty:OPs.ErrorOIE.involved}
\end{pty}

When an $\bVoidError$ instance $\voidError$ participates in the two types of sequence operations proposed in this paper,
its $\I$(3rd item) will perform the Cartesian product operation with other $\bOIE$ instances' $\I$.
the result is still $\emptyset$.
Therefore, regardless of the type of sequential operators used,
the $\F$(2nd item) of result will be $\emptyset$,
which in turn causes the $\I$(3rd item) of the result to also be $\emptyset$.
Consequently, the final result will inevitably be an $\bVoidError$ instance.
\fi
    \subsection{Permutation Equivalence Relations for $\bOIE$ Instances and Results of Sequence Operations}\label{subsec:ops.permu}
    \ifincludeFile
To analyze the algebraic symmetry of sequence operations,
we examine the permutational equivalence relations governing both the $\bOIE$ instances themselves
and the results derived from applying $\oplus|_{\alpha}^{\beta}$ and $\otimes$.
This section establishes the formal criteria for structural equivalence under index permutation
and investigates how the choice of sequence operation determines the orbital properties of the result space.
The discussion is organized into three subsubsections:
\begin{itemize}
\item \BF{Subsubsection~\ref{subsubsec:ops.permu.oie}} formalizes the permutational equivalence relation between non-void $\bOIE$ instances,
defining when two instances $oie_1$ and $oie_2$
are structurally identical under a permutation matrix $\CAL{M}$
based on the invariance of their constituent elements $(\C,\F,\I,\A)$.
\item \textbf{Subsubsection~\ref{subsubsec:ops.permu.add}} demonstrates that $\oplus|_{\alpha}^{\beta}$ yields a \textit{single-orbit space}:
regardless of operand ordering, any two results $\oplus|_{\alpha}^{\beta}(\dots)$ obtained from different index tuples are permutationally equivalent,
reflecting the operation's intrinsic fairness and commutativity up to equivalence.
\item \textbf{Subsubsection~\ref{subsubsec:ops.permu.multi}} establishes the contrasting behavior of $\otimes$,
proving that it may generate \textit{multiple distinct orbits}:
different index orders may produce results that are not permutationally equivalent,
as the strict temporal ordering encoded in $\otimes$ is sensitive to the sequence of operands.
\end{itemize}

\fi
        \subsubsection{Permutation Equivalence Relation on $\bOIE$ Instances}\label{subsubsec:ops.permu.oie}
        \ifincludeFile
\begin{defi}[\textbf{Permutation equivalence relation of non-$\voidError$ $\bOIE$ instances}]\label{def:OPs.Permu}\EBL

For two non-$\voidError$ $\bOIE$ instances $\oie_1$ and $\oie_2$.

If there exists a permutation matrix $\CAL{M}$ satisfying the following conditions
\begin{align}
    & \C_{\oie_1} \cdot \CAL{M} = \C_{\oie_2},\
    \F_{\oie_1} \stackrel{\CAL{M}}{\cong} \F_{\oie_2},\
    \I_{\oie_1} = \I_{\oie_2},\
    \text{ and } \A_{\oie_1} = \A_{\oie_2},
\end{align}
then we say that ``$\oie_1$ and $\oie_2$ are permutationally equivalent
with respect to $\CAL{M}$'',
denoted as
\begin{align}
    \oie_1 \stackrel{\CAL{M}}{\sim} \oie_2.
\end{align}
\end{defi}

Here we need to explain the practical significance of Definition~\ref{def:OPs.Permu}.

\begin{align}
    \F_{\oie_1} \cong_{\CAL{M}} \F_{\oie_2}
\end{align}
$\stackrel{\CAL{M}}{\cong}$ means that $\F_{\oie_1}$ and $\F_{\oie_2}$ are isomorphic via the permutation matrix $\CAL{M}$,
i.e.
\begin{align}
    \begin{aligned}
    & \forall\ \TwoTplT_{i} \in \F_{\oie_1},\ \exists\ \TwoTplT_{j} \in \F_{\oie_2}:\ \TwoTplT_{i} \stackrel{\CAL{M}}{\sim} \TwoTplT_{j} \\
    \land \ & \F_{\oie_1} \xrightarrow{\sim} \F_{\oie_2}\ form\ a\ bijection.
        \end{aligned}
\end{align}

It is common in daily life to describe N events as follows.
\[
    \begin{aligned}
        & \qquad The\ first\ event\ is\ in\ a\ specific\ manner,\ the\ second\ event\ is\ in\ a\ specific\ manner... \\
        & The\ Nth\ event\ is\ in\ a\ specific\ manner.
    \end{aligned}
\]
In such a way,
there must be a first one that is presented initially, a second one that appears afterward, and a last one that appears at the end.
But in reality, the order in which these N events are presented
does not affect their inherent nature.

In the corresponding $\bOIES$, this means that the difference between two $\bOIE$ instances
satisfies the permutationally equivalent relation.
The two instances differ in expression form due to
the different presentation orders of their $\C$.
They can be transformed into each other's expression simply by performing a permutation operation.
For the real-world events mapped by each $\bOIE$ instance in the $\C$, the intervals and combinations
of their real-world events are not affected by the permutation operation.

If the $\C$(1st element) of an $\bOIE$ instance has no elements,
this $\bOIE$ instance is $\bAtomOIE$ type.
Consequently, it has a special permutationally equivalent relation,
we can summarize the following property.

\begin{pty}[\textbf{Permutation equivalence of $\bAtomOIE$ instances implies their equality}]\label{def:OPs.Permu.atom}\EBL

    Suppose there are two $\bAtomOIE$ instances $\atomOie_1$ and $\atomOie_2$.
    If they are permutationally equivalent with respect to $\CAL{M_I}$
    \begin{align}
        \atomOie_1 \stackrel{\CAL{M_I}}{\sim} \atomOie_2,
    \end{align}
    then
    \begin{align}
        \begin{aligned}
            & \atomOie_1 = \atomOie_2, \\
            \land\ & \CAL{M_I}\ is\ an\ one-dimensional\ identity\ permutation\ matrix.
        \end{aligned}
    \end{align}
\end{pty}

For $\bVoidError$, it only has one instance.
We argue that it is permutationally equivalent to itself.

\begin{pty}[\textbf{Permutation equivalence relation of $\bVoidError$ instance to itself}]\label{prop:OPs.Permu.void}\EBL

The unique global $\bVoidError$ instance $\voidError=((), \emptyset, \emptyset, \emptyset)$
is permutationally equivalent to itself via any permutation matrix,
and no non-void $\bOIE$ instance holds permutational equivalence with $\voidError$.
\end{pty}

Property~\ref{prop:OPs.Permu.void} clarifies the permutation invariance of the void $\bOIE$ instance,
filling the edge-case gap of the permutational equivalence definition for non-void $\bOIE$ instances.
It guarantees that void results from sequence operations maintain uniform equivalence characteristics across all operand permutations,
providing a consistent logical basis for the subsequent permutational equivalence analysis of the two core sequence operations.\fi
        \subsubsection{Permutation Equivalence Relation on Results of Complete Sequence Addition}\label{subsubsec:ops.permu.add}
        \ifincludeFile
In this subsubsection, we will prove that there exists a permutational equivalence relation
between the results of the same operands arranged in different orders under Complete Sequence Addition.

\begin{pty}[\BF{the permutation equivalent relation between the results of the same
operands arranged in different orders under Complete Sequence Addition.}]\label{pty:OPs.Add.Permu}\EBL

Suppose there is a non-empty finite $\bOIES$ instance
\begin{align}
    \oieS = \{\oie_1,\ \oie_2,\ \cdots,\ \oie_n\},
\end{align}
and two $\bIdxT$ instances with length $n$
\begin{align}
    & \idxT_1 = \{i_1,\ i_2,\ i_3,\ \cdots,\ i_n\}, \\
    & \idxT_2 = \{j_1,\ j_2,\ j_3,\ \cdots,\ j_n\},
\end{align}
and a $\bDomainFilterTwoTpl$ instance $(\alpha, \beta)$.

Perform two Complete Sequence Additions
\begin{align}
    & \oie_i = \oplus|_{\alpha}^{\beta}(\oie_{i_1}, \oie_{i_2}, \cdots, \oie_{i_n}), \\
    & \oie_j = \oplus|_{\alpha}^{\beta}(\oie_{j_1}, \oie_{j_2}, \cdots, \oie_{j_n}).
\end{align}
Then there exists a permutation matrix $\CAL{M}$
such that $\oie_i$ and $\oie_j$ are permutationally equivalent via $\CAL{M}$,
denoted as
    \begin{align}
        \oie_i \stackrel{\CAL{M}}{\sim} \oie_j.
    \end{align}
\end{pty}

\begin{proof}[\BF{Proof of Property~\ref{pty:OPs.Add.Permu}}]\label{proof:OPs.Add.Permu}\EBL

\indent \BF{(1) The situation $\bVoidError$ instance exists in $\oieS$ or the intersection of all $\A$ across all operands isn't $\emptyset$.}

Assume that the $\bOIES$ instance $\oieS$ contains an $\bVoidError$ instance
or the intersection of the $\A$(4th elements) of all operands is non-empty.
Then, by Step 1 of Definition~\ref{def:OPs.Add},
the result of the $\oplus|_{\alpha}^{\beta}$ for any index order is $\voidError$.
By Property~\ref{prop:OPs.Permu.void}, $\voidError$ is permutationally equivalent via any permutation matrix to itself.
Therefore, in this case, the two results of $\oplus|_{\alpha}^{\beta}$ are permutationally equivalent.

this concludes the proof for these cases.

\BF{(2) The situation $\bVoidError$ instance doesn't exist in $\oieS$ and the intersection of all $\A$ across all operands is $\emptyset$.}

Separately for each of the four elements of these two $\bOIE$ instances.

\TT{Step I. Proof for the $\C$(1st element) of $\oie_i$ and $\oie_j$}

In Step 1 of Definition~\ref{def:OPs.Add},
there are two $\C$(1st element) of $\oie_i$ and $\oie_j$ respectively.
According to Property~\ref{def:ops.idxT.Permu},
there must be a permutation matrix $\CAL{M}$ such that
\begin{align}
    \idxT_1 \stackrel{\CAL{M}}{\sim} \idxT_2,
\end{align}
and two $\oplus|_{\alpha}^{\beta}$ operations are exactly
the ones that apply the order of these two $\bIdxT$ instances directly,
according to Definition~\ref{def:OPs.Add},
\begin{align}
    & \C_{\oie_i} = (\oie_{i_1},\ \oie_{i_2},\ \oie_{i_3},\ \cdots,\ \oie_{i_n}), \\
    & \C_{\oie_j} = (\oie_{j_1},\ \oie_{j_2},\ \oie_{j_3},\ \cdots,\ \oie_{j_n}).
\end{align}
Then, the two $\C$(1st element) of $\oie_i$ and $\oie_j$ must be permutationally equivalent based on $\CAL{M}$ too,
i.e.
\begin{align}
    \C_{\oie_i} \cdot \CAL{M} = \C_{\oie_j}.
\end{align}
This equation satisfies the first condition in Definition~\ref{def:OPs.Permu}.
The proof for $\C$ of $\bOIE$ is complete.

\TT{Step II. Proof for the $\F$(2nd element) of $\oie_i$ and $\oie_j$}

In Step 2 of Definition~\ref{def:OPs.Add},
according to Property~\ref{prop:oieOP.fsble.isom},
\begin{align}
    & \fFeasibleIntvlTwoTplTS{\oieS}{\idxT_1} \stackrel{\CAL{M}}{\cong} \fFeasibleIntvlTwoTplTS{\oieS}{\idxT_2}.
\end{align}

If
\begin{align}
    \begin{aligned}
        & \fFeasibleIntvlTwoTplTS{\oieS}{\idxT_1} = \emptyset \\
        \land\ & \fFeasibleIntvlTwoTplTS{\oieS}{\idxT_2} = \emptyset,
    \end{aligned}
\end{align}
then according to Definition~\ref{def:OPs.Add},
\begin{align}
    \oie_i = \oie_j = \voidError.
\end{align}
According to Property~\ref{prop:OPs.Permu.void},
\begin{align}
    \oie_i \stackrel{\CAL{M}}{\sim} \oie_j.
\end{align}
this concludes the proof for this conditional branch.

Otherwise, neither is $\emptyset$.
The process goes to step 3,
and we use a $\bDomainFilterTwoTpl$ instance $(\alpha, \beta)$ to filter
the two isomorphic $\bFeasibleIntvlTwoTplTS$ instances.
Then we assign two results to $\F_{\oie_i}$ and $\F_{\oie_j}$:
\begin{align}
    & \F_{\oie_i} = \fDomainFilteredSubTwoTupleTS{\fFeasibleIntvlTwoTplTS{\oieS}{\idxT_1}}{\alpha}{\beta}, \\
    & \F_{\oie_j} = \fDomainFilteredSubTwoTupleTS{\fFeasibleIntvlTwoTplTS{\oieS}{\idxT_2}}{\alpha}{\beta}.
\end{align}

If
\begin{align}
    \F_{\oie_i} = \F_{\oie_j} = \emptyset,
\end{align}
then according to Definition~\ref{def:OPs.Add},
\begin{align}
    \oie_i = \oie_j = \voidError.
\end{align}
According to Property~\ref{prop:OPs.Permu.void},
\begin{align}
    \oie_i \stackrel{\CAL{M}}{\sim} \oie_j.
\end{align}
this concludes the proof for this conditional branch.

Otherwise, both $\F_{\oie_i}$ and $\F_{\oie_j}$ are not $\emptyset$,
according to the permutation isomorphism invariance in Definition~\ref{def:OPs.Add.DomainFilteredSubTwoTplTS},
it can be concluded that
\begin{align}
    \begin{aligned}
        & \fFeasibleIntvlTwoTplTS{\oieS}{\idxT_1} \stackrel{\CAL{M}}{\cong} \fFeasibleIntvlTwoTplTS{\oieS}{\idxT_2} \Rightarrow \\
        & \qquad \F_{\oie_i} \stackrel{\CAL{M}}{\cong} \F_{\oie_j}.
    \end{aligned}
\end{align}
The proof for $\F$(2nd element) of $\bOIE$ is complete.

\BF{Step III. Proof for the $\I$(3rd element) of $\oie_i$ and $\oie_j$}

The forms of $\I$(3rd element) of $\oie_i$ and $\oie_j$ are as following
\begin{align}
    & \I_{\oie_i} = \bigg\{\ \fBoundTwoT{\TwoTplT}\ \bigg|\ \TwoTplT \in \F_{\oie_i} \ \bigg\}, \\
    & \I_{\oie_j} = \bigg\{\ \fBoundTwoT{\TwoTplT}\ \bigg|\ \TwoTplT \in \F_{\oie_j} \ \bigg\}.
\end{align}

We have already proved that $\F_{\oie_i} $ and $\F_{\oie_j}$ are isomorphic via a permutation matrix $\CAL{M}$.
Thus, there exists a bijection preserving permutational equivalence between their members.
This means that
\begin{align}
    \begin{aligned}
        & \forall \TwoTplT_{\lambda} \in \F_{\oie_i},\ \exists \TwoTplT_{\mu} \in \F_{\oie_j}: \\
        & \qquad \TwoTplT_{\lambda} \stackrel{\CAL{M}}{\sim} \TwoTplT_{\mu},
    \end{aligned}
\end{align}
and this correspondence is bijective.

According to the permutation invariance in Definition~\ref{def:oie.BoundTwoTuple},
for every corresponding pair $\TwoTplT_{\lambda}$ and $\TwoTplT_{\mu}$ of $\F_{\oie_i}$ and $\F_{\oie_j}$,
\begin{align}
    \begin{aligned}
        & \TwoTplT_{\lambda} \stackrel{\CAL{M}}{\sim} \TwoTplT_{\mu} \Rightarrow \\
        & \qquad \fBoundTwoT{\TwoTplT_{\lambda}} = \fBoundTwoT{\TwoTplT_{\mu}}.
    \end{aligned}
\end{align}
Then we can conclude that
\begin{align}
    & \bigg\{\ \fBoundTwoT{\TwoTplT}\ \bigg|\ \TwoTplT \in \F_{\oie_i} \ \bigg\} = \\
    & \qquad \bigg\{\ \fBoundTwoT{\TwoTplT}\ \bigg|\ \TwoTplT \in \F_{\oie_j} \ \bigg\}, \notag
\end{align}
that is $\CAL{I}_{\oie_i} = \CAL{I}_{\oie_j}$.
The proof for $\I$(3rd element) of $\bOIE$ is complete.

\BF{Step IV. Proof for the $\A$(4th element) of $\oie_i$ and $\oie_j$}

In Step 1 of Definition~\ref{def:OPs.Add},
the $\A$(4th element) of the result $\bOIE$ instance is set to the union of all $\A$(4th elements) of the operands:
\begin{align}
    \A_{\oie_{result}} = \bigcup_{k=1}^n \A_{\oie_k}.
\end{align}
For the two Complete Sequence Addition operations with different index orders, we have:
\begin{align}
    \A_{\oie_{i}} = \bigcup_{k=1}^n \A_{\oie_{i_k}},\ \A_{\oie_{j}} = \bigcup_{k=1}^n \A_{\oie_{j_k}}.
\end{align}

For finite $\bOIES$ instance
\begin{align}
    \oieS = \{\ \oie_1,\ \oie_2,\ \cdots,\ \oie_n\ \},
\end{align}
since $\idxT_1 = \{ i_1, i_2, \cdots, i_n \}$ and $\idxT_2 = \{ j_1, j_2, \cdots, j_n \}$ are permutationally equivalent, \\
$\{ \oie_{i_1}, \oie_{i_2}, \cdots, \oie_{i_n} \}$ and $\{ \oie_{j_1}, \oie_{j_2}, \cdots, \oie_{j_n} \}$
are permutationally equivalent, as they both contain exactly the same elements from $\oieS$ (just may be in different orders).
We have:
\begin{align}
    \A_{\oie_{i}} = \bigcup_{k=1}^n \A_{\oie_{i_k}},\ \A_{\oie_{j}} = \bigcup_{k=1}^n \A_{\oie_{j_k}}.
\end{align}

Since set union is a commutative and idempotent operation that is independent of the order of its operands, we have:
\begin{align}
    \bigcup_{k=1}^n \A_{\oie_{i_k}} = \bigcup_{k=1}^n \A_{\oie_{j_k}} = \bigcup_{k=1}^n \A_{\oie_{k}}.
\end{align}
Therefore, we can conclude that
\begin{align}
    \A_{\oie_{i}} = \A_{\oie_{j}}.
\end{align}
The proof for $\A$(4th element) of $\bOIE$ is complete.

Now, the proof for case (2) is complete.

We have completed the proofs for cases (1) and (2) above.
Since (1) and (2) cover all possible cases, we have completed the proof.

\end{proof}
\fi
        \subsubsection{Permutation Equivalence relation on Results of Complete Sequence Multiplication}\label{subsubsec:ops.permu.multi}
        \ifincludeFile
In this subsubsection, we will prove that there need not exist a permutational equivalence relation
between the results of the same operands arranged in different orders under Complete Sequence Multiplication.

\begin{pty}[\BF{the permutation equivalence relation between the results of the same
operands arranged in different orders under Complete Sequence Multiplication.}]\label{pty:OPs.Multi.Permu}\EBL

Suppose there is a non-empty finite $\bOIES$ instance
\begin{align}
    \oieS = \{\oie_1,\ \oie_2,\ \cdots,\ \oie_n\},
\end{align}
and two $\bIdxT$ instances with length $n$
\begin{align}
    & \idxT_1 = (i_1,\ i_2,\ i_3,\ \cdots,\ i_n), \\
    & \idxT_2 = (j_1,\ j_2,\ j_3,\ \cdots,\ j_n).
\end{align}
Perform two $\otimes$ operations
\begin{align}
    & \oie_i = \otimes(\oie_{i_1}, \oie_{i_2}, \cdots, \oie_{i_n}), \\
    & \oie_j = \otimes(\oie_{j_1}, \oie_{j_2}, \cdots, \oie_{j_n}).
\end{align}
Then it is not always true that $\oie_i$ and $\oie_j$ are permutationally equivalent.
\end{pty}

\begin{proof}[\BF{Proof of Property~\ref{pty:OPs.Multi.Permu}}]\label{proof:OPs.Multi.Permu}\EBL

We proceed by considering two cases.

\BF{(1) The situation where $\bVoidError$ instance exists in $\oieS$ or the intersection of all $\A$ across all operands isn't $\emptyset$.}

Assume that the $\bOIES$ instance $\oieS$ contains an $\bVoidError$ instance
or the intersection of the $\A$(4th elements) of all operands is non-empty.
Then, by Step 1 of Definition~\ref{def:Ops.Multi},
the result of the Complete Sequence Multiplication for any index order
is set to $\voidError$.
By Property~\ref{prop:OPs.Permu.void}, $\voidError$ is permutationally equivalent via any permutation matrix to itself.
Therefore, in this case, the two results of Complete Sequence Multiplication are permutationally equivalent.

This proves that the two results of Complete Sequence Multiplication
are permutationally equivalent under these two conditions.

\BF{(2) The situation where $\voidError$ doesn't exist in $\oieS$ and the intersection of all $\A$ across all operands is $\emptyset$.}

In Step 2, according to Property~\ref{prop:oieOP.fsble.isom}
\begin{align}
    & \fFeasibleIntvlTwoTplTS{\oieS}{\idxT_1} \stackrel{\CAL{M}}{\cong} \fFeasibleIntvlTwoTplTS{\oieS}{\idxT_2}.
\end{align}

If
\begin{align}
    \begin{aligned}
        & \fFeasibleIntvlTwoTplTS{\oieS}{\idxT_1} = \emptyset \\
        \land\ & \fFeasibleIntvlTwoTplTS{\oieS}{\idxT_2} = \emptyset,
    \end{aligned}
\end{align}
then according to Definition~\ref{def:Ops.Multi},
\begin{align}
    \oie_i = \oie_j = \voidError,
\end{align}
according to Property~\ref{prop:OPs.Permu.void},
\begin{align}
    \oie_i \stackrel{\CAL{M}}{\sim} \oie_j.
\end{align}
this concludes the proof for this conditional branch.

Otherwise, neither is $\emptyset$.
The process goes to step 3,

We obtain two $\bCompleteAscOrderFilterSubTwoTplTS$ instances of these two $\bFeasibleIntvlTwoTplTS$ instances,
and then assign them to
$\F_{\oie_i}$ and $\F_{\oie_j}$
\begin{align}
    & \F_{\oie_i} = \fCompleteAscOrderFilterSubTwoTplTS{\fFeasibleIntvlTwoTplTS{\oieS}{\idxT_1}}, \\
    & \F_{\oie_j} = \fCompleteAscOrderFilterSubTwoTplTS{\fFeasibleIntvlTwoTplTS{\oieS}{\idxT_2}}.
\end{align}

According to Definition~\ref{def:OPs.Multi.CompleteAscOrderFilteredSubTwoTplTS},
the permutation isomorphism between $\fFeasibleIntvlTwoTplTS{\oieS}{\idxT_1}$ and $\fFeasibleIntvlTwoTplTS{\oieS}{\idxT_2}$
does not guarantee that $\F_{\oie_i}$ and $\F_{\oie_j}$ are permutationally equivalent.
Thus, in situation (2), permutational equivalence need not hold.

Combining the conclusions of (1) and (2), we complete the proof.

\end{proof}
\fi
    \subsection{Natural Complete Sequence Addition}\label{subsec:OPs.add.nat}\EBL
    \ifincludeFile
\indent
Complete Sequence Addition supports modeling concurrent events within a time domain.
In cases where all participating $\bOIE$ instances share the same largest common time domain,
we can simplify the notation of Complete Sequence Addition without altering the operational logic
by using that largest domain.

\begin{defi}[\BF{Natural Complete Sequence Addition of members of a finite non-empty $\bOIES$ instance excluding $\emptyset$ under an index order}]\label{def:OPs.Add.Nat}\EBL

Suppose there is a finite non-empty $\bOIES$ instance excluding $\emptyset$, denoted as $\oieS$,
containing $n(n \in \mathbb{N}^+, n > 1)$ elements in the ordered enumeration form
\begin{align}
    \oieS = \{\oie_1, \oie_2, \cdots, \oie_n\},
\end{align}
and an $\bIdxT$ instance with length $n$
\begin{align}
    \idxT = ( \idx_1, \idx_2, \cdots, \idx_n ).
\end{align}

For any $\bOIE$ instance $\oie_\lambda$, let
\begin{align}
    \pi_1^{\min}(\oie_\lambda) = \min\left\{ \pi_1(2tuple_{\theta}) \mid 2tuple_{\theta} \in \CAL{I}_{\oie_\lambda} \right\},
\end{align}
where $\pi_1(2tuple_{\theta})$ denotes the first projection(get the 1st element) of $2tuple_{\theta}$, and
\begin{align}
    \pi_2^{\max}(\oie_\lambda) = \max\left\{ \pi_2(2tuple_{\theta}) \mid 2tuple_{\theta} \in \CAL{I}_{\oie_\lambda} \right\}.
\end{align}
where $\pi_2(2tuple_{\theta})$ denotes the second projection(get the 2nd element) of $2tuple_{\theta}$.

If
\begin{align}
    \begin{aligned}
        & \exists (A, B), \forall i \in [1, n]: \\
        & \qquad \oie_i \ne \voidError\ \land\ \pi_1^{\min}(\oie_i) = A\ \land\ \pi_2^{\max}(\oie_i) = B,
    \end{aligned}
\end{align}
then by Definition~\ref{def:OPs.Add}, the following Complete Sequence Addition
\begin{align}
    {\oplus|_{A}^{B}}(\oie_{\idx_1},\ \oie_{\idx_2},\ \cdots,\ \oie_{\idx_n})
\end{align}
is called ``Natural Complete Sequence Addition'', denoted as ``$\oplus$'', and the expression can be simplified to
\begin{align}
    \oplus(\oie_{\idx_1},\ \oie_{\idx_2},\ \cdots,\ \oie_{\idx_n}).
\end{align}
\end{defi}

Natural Complete Sequence Addition is a special case of $\oplus|_{\alpha}^{\beta}$
that eliminates subjective intervention:
It directly adopts the shared natural timestamp boundaries of all participating $\bOIE$ instances as the filtering domain,
avoiding the subjectivity introduced by manual domain specification.

The $\oplus$ fully inherits the core semantics of $\oplus|_{\alpha}^{\beta}$ and
is still applied to model concurrent events with equal opportunity within a shared time domain.
It only imposes a naturalized and standardized qualification on the filtering domain
without altering the core rules of the operation.
Meanwhile, it provides a concise form for the standardized modeling of real-world scenarios,
which fits well with concurrent scenarios that inherently share a unified time window, such as the 100 metres race.
For such scenarios, event modeling can be completed directly through the simplified expression
without additional specification of the filtering domain.

From a formal perspective, the removal of the filtering domain in $\oplus$
results in a notational form identical to that of $\times$.
This unification endows the two sequence operations with a high degree of mathematical elegance.
\fi

\section{Algebraic Properties of Sequence Operations}\label{sec:algbrProp}\EBL
\ifincludeFile
Most existing studies investigating algebraic properties of event ordering adopt lattice-theoretic methods
to describe sequentiality of events through the analysis of sequential relation~\cite{birkhoff1940lattice}.
In contrast, this paper establishes its theoretical framework directly based on group theory.
This selection stems from the intrinsic symmetry of algebraic structures built
via $\bOIE$ and sequence operations, such as the permutational equivalence elaborated in Subsection~\ref{subsec:ops.permu},
along with the symmetry inherent in practical application scenarios.

\begin{itemize}
\item \textbf{Section~\ref{subsec:algbrProp.closure}:}
We prove that both Complete Sequence Addition and Complete Sequence Multiplication
satisfy the closure property:
the result of any operation, including edge cases involving $\voidError$,
is always an $\bOIE$ instance.
\item \textbf{Section~\ref{subsec:algbrProp.commutative}:}
We establish the general non-commutativity of both operations,
with a strict distinction between set-theoretic equality (Definition~\ref{defi:OPs.oie.Equal})
and permutational equivalence (Definition~\ref{def:OPs.Permu},
Property~\ref{prop:OPs.Permu.void}) for $\bOIE$ instances.
\item \textbf{Section~\ref{subsec:algbrProp.orbitSpace}:}
We analyze the orbit spaces of the two sequence operations:
Complete Sequence Addition yields a single-orbit space via permutational equivalence,
while Complete Sequence Multiplication generates multiple distinct orbits
under strict temporal ordering constraints.
\end{itemize}
\fi
    \subsection{Closure}\label{subsec:algbrProp.closure}\EBL
    \ifincludeFile
In this subsection, we discuss the closure properties of the two sequence operations.

\begin{pty}[\BF{Complete Sequence Addition and Complete Sequence Multiplication both satisfy closure}]\label{prop:closedness}
\end{pty}

\begin{proof}[\BF{Proof of Property~\ref{prop:closedness}}]\EBL

According to Definition~\ref{def:OPs.Add} and Definition~\ref{def:Ops.Multi},
both Complete Sequence Addition and Complete Sequence Multiplication return an $\bOIE$ instance as their result.
The result is either the unique $\bVoidError$ instance $\voidError$ with the canonical form
\begin{align}
    \left(\ (),\ \emptyset,\ \emptyset,\ \emptyset\ \right),
\end{align}
or a non-$\voidError$ instance with the form
\begin{align}
    \left( \C, \F, \I, \A \right)
\end{align}
in which
\begin{align}
    \Dim{\C} \geq 0 \land \CARDI{\F} > 0 \land \CARDI{\I} > 0 \land \CARDI{\A} > 0.
\end{align}
Since $\voidError$ is also an instance of $\bOIE$,
it follows directly from these two definitions that they satisfy the closure property.
\end{proof}
\fi
    \subsection{Commutativity}\label{subsec:algbrProp.commutative}\EBL
    \ifincludeFile
In this subsection, we discuss the commutativity of the two sequence operations.

\begin{pty}[\BF{Complete Sequence Addition and Complete Sequence Multiplication are non-commutative in general}]\label{prop:OPs.Commutativity}
\end{pty}

\begin{proof}[\BF{Proof of Property~\ref{prop:OPs.Commutativity}}]\EBL
Suppose there exists an $\bOIES$ instance $\oieS$ with cardinality $n > 0$
\begin{align}
    \oieS = \{\ \oie_1,\ \oie_2,\ \oie_3, \cdots, \oie_n\},
\end{align}
and two $\bIdxT$ instances with length $n$
\begin{align}
    & \idxT_i = (\ i_1,\ i_2,\ i_3,\ \cdots,\ i_n\ ), \\
    & \idxT_j = (\ j_1,\ j_2,\ j_3,\ \cdots,\ j_n\ )
\end{align}
satisfying
\begin{align}
    \begin{aligned}
        & \idxT_i \ne \idxT_j \\
        \land\ & \exists\ a\ permutation\ matrix\ \CAL{M}: \idxT_i \stackrel{\CAL{M}}{\sim} \idxT_j,
    \end{aligned}
\end{align}
and a $\bDomainFilterTwoTpl$ instance $(\alpha, \beta)$.

\BF{(1) Complete Sequence Addition}

Let
\begin{align}
    & \oie_i =  {\oplus|_{\alpha}^{\beta}}(\oie_{i_1},\ \oie_{i_2},\ \cdots,\ \oie_{i_n}), \\
    & \oie_j =  {\oplus|_{\alpha}^{\beta}}(\oie_{j_1},\ \oie_{j_2},\ \cdots,\ \oie_{j_n}).
\end{align}
    According to Property~\ref{pty:OPs.Add.Permu}, there exists a permutation matrix $\CAL{M}$
    \begin{align}
        \oie_i \stackrel{\CAL{M}}{\sim} \oie_j.
    \end{align}

    If $\oie_i = \voidError$, then $\oie_j = \voidError$ as well.
    Thus, in this condition
    \begin{align}
        \oie_i = \oie_j = \voidError.
    \end{align}
Otherwise, if $\oie_i \ne \voidError$, then $\oie_j \ne \voidError$ as well.
We have imposed the condition that $\idxT_i \ne \idxT_j$,
from which it follows that $\C_{oie_i} \neq \C_{oie_j}$.
According to Definition~\ref{defi:OPs.oie.Equal}, we can conclude that
    \begin{align}
        \oie_i \ne \oie_j.
    \end{align}
    Thus, we can conclude that Complete Sequence Addition is non-commutative in general.

\BF{(2) Complete Sequence Multiplication}

Let
    \begin{align}
        & \oie_i = {\otimes}(\oie_{i_1},\ \oie_{i_2},\ \cdots,\ \oie_{i_n}), \\
        & \oie_j = {\otimes}(\oie_{j_1},\ \oie_{j_2},\ \cdots,\ \oie_{j_n}).
    \end{align}
According to Property~\ref{pty:OPs.Multi.Permu},
    \begin{align}
        \idxT_i \stackrel{\CAL{M}}{\sim} \idxT_j
    \end{align}
can not guarantee that $\oie_i$ and $\oie_j$ are permutationally equivalent,
it follows that the equality of the two sequence operation results cannot be guaranteed either.
Thus, we can conclude that Complete Sequence Multiplication is non-commutative in general.

Combining the conclusions of (1) and (2), we complete the proof.

\end{proof}
\fi
    \subsection{Orbit Space of Complete Sequence Addition and Complete Sequence Multiplication}\label{subsec:algbrProp.orbitSpace}\EBL
    \ifincludeFile
In this subsection, we will discuss the orbit space of
Complete Sequence Addition and Complete Sequence Multiplication.

\fi
        \subsubsection{Orbit Space of Complete Sequence Addition}\label{subsubsec:algbrProp.orbitSpace.add}\EBL
        \ifincludeFile

\begin{defi}[\BF{Orbit space of Complete Sequence Addition}]\label{defi:OPS.orbitSpace.add}\EBL

Let $\oieS$ be a finite $\bOIES$ instance with cardinality $0 < n < \infty$
\begin{align}
    \oieS = \{\ \oie_1,\ \oie_2,\ \cdots, \oie_n \}.
\end{align}
Let $\idxT\CAL{S}$ be the set of all $\bIdxT$ instances of length $n$
\begin{align}
    \idxT\CAL{S} = \left\{ (\ \idx_1,\ \idx_2,\ \cdots,\ \idx_n\ ) \Big| \{\idx_1,\ \idx_2,\ \cdots,\ \idx_n\} = \{1,\ 2,\ \cdots,\ n\} \right\}.
\end{align}
Let $(\alpha, \beta)$ be a domain-filtering 2-tuple, and let ${\oplus|_{\alpha}^{\beta}}$ denote Complete Sequence Addition.
Let $\stackrel{\CAL{M}}{\sim}$ denote the permutational equivalence relation between two $\bOIE$ instances (Subsubsection~\ref{subsubsec:ops.permu.oie}).

The orbit space of Complete Sequence Addition for $\oieS$, denoted as $\CAL{O}(oieS, {\oplus|_{\alpha}^{\beta}})$,
is the set of equivalence classes of operation results under permutational equivalence, with the form
\begin{align}
    & \OrbitAdd = \left\{ [\oie_{res}]_{\sim} \Big| \exists \idxT \in \idxT\CAL{S}: \oie_{res} = {\oplus|_{\alpha}^{\beta}}(\oie_{\idx_1},\ \oie_{\idx_2},\ \cdots,\ \oie_{\idx_n})\right\}
\end{align}
where $[\oie]_{\sim}$ denotes the equivalence class of $\bOIE$ instances under permutational equivalence.
\end{defi}

\begin{pty}[\BF{Orbit space of Complete Sequence Addition has only one element}]\label{pty:OPS.orbitSpace.add.oneElem}
\end{pty}

\begin{proof}[\BF{Proof of Property~\ref{pty:OPS.orbitSpace.add.oneElem}}]\EBL

Let $\oieS$ be a finite $\bOIES$ instance with n elements $(0 < n < \infty)$
and let $(\alpha, \beta)$ be a $\bDomainFilterTwoTpl$ instance.

Denote by $\idx\CAL{T}\CAL{S}$ the set of all $\bIdxT$ instances of length n
that are permutations of $\{1, 2, 3, \cdots, n\}$
\begin{align}
    \begin{aligned}
        & \forall \idxT_1, \idxT_2(\idxT_1 \ne \idxT_2) \ \in \idx\CAL{T}\CAL{S}, \\
        & \quad \exists\ a\ permutation\ matrix\ \CAL{M}: \\
        & \qquad\qquad \idxT_1 \stackrel{\CAL{M}}{\sim} \idxT_2.
    \end{aligned}
\end{align}
Let $\idxT_1$ and $\idxT_2$ be
\begin{align}
    & \idxT_1 = (i_1, i_2, \cdots, i_n), \\
    & \idxT_2 = (j_1, j_2, \cdots, j_n).
\end{align}
And let
\begin{align}
    & \oie_{res_1} = {\oplus|_{\alpha}^{\beta}}(\oie_{i_1},\ \oie_{i_2},\ \cdots,\ \oie_{i_n}), \\
    & \oie_{res_2} = {\oplus|_{\alpha}^{\beta}}(\oie_{j_1},\ \oie_{j_2},\ \cdots,\ \oie_{j_n}).
\end{align}
Then, Property~\ref{pty:OPs.Add.Permu} guarantees that
\begin{align}
    \oie_{res_1} \stackrel{\CAL{M}}{\sim} \oie_{res_2}
\end{align}
If for some $\bIdxT$ instance, the result is $\voidError$,
then by Property~\ref{pty:OPs.Add.Permu},
the result for every $\bIdxT$ instance is $\voidError$.
If for some $\bIdxT$ instance, the result is a non‑$\voidError$ $\bOIE$ instance,
then by Property~\ref{pty:OPs.Add.Permu} every other result is permutationally equivalent to it,
hence again all results lie in a single equivalence class.

this concludes the proof.
\end{proof}
\fi
        \subsubsection{Orbit Space of Complete Sequence Multiplication}\label{subsubsec:algbrProp.orbitSpace.multi}\EBL
        \ifincludeFile
\begin{defi}[\BF{Orbit space of Complete Sequence Multiplication}]\label{defi:OPS.orbitSpace.mul}\EBL

Let $\oieS$ be a finite $\bOIES$ instance with cardinality $0 < n < \infty$
\begin{align}
    \oieS = \{\ \oie_1,\ \oie_2,\ \cdots, \oie_n\ \}.
\end{align}
Let $\idxT\CAL{S}$ be the set of all $\bIdxT$ instances of length $n$
    \begin{align}
        \idxT\CAL{S} = \left\{ (\ \idx_1,\ \idx_2,\ \cdots,\ \idx_n\ ) \Big| \{\idx_1,\ \idx_2,\ \cdots,\ \idx_n\} = \{1,\ 2,\ \cdots,\ n\} \right\}.
    \end{align}
Let ${\otimes}$ denote the Complete Sequence Multiplication operation (Definition~\ref{def:Ops.Multi}).
Let $\stackrel{\CAL{M}}{\sim}$ denote the permutational equivalence relation between two $\bOIE$ instances (Subsubsection~\ref{subsubsec:ops.permu.oie}).

The orbit space of Complete Sequence Multiplication for $\oieS$, denoted as $\CAL{O}(oieS, {\otimes})$ ,
is the set of equivalence classes of operation results under permutational equivalence, with the form
    \begin{align}
        & \OrbitMulti = \left\{ [\oie_{res}]_{\sim} \Big| \exists \idxT \in \idxT\CAL{S}: \oie_{res} = {\otimes}(\oie_{\idx_1},\ \oie_{\idx_2},\ \cdots,\ \oie_{\idx_n})\right\},
    \end{align}
where $[\oie]_{\sim}$ denotes the equivalence class of $\bOIE$ instances under permutational equivalence.
\end{defi}

\begin{pty}[\BF{Orbit space of Complete Sequence Multiplication has at least one element}]\label{pty:OPS.orbitSpace.multi.multiElem}
\end{pty}

\begin{proof}[\BF{Proof of Property~\ref{pty:OPS.orbitSpace.multi.multiElem}}]\EBL

Let $\oieS$ be a finite $\bOIES $instance with n elements $(0 < n < \infty)$.

Denote by $\idx\CAL{T}\CAL{S}$ the set of all $\bIdxT$ instances of length n
that are permutations of $\{1, 2, \cdots, n\}$
\begin{align}
    \begin{aligned}
        & \forall \idxT_1, \idxT_2(\idxT_1 \ne \idxT_2) \in \idx\CAL{T}\CAL{S}, \\
        & \qquad \exists\ a\ permutation\ matrix\ \CAL{M}: \idxT_1 \stackrel{\CAL{M}}{\sim} \idxT_2.
    \end{aligned}
\end{align}
Let $\idxT_1$ and $\idxT_2$ be
    \begin{align}
        & \idxT_1 = (i_1, i_2, \cdots, i_n), \\
        & \idxT_2 = (j_1, j_2, \cdots, j_n).
    \end{align}
And let
    \begin{align}
        & \oie_{res_1} = {\otimes}(\oie_{i_1},\ \oie_{i_2},\ \cdots,\ \oie_{i_n}), \\
        & \oie_{res_2} = {\otimes}(\oie_{j_1},\ \oie_{j_2},\ \cdots,\ \oie_{j_n}).
    \end{align}
Then, Property~\ref{pty:OPs.Multi.Permu} guarantees that
\begin{align}
    \oie_{res_1}\ and\ \oie_{res_2}\ need\ not\ be\ permutationally\ equivalent.
\end{align}
If for all $\bIdxT$ instances, the result is $\voidError$
, then all results lie in a single equivalence class.
Thus, the orbit space has only one element.
If for more than one $\bIdxT$ instance, their results are non‑$\voidError$ $\bOIE$ instances,
then by Property~\ref{pty:OPs.Multi.Permu}, they are not permutationally equivalent,
hence all results lie in more than one equivalence class.

this concludes the proof.
\end{proof}
\fi

\section{The Implementation of Optional Intervals Event}\label{sec:impl}\EBL
\ifincludeFile
In computer science,
formal models for concurrent system design, real-time task scheduling and temporal logic reasoning have traditionally adopted a global clock.
However, the inherent uncertainty of timing devices fundamentally undermines its reliability.
Even the most advanced atomic clocks cannot eliminate inherent measurement errors,
leading to an unbridgeable discrepancy between observed temporal relationships and objective physical realities.
This section presents a formal definition for the executable implementation of $\bOIE$ instances.
We further construct a projection mechanism that maps the results of sequential operations to general finite n-ary operations,
and prove that the resulting operation set is unique within each equivalence class of the orbit space.

This section is organized as follows:
\begin{itemize}
    \item Section~\ref{subsec:impl.oie}
    formalizes two canonical implementation modes for $\bOIE$ instances:
    Type-1 implementation for atomic events (selecting a single interval from $\I$),
    and Type-2 implementation for composite events
    (selecting a detailed schedule tuple from $\F$ to determine sub-event intervals).
    \item Section~\ref{subsec:impl.res2op}
    establishes the projection mechanism that maps sequence operation results to sets of general $n$-ary finitary operations,
    defining how interval-based $\bOIE$ structures translate into executable operation sequences based on ending timestamp groupings.
    \item Section~\ref{subsec:impl.restriction}
    proves the uniqueness properties of operation sets within orbit space equivalence classes,
    establishing the algebraic constraints that ensure deterministic projection outcomes
    for both $\oplus|_{\alpha}^{\beta}$ (permutation-invariant) and $\otimes$ (strictly ordered).
\end{itemize}
\fi
    \subsection{The Implementation of $\bOIE$ Instances}\label{subsec:impl.oie}\EBL
    \ifincludeFile
In contrast to the concreteness of $\bE$ instances,
a non-$\voidError$ $\bOIE$ instance acts as an abstract carrier
that stores the set of feasible execution intervals
for its corresponding $\bE$ instance.
The primary function of an $\bOIE$ instance is to
constrain the execution interval of its mapped $\bE$ instance.
This subsection formalizes two types of canonical implementations for $\bOIE$ instances,
distinguished by their granularity of temporal planning.

For the first type,
we use the $\I$ of an $\bOIE$ instance to determine the interval of the $\bE$ instance it maps to.

\begin{defi}[\BF{The 1st type of implementation of an $\bOIE$ instance}]\label{def:implementAndProps.implement1}\EBL

Given a non-$\voidError$ $\bOIE$ instance $\oie$
and an $\bE$ instance $\E$, such that
\begin{align}
    \oie \rightarrow \E.
\end{align}
Select an interval 2-tuple from $\I_{\oie}$(3rd element of $\oie$)
as the execution interval of $\E$.
This is called ``The first type of implementation of an $\bOIE$ instance''.
\end{defi}

The 1st type implementation only specifies the global start and end timestamps of the target $\bE$ instance,
without encoding any temporal details of internal sub-processes.
In order to further determine the specific details
within the execution interval of $\E$,
we propose the second type of implementation for the $\bOIE$ instance.

\begin{defi}[\BF{The 2nd type of implementation of an $\bOIE$ instance}]\label{def:implementAndProps.implement2}\EBL

Given a non-empty $\bOIES$ instance excluding $\voidError$ $\oieS$ in the form of
\begin{align}
    \oieS = \{\ \oie_1,\ \oie_2,\ \oie_3,\ \cdots,\ \oie_n\ \},
\end{align}
and an $\bIdxT$ instance
\begin{align}
    \IdxT = \{ i_1, i_2, i_3, \cdots, i_n \},
\end{align}
and a non-$\voidError$ $\bOIE$ instance $\oie_{res}$,
which is derived by a sequence operation ``$\boldsymbol{*}$'':
    \begin{align}
        & \oie_{res} = \boldsymbol{*}(\oie_{i_1}, \oie_{i_2}, \oie_{i_3}, \cdots, \oie_{i_n}),
    \end{align}
and an $\bE$ instance $\E$ mapped by $\oie_{res}$
\begin{align}
    \oie_{res} \rightarrow \E
\end{align}
which consists of $\E_1$, $\E_2$, $\cdots$, $\E_n$ and satisfies
\begin{align}
    \begin{aligned}
        & \oie_1 \rightarrow \E_1, \\
        & \oie_2 \rightarrow \E_2, \\
        & \cdots, \\
        & \oie_n \rightarrow \E_n.
    \end{aligned}
\end{align}
Select a $\bTwoTplT$ instance $\TwoTplT$
\begin{align}
    & \TwoTplT = (\ (x_{i_1}, y_{i_1}),\ (x_{i_2}, y_{i_2}),\ \cdots,\ (x_{i_n}, y_{i_n})\ )
\end{align}
from $\F_{\oie_{res}}$(2nd element of $\oie_{res}$).
Assign the interval $[x_{i_k}, y_{i_k})$ as the execution interval of the sub-event $\E_{i_k}$ mapped to $\oie_{i_k}$ for all $k \in [1, n]$.
This is called ``The second type of implementation of an $\bOIE$ instance''.
\end{defi}

The type-2 implementation schedules both the global time window
of the composite event and the execution intervals of all its sub-events,
enabling fine-grained temporal modeling.

For example, for a composed event,
the overall time is from 1:00 PM to 5:00 PM on a certain day.
It is composed of three sub-events, and their respective intervals are
\begin{align}
    \begin{aligned}
        & [ 1:00 \text{ PM}, 2:00 \text{ PM} ), \\
        & [ 2:00 \text{ PM}, 3:00 \text{ PM} ), \\
        & [ 4:00 \text{ PM}, 5:00 \text{ PM} ).
    \end{aligned}
\end{align}
It can be observed that the second type of implementation (Definition~\ref{def:implementAndProps.implement2})
can express that there is no sub-event execution in the interval $[ 3:00 \text{ PM}, 4:00 \text{ PM} )$.
If the first type of implementation (Definition~\ref{def:implementAndProps.implement1}) is used,
this information cannot be expressed.

Overall, The type-1 implementation is tailored for $\bAtomOIE$ instances (mapping to atomic, indivisible events),
while the type-2 implementation is designed for $\bCombOIE$ instances (mapping to composite events with hierarchical sub-structures).
\fi
    \subsection{Projection from Sequence Operations to General $n$-ary Finitary Operations}\label{subsec:impl.res2op}\EBL
    \ifincludeFile
Building on subsection~\ref{subsec:impl.oie},
this subsection establishes a projection that maps $\bOIE$ interval structures to $n$-ary finitary operations,
translating temporal constraints into executable orderings.

\begin{defi}[\textbf{Projection based on ending timestamps of the $\F$ of an $\bOIE$ instance obtained by a certain sequence operation to a set of $n$-ary finitary operations}]
    \label{def:implementAndProps.endTsProjection}\EBL

We formalize the projection from a sequence operation result ($\bOIE$ instance) to a set of $n$-ary finitary operations, structured as follows:

\noindent \textbf{I. Preconditions.}
Let the following objects be given (all notation consistent with Definitions~\ref{def:oie},~\ref{def:OPs.Add},~\ref{def:Ops.Multi}): 
\begin{enumerate}
    \item A finite set of \emph{operand variables}: $\OperandS = \{ opd_1, opd_2, \cdots, opd_n \}$ with $n \in \mathbb{N}^+, n > 1$;
    \item A finite set of \emph{events}: $\ES = \{ \E_1, \E_2, \cdots, \E_n \}$ with a fixed bijection $\ES \to \OperandS$ (each event maps to a unique operand bijectively);
    \item A finite $\bOIES$ instance: $\oieS = \{ \oie_1, \oie_2, \cdots, \oie_n \}$ with a fixed mapping $\oieS \to \ES$ (each $\bOIE$ instance maps to a unique event);
    \item An \emph{index tuple}: $\idxT = (i_1, i_2, \cdots, i_n)$ where $\{ i_1, i_2, \cdots, i_n \} = \{ 1, 2, \cdots, n \}$;
    \item A \emph{sequence operation}: $* \in \{ \oplus|_\alpha^\beta, \otimes \}$ (Complete Sequence Addition/Multiplication);
    \item A non-$\voidError$ $\bOIE$ result: $\oie_{res} = *(\oie_{i_1}, \oie_{i_2}, \cdots, \oie_{i_n})$ (projection is undefined when the result is $\voidError$);
    \item An $n$-ary finitary operator: $\odot$ for building operations, with an initial empty operation set $\OperationS = \emptyset$.
\end{enumerate}

\noindent \textbf{II. Construction Steps.}
We build $\OperationS$ by processing each $\TwoTplT_{cur} \in \F_{\oie_{res}}$ (The $\F$(2nd element) of ${\oie_{res}}$):
\begin{enumerate}
    \item \textbf{Group by the ending timestamps} \\
    \indent For the current $\bTwoTplT$ instance in $\F_{\oie_{res}}$
    \begin{align}
        \TwoTplT_{cur} = \big( (\TSs_1, \TSe_1), (\TSs_2, \TSe_2), \cdots, (\TSs_n, \TSe_n) \big),
    \end{align}
    let $t_1 < t_2 < \cdots < t_\lambda$($1 \leq \lambda \leq n$) be the \emph{distinct ending timestamps} in $\{ \TSe_1, \TSe_2, \cdots, \TSe_n \}$.
    For each $t_\theta \in \{t_1, t_2, \cdots, t_\lambda\}$, define:
    \begin{align}
        \Gamma_{t_\theta} = \big\{ i_k \mid \TSe_{i_k} = t_{\theta} \big\}, \quad \mathit{Card}_{t_\theta} = \CARDI{\Gamma_{t_\theta}}.
    \end{align}
    Let
    \begin{align}
        \OperandS_{t_\theta} = \{ opd_{i_k} \mid i_k \in \Gamma_{t_\theta} \}
    \end{align}
    be the corresponding operand subset.
    Then, we get two sets
    \begin{align}
        & \Gamma_{group} = \{ \Gamma_{t_1}, \Gamma_{t_2}, \cdots, \Gamma_{t_\lambda} \}, \\
        & \OperandS_{group} = \{ \OperandS_{t_1}, \OperandS_{t_2}, \cdots, \OperandS_{t_\lambda}\}.
    \end{align}

    \item \textbf{Generate intra-group permutations} \\
    \indent For each group ending timestamp $t_\theta \in \{t_1,t_2,\cdots,t_\lambda\}$,
    let $Perm_{t_\theta}$ be the set of all $\mathit{Card}_{t_\theta}!$ permutations of $\OperandS_{t_\theta}$ (all possible orders of operands sharing the same ending timestamp $t_\theta$).

    \item \textbf{Generate global order-respecting permutations} \\
    Valid global permutations are constructed by taking the Cartesian product across all intra-group permutation sets
    \begin{align}
        Perm = Perm_{t_1} \times Perm_{t_2} \times \cdots \times Perm_{t_\lambda}.
    \end{align}
    Each tuple $\pi \in Perm$ corresponds to a \emph{concatenated global permutation}
    \begin{align}
        \pi = \pi_1 \Vert \pi_2 \Vert \cdots \Vert \pi_\lambda\ (\pi_1 \in Perm_{t_1}, \pi_2 \in Perm_{t_2}, \cdots, \pi_\lambda \in Perm_{t_\lambda}),
    \end{align}
    where $\Vert$ denotes permutation concatenation (preserving group order).

    \item \textbf{Build expressions and deduplicate} \\
    \indent For each global permutation $\pi \in Perm$, construct the $n$-ary expression:
    \begin{align}
        \odot\big( \pi(1), \pi(2), \cdots, \pi(n) \big),
    \end{align}
    where $\pi(i)$ means the $i$-th item in $\pi$.
    Insert all such expressions into $\OperationS$, then remove \emph{syntactically duplicate expressions} (equality up to operand order, depending on $\odot$'s properties).
\end{enumerate}

\noindent \textbf{III. Projection Notation.}
The above construction is called ``\emph{The projection based on ending timestamps of $\mathcal{F}$}'', denoted by:
\begin{align}
    *(\oieS, \idxT) \xrightarrow[\ES, \OperandS, \odot]{\text{ascending order of } \bTSe} \OperationS.
\end{align}

\end{defi}

As an example of Definition~\ref{def:implementAndProps.endTsProjection},
consider an $\bOIE$ instance $\oie$ with a $\bTwoTplT$ instance in its $\F$ (2nd element):
\begin{align}
    \TwoTplT = ((0, 5), (6, 10), (8, 10), (11, 15), (12, 15), (13, 15)).
\end{align}
The corresponding operand set is $\OperandS = \{a_1, b_1, b_2, c_1, c_2, c_3\}$
mapped bijectively to the six tuples in the given order
\begin{align}
    \begin{aligned}
        & (0, 5) \leftrightarrow a_1, \\
        & (6, 10) \leftrightarrow b_1, \\
        & (8, 10) \leftrightarrow b_2, \\
        & (11, 15) \leftrightarrow c_1, \\
        & (12, 15) \leftrightarrow c_2, \\
        & (13, 15) \leftrightarrow c_3,
    \end{aligned}
\end{align}
and the $n$-ary finitary operator is $\odot$.
We now need to compute the expressions derived for $\TwoTplT$.

We construct the projection as follows:

\textbf{Step 1: Group by the ending timestamps.}
Extracting the 2nd items (ending timestamps) from each 2-tuple,
we obtain distinct timestamps $t_1 = 5 < t_2 = 10 < t_3 = 15$.
The index groups and corresponding $\bOperandS$ instances are:
\begin{itemize}
    \item $\Gamma_{5} = \{1\}$, $\OperandS_{5} = \{a_1\}$ (cardinality $1$);
    \item $\Gamma_{10} = \{2,3\}$, $\OperandS_{10} = \{b_1, b_2\}$ (cardinality $2$);
    \item $\Gamma_{15} = \{4,5,6\}$, $\OperandS_{15} = \{c_1, c_2, c_3\}$ (cardinality $3$).
\end{itemize}

\textbf{Step 2: Generate intra-group permutations.}
For each group, we enumerate all permutations of its operands:
\begin{itemize}
    \item $\text{Perm}_{5} = \{(a_1)\}$ ($1! = 1$ permutation);
    \item $\text{Perm}_{10} = \{(b_1, b_2), (b_2, b_1)\}$ ($2! = 2$ permutations);
    \item $\text{Perm}_{15} = \{(c_1, c_2, c_3), (c_1, c_3, c_2), (c_2, c_1, c_3), (c_2, c_3, c_1), (c_3, c_1, c_2), (c_3, c_2, c_1)\}$ ($3! = 6$ permutations).
\end{itemize}

\textbf{Step 3: Generate global order-respecting permutations.}
The valid global permutations are formed by the Cartesian product $\text{Perm}_{5} \times \text{Perm}_{10} \times \text{Perm}_{15}$,
concatenated while preserving the ascending order of timestamps $(5, 10, 15)$.
This yields $1 \times 2 \times 6 = 12$ distinct global permutations $\OperandS_{\TwoTplT}$:
\begin{align}
    \begin{aligned}
        & (a_1, b_1, b_2, c_1, c_2, c_3), && (a_1, b_1, b_2, c_1, c_3, c_2), && (a_1, b_1, b_2, c_2, c_1, c_3), \\
        & (a_1, b_1, b_2, c_2, c_3, c_1), && (a_1, b_1, b_2, c_3, c_1, c_2), && (a_1, b_1, b_2, c_3, c_2, c_1), \\
        & (a_1, b_2, b_1, c_1, c_2, c_3), && (a_1, b_2, b_1, c_1, c_3, c_2), && (a_1, b_2, b_1, c_2, c_1, c_3), \\
        & (a_1, b_2, b_1, c_2, c_3, c_1), && (a_1, b_2, b_1, c_3, c_1, c_2), && (a_1, b_2, b_1, c_3, c_2, c_1).
    \end{aligned}
\end{align}

\textbf{Step 4: Build expressions.}
For each global permutation $\pi$,
we construct the $n$-ary expression
\begin{align}
    \odot(\pi(1), \pi(2), \pi(3), \pi(4), \pi(5), \pi(6)).
\end{align}
Thus, the current resulting $\bOperationS$ instance is:
\begin{align}
    \begin{aligned}
        & \OperationS = \big\{ \\
        & \qquad \odot(a_1, b_1, b_2, c_1, c_2, c_3),\; \odot(a_1, b_1, b_2, c_1, c_3, c_2),\; \odot(a_1, b_1, b_2, c_2, c_1, c_3), \\
        & \qquad \odot(a_1, b_1, b_2, c_2, c_3, c_1),\; \odot(a_1, b_1, b_2, c_3, c_1, c_2),\; \odot(a_1, b_1, b_2, c_3, c_2, c_1), \\
        & \qquad \odot(a_1, b_2, b_1, c_1, c_2, c_3),\; \odot(a_1, b_2, b_1, c_1, c_3, c_2),\; \odot(a_1, b_2, b_1, c_2, c_1, c_3), \\
        & \qquad \odot(a_1, b_2, b_1, c_2, c_3, c_1),\; \odot(a_1, b_2, b_1, c_3, c_1, c_2),\; \odot(a_1, b_2, b_1, c_3, c_2, c_1) \\
        & \big\}.
    \end{aligned}
\end{align}

Thus,
\begin{align}
    \CARDI{\OperationS} = 1! \cdot 2! \cdot 3! = 12.
\end{align}
The algebraic structure enforces that $a_1$ (ending at $t=5$) must appear first,
followed by $b_1, b_2$ (ending at $t=10$) in either order,
and finally $c_1, c_2, c_3$ (ending at $t=15$) in any order,
reflecting the temporal precedence constraints encoded in the original $\bTwoTplT$ instance $\TwoTplT$.

To characterize the computational complexity of this projection,
we establish bounds on the cardinality of the resulting operation set.

\begin{pty}[\BF{The number of operation orders determined by projection on Complete Sequence Addition}]\label{prop:addOpNumber}\EBL

Let the following objects be given:

\begin{enumerate}
    \item \BF{A finite set of operands}
    $\OperandS = \{ opd_1, opd_2, \cdots, opd_n \}$ with $n \in \mathbb{N}^+, n > 1$.

    \item \BF{A finite set of events}
    $\ES = \{ \E_1, \E_2, \cdots, \E_n \}$ together with a bijection \\ $\ES \to \OperandS$.

    \item \BF{A set of $\bOIE$ instances}
    $\oieS = \{ \oie_1, \oie_2, \cdots, \oie_n \}$ together with a mapping \\ $\oieS \to \ES$.

    \item \BF{An index tuple}
    $\IdxT = (i_1, i_2, \cdots, i_n)$ where $\{i_1, i_2 \cdots, i_n\} = \{1, 2, \cdots, n\}$.

    \item \BF{Complete sequence operation}
    ${\oplus|_{\alpha}^{\beta}}$.
    Applying ${\oplus|_{\alpha}^{\beta}}$ to $\oieS$ and $\IdxT$ yields
    \begin{align}
        \oie_{res} = {\oplus|_{\alpha}^{\beta}}(\oie_{i_1}, \oie_{i_2}, \cdots, \oie_{i_n}).
    \end{align}
    We require $\oie_{res} \neq \voidError$; otherwise, the projection does not hold.

    \item \BF{An $n$-ary finitary operator}
    $\odot$ used to build expressions.

    \item \BF{A set of $n$-ary finitary operations} $\OperationS$
\end{enumerate}
By Definition~\ref{def:implementAndProps.endTsProjection}, the following projection holds:
\begin{align}
    {\oplus|_{\alpha}^{\beta}}(\oieS,\ \idxT)
    \;\xrightarrow[\ES,\ \OperandS,\ \odot]{\text{ascending order of $\bTSe$}}\;
    \OperationS.
\end{align}

Then the range of the cardinality of $\OperationS$ is
\begin{align}
    [ n,\ n! ]
\end{align}
\end{pty}

\begin{proof}[\BF{Proof of Property~\ref{prop:addOpNumber}}]\EBL

We prove the two bounds separately.

\noindent\textbf{Lower bound ($\geq n$):}

    By Definition~\ref{def:OPs.Add.DomainFilteredSubTwoTplTS}(domain-filtered subset),
    for each index $i \in \{1,\cdots,n\}$,
    there exists a $\TwoTplT_{\lambda} \in \F_{\oie_{res}}$ such that the 2nd element (the ending timestamp) of the $i$-th 2-tuple in $\TwoTplT_{\lambda}$ equals $\beta$.
    Therefore
        \begin{align}
            & \forall opd_i \in \OperandS,\ \exists \text{operation} \in \OperationS: opd_i\ \text{is the last operand in operation}.
        \end{align}
    These $n$ operation orders are distinct because their last operands are different.
    Hence
\begin{align}
    \CARDI{\OperationS} \geq n.
\end{align}

\noindent\textbf{Upper bound ($\leq n!$):}

Each element of $\text{operation}\mathcal{S}$ is an $n$-ary expression $\odot(\pi(1), \cdots, \pi(n))$, where $\pi$ is a permutation of the $n$ operands.
There are at most $n!$ distinct permutations of $n$ distinct operands.
Therefore
\begin{align}
    \CARDI{\OperationS} \leq n!.
\end{align}

\noindent Combining both bounds, we have:
\begin{align}
    n \leq |\text{operation}\mathcal{S}| \leq n!.
\end{align}

this concludes the proof.
\end{proof}

\begin{pty}[\BF{The number of operation orders determined by
projection
on Complete Sequence Multiplication
}]\label{prop:multiOpNumber}\EBL

Let the following objects be given:

\begin{enumerate}
    \item \textbf{A finite set of operands}
    $\OperandS = \{ opd_1, opd_2, \cdots, opd_n \}$ with $n \in \mathbb{N}^+, n > 1$.

    \item \textbf{A set of events}
    $\ES = \{ \E_1, \E_2, \cdots, \E_n \}$ together with a bijection $\ES \to \OperandS$.

    \item \textbf{A set of $\bOIE$ instances}
    $\oieS = \{ \oie_1, \oie_2, \cdots, \oie_n \}$ together with a mapping $\oieS \to \ES$.

    \item \textbf{An index tuple}
    $\IdxT = (i_1, i_2, \cdots, i_n)$ where $\{i_1, i_2 \cdots, i_n\} = \{1, 2, \cdots, n\}$.

    \item \textbf{Complete Sequence Multiplication}
    $\otimes$.
    Applying $\otimes$ to $\oieS$ and $\IdxT$ yields
    \begin{align}
        \oie_{res} = \otimes(\oie_{i_1}, \oie_{i_2}, \cdots, \oie_{i_n}).
    \end{align}
    We require $\oie_{res} \neq \voidError$; otherwise, the projection does not hold.

    \item \textbf{An $n$-ary finitary operator}
    $\odot$ used to build expressions.

    \item \BF{A set of $n$-ary finitary operations} $\OperationS$
\end{enumerate}
By Definition~\ref{def:implementAndProps.endTsProjection}, the following projection holds:
\begin{align}
\otimes(\oieS,\ \idxT)
\;\xrightarrow[\ES,\ \OperandS,\ \odot]{\text{ascending order of $\bTSe$}}\;
\OperationS.
\end{align}

Then the number of operand orders of the $n$-ary finitary operations is 1
\end{pty}

\begin{proof}[\BF{Proof of Property~\ref{prop:multiOpNumber}}]\EBL

According to Definition~\ref{def:Ops.Multi} and Definition~\ref{def:OPs.Multi.CompleteAscOrderFilteredSubTwoTplTS},
\begin{align}
    \begin{aligned}
        \makecell[l]{ \forall\ \TwoTpl_i,\ \TwoTpl_j \in \TwoTplT\ \land\ i < j: \\
        \ \ \ the\ 2nd\ item\ of\ \TwoTpl_i \leq the\ 1st\ item\ of\ \TwoTpl_j}.
    \end{aligned}
\end{align}
Therefore,
the ending timestamps (2nd items) of the intervals in $\TwoTplT$ must be arranged in non-decreasing order,
which determines that the n $\bE$ instances mapped by $\oie_{res}$
must be arranged in the order of the $\bOIE$ instances participating in $\otimes$,
so there is only one permutation, the number is 1.
\end{proof}

\begin{cor}[\BF{Uniqueness of sets of $n$-ary finitary operations within an equivalence class in orbit space}]\label{cor:uniqueOpSet}\EBL

By Definition~\ref{def:implementAndProps.endTsProjection},
given an operand set $\OperandS$ with cardinality $1 < n < \infty$,
an $\bES$ instance $\ES$ and an $\bOIES$ instance $\oieS$.
There exists $\bIdxT$ instances, for any $\bIdxT$ instance $\idxT$ in them,
there is an operation set $\OperationS$ with cardinality $0 < m < \infty$ for $n$-ary finitary operator $\odot$,
they satisfy
\begin{align}
    *(\oieS,\ \idxT)
\;\xrightarrow[\ES,\ \OperandS,\ \odot]{\text{ascending order of $\bTSe$}}\;
\OperationS,
\end{align}
where $*$ denotes either the Complete Sequence Addition $\oplus|_{\alpha}^{\beta}$
or the Complete Sequence Multiplication $\otimes$.

Let $\CAL{O}(\oieS, *)$ be the orbit space of the sequence operation $*$ on $\oieS$
(see Definition~\ref{defi:OPS.orbitSpace.add} and Definition~\ref{defi:OPS.orbitSpace.mul}).
Then, for each equivalence class $[\oie_{res}]_{\sim}$ in $\CAL{O}(\oieS, *)$,
all $\bOIE$ instances in $[\oie_{res}]_{\sim}$ yield the same set of $n$-ary finitary operations via the projection.

That is, for any $\oie_{res_1}, \oie_{res_2} \in [\oie_{res}]_{\sim}$,
the $\bOperationS$ instances are the same.
\end{cor}

\begin{proof}[\BF{Proof of Corollary~\ref{cor:uniqueOpSet}}]\EBL

Assume two non-$\voidError$ $\bOIE$ instances $\oie_{res_{1}}$ and $\oie_{res_{2}}$ belong to the same equivalence class in $\CAL{O}(\oieS,\ast)$.
By Definition~\ref{defi:OPS.orbitSpace.add} and Definition~\ref{defi:OPS.orbitSpace.mul}, $\oie_{res_{1}}$ and $\oie_{res_{2}}$ are permutationally equivalent.
By Definition~\ref{def:OPs.Permu}, it indicates that there exists a permutation matrix $\M$ such that
\begin{align}
    \C_{\oie_{res_{1}}} \cdot \M = \C_{\oie_{res_{2}}}, \quad
    \F_{\oie_{res_{1}}} \stackrel{\M}{\cong} \F_{\oie_{res_{2}}}, \quad
    \I_{\oie_{res_{1}}} = \I_{\oie_{res_{2}}}, \quad
    \A_{\oie_{res_{1}}} = \A_{\oie_{res_{2}}}.
\end{align}
The permutation isomorphism $\F_{\oie_{res_{1}}} \stackrel{\M}{\cong} \mathcal{F}_{\oie_{res_{2}}}$
means that there is a bijection
\begin{align}
    \sigma: \F_{\oie_{res_{1}}} \to \F_{\oie_{res_{2}}}
\end{align}
with $\sigma(\TwoTplT) = \TwoTplT \cdot \M$ for every $\TwoTplT \in \F_{\oie_{res_{1}}}$.

Now consider the projection defined in Definition~\ref{def:implementAndProps.endTsProjection}.
For each $\TwoTplT \in \F_{\oie_{res_{1}}}$,
the projection constructs a set of $n$-ary expressions $\odot(\pi(1),\cdots,\pi(n))$
based on the ending timestamps (the 2nd items) of the $2$-tuples in $\TwoTplT$.
Let us denote this set by $\OperationS_{\TwoTplT}$.
The final operation set $\OperationS_1$ for $\oie_{res_{1}}$ is the union of
all $\OperationS_{\TwoTplT}$ for $\TwoTplT \in \F_{\oie_{res_{1}}}$ (with duplicates removed)
\begin{align}
    \OperationS_1 = \bigcup_{\TwoTplT \in \F_{\oie_{res_{1}}}} (\OperationS_{\TwoTplT}).
\end{align}
Similarly, for $\oie_{res_{2}}$ we obtain $\OperationS_2$ from $\F_{\oie_{res_{2}}}$
\begin{align}
    \OperationS_2 = \bigcup_{\TwoTplT' \in \F_{\oie_{res_{2}}}} (\OperationS_{\TwoTplT'}).
\end{align}

Take an arbitrary $\TwoTplT \in \F_{\oie_{res_{1}}}$
\begin{align}
    \TwoTplT = \big( (\TSs_1,\TSe_1), (\TSs_2,\TSe_2), \cdots, (\TSs_n,\TSe_n) \big).
\end{align}
Its image under $\sigma$ is
\begin{align}
    \TwoTplT{'} = \sigma(\TwoTplT) = \TwoTplT \cdot \CAL{M}
     = \big( (\TSs_{\sigma(1)},\TSe_{\sigma(1)}), (\TSs_{\sigma(2)},\TSe_{\sigma(2)}), \cdots, (\TSs_{\sigma(n)},\TSe_{\sigma(n)}) \big),
\end{align}
where $\sigma$ is the permutation corresponding to $\CAL{M}$.
The ending timestamps(2nd items) in $\TwoTplT$ are the multiset $\{\TSe_1, \TSe_2, \cdots, \TSe_n\}$,
which is the same as in $\TwoTplT{'}$ because $\M$ merely reorders the pairs.
Let
\begin{align}
    t_1 < t_2 < \cdots < t_\lambda,\ 1 \leq \lambda \leq n
\end{align}
be the \emph{distinct ending timestamps} in $\{ \TSe_1, \TSe_2, \cdots, \TSe_n \}$,
Consequently, the grouping of indices by equal ending timestamps is preserved up to the permutation:
if for an ending timestamp $t_\theta \in \{t_1, t_2, \cdots, t_\lambda\}$,
for $\F_{\oie_{res_{1}}}$,
the set of indices with $t_\theta$ in $\TwoTplT$ is
\begin{align}
    \Gamma_{t_\theta},
\end{align}
then for $\F_{\oie_{res_{2}}}$, the set of indices with the same ending timestamp $t_\theta$ in $\TwoTplT{'}$ is
\begin{align}
    \sigma(\Gamma_{t_\theta}) = \{ \sigma(k) \mid k \in \Gamma_{t_\theta} \}.
\end{align}

Recall that the operands are associated with the $\bOIE$ instances via the fixed mappings $\oieS \to \ES \to \OperandS$.
For $\oie_{res_{1}}$, the tuple
\begin{align}
    \C_{\oie_{res_{1}}} = (\oie_{i_1}, \oie_{i_2}, \cdots, \oie_{i_n})
\end{align}
determines that the $k$-th($k \in [1, n]$) component of any $\TwoTplT \in \F_{\oie_{res_{1}}}$ corresponds to operand $opr_{i_k}$.
For $\oie_{res_{2}}$,
because $\C_{\oie_{res_{1}}} \cdot \M = \C_{\oie_{res_{2}}}$,
we have
\begin{align}
    \C_{\oie_{res_{2}}} = (\oie_{i_1^{'}}, \oie_{i_2^{'}}, \cdots,\oie_{i_n^{'}})
\end{align}
with
\begin{align}
    (i_1^{'},i_2^{'},\cdots,i_n^{'}) = (i_{\sigma(1)}, i_{\sigma(2)},\cdots,i_{\sigma(n)}).
\end{align}
Hence, the operand corresponding to the $k$-th($k \in [1, n]$) component of $\TwoTplT'$ is
\begin{align}
    opr_{i_k^{'}} = opr_{i_{M(k)}}.
\end{align}

Now examine the construction of $\OperationS_{\TwoTplT}$ and $\OperationS_{\TwoTplT{'}}$.
In both cases, the operands are partitioned into groups according to the distinct ending timestamps $t_1 < t_2 < \cdots < t_\lambda$.
For $\TwoTplT$,
the group for $t_\theta \in \{t_1, t_2, \cdots, t_\lambda\}$ contains the operands $\{ opr_{i_k} \mid k \in \Gamma_{t_\theta} \}$.
For $\TwoTplT'$, the group for $t_\theta$ contains the operands satisfying
\begin{align}
    \{ opr_{i_k^{'}} \mid k \in \sigma(\Gamma_{t_\theta}) \} = \{ opr_{i_{\sigma(k)}} \mid k \in \sigma(\Gamma_{t_\theta}) \}.
\end{align}
Since $\sigma$ is a permutation, we have
\begin{align}
    \{ opr_{i_{\sigma(k)}} \mid k \in \sigma(\Gamma_{t_\theta}) \} = \{ opr_{i_k} \mid k \in \Gamma_{t_\theta} \}.
\end{align}
Thus, each group consists of exactly the same set of operands.

The projection then generates all permutations of the operands that respect the group order:
the operands of the first group (with the smallest ending timestamp) appear first in any order,
followed by the operands of the second group, etc.
Because the groups are identical as sets, the collection of all such permutations is the same for $\TwoTplT$ and $\TwoTplT'$.
Therefore
\begin{align}
    \OperationS_{\TwoTplT} = \OperationS_{\TwoTplT{'}}.
\end{align}

Since $\sigma$ is a bijection between $\F_{\oie_{res_{1}}}$ and $\F_{\oie_{res_{2}}}$,
By taking the union all sets,
we have
\begin{align}
\bigcup_{\TwoTplT \in \F_{\oie_{res_{1}}}} (\OperationS_{\TwoTplT})
= \bigcup_{\TwoTplT' \in \F_{\oie_{res_{2}}}} (\OperationS_{\TwoTplT{'}}).
\end{align}
Hence
\begin{align}
    \OperationS_1 = \OperationS_2.
\end{align}

This shows that any two permutationally equivalent $\bOIE$ instances produce the same set of $n$-ary finitary operations through the projection.
Therefore, within an equivalence class of the orbit space,
all $\bOIE$ instances yield the same $\bOperationS$ instance.
\end{proof}
\fi
    \subsection{Algebraic Constraints and Operation Set Uniqueness in $\bOIE$ Implementation}\label{subsec:impl.restriction}\EBL
    \ifincludeFile
In this subsection, we will present more general conclusions that can be applied in practice for
Complete Sequence Addition and Complete Sequence Multiplication.

\begin{cor}[\BF{Conditional equivalence of operation sets under the projection of $\oplus|_{\alpha}^{\beta}$}]
    \label{thm:implementAndProps.fit.add}\EBL

Given an $\bOIES$ instance $\oieS$ with cardinality $0<n<+\infty$
\begin{align}
    \oieS = \{ \oie_1, \oie_2, \cdots, \oie_n \},
\end{align}
an $\bES$ instance $\ES$ with cardinality $1<n<+\infty$,
an $\bOperandS$ instance $\OperandS$ satisfying
\begin{align}
    \oieS \rightarrow \ES \rightarrow \OperationS
\end{align}
with the form
\begin{align}
    \OperandS = \{ F_1(\oie_1), F_2(\oie_2), \cdots, F_n(\oie_n) \},
\end{align}
an $\bIdxT$ instance $\idxT$ with the form
\begin{align}
    \idxT = (i_1, i_2, \cdots, i_n),
\end{align}
and an $\bOperationS$ instance $\OperationS$ with n-ary finitary operation $\odot$.
And they form a projection based on ending timestamp of the $\F$ of $\oie_{res}$
obtained by Complete Sequence Addition $\oplus|_{\alpha}^{\beta}$ on $\oieS$ under $\idxT$
(Definition~\ref{def:implementAndProps.endTsProjection})
\begin{align}
        \oplus|_{\alpha}^{\beta}(\oieS,\ \idxT)
    \;\xrightarrow[\ES,\ \OperandS,\ \odot]{\text{ascending order of $\bTSe$}}\;
    \OperationS.
\end{align}
If ``$\odot$'' is an $n$-ary finitary operation that satisfies permutational equivalence
\begin{align}
    \begin{aligned}
        & \forall\ (opr_{i_1}, opr_{i_2}, \cdots, opr_{i_n}) \stackrel{\CAL{M}}{\sim}  (opr_{i_1^{'}}, opr_{i_2^{'}}, \cdots, opr_{i_n^{'}}) : \\
        & \quad \odot(opr_{i_1}, opr_{i_2}, \cdots, opr_{i_n}) = \odot(opr_{i_1^{'}}, opr_{i_2^{'}}, \cdots, opr_{i_n^{'}}),
    \end{aligned}
\end{align}
then
\begin{align}
    \begin{aligned}
        \forall\ \odot(opr_{i_1}, opr_{i_2}, \cdots, opr_{i_n}), \odot(opr_{i_1^{'}}, opr_{i_2^{'}}, \cdots, opr_{i_n^{'}}) \in \OperationS: \\
        \odot(opr_{i_1}, opr_{i_2}, \cdots, opr_{i_n}) = \odot(opr_{i_1^{'}}, opr_{i_2^{'}}, \cdots, opr_{i_n^{'}}).
    \end{aligned}
\end{align}
Otherwise, if ``$\odot$'' is an $n$-ary finitary operation that does not satisfy permutational equivalence,
then the above relationship is not necessarily satisfied.
\end{cor}

\begin{cor}[\textbf{Singleton operation set under the projection of $\otimes$}]\label{thm:implementAndProps.fit.mul}\EBL

Given an $\bOIES$ instance $\oieS$ with cardinality $0<n<+\infty$
\begin{align}
    \oieS = \{ \oie_1, \oie_2, \cdots, \oie_n \},
\end{align}
an $\bES$ instance $\ES$ with cardinality $1<n<+\infty$,
an $\bOperandS$ instance $\OperandS$ satisfying
\begin{align}
    \oieS \rightarrow \ES \rightarrow \OperationS
\end{align}
with the form
\begin{align}
    \OperandS = \{ F_1(\oie_1), F_2(\oie_2), \cdots, F_n(\oie_n) \},
\end{align}
an $\bIdxT$ instance $\idxT$ with the form
\begin{align}
    \idxT = (i_1, i_2, \cdots, i_n),
\end{align}
and an $\bOperationS$ instance $\OperationS$ with n-ary finitary operation $\odot$.
And they form a projection based on ending timestamp of the $\F$ of $\oie_{res}$
obtained by Complete Sequence Multiplication $\otimes$ under $\idxT$
(Definition~\ref{def:implementAndProps.endTsProjection})
    \begin{align}
        \otimes(\oieS,\ \idxT)
        \;\xrightarrow[\ES,\ \OperandS,\ \odot]{\text{ascending order of $\bTSe$}}\;
        \OperationS
    \end{align}
where
\begin{align}
    \idxT = (i_1, i_2, \cdots, i_n),
\end{align}
Then, the $\OperationS$ has only one element operation
\begin{align}
    \odot(F_{i_1}(\oie_{i_1}), F_{i_2}(\oie_{i_2}), \cdots, F_{i_n}(\oie_{i_n})).
\end{align}
\end{cor}

Corollary~\ref{thm:implementAndProps.fit.add} establishes the foundational invariance
that all members of an orbit space equivalence class yield identical sets of $n$-ary finitary operations under projection.
However, this result remains structurally abstract:
it does not distinguish how Complete Sequence Addition($\oplus|_{\alpha}^{\beta}$) permits multiple expressions whose equivalence depends on $\odot$'s permutation properties,
whereas Complete Sequence Multiplication($\otimes$) enforces singleton sets through strict temporal ordering constraints.
This structural divergence critically determines whether a system exhibits sensitivity to operand ordering —
a distinction essential for subsequent applications, from modeling fair concurrent competitions in physics to formalizing parallel versus serial execution in computer science.
The following Corollary bridges this gap by explicitly characterizing the unique structural properties of operation sets within each equivalence class,
thereby rigorously differentiating process symmetry from outcome symmetry at the algebraic level.

\begin{cor}[\BF{Structural uniqueness of the operation sets within orbit space equivalence classes}]
    \label{cor:uniquenessOperationSetOrbit}\EBL

Let $\oieS$ be a finite $\bOIE$ instance and let
\begin{align}
    * \in \{\oplus|_{\alpha(n)}^{\beta}, \otimes\}
\end{align}
be a sequence operation.
Denote by $\CAL{O}(\oieS, *)$ the corresponding orbit space
(see Definitions~\ref{defi:OPS.orbitSpace.add} and Definition~\ref{defi:OPS.orbitSpace.mul}).
For every equivalence class $[\oie_{res}]_{\sim} \in \CAL{O}(\oieS, *)$,
there exists a unique set of $n$-ary finitary operations, denoted
$\OperationS_{[\oie_{res}]_{\sim}}$, such that for each
$\oie_{res} \in [\oie_{res}]_{\sim}$ the projection defined in
Definition~\ref{def:implementAndProps.endTsProjection} yields exactly $\OperationS_{[\oie_{res}]_{\sim}}$.

Moreover, the following structural properties hold:

\begin{enumerate}
    \item \textbf{For Complete Sequence Addition} ($* = \oplus|_{\alpha}^{\beta}$):
    \begin{itemize}
        \item The orbit space $\CAL{O}(\oieS, \oplus|_{\alpha}^{\beta})$
              consists of a single equivalence class (Property~\ref{pty:OPS.orbitSpace.add.oneElem}).
              Consequently, the whole orbit is associated with a unique set $\OperationS$.
        \item According to Corollary~\ref{thm:implementAndProps.fit.add},
              if the $n$-ary finitary operator $\odot$ satisfies permutational equivalence
              (i.e., its result is invariant under any reordering of its operands),
              then every expression in $\OperationS$ denotes the same operation.
              Otherwise, the expressions in $\OperationS$ may denote distinct operations.
    \end{itemize}

    \item \textbf{For Complete Sequence Multiplication} ($* = \otimes$):
    \begin{itemize}
        \item The orbit space $\CAL{O}(\oieS, \otimes)$
              may contain several equivalence classes (Property~\ref{pty:OPS.orbitSpace.multi.multiElem}).
        \item According to Corollary~\ref{thm:implementAndProps.fit.mul}, for each equivalence class $[\oie_{res}]_{\sim}$, the set
              $\OperationS_{[\oie_{res}]_{\sim}}$ contains exactly one expression.
              This expression is uniquely determined by the order of operands specified by
              any index tuple that produces a representative of the class.
    \end{itemize}
\end{enumerate}

\end{cor}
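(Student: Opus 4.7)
The plan is to first establish the uniqueness of $\OperationS_{[\oie_{res}]_{\sim}}$ at the level of the equivalence class, and then read off the two bulleted cases from the structure of the corresponding orbit spaces together with Corollaries~\ref{thm:implementAndProps.fit.add} and~\ref{thm:implementAndProps.fit.mul}.

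For the uniqueness claim I would fix two representatives $\oie_{res}^{(1)}, \oie_{res}^{(2)} \in [\oie_{res}]_{\sim}$. By Definition~\ref{def:OPs.Permu} their second components $\F_1$ and $\F_2$ are isomorphic via some permutation matrix $\CAL{M}$, so there is a bijection $\phi : \F_1 \to \F_2$ given by $\phi(\TwoTplT) = \TwoTplT \cdot \CAL{M}$. The projection of Definition~\ref{def:implementAndProps.endTsProjection} first re-sorts the entries of a $\bTwoTplT$ instance by ascending ending timestamp and then emits an expression of the form $\odot(F_{k_1}(\oie_{k_1}), \dots, F_{k_n}(\oie_{k_n}))$. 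Because $\CAL{M}$ only permutes entries of $\TwoTplT$ without altering the underlying multiset, the sort step returns identical output on $\TwoTplT$ and $\phi(\TwoTplT)$, so the set produced from $\F_1$ coincides, expression-by-expression, with the one produced from $\F_2$. Hence $\OperationS_{[\oie_{res}]_{\sim}}$ depends only on the class.

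For the addition case, Property~\ref{pty:OPS.orbitSpace.add.oneElem} collapses $\CAL{O}(\oieS, \oplus|_{\alpha}^{\beta})$ to a single equivalence class, so there is exactly one associated $\OperationS$, and Corollary~\ref{thm:implementAndProps.fit.add} then yields the stated dichotomy between permutation-invariant and non-invariant choices of $\odot$. For the multiplication case I would fix an $\bIdxT$ instance $\idxT = (i_1, \dots, i_n)$ producing a representative of the class. By Step~3 of Definition~\ref{def:Ops.Multi} combined with Definition~\ref{def:OPs.Multi.CompleteAscOrderFilteredSubTwoTplTS}, every $\TwoTplT \in \F_{\oie_{res}}$ already satisfies the ascending-order constraint along positions $i_1, \dots, i_n$, so its ending timestamps are weakly ascending in that order; the sort-by-ending-timestamp step therefore leaves the index sequence unchanged, and every such $\TwoTplT$ projects to the same expression $\odot(F_{i_1}(\oie_{i_1}), \dots, F_{i_n}(\oie_{i_n}))$, matching Corollary~\ref{thm:implementAndProps.fit.mul} and forcing $\OperationS_{[\oie_{res}]_{\sim}}$ to be a singleton.

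The hard part will be the careful treatment of ties: two entries of a $\bTwoTplT$ instance can share the same ending timestamp, so ``sort by ending timestamp'' is not canonically defined at the raw index level. I would need to fix a tie-breaking convention (for instance, one that is stable with respect to the position given by the chosen $\idxT$) and then verify that permutation equivalence through $\CAL{M}$ still identifies the resulting expressions in the addition case, and that in the multiplication case the ascending-order filter already forces the 1st items at tied positions to coincide, so that the conclusion is independent of the chosen tie-breaking rule.
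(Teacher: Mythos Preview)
Your proposal is correct and follows essentially the same three-part structure as the paper's own proof: uniqueness within an equivalence class, then the addition case via Property~\ref{pty:OPS.orbitSpace.add.oneElem} and Corollary~\ref{thm:implementAndProps.fit.add}, then the multiplication case via Property~\ref{pty:OPS.orbitSpace.multi.oneElem} and Corollary~\ref{thm:implementAndProps.fit.mul}.

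The only noteworthy difference is in Part~1. The paper dispatches the uniqueness claim by invoking an earlier result (Corollary~\ref{cor:uniqueOpSet}) that already asserts permutationally equivalent $\bOIE$ instances project to the same $\OperationS$; you instead unpack this directly by using the bijection $\phi(\TwoTplT)=\TwoTplT\cdot\CAL{M}$ on $\F$ and arguing that the sort-by-ending-timestamp step is insensitive to the preliminary permutation. Your version is more self-contained, and in particular your explicit discussion of ties in ending timestamps addresses a genuine technical wrinkle that the paper's proof does not confront (it simply asserts that complete sequential multiplication ``forces the ending timestamps to be strictly ordered'', whereas Definition~\ref{def:OPs.Multi.CompleteAscOrderFilteredSubTwoTplTS} only gives a weak inequality). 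So your treatment is slightly more careful than the paper's on this point, but the overall architecture is the same.
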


\begin{proof}[\BF{Proof of Corollary~\ref{cor:uniquenessOperationSetOrbit}}]\EBL

We prove the three parts of the corollary in order.

\noindent\textbf{1. Uniqueness within an equivalence class.}
Let $[\oie_{res}]_{\sim}$ be an equivalence class in $\CAL{O}(\oieS, *)$.
Take any two $\bOIE$ instances $\oie_{res_1}, \oie_{res_2} \in [\oie_{res}]_{\sim}$.
By definition, they are permutationally equivalent.
Applying Corollary~\ref{cor:uniqueOpSet}
to $\oie_{res_1}$ and $\oie_{res_2}$ shows that the projection
(Definition~\ref{def:implementAndProps.endTsProjection}) yields the same set of \(n\)-ary finitary operations
for both.
Hence, all members of the equivalence class produce the same set, which we denote by
$\OperationS_{[\oie_{res}]_{\sim}}$.

\noindent\textbf{2. Properties for Complete Sequence Addition.}
Property~\ref{pty:OPS.orbitSpace.add.oneElem} states that the orbit space of Complete Sequence Addition
contains exactly one equivalence class.
Therefore, the whole orbit corresponds to a single set $\OperationS$.
Now assume that the operation $\odot$ satisfies permutational equivalence.
Corollary~\ref{thm:implementAndProps.fit.add} then implies that every
expression in $\OperationS$ (which is precisely the permutations of
operands that respect the grouping induced by the ending timestamps) evaluates to the same result.
If $\odot$ does not satisfy permutational equivalence, the expressions in $\OperationS$
may differ.

\noindent\textbf{3. Properties for Complete Sequence Multiplication.}
By Property~\ref{pty:OPS.orbitSpace.multi.multiElem}, the orbit space of Complete Sequence Multiplication
may have more than one equivalence class.
For an arbitrary class $[\oie_{res}]_{\sim}$,
choose any representative $\oie_{res}$.
Corollary~\ref{thm:implementAndProps.fit.mul}
asserts that the projection for $\oie_{res}$ produces a set $\OperationS_{[\oie_{res}]_{\sim}}$
with exactly one element.
This is because the Complete Sequence Multiplication forces the
ending timestamps to be strictly ordered (each event must finish before the next one starts),
leaving only one admissible ordering of the operands in the projection—namely the order
prescribed by the index tuple that defines the operation.
Hence, $\OperationS_{[\oie_{res}]_{\sim}}$ is a singleton.

The proof is now complete.
\end{proof}
\fi

\section{Applications and Comparisons in Computer Science, Probability Theory and Physics}\label{sec:app-comp}\EBL
\ifincludeFile
\indent With the theoretical foundations of the $\bOIE$ framework and the sequence operations($\oplus|_{\alpha}^{\beta}$ and $\otimes$)
established in the preceding sections,
we now validate their practical efficacy across three foundational disciplines.
This section applies the abstract algebraic machinery to concrete problems in computer science, probability and physics,
demonstrating how $\bOIE$ and sequence operations reconstruct classical notions
of parallelism, simultaneity and probabilistic fairness.
By comparing our approach with well-known methods
such as Process Algebra, Petri nets, classical probability, and classical/relativistic mechanics,
we show that our constraint-aware, planning-based framework can unify different modeling paradigms.

This section is organized as follows:
\begin{itemize}
\item Subsection~\ref{subsec:app.cs} applies the $\bOIE$ framework and the two sequence operations
to a foundational question in competitive events and parallel computation:
Why both concurrent (100-meter sprint, modeled by $\oplus|_{\alpha}^{\beta}$) and strictly sequential (skiing downhill, modeled by $\otimes$)
speed competitions can deterministically identify a champion despite their opposing temporal architectures.
It reveals that both modalities converge to an identical permutation-invariant algebraic structure (the minimum operation),
ensuring outcome determinacy through structural invariance rather than observational simultaneity.
We further extend this approach to determine the time complexity of parallel merge sort,
combining parallel and sequential behavior in one algebraic form.
\item Subsection~\ref{subsec:comp.cs}
benchmarks the $\bOIE$ framework against Process Algebra, Petri nets, and Allen's interval algebra from a computer science theory perspective.
While Process Algebra captures behavioral equivalence and Petri nets model state transitions,
both rely on object-level rules and idealized temporal precision, suffering from state-space explosion or interval rigidity.
The $\bOIE$ framework instead adopts meta-level structural constraints and a planning-ahead paradigm,
using interval sets as basic units to naturally express duration uncertainty and multiple candidate execution schemes.
This unified approach yields significant advantages in real-time scheduling, physical observation constraints, and dynamic planning scenarios.
\item Subsection~\ref{subsec:app.math} employs the $\bOIE$ framework to analyze the classical probability problem of drawing lots.
It distinguishes simultaneous sampling where all participants draw concurrently, from sequential sampling where participants draw one after another.
Classical probability establishes that each participant's marginal probability of drawing a red ball is $m/n$ in both scenarios.
The $\bOIE$ framework reveals that this outcome equivalence arises despite fundamentally distinct algebraic architectures:
Simultaneous sampling corresponds to $\oplus|_{\alpha}^{\beta}$ with its single-orbit permutational equivalence,
whereas sequential sampling corresponds to $\otimes$ with its strict ordering and potentially multiple orbital equivalence classes.
\item Subsection~\ref{subsec:comp.math} examines the traditional classical-probability analysis of simultaneous versus sequential sampling and identifies three essential limitations:
Completely separate mathematical paradigms for the two scenarios, the absence of a strict mathematical definition of ``process symmetry'', and the neglect of temporal constraints' influence on probability results.
Based on the $\bOIE$ algebraic framework, this subsection achieves theoretical innovations in four dimensions: constructing a unified formal description, rigorously defining process symmetry, revealing the impact of temporal constraints on probability outcomes, and distinguishing the descriptive meaning from the ontological meaning of indices in the sampling process.
\item Subsection~\ref{subsec:app.phy} presents an $\bOIE$ reinterpretation of Galileo's legendary free-fall experiment at the Leaning Tower of Pisa.
The conventional narrative implicitly assumes pointwise simultaneous release and pointwise simultaneous landing, both of which confront the unverifiability of pointwise simultaneity.
By modeling the ``simultaneous'' release as $\oplus|_{\alpha}^{\beta}$ within a shared equipotential time domain,
the subsection shows that the experiment verifies not that ``both spheres land at precisely the same instant'',
but rather that, within the equipotential domain, mass difference does not affect the result of the projection operation.
The output remains invariant with respect to the mass parameter.
\item Subsection~\ref{subsec:comp.phy} conducts an in-depth comparison in physics around the formal characterization of simultaneity from the observer's perspective,
using the 100-metre dash as the carrier scenario.
Newtonian mechanics upholds absolute simultaneity, while relativistic mechanics analyzes simultaneity through Lorentz transformations,
yet both still rely on the ideal hypothesis of pointwise simultaneity.
This subsection clarifies that the core contradiction is not the logical self-consistency of pointwise equality at the theoretical level,
but whether humans can strictly verify it through observation.
It proposes the concept of ``Epistemological equipotentiality'' to replace ``Pointwise equality'',
thereby freeing the observer from a fully passive role.
\end{itemize}
\fi
    \subsection{Application in Computer Science: The Essential Reason Why Both the 100-metre Dash and Skiing Downhill Can Determine a Definitive Champion \& Algebraic Analysis of Parallel Sorting Feasibility}\label{subsec:app.cs}\EBL
    \ifincludeFile
This subsection applies the $\bOIE$ framework and the two sequence operations to a foundational question raised in introduction:
why do both concurrent and strictly sequential speed competitions deterministically identify a champion, despite their opposing temporal architectures, and how can this algebraic insight formalize parallel algorithm design?
We addressed this problem using $\bOIE$ and sequence operations.
Through the sorting parallelization case,
we further demonstrated that our proposed methods are remarkably convenient and have tremendous potential for direct application in programming and industrial scenarios.

\begin{itemize}
\item Subsubsection~\ref{subsubsec:app.cs.100m} analyzes the 100-meters dash as an instantiation of $\oplus|_{\alpha}^{\beta}$,
establishing how equipotential time domains and permutational equivalence render lane assignments observationally irrelevant to the final ranking;
\item Subsubsection~\ref{subsubsec:app.cs.downhill} examines downhill skiing through $\otimes$,
demonstrating how strict sequential ordering generates multiple orbital equivalence classes while preserving outcome determinacy via the minimum operation;
\item Subsubsection~\ref{subsubsec:app.cs.unify} synthesizes these analyses to reveal the algebraic unification underlying concurrent and sequential competitions,
proving that champion determinacy arises not from shared physical implementation but from the permutation invariance of the temporal comparison operation.
\item Subsubsection~\ref{subsubsec:app.cs.parallelSorting} extends the theory to practical computer science, employing $\oplus|_{\alpha}^{\beta}$ and $\otimes$ to formally model parallel merge sort.
It identifies commutative matrix-multiplication subexpressions across divide-and-conquer layers, thereby deriving the time complexity
under the constraint of intra-layer parallelism and inter-layer seriality.
\end{itemize}\fi
        \subsubsection{100-meter Dash}\label{subsubsec:app.cs.100m}\EBL
        \ifincludeFile
Suppose the starting gun fires at 10:00:00 Beijing Time on January~7, 2026, corresponding to the Unix timestamp $\mathit{ts}_{\text{start}} = 1736253600$.
While there is no mandatory time limit for finishing a 100‑meter race, for our theoretical model we suppose that the race must be completed within $20$ seconds;
hence the latest allowed finish timestamp is $\mathit{te}_{\max} = 1736253620$.

We define 8 atomic Event instances representing each athlete's performance:
\begin{align}
    \{\, \atomE_{\text{athlete}_1},\,
          \atomE_{\text{athlete}_2},\,
          \dots,\,
          \atomE_{\text{athlete}_8} \,\}.
\end{align}
Abstracting the interval component yields eight Event$^*$ instances:
\begin{align}
    \{\, \text{event}^*_{\text{athlete}_1},\,
          \text{event}^*_{\text{athlete}_2},\,
          \dots,\,
          \text{event}^*_{\text{athlete}_8} \,\}.
\end{align}

Let the index tuple
\begin{align}
    \idxT_{\text{asc}} = (1,2,3,4,5,6,7,8)
\end{align}
represent lane assignments, where $athlete_i$ competes in lane~$i$.
We assume lane assignment does not affect performance, so the events are mutually independent.

Under the rules, each athlete's feasible interval set is
\begin{align}
\I = \{\, (\mathit{ts},\mathit{te}) \mid
                1736253600 \le \mathit{ts} < \mathit{te} \le 1736253620
                \ \land\
                9.4 \le (\mathit{te}-\mathit{ts}) \le 20 \,\}.
\end{align}
Crucially, every athlete has the opportunity to approach the theoretical human limit of 9.4 seconds,
to finish safely, and above all to \TT{share equal opportunity for a fair start}.
Whether an athlete achieves this depends on their performance.

By Definition~\ref{def:OPs.feasibleCP.independentOIES},
we construct a mutually independent $\bOIES$ instance
\begin{align}
    \oieS^{indep} = \{\oie_1, \oie_2, \dots,\oie_8\},
\end{align}
where
\begin{align}
    \oie_i = \Bigl(
       (),
       \{\, ((\mathit{ts},\mathit{te})) \mid (\mathit{ts},\mathit{te})\in\I \,\},
       \I,
       \{\, \E^*_i \,\}
    \Bigr).
\end{align}

We then form the Complete Sequence Addition:
\begin{align}
    \oie_{100\text{dash}}=\oplus|^{1736253620}_{1736253600}(\oie_1, \oie_2, \dots, \oie_8),
\end{align}
by Definition~\ref{def:OPs.Add.Nat}, this simplifies to
\begin{align}
    \oie_{100\text{dash}}=\oplus(\oie_1, \oie_2, \dots, \oie_8).
\end{align}

We use an $n$-ary finitary operation $\odot$ that returns the minimum of its operands to determine the winner.
By Definition~\ref{def:implementAndProps.endTsProjection}, we construct the projection
\begin{align}
\oplus|^{1736253620}_{1736253600}
   (\oieS,\,\idxT_{\text{asc}})
   \xrightarrow[\ES,\ \OperandS,\ \odot]
              {\text{ascending order of TSend}}
   \OperationS.
\end{align}

Ignoring reaction delays, the final result is determined by subtracting the start timestamp from the finish timestamp.
Let $F_i(\oie_i)$ denote the finishing time of $\text{athlete}_i$,
the $n$-ary finitary operation that determines the champion is
\begin{align}
    \odot\bigl(F_1(\oie_1),F_2(\oie_2),\dots,F_8(\oie_8)\bigr),
\end{align}
which returns the minimum of all the operands.

By Corollary~\ref{thm:implementAndProps.fit.add},
since $\odot$ satisfies permutational equivalence,
the projected operation is invariant to the order of finish times.

\paragraph{Equivalence under different lane assignments.}
The expression above corresponds to $\text{athlete}_1$–$\text{athlete}_8$ in lanes 1–8.
If we swap $\text{athlete}_1$ and $\text{athlete}_2$, we obtain
\begin{align}
    \oie_{100\text{dash}_{21}}=\oplus|^{1736253620}_{1736253600}(\oie_2,\oie_1,\oie_3,\dots,\oie_8).
\end{align}
If we reverse the order completely, we obtain
\begin{align}
    \oie_{100\text{dash}_{\text{reverse}}}=\oplus|^{1736253620}_{1736253600}(\oie_8,\oie_7,\dots,\oie_1).
\end{align}

These are merely permutations of operands and are therefore permutationally equivalent:
\begin{align}
    \oie_{100\text{dash}} \stackrel{\CAL{M}}{\sim} \oie_{100\text{dash}_{21}} \stackrel{\CAL{M}^{'}}{\sim} \oie_{100\text{dash}_{\text{reverse}}}.
\end{align}
All lie in an equivalence class of the orbit space
$\CAL{O}(\oieS, \oplus|^{1736253620}_{1736253600})$.
By Corollary~\ref{cor:uniquenessOperationSetOrbit},
permutational equivalence of $\odot$
ensures that the projected n‑ary operation remains identical regardless of lane assignment.

Thus,
lane ordering does not affect the expression that determines the champion or the overall ranking.
\fi
        \subsubsection{Downhill Skiing}\label{subsubsec:app.cs.downhill}\EBL
        \ifincludeFile
Downhill skiing is a winter sport characterized by strict sequential starts:
skiers compete one after another in a predetermined order,
and only one skier is allowed on the track at any time.
The skier who records the shortest interval is declared the champion

Assume the competition starts at 10:00:00 Beijing Time on January 7, 2026,
corresponding to the Unix timestamp $ts_{\text{start}} = 1736253600$.
The competition has a total time limit:
the last skier must finish within 20 minutes,
so the latest allowed finish timestamp is $\TSe_{\max} = \TSs_{\text{start}} + 1200 = 1736254800$.

Each skier's performance is constrained by physiological and technical limits,
defining a range of feasible skiing times.
For simplicity, we assume all skiers have identical feasible time intervals:
a minimum (fastest) time of $T_{\min}^{(i)} = 90$ seconds (1.5 minutes) and a maximum (slowest) time of $T_{\max}^{(i)} = 120$ seconds (2 minutes).
The model can be easily extended to personalized intervals for individual skiers.

We first define 8 atomic Event instances, each representing a skier's competition performance:
\begin{align}
    \{\, \atomE_{\text{skier}_1},\, \atomE_{\text{skier}_2},\, \ldots,\, \atomE_{\text{skier}_8} \,\}.
\end{align}

Abstracting the interval property yields eight Event$^*$ instances:
\begin{align}
    \{\, \EStar_{\text{skier}_1},\, \EStar_{\text{skier}_2},\, \ldots,\, \EStar_{\text{skier}_8} \,\}.
\end{align}

We introduce an index tuple to represent the competition start order (typically determined by preliminary round results):
\begin{align}
    \idxT_{\text{asc}} = (1, 2, 3, 4, 5, 6, 7, 8),
\end{align}
where the $i$-th element indicates that skier $i$ starts in the $i$-th position.
According to the competition rules,
there are no physical or logical conflicts that would render interval combinations infeasible, so we set
\begin{align}
    \fInfeasIntTwoTplTS{\oieS}{\idxT_{\text{asc}}} = \emptyset.
\end{align}

Next, we construct the $\bOIE$ instances for each skier.
The set of feasible intervals ($\I$, the 3rd element of the $\bOIE$) must satisfy two constraints:
1. The interval must lie within the total competition time window $[\TSs_{\text{start}}, \TSe_\text{end}]$.
2. The interval length (skiing time) must fall within the skier's feasible range $[T_{\min}^{(i)}, T_{\max}^{(i)}]$.

Thus, for each $\text{skier}_i$, the feasible interval set is:
\begin{align}
    \I_i = \{\, (\TSs, \TSe) \mid \TSs_{\text{start}} \le \TSs < \TSe \le \TSe_{\text{end}} \ \wedge\ T_{\min}^{(i)} \le (\TSe - \TSs) \le T_{\max}^{(i)} \,\}.
\end{align}

We form the $\bOIES$ instance $\oieS = \{\oie_1, \ldots, \oie_8\}$,
where each $\bOIE$ instance is defined as:
\begin{align}
    \oie_i = \Bigl( (\,),\ \{\, ((\TSs, \TSe)) \mid (\TSs, \TSe) \in \I_i \,\},\ \I_i,\ \{\, \EStar_{\text{skier}_i} \,\} \Bigr).
\end{align}

The core feature of downhill skiing,
sequential starts with no overlapping track usage,
exactly aligns with the semantics of $\otimes$,
which enforces a ``strict order'' constraint between operands.

Thus, we use the $\otimes$ to combine the eight skiers' $\bOIE$ instances
according to the given start order $\idxT_{\text{asc}}$:
\begin{align}
    \oie_{\text{downhill}} = \otimes(\oie_1, \oie_2, \ldots, \oie_8).
\end{align}

Following Definition~\ref{def:implementAndProps.endTsProjection},
we construct the projection from the sequence operation result to a set of $n$‑ary finitary operations.
Let:
\begin{itemize}
    \item $\ES = \{\E_{\text{skier}_1}, \ldots, \E_{\text{skier}_8}\}$ be the set of concrete $\bE$ instances.
    \item $\OperandS = \{F_1(\oie_1), F_2(\oie_2), \ldots, F_8(\oie_8)\}$ be the set of operands, where $F_i(\oie_i)$ denotes the competition time of $\text{skier}_i$.
    \item $\odot$ be the $n$-ary finitary operation to take the minimum of operands.
    \item $\OperationS$ be the set of $n$-ary finitary operations.
\end{itemize}

The projection is written as
\begin{align}
    \otimes(\oieS,\, \idxT_{\text{asc}}) \xrightarrow[\text{ascending order of $\bTSe$}]{\ES,\ \OperandS,\ \odot} \OperationS.
\end{align}

By Property~\ref{prop:multiOpNumber},
$\otimes$ enforces a strict order constraint:
the end timestamp of each skier's interval must be less than or equal to the start timestamp of the next skier's interval.
This constraint ensures that the resulting $\OperationS$ contains exactly one expression:
\begin{align}
    \odot\bigl( F_1(\oie_1), F_2(\oie_2), \ldots, F_8(\oie_8) \bigr).
\end{align}
This is precisely the operation used to determine the downhill skiing champion:
comparing all skiers' competition times and selecting the minimum.

Unlike the 100-meter dash, the result of the $\otimes$ for downhill skiing depends on the start order.
If the start order changes, e.g., swapping athletes 1 and 2:
\begin{align}
    \idxT^{'} = (2, 1, 3, 4, 5, 6, 7, 8),
\end{align}
the resulting $\bOIE$ instance $\oie_{\text{downhill}}^{'}$ of $\otimes$ is {not necessarily}
permutationally equivalent to the original $\oie_{\text{downhill}}$ (by Property~\ref{pty:OPs.Multi.Permu}).
Consequently, the projected operation set may also differ (although it still contains only one expression, the order of the operands may change).
However, because the $\odot$ operation satisfies permutational equivalence (exchanging the order of operands does not affect the resulting value),
the mathematical result for the champion remains unchanged regardless of the start order.
\fi
        \subsubsection{Conclusion: Algebraic Unification of Concurrent and Sequential Competitions}\label{subsubsec:app.cs.unify}\EBL
        \ifincludeFile
Using $\bOIE$ and the sequence operations $\oplus|_{\alpha}^{\beta}$ and $\otimes$,
we provide a rigorous algebraic framework for analyzing how both concurrent and sequential speed competitions deterministically identify a champion.
This subsubsection synthesizes the foregoing analyses of the 100-meter dash and downhill skiing to reveal the deep algebraic symmetry
underlying their seemingly disparate execution structures.

\textbf{Modeling Framework: Fairness versus Sequentiality}

The two racing modalities exhibit complementary temporal architectures.
The 100-meter dash instantiates \textit{concurrent equality}: modeled by $\oplus|_{\alpha}^{\beta}$,
it emphasizes that all participants share an equipotential domain wherein each has an equal opportunity to start
and finish within the same time window.
The interval combinations in the resulting $\F$(2nd element) allow arbitrary permutations of athletes' ending timestamps,
reflecting the physical reality that finish-line crossing order is empirically contingent.

Conversely, downhill skiing instantiates \textit{strict seriality}: modeled by $\otimes$,
it enforces a total order where the $i$-th participant’s interval must conclude before the ($i+1$)-th begins.
Here, the $\F$ (2nd element) contains only those timestamp sequences that respect the non-decreasing order prescribed by the starting index.

Despite this structural opposition—fairness ($\oplus|_{\alpha}^{\beta}$) versus sequentiality ($\otimes$)
—both competitions reduce to an identical algebraic operation for determining the winner: the minimum function $\odot$.
Extended to n arguments, this operation $\odot(x_1, \ldots, x_n)$ satisfies closure, associativity, and \textit{permutational equivalence}.
That is, for any sequence and any of its permutations $(x_{\sigma(1)}, \ldots, x_{\sigma(n)})$, the equality
\begin{align}
    \odot(x_1, \ldots, x_n) = \odot(x_{\sigma(1)}, \ldots, x_{\sigma(n)})
\end{align}
holds invariantly.

\textbf{Determinacy Analysis: Orbit Space and Operational Invariance}

The algebraic properties of $\odot$ guarantee champion determinacy independent of execution order,
but the mechanism of this guarantee differs structurally between the two models.

For the 100-meter dash,
the operation $\min$ satisfies closure, associativity, and \textit{permutational equivalence}.
That is, for any sequence $(x_1, \ldots, x_n)$ and any of its permutations $(x_{\sigma(1)}, \ldots, x_{\sigma(n)})$,
although lane assignments (operand order) can be arbitrarily swapped,
leading to different $\bOIE$ results such as $\oie_{100\text{dash}}$ and $\oie_{100\text{dash}_{21}}$,
Property~\ref{pty:OPS.orbitSpace.add.oneElem} establishes
that these results belong to a single equivalence class in the orbit space $\OrbitAdd$.

By Corollary~\ref{cor:uniquenessOperationSetOrbit},
the projected operation set contains $n$-ary finitary operations corresponding to all possible finish-time permutations,
yet all evaluate to the same $n$-ary finitary operation.
Consequently, the champion is determined by a single order-independent operation:
\begin{align}
    \min(F_1, F_2, \ldots, F_n).
\end{align}

For downhill skiing ($\otimes$):
The starting order is rigidly fixed by the index tuple $\idxT$.
Corollary~\ref{cor:uniquenessOperationSetOrbit} implies that the projection yields a singleton set containing exactly $\min(F_{i_1}, \ldots, F_{i_n})$,
where the operand order matches $\idxT$.
Altering the start order (e.g., swapping the first two athletes) generates a different equivalence class in the orbit space (Property~\ref{pty:OPS.orbitSpace.multi.multiElem}),
and the projected expression changes accordingly.
However, the permutational equivalence of $\odot$ ensures that the computed result (the minimum time)
remains identical regardless of operand ordering.
Thus, while the \textit{algebraic path} depends on the starting sequence,
the \textit{outcome} (the champion) does not.

\BF{Synthesis: From Phenomenological Description to Axiomatic Planning}

Although the 100-meter dash and downhill skiing stand in conceptual opposition regarding ``fairness'' and ``sequentiality'',
their essence as speed competitions — comparing time costs via total ordering —
is algebraically unified.
The critical insight is that the champion-determining operation $\odot$ possesses strong symmetry.
This permutation invariance property ensures that:
\begin{itemize}
    \item \BF{For $\oplus|_{\alpha}^{\beta}$ emphasizing concurrent fairness}: The result is insensitive to the instantaneous arrangement of participants,
    thereby grounding the idealized concept of ``simultaneous start'' in an equipotential domain rather than pointwise temporal equality.
    \item \BF{For $\otimes$ emphasizing strict order}: Although the operation depends on a preset order structure, the permutation invariance of $\odot$ guarantees that the final comparison result remains independent of that specific arrangement.
\end{itemize}

Therefore, both competition forms reliably determine a champion not through shared physical implementations,
but through their convergence to an identical algebraic structure with permutation invariance.
By establishing pre-execution algebraic constraints ($\oplus|_{\alpha}^{\beta}$ or $\otimes$) as the governing framework for event execution,
the $\bOIE$ formalism extends traditional analytical approaches:
The identification of the winner depends on structural invariance rather than relying solely on observational precision.
This perspective offers a unified basis for event scheduling, physical simulation, and computational modeling,
exhibiting robustness against the epistemological constraints inherent in precise time measurement.
\fi
        \subsubsection{Practical Applications: Algebraic Analysis of Parallel Sorting Feasibility}\label{subsubsec:app.cs.parallelSorting}\EBL
        \ifincludeFile
Array sorting is a pivotal research area in computer science.
In this subsection,
we employ $\bOIE$, $\oplus|_{\alpha}^{\beta}$ and $\otimes$
to analyze the implementation of parallel array sorting via an algebraic approach.
Through this example, we demonstrate that $\bOIE$ and sequence operations
can not only reshape the theory of parallel and sequential computation,
but also directly incorporate subjectivity into program design and many industrial applications.

Modern general-purpose servers typically spend 10\% to 30\% of their runtime on sorting,
while servers running specialized data-intensive workloads can reach 80\%.
Suppose there is an array
\begin{align}
    arr = (499, 375, 233, 168, 100, 57, 28, 14)
\end{align}
We sort it in ascending order
\begin{align}
    (499, 375, 233, 168, 100, 57, 28, 14) \to (14, 28, 57, 100, 168, 233, 375, 499)
\end{align}

In fact,
the essence of sorting is to ``\TT{multiply the original sequence by a permutation matrix to yield an ordered sequence}'',
and this example is as follows
\begin{align}
    \begin{aligned}
        \left(499, 375, 233, 168, 100, 57, 28, 14\right) &
        \times
        \color{gray!50}
        \begin{pmatrix}
            0 & 0 & 0 & 0 & 0 & 0 & 0 & \MXBF{1} \\
            0 & 0 & 0 & 0 & 0 & 0 & \MXBF{1} & 0 \\
            0 & 0 & 0 & 0 & 0 & \MXBF{1} & 0 & 0 \\
            0 & 0 & 0 & 0 & \MXBF{1} & 0 & 0 & 0 \\
            0 & 0 & 0 & \MXBF{1} & 0 & 0 & 0 & 0 \\
            0 & 0 & \MXBF{1} & 0 & 0 & 0 & 0 & 0 \\
            0 & \MXBF{1} & 0 & 0 & 0 & 0 & 0 & 0 \\
            \MXBF{1} & 0 & 0 & 0 & 0 & 0 & 0 & 0 \\
        \end{pmatrix} \\
        \color{black}
        & \qquad = \left(14, 28, 57, 100, 168, 233, 375, 499\right)
    \end{aligned}
\end{align}
In the standard merge sort algorithm, merge is the fundamental operation at the core.
The merging rule of the standard merge sort is to merge only adjacent sorted subintervals,
and the merging intervals in each pass are all pairwise combinations of the sorted subintervals obtained
from the previous pass.
Thus, all merging intervals within the same pass are inevitably non-overlapping and disjoint.

Because it is essentially a sequence multiplied by a transposition matrix for the action of merge,
we use matrix
\begin{align}
    \CAL{M}_{([i_l, i_r], [j_l, j_r])}^{(n)}
\end{align}
to express the transposition matrix for merging two sorted subsequences.
In the notation for this matrix variable,
$i_l$, $i_r$ are the left and right boundary indices of one sorted subsequence,
and $j_l$, $j_r$ are those of the other sorted subsequence
(We use 1 as the starting index here).
The ``$n$'' is the length of the array.
For example, the form of $\CAL{M}_{([1, 1], [2, 2])}^{(8)}$ in this case is
\begin{align}
    \CAL{M}_{([1, 1], [2, 2])}^{(8)} =
    \color{gray!50}
    \begin{pmatrix}
        0 & \MXBF{1} & 0 & 0 & 0 & 0 & 0 & 0 \\
        \MXBF{1} & 0 & 0 & 0 & 0 & 0 & 0 & 0 \\
        0 & 0 & \MXBF{1} & 0 & 0 & 0 & 0 & 0 \\
        0 & 0 & 0 & \MXBF{1} & 0 & 0 & 0 & 0 \\
        0 & 0 & 0 & 0 & \MXBF{1} & 0 & 0 & 0 \\
        0 & 0 & 0 & 0 & 0 & \MXBF{1} & 0 & 0 \\
        0 & 0 & 0 & 0 & 0 & 0 & \MXBF{1} & 0 \\
        0 & 0 & 0 & 0 & 0 & 0 & 0 & \MXBF{1} \\
    \end{pmatrix},
\end{align}
then the merge of 1st and 2nd elements can be expressed as
\begin{align}
    \left(499, 375, 233, 168, 100, 57, 28, 14\right) \times \CAL{M}_{([1, 1], [2, 2])}^{(8)} = \left(375, 499, 233, 168, 100, 57, 28, 14\right)
\end{align}

Then, we use transposition matrix multiplication to denote merge sort for the entire sequence.
Its process is
\begin{align}
    \begin{aligned}
        & (499, 375, 233, 168, 100, 57, 28, 14) \\
        & \xrightarrow{\text{merge([1, 1], [2, 2])}} (375, 499, 233, 168, 100, 57, 28, 14), \\
        & \xrightarrow{\text{merge([3, 3], [4, 4])}} (375, 499, 168, 233, 100, 57, 28, 14), \\
        & \xrightarrow{\text{merge([5, 5], [6, 6])}} (375, 499, 168, 233, 57, 100, 28, 14), \\
        & \xrightarrow{\text{merge([7, 7], [8, 8])}} (375, 499, 168, 233, 57, 100, 14, 28), \\
        & \xrightarrow{\text{merge([1, 2], [3, 4])}} (168, 233, 375, 499, 57, 100, 14, 28), \\
        & \xrightarrow{\text{merge([5, 6], [7, 8])}} (168, 233, 375, 499, 14, 28, 57, 100), \\
        & \xrightarrow{\text{merge([1, 4], [5, 8])}} (14, 28, 57, 100, 168, 233, 375, 499).
    \end{aligned}
\end{align}
Translate this process into the form of matrix multiplication
\begin{align}
        & (499, 375, 233, 168, 100, 57, 28, 14) \\
        & \times
        \color{gray!50}
        \begin{pmatrix}
            0 & \textbf{\color{black}1} & 0 & 0 & 0 & 0 & 0 & 0 \notag \\
            \textbf{\color{black}1} & 0 & 0 & 0 & 0 & 0 & 0 & 0 \notag \\
            0 & 0 & \MXBF{1} & 0 & 0 & 0 & 0 & 0 \notag \\
            0 & 0 & 0 & \MXBF{1} & 0 & 0 & 0 & 0 \notag \\
            0 & 0 & 0 & 0 & \MXBF{1} & 0 & 0 & 0 \notag \\
            0 & 0 & 0 & 0 & 0 & \MXBF{1} & 0 & 0 \notag \\
            0 & 0 & 0 & 0 & 0 & 0 & \MXBF{1} & 0 \notag \\
            0 & 0 & 0 & 0 & 0 & 0 & 0 & \MXBF{1} \notag \\
        \end{pmatrix}
        \times
        \begin{pmatrix}
            \MXBF{1} & 0 & 0 & 0 & 0 & 0 & 0 & 0 \notag \\
            0 & \MXBF{1} & 0 & 0 & 0 & 0 & 0 & 0 \notag \\
            0 & 0 & 0 & \MXBF{1} & 0 & 0 & 0 & 0 \notag \\
            0 & 0 & \MXBF{1} & 0 & 0 & 0 & 0 & 0 \notag \\
            0 & 0 & 0 & 0 & \MXBF{1} & 0 & 0 & 0 \notag \\
            0 & 0 & 0 & 0 & 0 & \MXBF{1} & 0 & 0 \notag \\
            0 & 0 & 0 & 0 & 0 & 0 & \MXBF{1} & 0 \notag \\
            0 & 0 & 0 & 0 & 0 & 0 & 0 & \MXBF{1} \notag \\
        \end{pmatrix} \notag \\
        & \times
        \color{gray!50}
        \begin{pmatrix}
            \MXBF{1} & 0 & 0 & 0 & 0 & 0 & 0 & 0 \notag \\
            0 & \MXBF{1} & 0 & 0 & 0 & 0 & 0 & 0 \notag \\
            0 & 0 & \MXBF{1} & 0 & 0 & 0 & 0 & 0 \notag \\
            0 & 0 & 0 & \MXBF{1} & 0 & 0 & 0 & 0 \notag \\
            0 & 0 & 0 & 0 & 0 & \MXBF{1} & 0 & 0 \notag \\
            0 & 0 & 0 & 0 & \MXBF{1} & 0 & 0 & 0 \notag \\
            0 & 0 & 0 & 0 & 0 & 0 & \MXBF{1} & 0 \notag \\
            0 & 0 & 0 & 0 & 0 & 0 & 0 & \MXBF{1} \notag \\
        \end{pmatrix}
        \times
        \begin{pmatrix}
            \MXBF{1} & 0 & 0 & 0 & 0 & 0 & 0 & 0 \notag \\
            0 & \MXBF{1} & 0 & 0 & 0 & 0 & 0 & 0 \notag \\
            0 & 0 & \MXBF{1} & 0 & 0 & 0 & 0 & 0 \notag \\
            0 & 0 & 0 & \MXBF{1} & 0 & 0 & 0 & 0 \notag \\
            0 & 0 & 0 & 0 & \MXBF{1} & 0 & 0 & 0 \notag \\
            0 & 0 & 0 & 0 & 0 & \MXBF{1} & 0 & 0 \notag \\
            0 & 0 & 0 & 0 & 0 & 0 & 0 & \MXBF{1} \notag \\
            0 & 0 & 0 & 0 & 0 & 0 & \MXBF{1} & 0 \notag \\
        \end{pmatrix} \notag \\
        & \times
        \color{gray!50}
        \begin{pmatrix}
            0 & 0 & \MXBF{1} & 0 & 0 & 0 & 0 & 0 \notag \\
            0 & 0 & 0 & \MXBF{1} & 0 & 0 & 0 & 0 \notag \\
            \MXBF{1} & 0 & 0 & 0 & 0 & 0 & 0 & 0 \notag \\
            0 & \MXBF{1} & 0 & 0 & 0 & 0 & 0 & 0 \notag \\
            0 & 0 & 0 & 0 & \MXBF{1} & 0 & 0 & 0 \notag \\
            0 & 0 & 0 & 0 & 0 & \MXBF{1} & 0 & 0 \notag \\
            0 & 0 & 0 & 0 & 0 & 0 & \MXBF{1} & 0 \notag \\
            0 & 0 & 0 & 0 & 0 & 0 & 0 & \MXBF{1} \notag \\
        \end{pmatrix}
        \times
        \begin{pmatrix}
            \MXBF{1} & 0 & 0 & 0 & 0 & 0 & 0 & 0 \notag \\
            0 & \MXBF{1} & 0 & 0 & 0 & 0 & 0 & 0 \notag \\
            0 & 0 & \MXBF{1} & 0 & 0 & 0 & 0 & 0 \notag \\
            0 & 0 & 0 & \MXBF{1} & 0 & 0 & 0 & 0 \notag \\
            0 & 0 & 0 & 0 & 0 & 0 & \MXBF{1} & 0 \notag \\
            0 & 0 & 0 & 0 & 0 & 0 & 0 & \MXBF{1} \notag \\
            0 & 0 & 0 & 0 & \MXBF{1} & 0 & 0 & 0 \notag \\
            0 & 0 & 0 & 0 & 0 & \MXBF{1} & 0 & 0 \notag \\
        \end{pmatrix} \notag \\
        & \times
        \color{gray!50}
        \begin{pmatrix}
            0 & 0 & 0 & 0 & \MXBF{1} & 0 & 0 & 0 \notag \\
            0 & 0 & 0 & 0 & 0 & \MXBF{1} & 0 & 0 \notag \\
            0 & 0 & 0 & 0 & 0 & 0 & \MXBF{1} & 0 \notag \\
            0 & 0 & 0 & 0 & 0 & 0 & 0 & \MXBF{1} \notag \\
            \MXBF{1} & 0 & 0 & 0 & 0 & 0 & 0 & 0 \notag \\
            0 & \MXBF{1} & 0 & 0 & 0 & 0 & 0 & 0 \notag \\
            0 & 0 & \MXBF{1} & 0 & 0 & 0 & 0 & 0 \notag \\
            0 & 0 & 0 & \MXBF{1} & 0 & 0 & 0 & 0 \notag \\
        \end{pmatrix} \notag \\
        & = (14, 28, 57, 100, 168, 233, 375, 499). \notag
    \end{align}
Abbreviated as
\begin{align}
    \begin{aligned}
        & (499, 375, 233, 168, 100, 57, 28, 14) \\
        & \quad \times \CAL{M}_{([1, 1], [2, 2])}^{(8)} \times \CAL{M}_{([3, 3], [4, 4])}^{(8)} \times \CAL{M}_{([5, 5], [6, 6])}^{(8)} \times \CAL{M}_{([7, 7], [8, 8])}^{(8)} \\
        & \quad \times \CAL{M}_{([1, 2], [3, 4])}^{(8)} \times \CAL{M}_{([5, 6], [7, 8])}^{(8)} \\
        & \quad \times \CAL{M}_{([1, 4], [5, 8])}^{(8)} \\
        & = (14, 28, 57, 100, 168, 233, 375, 499).
    \end{aligned}
\end{align}

For a merging pass with the merging unit of length $k$,
where each merge operation combines two adjacent subarrays of length $k$ (total range $2k$)
the total number of merging tasks is $\lfloor \frac{n}{2k} \rfloor$
for a sequence containing $n$ elements.

The elements involved in different merging operations within the same merging pass are disjoint.
Thus, we summarize the rule as follows:
\begin{align}
    \begin{aligned}
        & For\ any\ two\ matrices\ \CAL{M}_{([a_l, a_r], [b_l, b_r])}^{(n)}, \CAL{M}_{([c_l, c_r], [d_l, d_r])}^{(n)} \\
        & \ in\ the\ same\ merging\ pass,\ the\ property \\
        & \quad \{x | x \in [a_l, b_r]\} \cap \{x | x \in [c_l, d_r]\} = \emptyset \\
        & holds.
    \end{aligned}
\end{align}

For example, when $k = 1$, there are 4 merging operations in this pass:
    \begin{align}
        & \text{merging of position 1 and position 2}: \CAL{M}_{([1, 1], [2, 2])}^{(8)}, \\
        & \text{merging of position 3 and position 4}: \CAL{M}_{([3, 3], [4, 4])}^{(8)}, \\
        & \text{merging of position 5 and position 6}: \CAL{M}_{([5, 5], [6, 6])}^{(8)}, \\
        & \text{merging of position 7 and position 8}: \CAL{M}_{([7, 7], [8, 8])}^{(8)}.
    \end{align}

This is represented as
\begin{align}
    \times \CAL{M}_{([1, 1], [2, 2])}^{(8)} \times \CAL{M}_{([3, 3], [4, 4])}^{(8)} \times \CAL{M}_{([5, 5], [6, 6])}^{(8)} \times \CAL{M}_{([7, 7], [8, 8])}^{(8)},
\end{align}
it follows that
\begin{align}
    \{1, 2\} \cap \{3, 4\} \cap \{5, 6\} \cap \{7, 8\} = \emptyset.
\end{align}
Similarly, for
\begin{align}
    & \times \CAL{M}_{([1, 2], [3, 4])}^{(8)} \times \CAL{M}_{([5, 6], [7, 8])}^{(8)},
\end{align}
it follows that
\begin{align}
    \{1, 4\} \cap \{5, 8\} = \emptyset.
\end{align}

Thus, we can conclude that in the matrix multiplication corresponding to each pass of merge sort,
the index ranges modified by different transposition matrices in the same pass are pairwise disjoint.
This indicates that \textit{the disjoint matrix multiplication corresponding to each pass satisfies the commutative law}.
The corresponding practical operation is that
one may choose to merge positions 1 and 2 first and positions 7 and 8 last,
or alternatively merge positions 7 and 8 first and positions 1 and 2 last.

Then, we can leverage the commutative law to add parentheses
to the matrix multiplication expression
\begin{align}
    \begin{aligned}
        & \left( \CAL{M}_{([1, 1], [2, 2])}^{(8)} \times \CAL{M}_{([3, 3], [4, 4])}^{(8)} \times \CAL{M}_{([5, 5], [6, 6])}^{(8)} \times \CAL{M}_{([7, 7], [8, 8])}^{(8)} \right) \\
        & \times \left( \CAL{M}_{([1, 2], [3, 4])}^{(8)} \times \CAL{M}_{([5, 6], [7, 8])}^{(8)} \right) \\
        & \times \CAL{M}_{([1, 4], [5, 8])}^{(8)}.
    \end{aligned}
\end{align}
Disjoint matrix multiplications inside the parentheses satisfy the commutative law,
while matrix operations outside the parentheses do not.

We now construct an $\bOIES$ instance for all matrices in the matrix multiplication of merge sort
\begin{align}
    \oieS_{merge} = \{ \oie_{11\_22},\ \oie_{33\_44},\ \oie_{55\_66},\ \oie_{77\_88},\ \oie_{12\_34},\ \oie_{56\_78},\ \oie_{14\_58}\},
\end{align}
an $\bOperandS$ instance
\begin{align}
    \begin{aligned}
        & \OperandS_{merge} = \{\ \CAL{M}_{([1, 1], [2, 2])}^{(8)},\ \CAL{M}_{([3, 3], [4, 4])}^{(8)},\ \CAL{M}_{([5, 5], [6, 6])}^{(8)}, \\
        & \qquad \CAL{M}_{([7, 7], [8, 8])}^{(8)},\ \CAL{M}_{([1, 2], [3, 4])}^{(8)},\ \CAL{M}_{([5, 6], [7, 8])}^{(8)}, \CAL{M}_{([1, 4], [5, 8])}^{(8)}\ \},
    \end{aligned}
\end{align}
and an $\bES$ instance
\begin{align}
    \ES_{merge} = \{ \E_{11\_22},\ \E_{33\_44},\ \E_{55\_66},\ \E_{77\_88},\ \E_{12\_34},\ \E_{56\_78},\ \E_{14\_58} \}.
\end{align}

According to Definition~\ref{def:implementAndProps.endTsProjection}
and Corollary~\ref{cor:uniquenessOperationSetOrbit},
we can use the following sequence operation to build projection using only $\otimes$
    \begin{align}
        \oie_{merge} = \otimes \left( \otimes(\oie_{11\_22}, \oie_{33\_44}, \oie_{55\_66}, \oie_{77\_88}),\ \otimes(\oie_{12\_34}, \oie_{56\_78}),\ \oie_{14\_58} \right)
    \end{align}
or using $\otimes$ outside the parentheses while $\oplus|_{\alpha}^{\beta}$ inside the parentheses
    \begin{align}
        \oie_{merge} = \otimes \left( \oplus|_{\alpha_1}^{\beta_1}(\oie_{11\_22}, \oie_{33\_44}, \oie_{55\_66}, \oie_{77\_88}),\ \oplus|_{\alpha_2}^{\beta_2}(\oie_{12\_34}, \oie_{56\_78}),\ \oie_{14\_58} \right).
    \end{align}
In most industrial contexts,
hardware and software systems are robust enough to ensure that all subtasks in a parallel job run entirely independently with no cross-interference.
This allows us to use the Definition~\ref{def:OPs.Add.Nat} to convert sequential operations into following expression:
\begin{align}
    \oie_{merge} = \otimes \left( \oplus(\oie_{11\_22}, \oie_{33\_44}, \oie_{55\_66}, \oie_{77\_88}),\ \oplus(\oie_{12\_34}, \oie_{56\_78}),\ \oie_{14\_58} \right).
\end{align}

In cases where both $\oplus|_{\alpha}^{\beta}$ (or $\oplus$) and $\otimes$ are employed,
the part running $\oplus|_{\alpha}^{\beta}$ theoretically underpins the practical applicability of parallel technologies.
Since the time complexity of all merge tasks in a single merge operation is of the same order of magnitude,
for a certain additive subexpression in the arithmetic expression,
we directly set the filter domain to the range corresponding to the time complexity of a single merge task.
In this way, we transform the parallelization of merge sort into the following problem:

\BF{Analyze the time complexity of parallel merge sort for sorting $n$ elements with $k$ processes ($1\leq k\leq n$) under the constraint that only tasks on the same layer of the divide-and-conquer binary tree are executed in parallel, while different layers are processed serially (the next layer starts only after all tasks of the current layer complete).}

Merge sort's execution forms a divide-and-conquer binary tree with $\log_2 n$ layers (indexed by $d$, $0\leq d\leq \log_2 n-1$). The $d$-th layer has $2^d$ subtasks, each processing a subarray of length $n/2^d$, and the total elements processed per layer is always $n$ ($2^d \times n/2^d = n$). For the $d$-th layer, the time complexity $T(d)$ depends on $k$ and $2^d$:
- If $2^d \leq k$ (sufficient processes), all subtasks run in parallel, so $T(d) = \mathcal{O}(n/2^d)$.
- If $2^d > k$ (insufficient processes), subtasks run in batches of $k$, so $T(d) = \lceil 2^d/k \rceil \times \mathcal{O}(n/2^d) = \mathcal{O}(n/k)$ (constant factors are ignored in big-$\mathcal{O}$).

Let $d_0 = \log_2 k$ (critical layer where $2^{d_0}=k$). We split the tree into pre-critical layers ($0\leq d\leq d_0-1$, $2^d <k$) and post-critical layers ($d_0\leq d\leq \log_2 n-1$, $2^d \geq k$). The total time of pre-critical layers is:
\begin{align}
    T_{\text{pre}} = \sum_{d=0}^{\log_2 k -1} \mathcal{O}\left(\frac{n}{2^d}\right) = \mathcal{O}\left(n \times \sum_{d=0}^{\log_2 k -1} \frac{1}{2^d}\right) = \mathcal{O}(n),
\end{align}
since the geometric series sum converges to a constant (2). The total number of post-critical layers is $\log_2 n - \log_2 k = \log_2(n/k)$, so their total time is:
\begin{align}
    T_{\text{post}} = \log_2\left(\frac{n}{k}\right) \times \mathcal{O}\left(\frac{n}{k}\right) = \mathcal{O}\left(\frac{n}{k} \log n\right)
\end{align}
where $\log_2(n/k)$ is simplified to $\log n$ (logarithm base does not affect big-$\mathcal{O}$).

The total time complexity is the sum of $T_{\text{pre}}$ and $T_{\text{post}}$:
\begin{align}
    T_{\text{total}} = \mathcal{O}(n) + \mathcal{O}\left(\frac{n}{k} \log n\right).
\end{align}
For $1\leq k\leq n$ and large $n$, $\mathcal{O}(n/k \log n)$ is the dominant term, so the final time complexity is:
\begin{align}
    T_{\text{total}} = \mathcal{O}\left(\frac{n}{k} \log n\right).
\end{align}

In conclusion, under the constraint of parallel execution on the same layer and serial execution across layers, the time complexity of merge sort for $n$ elements with $k$ processes ($1\leq k\leq n$) is $\mathcal{O}\left(\frac{n}{k} \log n\right)$, where a larger $k$ (within $n$) reduces the complexity, and pre-critical layers contribute only a negligible linear term for large $n$.

This concludes our analysis of parallelized merge sort.
In fact, for all sorting algorithms that fall into the category of selection sort or
those based on the ``Divide-and-Conquer'' method,
once we analyze the sorting steps using array multiplication,
we can identify disjoint matrix multiplication subexpressions that satisfy the commutative law
to varying degrees in the operational formulas.
Thus, with $\bOIE$ and sequence operations,
we can rigorously derive feasible parallelization strategies for all such algorithms.

The key novelty of this method lies in performing the entire analysis using purely algebraic approaches.
Specifically, we first formulate the matrix multiplication expressions,
and then derive the divide-and-conquer strategy based on the algebraic properties of matrix multiplication and two sequential operations.
In many special scenarios, the division step no longer operates on complete and uniform blocks (e.g., various interpolation operations).
Instead, we may be confronted with pre-partitioned segments whose division logic is entirely unknown.
For example, the tasks we inherit may involve a matrix multiplication whose origin or construction process is unknowable.
In such cases, the purely algebraic method presented in this subsubsection demonstrates exceptional generality and adaptability.

From a broader perspective,
matrix multiplication is a method widely applied in engineering practices,
including data processing, object rotation and translation, and many other industrial fields.
Matrix multiplication itself is merely a form of algebraic operation,
and numerous operations across all engineering fields can be abstracted into algebraic expressions of
\textit{n}-ary operations in key scenarios.
It is precisely in these scenarios that the concepts of $\bOIE$ and sequence operations can be effectively introduced.
\fi
    \subsection{Comparison in Computer Science}\label{subsec:comp.cs}\EBL
    \ifincludeFile
This subsection conducts a systematic comparative analysis between the $\bOIE$ framework
and mainstream formal frameworks for concurrent systems in computer science,
including Process Algebra, Petri nets, and Allen's interval algebra,
to elucidate the distinctive advantages and theoretical positioning of $\bOIE$ in expressing parallelism,
handling temporal uncertainty, and unifying parallel-serial execution.
The discussion is organized into two subsections:
\begin{itemize}
    \item \textbf{Subsubsection~\ref{subsubsec:comp.cs.1}} addresses the formal unification of parallelism and serialism.
    It demonstrates how the $\bOIE$ framework overcomes the fundamental defect of traditional theories:
    their inability to formalize permutational symmetry,
    by leveraging interval sets and sequence operations,
    and thereby achieves an algebraic unification of parallel and serial execution within a single axiomatic system.
    \item \textbf{Subsubsection~\ref{subsubsec:comp.cs.2}} examines the axiomatic foundations and epistemological orientations.
    It compares the meta-level constraint axiom (Axiom~\ref{axm:OPs.infsble.isom})
    of the $\bOIE$ framework against the object-level rules of traditional formalisms,
    revealing core differences in levels of abstraction, plan-ahead versus behavioral description, informational efficiency, and temporal modeling.
\end{itemize}\fi
        \subsubsection{Formal Unification of Parallelism and Serialism}\label{subsubsec:comp.cs.1}\EBL
        \ifincludeFile
In this subsection, we discuss the $\bOIE$ framework and sequence operations from a computer science theory perspective.
We compare this framework with several concurrent system specification frameworks
—Process Algebra, Petri nets, and Interval Algebra—
focusing on how each approach expresses event relationships,
particularly in how they formalize parallelism.

Process Algebra stands as a foundational and widely adopted framework for concurrent system modeling,
excelling at characterizing the behavioral equivalence of concurrent processes
with communication and synchronization as its core design primitives~\cite{Milner89,BergstraK84},
and has long served as the backbone of formal verification for concurrent and distributed systems.
While its canonical formalism is built on a point-based temporal model that elegantly captures the discrete state transition behaviors of concurrent processes,
it encounters non-trivial challenges when directly modeling interval-centric temporal properties,
including interval fluctuation, observation errors, and multiple candidate execution windows.
Against this backdrop, our sequence operation framework built on $\bOIE$ fully inherits the core compositionality and compositional reasoning capabilities of Process Algebra,
while redefining its basic modeling unit with interval sets.
This extended formalism enables natural, unified expression of key temporal attributes of events, including duration intervals, uncertain start times, and multiple candidate execution schemes.
It thus demonstrates better alignment with the practical modeling requirements of real-time scheduling, physical observation constraints, and dynamic planning scenarios, with notable advantages in the completeness and flexibility of temporal expression.

Petri net models stand as a foundational and widely adopted framework for discrete event dynamic system modeling,
with intuitive and canonical capabilities to characterize state transitions, resource contention, and concurrent execution behaviors~\cite{Petri62,Murata89}.
Since their inception, they have served as a core formal tool for the modeling, analysis of concurrent, distributed, and industrial automation systems.
While their core formalism elegantly captures the structural and behavioral properties of discrete event systems,
the canonical Petri net framework requires non-trivial additional extensions
to natively support multi-candidate temporal planning and interval constraint reasoning.
Furthermore, its state space is prone to combinatorial explosion and rapid expansion as the number and scale of modeled events increase, presenting non-trivial challenges for large-scale dynamic planning and scheduling scenarios.
Against this backdrop, our sequence operation framework built on $\bOIE$ is natively centered on interval algebraic operations,
enabling a streamlined modeling paradigm for temporal planning problems.
This framework supports native feasibility judgment and constraint screening prior to event execution,
eliminating the need for explicit construction of the global state space.
It thus delivers improved lightweight nature and computational efficiency across planning performance, interval constraint reasoning, and dynamic scheme adjustment, and is particularly well-suited for scheduling and decision-making systems that require pre-validation and pre-determination of feasible execution schemes.

Allen's interval algebra stands as a foundational and widely adopted framework
for interval-based temporal relation reasoning~\cite{Allen83,NebelB95},
forming the backbone of classical temporal constraint solving tasks.
While its core formalism is elegantly tailored for fixed intervals, static pairwise relations, and binary temporal constraints,
it encounters non-trivial challenges when generalized to more complex settings involving observation uncertainty, multiple candidate execution paths, and composite sequence operations.
Against this backdrop, our sequence operation framework built on $\bOIE$ retains full compatibility with the core interval relation expression of Allen's algebra,
while extending its expressive scope to support multiple optional intervals, dynamic constraints, and set-level combinatorial operations.
This extended formalism enables unified treatment of observation errors, temporal fluctuations, and parallel reasoning across multiple candidate schemes,
and demonstrates better alignment with the temporal modeling requirements of real-world physical and computer systems,
with notable advantages in expressive power, computability, and system adaptability.

While these foundational frameworks excel in their respective core application domains,
they present notable constraints when extended to the multi-scenario temporal modeling and dynamic planning problems addressed as follows:
\begin{enumerate}
    \item The current interpretations of parallelism in theoretical computer science suffer from two fundamental problems.
    Some approaches take pointwise simultaneity as the foundation and introduce mathematical tools such as lattice theory for analysis, yet they fail to effectively characterize the permutation symmetry of parallelism.
    The others abandon the use of pointwise simultaneity to interpret parallelism
    and directly base parallelism on abstract symmetric structures like
    \begin{align}
        ab \sim ba
    \end{align}
    Although such approaches perfectly express symmetry, they do not formalize the underlying instant relations, resulting in an inadequate theoretical foundation.
    These two issues form a core contradiction in which their defects are complementary yet mutually irreconcilable.
    \item Parallel and serial execution are treated as mutually exclusive dichotomous concepts, lacking a unified algebraic foundation for modeling.
    For classic parallel scenarios with ``intra-layer parallelism and inter-layer seriality''
    such as circuit routing, MapReduce, and parallel merge sort, formal modeling cannot be completed with a unified mathematical model, and only fragmented phased descriptions can be provided.
    When encountering more abstract processes, steps that could be parallelized are easily overlooked through descriptive analysis alone.
\end{enumerate}

The $\bOIE$ framework and sequence operations proposed in this paper completely transcend the limitations of phenomenological description in traditional theories.
Based on set theory and group theory, they construct a closed, self-consistent, and computable formal system for parallel-serial computing.
Their core advantages are reflected in two dimensions:

First, the $\bOIE$ framework does not avoid the critical issue of pointwise equality in the core scenario of modeling time with real numbers;
instead, it extends the expressive form of single-point equality using interval sets and thoroughly overcomes the shortcoming that traditional theories cannot formalize permutation-symmetric structures.
This framework and sequence operations transform the traditional phenomenological description of parallel/serial execution in the computer field into a rigorous, closed, and engineering-implementable algebraic formal system.
They not only address the inherent flaws of traditional parallel computing theories,
but also provide a brand-new theoretical foundation and analytical tool for core computer fields such as parallel algorithm design, real-time task scheduling, distributed systems, and formal program verification.

Second, it achieves the algebraic unification of parallel and serial execution,
enabling end-to-end formal modeling of complex hybrid computing scenarios.
In traditional theories, parallel and serial execution are mutually exclusive dichotomous concepts.
In contrast, through the nested combination of $\oplus|_{\alpha}^{\beta}$ and $\otimes$, the framework can directly perform unified formal modeling of classic ``hybrid parallel and serial'' scenarios such as parallel merge sort, divide-and-conquer algorithms, and pipeline computing.
Meanwhile, based on the algebraic properties of the operations, it can quantitatively derive the time complexity and upper bound of parallelism of parallel algorithms, replacing the qualitative description of algorithm parallelism in traditional models.

In summary, the $\bOIE$ framework rigorously unifies parallelism and serialism within a single axiomatic algebraic system through the operations $\oplus|_{\alpha}^{\beta}$ and $\otimes$.
By replacing phenomenological descriptions and pointwise simultaneity assumptions with interval sets and equipotential time domains, it overcomes the fundamental defects of traditional theories---notably their inability to formalize permutation symmetry and their reliance on global clock precision.
The framework characterizes the mathematical symmetry of concurrent structures via permutational equivalence and orbit-space analysis, while the nested composition of $\oplus|_{\alpha}^{\beta}$ and $\otimes$ enables full-link formal modeling and quantitative complexity derivation for hybrid computing scenarios (e.g., intra-layer parallelism with inter-layer serialism).
Consequently, it provides a computable, engineering-implementable theoretical foundation for parallel algorithm design, real-time task scheduling, and formal program verification.\fi
        \subsubsection{Axiom 1 and Traditional Formal Frameworks}\label{subsubsec:comp.cs.2}\EBL
        \ifincludeFile
Both the $\bOIE$ framework and classical formalisms
including Process Algebra, Allen's interval algebra, and Petri nets
employ axiomatic methods as their theoretical starting points.
Nevertheless, they differ in the type of axioms they adopt,
the level of abstraction at which these axioms are situated,
and the manner in which they relate to physical reality.
These distinctions are essential for understanding the specific contribution of the $\bOIE$ framework.

\paragraph{(1) Level of abstraction: meta-constraints versus object-level rules.}
Axiom~\ref{axm:OPs.infsble.isom} (structure-preserving permutational isomorphism)
is a \emph{meta-level} statement about constraints:
it postulates that the set of infeasible interval combinations remains structurally invariant
when the participating entities are permuted.
From this single assumption, subsequent algebraic properties can be rigorously derived.
By contrast, traditional frameworks usually stipulate rules directly at the \emph{object level}:
Process Algebra treats the commutativity of parallel composition
\begin{align}
    a \mid b \sim b \mid a
\end{align}
as a primitive syntactic equivalence;
Allen's interval algebra enumerates thirteen basic relations as a predefined taxonomy;
and Petri nets require firing rules that are fixed at the net-structure level.
These classical axioms effectively characterize system behavior,
but they are primarily designed to simplify algebraic manipulation
rather than reflect physical constraint invariance.
Axiom~\ref{axm:OPs.infsble.isom}, in contrast,
attempts to provide a bottom-up derivation path that proceeds
from the intrinsic invariance of physical and logical constraints to the resulting algebraic properties.

\paragraph{(2) Epistemological orientation: planning-ahead versus behavioral description.}
Traditional formalisms are primarily oriented toward the precise description of system execution.
For instance, the commutativity law
\[
    a \mid b \sim b \mid a
\]
in Process Algebra
elegantly captures the observational indistinguishability of parallel compositions,
yet it does so under an idealized assumption of exact synchronization at the discrete-state level.
Similarly, Allen's interval algebra operates on the premise that interval endpoints are known precisely,
enabling static relational inference.
The $\bOIE$ framework introduces a complementary, planning-ahead perspective:
Axiom~\ref{axm:OPs.infsble.isom} does not \emph{prescribe} that parallel composition must be commutative;
rather, it \emph{proves} that if real-world constraints satisfy permutational invariance,
then the corresponding planning-level concurrent operation naturally exhibits commutativity.
In this sense, the properties about commutativity become derived theorems
(Properties~\ref{pty:OPs.Add.Permu} and~\ref{pty:OPS.orbitSpace.add.oneElem}) rather than syntactic postulates.

\paragraph{(3) Derivational power and informational efficiency.}
Axiom~\ref{axm:OPs.infsble.isom} is informationally parsimonious:
together with the real-number continuum,
it suffices to derive the single-orbit space of $\oplus|_{\alpha}^{\beta}$,
the multi-orbit space of $\otimes$,
the uniqueness of projected operation sets (Corollaries~\ref{cor:uniqueOpSet} and~\ref{cor:uniquenessOperationSetOrbit}),
and the rigorous separation of process symmetry from outcome symmetry in probability theory (Subsections~\ref{subsec:app.math} and ~\ref{subsec:comp.math} ).
Traditional frameworks typically introduce multiple and mutually independent axioms
to cover these various aspects:
Process Algebra requires separate axioms for commutativity, associativity, synchronization, and bisimulation equivalence;
Allen's algebra needs thirteen relation definitions plus transitivity tables;
Temporal logics require discrete time steps, atomicity assumptions, and modal operators.
These axioms are usually complementary rather than derivable from one another.
The $\bOIE$ framework attempts to cover a broad range of properties from a compact set of structural assumptions,
reflecting a different formalization strategy rather than a claim of universal superiority.

\paragraph{(4) Time models: continuous interval uncertainty versus ideal precision.}
The force of Axiom~\ref{axm:OPs.infsble.isom} is inseparable from the $\bOIE$ treatment of time as continuous real-number domains (sets)
that incorporates observational uncertainty.
For example,
it acknowledges that even atomic clocks possess finite systematic error,
so that ``simultaneous start'' can only be meaningfully interpreted as equal opportunity within an equipotential domain.
Traditional frameworks generally adopt idealized temporal assumptions:
Process Algebra relies on a point-based discrete-state model with implicit global-clock synchronization;
Allen's interval algebra, although it uses intervals, defines its thirteen relations
as combinations of exact endpoint comparisons
and does not natively accommodate scenarios where an event may execute in $[0,1)$ or $[21,22)$;
temporal logics presuppose a precisely enumerable discrete time axis.
These idealizations are effective within their intended application domains,
whereas the $\bOIE$ framework seeks to leave greater formal room for temporal uncertainty and multiple candidate execution windows.

In summary,
the difference between the $\bOIE$ framework and traditional formalisms
lies not merely in the quantity of axioms,
but in the logical relationship between axioms and physical reality.
Axiom~\ref{axm:OPs.infsble.isom} is not intended as a direct replacement for the foundations of Process Algebra or the relation tables of Allen's algebra;
it offers an alternative path that connects physical and logical constraints to algebraic structures through derivation.
By situating the formalism within a constraint-aware, pre-execution planning paradigm,
the $\bOIE$ framework provides a unified setting in which properties of parallelism, sequentiality, and simultaneity
can be obtained as consequences of structural invariance,
rather than being introduced as independent syntactic postulates.\fi
    \subsection{Application to Probability Theory: Drawing Lots}\label{subsec:app.math}\EBL
    \ifincludeFile
Consider a bag containing $n$ balls,
of which $m$ are red and $n-m$ are black.
Let $k$ participants ($1 < k \leq m < n$) each draw one ball without replacement.
We distinguish two sampling scenarios:
\textit{simultaneous sampling}, where all $k$ participants reach into the bag concurrently,
and \textit{sequential sampling}, where participants draw one after another according to a predetermined order.

Classical probability theory establishes that each participant's marginal probability of drawing a red ball is $m/n$ in both scenarios.
For simultaneous sampling, this follows from combinatorial uniformity;
for sequential sampling, it follows from the chain rule of conditional probability.

The $\bOIE$ framework reveals that this equivalence of outcomes arises despite fundamentally distinct algebraic architectures:
simultaneous sampling corresponds to $\oplus|_{\alpha}^{\beta}$ with its single-orbit permutational equivalence,
while sequential sampling corresponds to $\otimes$ with its strict sequential ordering and potentially multiple orbital equivalence classes.
The key insight is that although the algebraic paths differ, both operations project onto the same probability distribution in the observable outcome space.

We model each participant $i$ as an $\bAtomOIE$ instance $\atomOie_i$ mapping to the atomic event $\atomE_i$ (the action of drawing a ball).
Each $\atomOie_i$ carries a feasible interval set
\begin{align}
    \CAL{I}_{\atomOie_i} = \{(\TSs, \TSe) \mid \alpha \leq \TSs < \TSe \leq \beta \land \TSe - \TSs = \delta\},
\end{align}
where $[\alpha, \beta)$ denotes the shared temporal domain of the sampling process and $\delta > 0$
represents the non-zero duration required to physically execute the draw.

In the simultaneous scenario, all $k$ participants possess equal opportunity to commence their draws within the shared domain $[\alpha, \beta)$,
with no privileged precedence.
This embodies the semantics of $\oplus|_{\alpha}^{\beta}$ (Definition~\ref{def:OPs.Add}).
Executing the operation
\begin{align}
    \oie_{simul} = \oplus|_{\alpha}^{\beta}(\atomOie_1, \atomOie_2, \ldots, \atomOie_k)
\end{align}
yields a result whose orbit space $\CAL{O}(\oieS, \oplus|_{\alpha}^{\beta})$
contains exactly one equivalence class (Property~\ref{pty:OPS.orbitSpace.add.oneElem}).
Consequently, for any two index tuples $\idxT_1$ and $\idxT_2$,
the results are permutationally equivalent:
\begin{align}
    \oplus|_{\alpha}^{\beta}(\oieS, \idxT_1) \stackrel{\CAL{M}}{\sim} \oplus|_{\alpha}^{\beta}(\oieS, \idxT_2).
\end{align}
This algebraic symmetry reflects that the assignment of indices represents merely a descriptive convention,
not an ontological distinction,
since all participants act within the same temporal window without sequential constraint.

Conversely, the sequential scenario enforces a strict order
wherein participant $i$ must complete their draws before participant $i+1$ begins.
This corresponds to $\otimes$ (Definition~\ref{def:Ops.Multi}),
which applies an ascending-ordered filter (Definition~\ref{def:OPs.Multi.CompleteAscOrderFilteredSubTwoTplTS}) requiring $\TSe_i \leq \TSs_{i+1}$ for all consecutive pairs.
The operation
\begin{align}
    \oie_{seq} = \otimes(\atomOie_1, \atomOie_2, \ldots, \atomOie_k)
\end{align}
generates a distinct algebraic structure: by Property~\ref{pty:OPs.Multi.Permu},
changing the index order generally produces a result that is \textit{not} permutationally equivalent to the original.
Thus, the orbit space $\CAL{O}(\oieS, \otimes)$ may contain multiple equivalence classes (Property~\ref{pty:OPS.orbitSpace.multi.multiElem}),
each corresponding to a specific permutation of the drawing order.
The algebraic structure explicitly encodes the temporal ``unfairness'' of precedence.

However, despite these structural differences, both operations yield identical marginal probability distributions when projected onto the outcome space.
Under $\oplus|_{\alpha}^{\beta}$, the single-orbit property ensures that every participant appears in every ordinal position with equal frequency across the feasible set $\CAL{F}$,
directly yielding the uniform marginal probability $m/n$.
Under $\otimes$, the same marginal probability emerges
from the combinatorial properties of conditional probability chains:
the product of conditional probabilities arranged in any order preserves the individual marginal $m/n$ for each position.

This distinction becomes clear when examining the projection to $n$-ary operations (Definition~\ref{def:implementAndProps.endTsProjection}).
For simultaneous sampling, the projection generates a set of $n$-ary operations
respecting permutational equivalence (Corollary~\ref{thm:implementAndProps.fit.add}),
reflecting that the order of drawing in the expression
does not affect the final allocation.
For sequential sampling, the projection yields a single ordered expression (Property~\ref{prop:multiOpNumber}),
yet when the operation $\odot$ for the $\OperationS$ in the projection is defined as probability calculation (specifically, the counting measure over red ball allocations),
both projections yield the identical probability value $m/n$.

The $\bOIE$ framework thus reveals how identical stochastic outcomes may arise from algebraically distinct event structures:
one collapsing all permutations into equipotentiality ($\oplus|_{\alpha}^{\beta}$),
and the other preserving strict causal ordering ($\otimes$).
This provides a rigorous formal distinction between \TT{process symmetry} and \TT{outcome symmetry} in probabilistic sampling.
\fi
    \subsection{Comparison in Probability Theory: Process Symmetry and Outcome Symmetry in Classical Probability}\label{subsec:comp.math}\EBL
    \ifincludeFile
The traditional classical probability theory's analysis of ``simultaneous sampling'' and ``sequential sampling without replacement''
has always revolved around the fairness of results:
it proves through combinatorial counting methods that the marginal probability of each participant drawing the target sample in simultaneous sampling is \( m/n \),
and verifies through conditional probability derivation that the marginal probability of each participant in sequential sampling is also \( m/n \),
ultimately concluding that ``the results of the two sampling methods are equivalent''.
However, the traditional analysis has three essential limitations:

\BF{I},
it adopts completely separate mathematical paradigms for the analysis of the two sampling scenarios:
simultaneous sampling is classified as a combinatorial problem,
while sequential sampling is classified as a conditional probability problem,
without establishing a unified formal description framework.

\BF{II},
it only focuses on the probability symmetry at the result level,
never providing a rigorous mathematical definition and characterization of ``the symmetry of the sampling process'',
and thus cannot explain the core contradiction of ``completely heterogeneous processes but completely consistent results'' from the underlying level.

\BF{III},
it simplifies the temporal difference of events into a difference in combinatorial rules of
``whether to consider the permutation order'',
completely ignoring the underlying influence of the temporal constraints of event execution on the probability results,
and also failing to distinguish between the descriptive meaning and the ontological meaning of indices in the sampling process.

Based on the $\bOIE$ algebraic framework,
this research achieves core theoretical innovations in 4 dimensions for the classical sampling problems in classical probability:

\BF{1}, it constructs a cross-scenario unified algebraic modeling system.
Through the two core operations of $\oplus|_{\alpha}^{\beta}$ and $\otimes$,
the two heterogeneous scenarios of ``simultaneous sampling'' and ``sequential sampling''
are incorporated into the same algebraic formal framework.
Among them, the ``equal opportunity, no priority order'' characteristic of simultaneous sampling is modeled as the $\oplus|_{\alpha}^{\beta}$ operation,
and the ``strict temporal order, causal priority'' characteristic of sequential sampling is modeled as the \( \otimes \) operation.
The process differences between the two scenarios are transformed into derivable and verifiable algebraic property differences.
This solves the problem of inconsistent analysis paradigms for the two sampling scenarios in traditional analysis,
and provides a general formal modeling tool for probabilistic events involving temporal order.

\BF{2}, it rigorously defines the mathematical connotations of process symmetry and result symmetry,
and proves their separability.
Traditional probability theory has never formed a formal definition of ``process symmetry'',
while this research clarifies the algebraic essence of process symmetry
through the theory of orbit space and permutational equivalence:
the result of simultaneous sampling modeled by the $\oplus|_{\alpha}^{\beta}$ operation
is always in a single orbit space,
and any index permutation maintains the permutational equivalence of the algebraic structure,
which is complete algebraic symmetry at the process level;
the result of sequential sampling modeled by the \( \otimes \) operation
may form a multi-orbit space,
and changes in index order will destroy the permutational equivalence of the algebraic structure,
which is algebraic asymmetry at the process level.
At the same time, this research further proves that the two operations with completely opposite process symmetries
can obtain completely consistent marginal probability distributions
when projected onto the probability result space based on the counting measure,
strictly proving the separability of process symmetry and result symmetry
from the underlying algebraic level,
and filling the gap in the formal characterization of process symmetry
in traditional probability theory.

\BF{3}, it reconstructs the underlying logic of the fairness of classical probability,
and clarifies the descriptive and ontological boundaries of indices.
For the classic proposition of ``the fairness of drawing lots'' in classical probability,
traditional analysis attributes fairness to the result characteristic of ``equal probability of each participant being selected'',
while this research reveals its essence through the $\bOIE$ framework:
the fairness of drawing lots does not come from the consistency of the sampling process,
but from the permutation invariance of the result projection operation (counting measure).
At the same time, this research clarifies the dual attributes of indices in the sampling process:
in the concurrent scenario of the \( \oplus \) operation,
the permutation of indices only changes the descriptive form, does not change the algebraic structure, and only has the meaning of a descriptive convention.
In the serial scenario of the \( \otimes \) operation,
the order of indices directly determines the causal temporal order and algebraic structure of events,
and has causal significance at the ontological level.
This distinction provides a novel ontological explanation for the core concept of ``ordered and unordered'' in classical probability.

\BF{4}, it establishes a mapping chain between temporal algebraic structures and probability results,
expanding the formal boundary of probability theory.
The traditional analysis of classical probability strips away the temporal attributes of events,
while this research transforms the temporal constraints of events into operable algebraic structures through the $\bOIE$ framework,
establishing a complete logical chain of ``projection from temporal algebraic operations to probability results''.
This makes probability analysis no longer limited to counting final results.
Instead, it can deeply characterize how temporal structure and constraints of event execution influence probability outcomes.
It also provides a novel theoretical foundation and analysis paradigm for integrating classical probability with formal verification, temporal logic, and concurrent system analysis.

In summary, the analysis of classical sampling problems based on the $\bOIE$ framework in this research not only provides a more rigorous and foundational algebraic explanation for the classic proposition of probability fairness,
but also achieves the formal expansion of classical probability theory through the separation and unification of process and result symmetries, and lays the foundation for the subsequent introduction of probability measures and the expansion of randomized temporal event modeling capabilities by the $\bOIE$ framework.\fi
    \subsection{An $\bOIE$ Reinterpretation of Galileo's Leaning Tower Experiment: The Legitimacy of the Repeatability of Physical Experiments}\label{subsec:app.phy}\EBL
    \ifincludeFile
Galileo's legendary free-fall experiment at the Leaning Tower of Pisa (circa 1590)
is traditionally described as verifying that objects of different masses,
released simultaneously from the same height, will ``hit the ground at the same time.''
However, from the perspective of the $\bOIE$ framework,
this classical formulation confronts the same epistemological dilemma
that plagues the notion of ``simultaneous start'' in speed competitions:
the \emph{unverifiability of pointwise simultaneity}

The conventional narrative implicitly assumes two hypotheses:
both spheres begin their descent at a precise instant (pointwise temporal equality),
and both spheres touch the ground at a precise instant.
As emphasized, we cannot verify the ``pointwise equality'' of the release instants of two balls.

Using the $\bOIE$ framework, we reconstruct the experiment.
Let the heavy ball's fall be $\E_{\text{heavy}}$,
and the light ball's fall be $\E_{\text{light}}$.
Then, construct two $\bOIE$ instances:
\begin{align}
\oie_{\text{heavy}} &= \big( (), \F_{\text{heavy}}, \I_{\text{heavy}}, \{\EStar_{\text{heavy}}\} \big), \\
\oie_{\text{light}} &= \big( (), \F_{\text{light}}, \I_{\text{light}}, \{\EStar_{\text{light}}\} \big),
\end{align}
where $\I_{\text{heavy}}$ and $\I_{\text{light}}$ are feasible interval sets satisfying:
\begin{align}
    \I_{\text{heavy}} = \I_{\text{light}} = \{ (\TSs, \TSe) \mid \TSs \geq \alpha \land \TSe \leq \beta \land \TSe - \TSs = \sqrt{2h/g} \}
\end{align}
with $[\alpha, \beta)$ denoting the experimental time domain (equipotential domain),
$h$ the tower height, and $g$ the gravitational acceleration.

The ``simultaneous'' release of both spheres can be modeled as $\oplus|_{\alpha}^{\beta}$
rather than pointwise simultaneity:
\begin{align}
    \oie_{\text{Galileo}} = \oplus|_{\beta}^{\alpha}(\oie_{\text{heavy}}, \oie_{\text{light}}).
\end{align}

The critical insight lies in Step 3 of Complete Sequence Addition (Definition~\ref{def:OPs.Add}):
The domain-filtered subset (Definition~\ref{def:OPs.Add.DomainFilteredSubTwoTplTS})
requires that all participating events have opportunities to start and end within the shared time domain $[\alpha, \beta)$,
without forcing them to share identical starting timestamps.
By Definition~\ref{def:implementAndProps.endTsProjection} (projection based on ending timestamps),
we project $oie_{\text{Galileo}}$ onto the kinematic outcome space.
Let $\odot$ denote the $n$-ary finitary operation ``Get the minimum landing time'':
\begin{align}
    \oplus|_{\beta}^{\alpha}(\{oie_{\text{heavy}}, \oie_{\text{light}}\}, (1, 2)) \xrightarrow[\ES, \OperandS, \odot]{\text{ascending order of } \bTSe} \OperationS
\end{align}
where $\odot$ satisfies permutational equivalence:
\begin{align}
    \odot(T_{\text{heavy}}, T_{\text{light}}) = \odot(T_{\text{light}}, T_{\text{heavy}}),
\end{align}
where $T_{\text{heavy}}$ and $T_{\text{light}}$ are the landing times of the two spheres.
By Corollary~\ref{cor:uniquenessOperationSetOrbit}, all operations in $\OperationS$ yield the same outcome.

So, the experiment verifies not that ``both spheres land at precisely the same instant $t$'' (pointwise simultaneity),
but rather that ``within the equipotential time domain $[\alpha, \beta)$, mass difference does not affect the projection''.
When the experimenter releases the balls to fall at the so-called ``same time'',
one only needs to match the domain formed by $\alpha$ and $\beta$ to the actual experimental conditions
in accordance with the precision specified in the experimental design,
and all metrics of this experiment are fully captured.
In fact, this experiment can also be modeled using projection of $\otimes$ (one by one),
as it does not even need to satisfy permutational equivalence.
Its constraints even more lenient than those of $\oplus|_{\alpha}^{\beta}$.

This algebraic insight extends beyond the reinterpretation of Galileo's specific experiment to illuminate ``\TT{The foundational epistemology of experimental repeatability in physics}''.
The legitimacy of repeatedly reproducible physical experiments,
whether conducted simultaneously in parallel or sequentially across time,
does not fundamentally rely on the strict attainment of pointwise temporal coincidence (simultaneous start) or exact spatial overlap across trials.
Rather, it depends critically upon the \textbf{preservation of algebraic structure},
specifically the permutational equivalence of the $n$-ary operation in the projection (Definition~\ref{def:implementAndProps.endTsProjection}).

When experimental instances are modeled via $\oplus|_{\alpha}^{\beta}$,
the single-orbit property (Property~\ref{pty:OPS.orbitSpace.add.oneElem}) ensures that all permutations of operand ordering yield permutationally equivalent results.
This algebraic symmetry guarantees that experiments performed within a shared equipotential domain,
regardless of minor temporal displacements or rearrangements,
remain structurally equivalent and thus scientifically legitimate as ``repeated'' instances.
\textbf{The validity of the experimental method derives not from the absolute simultaneity of initiation, but from the invariance of the operational outcome under index permutation.}

Conversely, $\otimes$ does not enforce permutational equivalence (Property~\ref{pty:OPS.orbitSpace.multi.multiElem}),
thereby generating multiple distinct orbits in the orbit space.
This structural characteristic actually confers a different kind of experimental robustness: by mandating strict sequential ordering without requiring permutational invariance,
$\otimes$ accommodates experimental repetition across extended temporal intervals with weaker constraints on temporal overlap.
The sequential structure tolerates, and indeed formalizes, the temporal separation between experimental instances,
rendering cross-temporal replication algebraically tractable without demanding simultaneous occurrence.

\TT{In summary, the legitimacy of repeatable physical experiments is subject to algebraic constraints rather than strict spatiotemporal coincidence.}
The choice between concurrent ($\oplus$) and sequential ($\otimes$) experimental architectures determines the specific symmetry requirements:
$\oplus|_{\alpha}^{\beta}$ demands permutation invariance within an equipotential time window,
while $\otimes$ relaxes simultaneity requirements through strict temporal ordering.
This reveals that physical repeatability is not merely an empirical accident of temporal alignment,
but a \TT{structured algebraic property} grounded in the orbit space characteristics of the underlying sequence operations.
\fi
    \subsection{Comparison in Physics: Unattainable Pointwise Equality and Epistemological Equipotentiality}\label{subsec:comp.phy}
    \ifincludeFile
This subsection continues to take the classic scenario of the 100-meter dash as the carrier,
and conducts an in-depth discussion around the first core question raised in the introduction:
``\TT{The formal description of the simultaneity of the starting moments of object motion from the observer's perspective}''.

Simultaneity is a fundamental core issue in physics.
Newtonian mechanics upholds the concept of absolute simultaneity,
holding that time is a unified and independent physical quantity across the universe.
The simultaneity of two events has absolute meaning,
independent of the observer's state of motion, and remains consistent in any reference frame.
In contrast, relativistic mechanics proposes the relativity of simultaneity,
pointing out that time and space are inseparable,
that simultaneity depends on the observer's inertial reference frame.
Observers in relative motion will have different judgments on the simultaneity of events,
and there is no absolutely unified simultaneity.

These two classical cognitions of simultaneity from the observer's perspective essentially take \TT{the pointwise precise equality of timestamps} as the core of the discussion:
Relativistic mechanics breaks the absolute view of space-time for observers,
but its analysis of simultaneity still relies on the Lorentz transformation for time coordinate conversion between different frames.
Crucially, it does not depart from the ideal hypothesis of pointwise simultaneity for observers.
For the simultaneity of the starting moments of two events,
this paper does not intend to refute the logical self-consistency of pointwise time equality at the theoretical level
(whether in the framework of Newtonian mechanics or relativity).
The core contradiction we focus on is:
Even if there is a theoretically pointwise equal starting moment,
can humans achieve strict verification of it through observation,
or can we ensure that the observed ``pointwise simultaneity'' is completely consistent with the objective physical facts?
As of 2026, this cannot be strictly guaranteed at the physical and epistemological levels.

We hereby clarify the boundaries of the discussion again:
``\TT{From the observer's perspective}'' limits the discussion in this paper to the epistemological level,
and ``\TT{Whether the starting moments of two events (or the moments when two objects start to move) are the same}''
limits our core focus to the equality relationship of the two moments, rather than the specific values of the moments.
We have emphasized in the introduction that the systematic error of the most advanced atomic clock in the world is still $8 \times 10^{-19}$ seconds,
and no observation of any moment can achieve absolute accuracy.
But the event and motion must have an objective starting moment independent of observation.
For example, we can stipulate that the moment when an athlete's foot leaves the ground in a 100-meter dash is the starting moment of his motion.
This moment exists objectively, but we cannot obtain its absolutely accurate value through observation.
In other words, for two objectively existing moment variables \(t_1\) and \(t_2\),
we do not discuss their specific values,
but only focus on the observability and verifiability of the relationship ``Whether the two are strictly equal''.

We believe that the reason why the traditional formal system cannot guarantee that its judgment of simultaneity from the observer's perspective
is consistent with the objective reality lies in the essential limitation of the cognition of the ``Observation'' behavior in the traditional physics research paradigm.
The classical physics research path can be summarized as
\begin{align}
    \begin{aligned}
        & \text{Observation and classification of objective phenomena} \\
        \to & \text{Proposal of hypotheses/laws} \\
        \to & \text{Deductive inference (prediction)} \\
        \to & \text{Experimental verification}.
    \end{aligned}
\end{align}
The first step of this paradigm takes objective phenomena as the only starting point of research,
so that no matter how much subjective analysis the observer introduces in the subsequent steps,
the observer is trapped in passive observation dependence from the very beginning.
The core idea of the $\bOIE$ framework and sequence operations is to add a pre-step of ``Subjective introduction of algebraic system''
before ``Observation and classification of objective phenomena''~\cite{Whitehead1978,kant1998critique,kant2004metaphysical},
so that the research path becomes
\begin{align}
    \begin{aligned}
        & \text{Subjective introduction of algebraic structure} \\
        \to & \text{Observation and classification of objective phenomena} \\
        \to & \text{Proposal of hypotheses/laws} \\
        \to & \text{Deductive inference (prediction)} \\
        \to & \text{Experimental verification}.
    \end{aligned}
\end{align}
Thus, the object of the observation behavior is no longer completely limited to objective phenomena,
but adds the mathematical structure preset by the research subject and matched with the object,
which fundamentally eliminates the constraint of observation accuracy on the theoretical system.

Based on this, we put forward the core viewpoint:
``\TT{As of 2026, the conclusion of pointwise simultaneity obtained from the observer's perspective can only be falsified, not verified}''.
Existing physical theories cannot provide a rigorous formal description of this subtle epistemological relationship,
while the $\bOIE$ framework and sequence operations proposed in this paper can handle the inherent uncertainty
caused by observation errors through strict algebraic formalization means from an epistemological perspective.
If we use the $\oplus|_{\alpha}^{\beta}$ operation defined earlier to model concurrent events,
it naturally introduces an equipotential time domain.
All events (or physical motions) have feasible intervals to start and end within this domain,
and all have opportunities to start at the left boundary of the domain.
This formal system only achieves, to a certain extent, the core physical demand of ``fair start''
that pointwise simultaneity intends to realize.

We need to clarify that $\oplus|_{\alpha}^{\beta}$ is not a compatibility or approximation
of the traditional ``pointwise simultaneity'',
because we have fundamentally abandoned this ideal hypothesis that cannot be realized through observation
(We might get there one day, but we can be more accountable for the present, for the past thousands of years, and for everything before that day).
$\oplus|_{\alpha}^{\beta}$ is only a formal scheme to replace the old cognitive paradigm for appropriate situations.
As mentioned in the last section of this paper,
there are more extended forms of sequence operations,
which are by no means limited to $\oplus|_{\alpha}^{\beta}$ and \(\otimes\).
A variety of alternative formal frameworks may exist that can complement or extend the conventional notion of simultaneity.

In summary, the traditional
``Pointwise simultaneity from the observer's perspective''
is not the same concept as the
``equipotential domain start under sequence operations'' proposed in this paper.
The latter is not designed to be compatible with the former,
because the former is physically unreachable in our epistemological framework.
Our algebraic formalism builds a new research paradigm with planning as the core and active constraints as the basis:
``Simultaneous start'' is reconceptualized as equal opportunity within a time domain,
and the limitations of observation are no longer regarded as approximation errors to the ideal theory,
but as derivative quantities layered on a more fundamental, observer-independent algebraic structure.
``\TT{The world need not conform to the measurement accuracy of our clocks;
rather, clocks are just tools for us to explore the world, whose algebraic structure can be actively introduced and defined by the research subject}''.
And ``\TT{A simultaneous start is not a reward the world grants to us. We inherently deserve our own right to participate and create}''~\cite{chen1991youwu,kern2011flower}.

This epistemological paradigm shift has an applicability far beyond the analysis of simultaneity in the time dimension.
Once we accept the conception that
``The premise of observation consists partly of the mathematical structure subjectively preset by observers,
and partly of the phenomena of the objective world''~\cite{bridgman1936nature, von1984constructivism},
then from the observer's perspective, in addition to the simultaneity of the starting moments of events,
the equality judgment of all physical quantities, such as the coincidence of the starting positions of motions and the equality of the temperature of physical systems, needs to be re-examined.
The traditional formal description of such equality relations takes the unobservable and unverifiable pointwise precise equality as the core ideal hypothesis,
which has exactly the same epistemological limitations as the traditional cognition of time simultaneity.
The ``time variabilization'' extension mentioned in the future work of the last section of this paper is a concrete extension of this idea.
We believe that both the underlying definition of physical quantity equality in theoretical physics and the formal treatment of measurement errors in experimental physics can obtain brand-new analytical ideas and formal tools from the $\bOIE$ framework and research paradigm proposed in this paper.\fi

\section{A Finite Commutative Semigroup with Absorbing Element and Reflexive Absorption Property}\label{sec:cayley}\EBL
\ifincludeFile
Let a binary operation $\oslash$ satisfy closure, associativity, commutativity, the existence of a zero element and the property of reflexive absorption.
The rule of this operation is similar to combining its operands:
If the two operands share a common element,
the result is the zero element; otherwise, the result is a non-zero element.

Consider a special class of $\bOIES$ instances comprising $n$ ($n>2$) distinct $\bOIE$ instances,
such that for any $k$ instances selected therefrom with $2 \le k \le n$,
performing $\oplus|_{\alpha}^{\beta}$ upon them in any order yields a result that is not $\voidError$.
In this section, we investigate the relationship between the special semigroup formed by the $\oslash$ mentioned above
and the algebraic system constituted by such $\bOIES$ instances together with $\oplus|_{\alpha}^{\beta}$.

\begin{itemize}
    \item \textbf{Subsection~\ref{subsec:cayley.semigroup.algebraStructure}}
    formalizes a finite commutative semigroup with an absorbing element and reflexive absorption property,
    constructs its $N$-dimensional Full CSA Cayley table via Newton's binomial combinatorial structure,
    and establishes a bijective mapping between this special semigroup and the orbit space of a special class of $\bOIES$ instances under $\oplus|_{\alpha}^{\beta}$.
    \item \textbf{Subsection~\ref{subsec:cayley.and}}
    demonstrates the universal applicability of this algebraic structure
    by revealing its formal correspondence with the referential distinctness constraint of natural language conjunction ``And''
    and with composition/inheritance semantics in object-oriented programming,
    thereby unifying linguistic, computational, and physical feasibility constraints under a single algebraic grammar.
\end{itemize}
\fi
    \subsection{The Bijection between a Special Semigroup and a Special Algebra Structure of $\bOIES$ on $\oplus|_{\alpha}^{\beta}$}\label{subsec:cayley.semigroup.algebraStructure}\EBL
    \ifincludeFile
In this subsection,
we discuss a special semigroup,
as well as a special algebraic system composed of a particular class of $\bOIES$ instances and $\oplus|_{\alpha}^{\beta}$.
We construct the Cayley diagram for the special semigroup and explore the connections between these two distinct special algebraic systems.

\begin{itemize}
    \item \textbf{Subsubsection~\ref{subsubsec:cayley.semigroup}}:
    This subsubsection axiomatically defines a special finite commutative semigroup with an absorbing element and reflexive absorption property.
    It establishes closure, associativity, commutativity, absorption, and generalized reflexive absorption as intrinsic axioms,
    and presents Algorithm 1 for constructing the $N$-dimensional Full CSA Cayley table based on Newton's binomial combinatorial structure.
    \item \textbf{Subsubsection~\ref{subsubsec:cayley.algebraStructure}}:
    This subsubsection identifies a special class of $\bOIES$ instances whose $\oplus|_{\alpha}^{\beta}$
    yields an algebraic structure satisfying uniqueness and non-degeneracy assumptions.
    It defines the orbital completion set under permutational equivalence and proves that its cardinality equals $2^{n}$.
    \item \textbf{Subsubsection~\ref{subsubsec:cayley.bijection}}:
    This subsubsection constructs an explicit bijection $\Psi$ from the special semigroup to the orbit space of the special $\bOIES$ instances,
    demonstrating that the two algebraic structures are bijective.
\end{itemize}\fi
        \subsubsection{A Special Semigroup}\label{subsubsec:cayley.semigroup}\EBL
        \ifincludeFile
We first formalize the special semigroup.
Suppose there is a set with $n$ distinct elements
\begin{align}
    \CAL{S}^{gen} = \{ v_1,\,v_2,\,\dots,\,v_n \}
\end{align}
as base elements.

Suppose there is a binary operation $\oslash$ on $\CAL{S}^{gen}$,
satisfying the properties of closure, associativity, and commutativity,
the existence of a zero element, and reflexive absorption.

Using $v_1,\ v_2,\ \cdots,\ v_n$ as the generators, let
\begin{align}
    \CAL{P}(\CAL{S}^{gen}) = \{v_{0}\} \cup \{ \oslash(\CAL{S}^{sub}) \mid \CAL{S}^{sub} \subseteq \CAL{S}^{gen}, \CAL{S}^{sub} \neq \emptyset \},
\end{align}
where
\begin{align}
    \CAL{S}^{sub} & = \{ v_{i_1}, v_{i_2}, \cdots, v_{i_k}\}\ (1 \leq k \leq n), \\
    \oslash(\CAL{S}^{sub}) & = ( \cdots ( (v_{i_1} \oslash v_{i_2}) \oslash v_{i_3} ) \cdots v_{i_k})
\end{align}
and $v_{0}$ is the absorbing element.
The cardinality is:
\begin{align}
    \CARDI{\CAL{P}(\CAL{S}^{gen})}=2^n.
\end{align}
The algebraic structure $(\CAL{P}(\CAL{S}^{gen}), \oslash)$ is called a ``\BF{Finite commutative semigroup with absorbing element and reflexive absorption property}''.
Its algebraic properties are as follows:
\begin{enumerate}
    \item \textit{Closure}:
    \begin{align}
        \forall v_a, v_b \in \CAL{P}(\CAL{S}^{gen}): v_a \oslash v_b \in \CAL{P}(\CAL{S}^{gen});
    \end{align}
    \item \textit{Associativity}:
    \begin{align}
        \forall v_a, v_b, v_c \in \CAL{P}(\CAL{S}^{gen}): (v_a \oslash v_b) \oslash v_c = v_a \oslash (v_b \oslash v_c);
    \end{align}
    \item \textit{Commutativity}:
    \begin{align}
        \forall v_a, v_b \in \CAL{P}(\CAL{S}^{gen}): v_a \oslash v_b = v_b \oslash v_a;
    \end{align}
    \item \textit{Absorbing Element Property}:
    \begin{align}
        \exists v_{0}\in \CAL{P}(\CAL{S}^{gen}), \forall v_{\lambda} \in \CAL{P}(\CAL{S}^{gen}): v_{\lambda} \oslash v_{0} = v_{0} \oslash v_{\lambda} = v_{0};
    \end{align}
    \item \textit{Generalized Reflexive Absorption Property}:
    \begin{align}
        \forall v_{\lambda} \in \CAL{P}(\CAL{S}^{gen}): v_{\lambda} \oslash v_{\lambda} = v_{0};
    \end{align}
    \item \textit{Uniqueness}:
    \begin{align}
        \forall \CAL{S}^{sub}_A, \CAL{S}^{sub}_B \subseteq \CAL{S}^{gen}: \CAL{S}^{sub}_A \neq \CAL{S}^{sub}_B \Rightarrow \oslash(\CAL{S}^{sub}_A) \neq \oslash(\CAL{S}^{sub}_B);
    \end{align}
    \item \textit{Non-degeneracy}:
    \begin{align}
        \forall \CAL{S}^{sub}_{\lambda} \subseteq \CAL{P}(\CAL{S}^{gen}): \CARDI{\CAL{S}^{sub}_{\lambda}} > 1 \Rightarrow \oslash(\CAL{S}^{sub}_{\lambda}) \neq v_{0}.
    \end{align}
\end{enumerate}

These properties of the algebraic structure are axiomatic intrinsic constraints, not derived from general semigroups or other more basic algebraic structures.
We thus define it as a distinct class of special semigroups.

%


Now we proceed to construct the Cayley table for this finite semigroup.
The top priority is to determine the arrangement of elements
for the row and column headers.
We use properties of the Newton binomial theorem to build this boundary.

We build the structure of rows and columns as follows
\begin{align}
    \left( v_{0}, v_1, v_2, \cdots, v_n, \oslash(\CAL{S}^{sub}_1), \oslash(\CAL{S}^{sub}_2), \cdots, \oslash(\CAL{S}^{sub}_n) \right)
\end{align}
This sequence is ordered by the cardinality of the subsets, following the combinatorial structure of the nth Newton binomial expansion.
Once the rows and columns have been constructed, we can proceed to directly build the Cayley table.
Below is the algorithm we use for constructing the Cayley table.

\begin{algorithm}[H]
    \caption{Generation algorithm for N-dimensional Cayley table for finite commutative semigroup with absorbing element and reflexive absorption property}
    \label{alg:algo}
    \begin{algorithmic}[1]
        \State $comb\_collection \gets [\ ]$
        \State $binomial\_theorem\_collection \gets [\ ]$\\

        \State Push\ [\ $v_{0}$\ ]\ into\ $binomial\_theorem\_collection$

        \For{$i \gets 0$ to $N - 1$}
            \State $cur\_combos \gets [\ ]$
            \If{$i = 0$}
                \State {$cur\_combos \gets [\ \left[v_1 \right], \left[v_2\right], \cdots \left[v_N\right]\ ]$}
            \Else
                \For {$pre\_combo\_val$\ of\ $comb\_collection[i - 1]$}
                    \State $starting\_idx \gets$ index\ of\ $pre\_combo\_val$'s\ last\ element\ +\ 1
                    \For {$j \gets starting\_idx + 1$ to $N - 1$}
                        \State {$cur\_combo\_val \gets pre\_combo\_val \oslash v_{j}$}
                        \State {Push\ $cur\_combo\_val$\ into\ $cur\_combos$}
                    \EndFor
                \EndFor
            \EndIf
            \State Push\ $cur\_combos$\ into\ $comb\_collection$
            \State Push\ $cur\_combos$\ into\ $binomial\_theorem\_collection$
        \EndFor
        \\
        \State{$all\_combos \gets [\ ]$}
        \For{$combos$ of $binomial\_theorem\_collection$}
            \For{$cur\_combo\_val$ of $combos$}
                \State Push\ $cur\_combo\_val$\ into $all\_combos$
            \EndFor
        \EndFor
        \\
        \State {$SIZE \gets length\ of\ all\_combos$(SIZE's value\ is\ $2^N$)}
        \State {Init\ $asterisk\_cayley\_table[SIZE][SIZE]$}
        \For{$i \gets 0$ to $SIZE - 1$}
            \For{$j \gets 0$ to $SIZE - 1$}
                \State {$asterisk\_cayley\_table[i][j]\ \gets\ all\_combos[i]\ \oslash\ all\_combos[j] $}
                \State {Sort $asterisk\_cayley\_table[i][j]$ with element index with asc order}
            \EndFor
        \EndFor
    \end{algorithmic}
\end{algorithm}

FIG.~\ref{fig:5} shows the 5-dimensional Cayley table for the operation,
which contains 5 generators.

FIG.~\ref{fig:7} shows the 7-dimensional Cayley table for the operation,
which contains 7 generators.

\begin{figure*}[ht]
    \centering
    \includegraphics[width=0.7\textwidth]{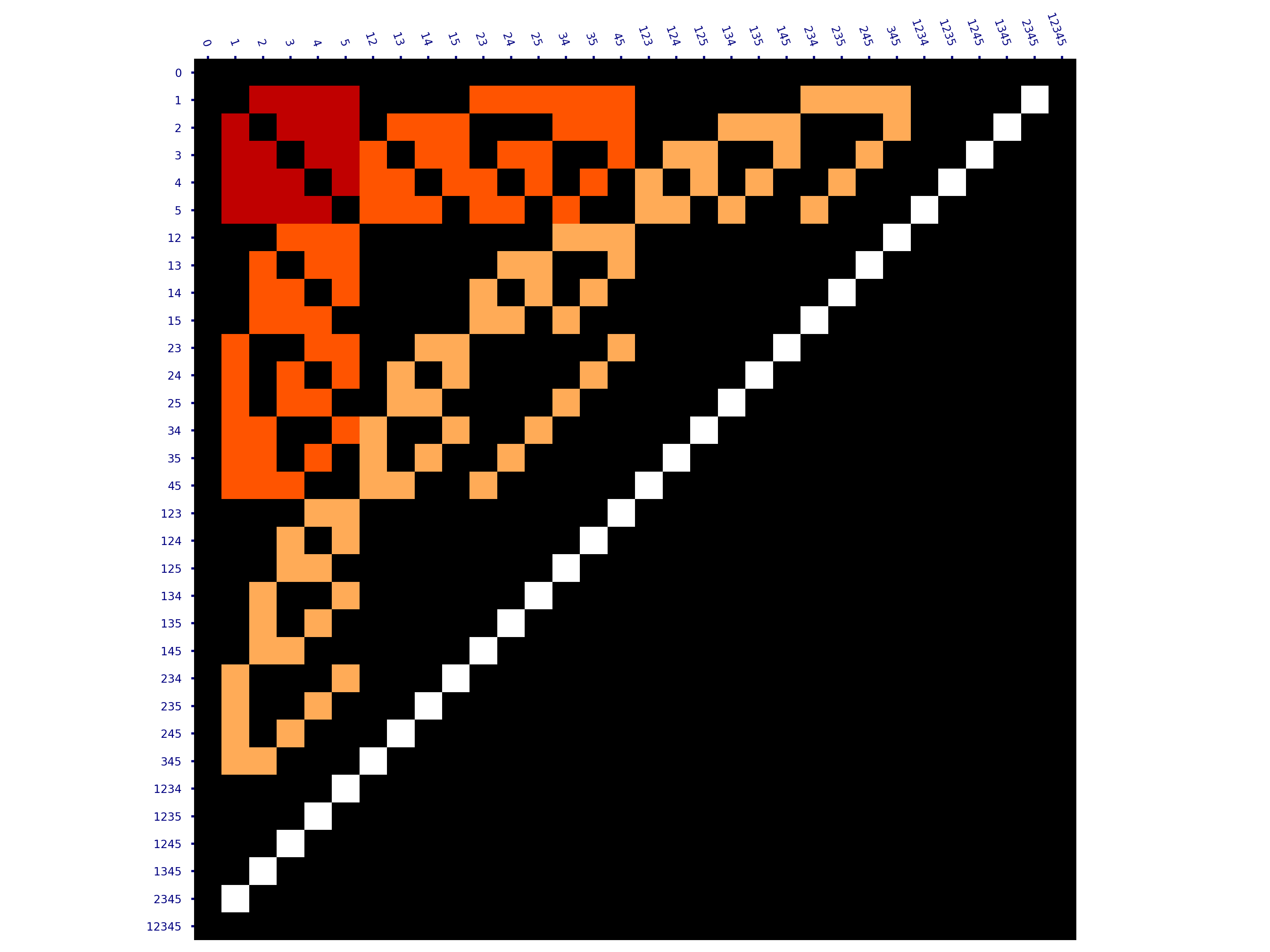}
    \caption{5-dimensional Full CSA Diagram}
    \label{fig:5}
\end{figure*}

\begin{figure*}[ht]
    \centering
    \includegraphics[width=0.7\textwidth]{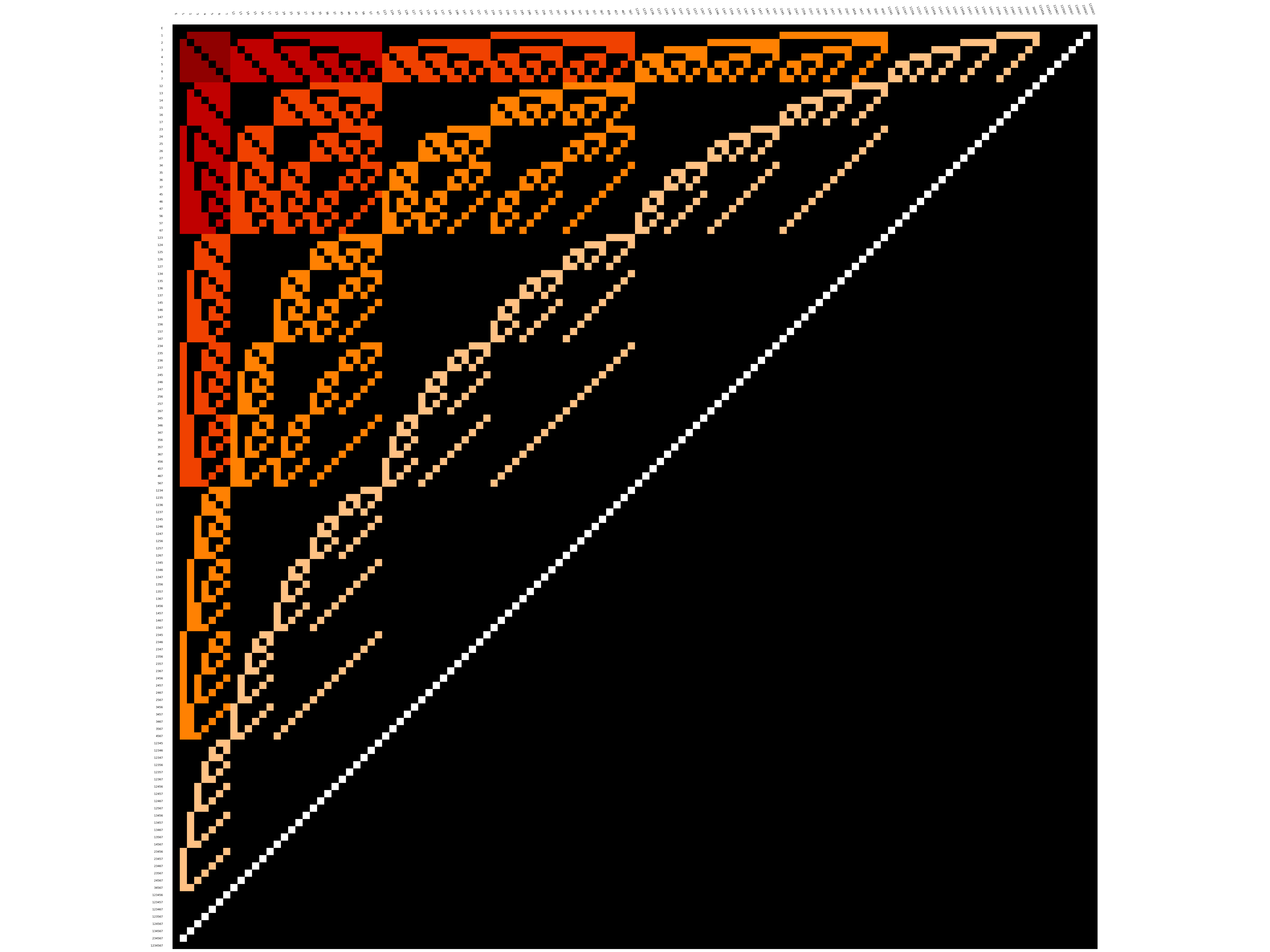}
    \caption{7-dimensional Full CSA Diagram}
    \label{fig:7}
\end{figure*}
\fi
        \subsubsection{A Special Algebra Structure of $\bOIES$ on $\oplus|_{\alpha}^{\beta}$}\label{subsubsec:cayley.algebraStructure}\EBL
        \ifincludeFile
Consider a special class of $\bOIES$ instances comprising $n$ ($n>2$) distinct $\bOIE$ instances, for example
\begin{align}
    \oieS_{special} = \{ \oie_1, \oie_2, \cdots, \oie_n \}\ (\forall k \in [1, n]: \oie_k \ne \voidError)
\end{align}
It satisfies \TT{uniqueness assumption} for $\oplus|_{\alpha}^{\beta}$
\begin{align}
    \begin{aligned}
        & \forall \oieS_1, \oieS_2 \subseteq \oieS_{special},\ \oieS_1 \neq \oieS_2: \\
        & \qquad    \forall \idxT_1 \in \operatorname{Perm}(\oieS_1),\ \forall \idxT_2 \in \operatorname{Perm}(\oieS_2): \\
        & \qquad\qquad    \oplus|_{\alpha}^{\beta}(\oieS_1, \idxT_1) \neq \oplus|_{\alpha}^{\beta}(\oieS_2, \idxT_2),
    \end{aligned}
\end{align}
where $\operatorname{Perm}(\oieS)$ denotes the set of all permutations of the elements in $\oieS$,
i.e., the set of all possible index tuples.

It satisfies the \TT{non-degeneracy assumption}
\begin{align}
    \begin{aligned} \label{eq:oies_non_void}
        & \forall \oieS_1 \subseteq \oieS_{special}: \\
        & \qquad    \forall \idxT_1 \in \operatorname{Perm}(\oieS_1): \\
        & \qquad\qquad    \oplus|_{\alpha}^{\beta}(\oieS_1, \idxT_1) \neq \voidError.
    \end{aligned}
\end{align}

Under these assumptions,
the \emph{orbital completion set} of $\oieS_{special}$ under $\oplus|_{\alpha}^{\beta}$,
denoted $\CAL{P}_{\oplus|_{\alpha}^{\beta}}(\oieS_{special})$,
is defined as the set of equivalence classes of $\bOIE$ instances
under permutational equivalence relation (Definition~\ref{def:OPs.Permu}).
Because $\oplus|_{\alpha}^{\beta}$ yields a single-orbit space (Property~\ref{pty:OPS.orbitSpace.add.oneElem}),
every unordered subset of operands determines a unique equivalence class,
independent of operand order.

Formally,
\begin{equation}
\label{eq:orbital_P}
\begin{aligned}
\CAL{P}_{\oplus|_{\alpha}^{\beta}}(\oieS_{special})
=\;&\big\{[\voidError]_{\sim}\big\} \\
&\cup\big\{[\oie_i]_{\sim}\mid 1\leq i\leq n\big\} \\
&\cup\bigcup_{k=2}^{n}
\left\{
    \left[ \oplus|_{\alpha}^{\beta}(\oie_{i_1},\ldots,\oie_{i_k}) \right]_{\sim}
\;\middle|\;
\{i_{1},\ldots,i_{k}\}\subseteq\{1,\ldots,n\}
\right\},
\end{aligned}
\end{equation}
where $[\,\cdot\,]_{\sim}$ denotes the equivalence class under permutational equivalence (Definition~\ref{def:OPs.Permu}).
By convention, the singleton case $k=1$ is already covered by the second term.

By Property~\ref{pty:OPS.orbitSpace.add.oneElem}, for any fixed unordered subset $\{i_{1},\ldots,i_{k}\}$,
all $k!$ permutations of the operands produce results that are permutationally equivalent;
hence they collapse to a single class in $P_{\oplus|_{\alpha}^{\beta}}$.
Consequently, the number of equivalence classes equals the number of subsets of an $n$-element set:
\begin{align}
    \big|\CAL{P}_{\oplus|_{\alpha}^{\beta}}(\oieS_{special})\big|
    =1+\sum_{k=1}^{n}\binom{n}{k}=2^{n}.
\end{align}

We need to describe these special $\bOIES$ instances.
For instance, an $\bOIES^{indep}$ instance satisfies this condition (Definition~\ref{def:OPs.feasibleCP.independentOIES}).
Such scenarios are extremely common in daily life.
Examples include testing the free-fall time of n balls with distinct masses;
n servers that do not communicate with one another and execute their respective tasks independently;
and all students in a class traveling individually from dormitories to classrooms in the morning.
In all these cases, each event involves a distinct subject, and there is no interaction or influence between them.
All these scenarios can be abstracted into this special type of algebra structure.
\fi
        \subsubsection{Bijection between Two Algebra Structures}\label{subsubsec:cayley.bijection}\EBL
        \ifincludeFile
Let $(\CAL{P}(\CAL{S}^{gen}),\oslash)$ be the finite commutative semigroup with absorbing element and reflexive absorption property constructed in Subsection~\ref{subsubsec:cayley.semigroup},
where $\CAL{S}^{gen}=\{v_{1}, v_2, \cdots,v_{n}\}$.
Define the mapping
\[
    \Psi: \CAL{P}(\CAL{S}^{gen}) \longrightarrow \CAL{P}_{\oplus|_{\alpha}^{\beta}}(\oieS_{special})
\]
by
\begin{equation}
\label{eq:bijection_Psi}
\Psi(\CAL{S}^{sub})=
\begin{cases}
[\voidError]_{\sim}, & \CAL{S}^{sub}=\{v_0\},\\[4pt]
\left[\oplus|_{\alpha}^{\beta}\big(\oie_{i_{1}},\ldots,\oie_{i_{k}}\big)\right]_{\sim}, & \CAL{S}^{sub}=\{v_{i_{1}}, \cdots, v_{i_{k}}\} \neq \emptyset.
\end{cases}
\end{equation}
Because of the uniqueness assumption, $\Psi$ is injective;
Their cardinalities satisfy
\begin{align}
    \CARDI{\CAL{P}_{\oplus|_{\alpha}^{\beta}}(\oieS_{special})} = 2^{n} = \CARDI{\CAL{P}(\CAL{S}_{gen})}
\end{align}

Moreover, $\Psi$ preserves the algebraic structure.
Define the induced binary operation $*$ on $P_{\oplus|_{\alpha}^{\beta}}$ by
\begin{equation}\label{eq:induced_op}
[\oie_A]_{\sim} * [\oie_B]_{\sim} =
\begin{cases}
[\voidError]_{\sim}, & [\oie_A]_{\sim}=[\oie_B]_{\sim} \\
& \quad\lor [\oie_A]_{\sim}=[\voidError]_{\sim} \\
& \quad\lor [\oie_B]_{\sim}=[\voidError]_{\sim},\\[12pt]
\left[\displaystyle\oplus|_{\alpha}^{\beta}\Bigl(constituents(\oie_A)\cup constituents(\oie_B)\Bigr)\right]_{\sim}, & \text{otherwise}.
\end{cases}
\end{equation}
where
\begin{align}
\mathsf{constituents}(\oie) =
\begin{cases}
\{\oie\}, & \oie \text{ is an } \bAtomOIE \text{ instance},\\[6pt]
\{\text{elements of } \C_{\oie}\}, & \oie \text{ is a } \bCombOIE \text{ instance}.
\end{cases}
\end{align}

Then $(P_{\oplus|_{\alpha}^{\beta}}, *)$ is a finite commutative semigroup with absorbing element $[\voidError]_{\sim}$ and reflexive absorption property $[\oie]_{\sim} * [\oie]_{\sim}=[\voidError]_{\sim}$,
and $\Psi$ is a bijection:
\[
\Psi(\CAL{S}^{sub}_A \cup \CAL{S}^{sub}_B) = \Psi(\CAL{S}^{sub}_A)\; * \;\Psi(\CAL{S}^{sub}_B)
\]

Scenarios satisfying uniqueness assumptions and non-degeneracy assumptions are ubiquitous.
For instance, a mutually independent $\bOIES$ instance (Definition~\ref{def:OPs.feasibleCP.independentOIES})
in which all atomic events are distinct and no cross-event constraints force distinct subsets to yield the same feasible schedule set $\mathcal{F}$ will satisfy these conditions.
Examples include $n$ independent servers executing non-communicating tasks, or $n$ students traveling individually from dormitories to classrooms.
\fi
    \subsection{The ``And'' in Natural Language \& ``Composition'' in Programming}\label{subsec:cayley.and}\EBL
    \ifincludeFile
The finite commutative semigroup structure expounded in Section~\ref{subsec:cayley.semigroup.algebraStructure}
exhibits a striking correspondence with the implicit operational semantics
underlying natural language conjunction and program composition.
This subsection elucidates these correspondences,
demonstrating that the reflexive absorption property $a \oslash a = v_{zero}$
encodes a universal constraint across linguistic, computational, and physical domains.

In natural language, the conjunction ``And'' operates under a fundamental presupposition of \textit{referential distinctness}.
When a speaker asserts a proposition of the form ``Event A and Event B'',
there exists an implicit requirement that the referents of A and B be distinguishable within the discourse context.
The utterance ``Dr.A and Dr.A submitted a paper'' is semantically anomalous precisely
because it violates this distinctness condition.
While natural language processing systems often treat ``and'' as an idempotent,
associative operator (interpreting $\{a,\ a,\ b\}$  as $\{a,\ b\}$),
this simplification elides a crucial cognitive constraint:
the mental model construction underlying linguistic comprehension
cannot sustain indistinguishable participants occupying distinct participant roles simultaneously.
This linguistic constraint manifests as \textit{self-referential absorption}:
the composition of an entity with itself fails to produce a meaningful augmentation of the conceptual structure.
Instead, it collapses into a null interpretation--
analogous to the absorption element $v_{zero}$ in our algebraic structure.
The finite commutative semigroup $(\CAL{S}, \oslash)$
thus provides a formal correlate to this linguistic phenomenon,
where the reflexive absorption property encodes the ungrammaticality of conjoining indistinguishable referents.

In computer programming, composition operators (function composition, concurrent process spawning, resource allocation)
enforce the distinctness constraint \textit{syntactically and operationally} rather than merely pragmatically.
Consider the following paradigmatic cases:

\begin{enumerate}
\item \textbf{Resource Allocation Semantics}:
The expression \textit{bind(socket, address)} composed with itself on the same socket descriptor raises a runtime exception (\textit{EADDRINUSE}), not an idempotent no-op.
This reflects the operational reality that a resource cannot be bound twice,
a physical constraint encoded as an absorbing error state.
\item \textbf{Process Composition}: In parallel programming frameworks, constructs like \texttt{par[A, B]} enable concurrent execution.
If $A = B$ and the operation requires distinct thread contexts, the composition yields a scheduling failure or deduplication, not parallelized self-composition.
\item \textbf{Functional Composition with Effects}: While $f \circ f$ is mathematically valid, in effectful computation (state monads, resource-transforming functions), composing an action with itself may violate resource or state consistency constraints.
\end{enumerate}

These examples demonstrate that programming composition,
particularly in the presence of state and resources,
conforms to the \textit{finite commutative semigroup with absorption} structure.
The void element corresponds to the computational bottom type, error monad, or exception state.
This structure proactively excludes invalid compositions before execution,
mirroring our $\bOIE$ framework's shift from passive observation to active planning.

The epistemological shift proposed in our $\bOIE$ framework is thus not merely mathematical but transdisciplinary:
it formalizes the universal constraint that the composition of indistinguishable entities
is not merely redundant but ontologically invalid in any system that schedules, plans, or allocates resources.
The Cayley table of Subsection~\ref{subsec:cayley.semigroup.algebraStructure}
is not merely an algebraic curiosity but a grammar of feasibility applicable to language, computation, and physics alike.

In conclusion,
by embedding Complete Sequence Addition within the structure of the finite commutative semigroup with absorbing element, we provide a single formalism that subsumes:
\begin{itemize}
\item Linguistic well-formedness constraint on ``And''
\item Computational safety constraints on composition
\item Physical feasibility constraints on event scheduling
\end{itemize}
This unification supports our central thesis:
event analysis must transcend passive observation and embrace the algebraic structure of proactive,
constraint-aware planning.
The reflexive absorption property is not an artifact of our formalism but a feature capturing a deep truth across domains:
identity, once used, cannot be reused without consequence.
\fi

\section{Derived Implications}\label{sec:impli}\EBL
\ifincludeFile
This section outlines promising directions for future research
that extend the $\bOIE$ framework and sequence operations
into broader theoretical and practical domains.

\begin{itemize}
    \item \textbf{Subsection~\ref{subsec:impli.category}. Introducing category theory:} Employing category theory to describe $\bOIE$ and sequence operations at a more abstract level, building upon the natural isomorphisms introduced in previous sections.
    \item \textbf{Subsection~\ref{subsec:impli.extensions}. Extensions of sequence operations:} Investigating ``incomplete'' variants of sequence addition and multiplication where only start or end timestamps are constrained, rather than both.
    \item \textbf{Subsection~\ref{subsec:impli.discretization}. Time discretization:} Examining whether the framework's conclusions remain valid under discrete time models as opposed to continuous real-number time.
    \item \textbf{Subsection~\ref{subsec:impli.variableization}. Time variableization:} Generalizing the temporal variable to other physical quantities such as length, temperature, or magnetic field strength, addressing the unobservability of pointwise equality in these dimensions.
    \item \textbf{Subsection~\ref{subsec:impli.probability}. Introducing probability/probability density:} Incorporating probability measures into $\bOIE$ instances to represent the likelihood of different execution intervals, enabling probabilistic event modeling.
    \item \textbf{Subsection~\ref{subsec:impli.others}. Other types of sequence operations:} Exploring the existence and algebraic properties of sequential subtraction and division.
    \item \textbf{Subsection~\ref{subsec:impli.lzyGraph}. Full CSA Diagram:} Investigating the structural properties, high-dimensional geometric patterns, and applications of the Complete Sequence Addition diagram across linguistics, programming, and other fields.
    \item \textbf{Subsection~\ref{subsec:impli.nonClassical}. Non-classical mechanics:} Extending the framework to quantum mechanics and other non-classical systems, leveraging the epistemological shift from observation to planning.
    \item \textbf{Subsection~\ref{subsec:impli.explosion}. Optimization of Cartesian Product:} Addressing computational complexity and practical optimization strategies for implementing sequence operations, particularly regarding the Cartesian product of interval sets.
\end{itemize}\fi
    \subsection{Introducing Category Theory}\label{subsec:impli.category}\EBL
    \ifincludeFile
Since we introduced the natural isomorphism of category theory
in Complete Sequence Addition(Definition~\ref{def:OPs.Add}) and Complete Sequence Multiplication(Definition~\ref{def:Ops.Multi}),
how to use category theory to describe $\bOIE$ and the two sequence operations at a more abstract level
is clearly a promising direction for further research~\cite{BaezStay2011}.\fi
    \subsection{The Extensions of Complete Sequence Addition and Complete Sequence Multiplication}\label{subsec:impli.extensions}\EBL
    \ifincludeFile
This paper only introduces two types of sequence operations,
both of which contain the word ``complete''.
The reason why we named them this way is that both the starting timestamp and the ending timestamp are deeply involved in the operation rules.
Therefore, if they are ``incomplete'', do there exist ``sequence addition based on a filtering 2-tuple and the starting timestamp''
and ``sequence multiplication based on the ending timestamp''?
Judging from the names, in fact, if there is no regulation on when the 100 metres must end,
the 100 metres seems like a ``sequence addition based on the start time'' that we conjecture.

In $\oplus|_{\alpha}^{\beta}$,
our requirement is that each $\bOIE$ instance involved in the operation
has an interval with the opportunity to reach the left boundary of the filter domain;
we do not require that all $\bOIE$ instances have the opportunity to reach it simultaneously.
This also serves as a perspective for constructing other sequence operations.

In subsection~\ref{subsec:comp.phy},
we also emphasize that the classical notion of simultaneous start based on pointwise temporal equality is distinct from Complete Sequence Addition.
From the viewpoint of using sequence operations to rectify the defective epistemology of simultaneity in conventional theories,
many promising research directions can be derived.

In short, sequence operations constitute a rich family of operators under the $\bOIE$ framework.
This work only presents a detailed analysis of two representative instances.
We believe that diverse other sequence operations merit further exploration from both theoretical and practical standpoints.

\fi
    \subsection{Time Discretization}\label{subsec:impli.discretization}\EBL
    \ifincludeFile
In this paper, the understanding of time is to regard time as a real number.

Is time modeled as a real number?
If time is discrete, are the viewpoints in this paper correct?

Since both $\bE$ and $\bOIE$ share the same time model,
we speculate that a discrete time model will not affect the conclusions of this paper.
However, this also requires further in-depth analysis.
\fi
    \subsection{Time Variableization}\label{subsec:impli.variableization}\EBL
    \ifincludeFile
Sequence operations are targeted at time.
As we have discussed in subsection~\ref{subsec:comp.phy},
if we abstract time once again and just regard it as a variable,
then by replacing this variable with other physical elements
(such as length, temperature, magnetic field strength, and so on),
we can generalize the framework to other physical dimensions.
Humans have never observed pointwise-equal simultaneity,
nor have observed pointwise equality in altitude or temperature.
Can the framework of this paper still be applied?

\fi
    \subsection{Introducing Probability / Probability Density}\label{subsec:impli.probability}\EBL
    \ifincludeFile
For the $\I$(3rd element) of an $\bOIE$ instance,
if it is a finite set,
what is the probability of each set element (that is, each interval) being used?
If it is an infinite set, what is the probability density?
This paper discusses the existence of $\bOIE$ and does not involve probability issues.

After introducing probability,
to represent probabilistic data,
the $\bOIE$ proposed in this paper is likely to serve as a fundamental structure,
with additional elements incorporated such that the number of elements within a tuple exceeds four.
This will introduce more properties and methods.

Therefore, this is a great potential for applications.
\fi
    \subsection{Other Types of Sequence Operations}\label{subsec:impli.others}\EBL
    \ifincludeFile
Do sequential subtraction and sequential division exist?
If they do, what are their algebraic properties?
\fi
    \subsection{Full CSA Diagram}\label{subsec:impli.lzyGraph}\EBL
    \ifincludeFile
The N-dimensional Full CSA diagram is not only a kind of Cayley table for $\oplus|_{\alpha}^{\beta}$,
but also a universal representation for any commutative composition operation with reflexive absorption.

There is a class of operations that satisfy the closure property, the commutative law, and the associative law,
but the operation between two identical numbers is not allowed.
Their Cayley tables are all N-dimensional Full CSA diagrams.

For example, the four major oceans of the Earth consist of the Pacific Ocean, the Atlantic Ocean, the Indian Ocean, and the Arctic Ocean, which can be expressed as
\begin{align}
    \begin{aligned}
        & \text{Pacific Ocean} + \text{Atlantic Ocean} \\
        & \quad + \text{Indian Ocean} + \text{Arctic Ocean}
    \end{aligned}
\end{align}
Each ocean can only appear once in the expression.
An expression with repeated occurrences is incorrect.
For example, the Pacific Ocean should not participate in the composition of the world's oceans twice.
If participating twice is wrong, then
\begin{align}
    \text{Pacific Ocean} + \text{Pacific Ocean} = Error
\end{align}
Obviously, the 4-dimensional Full CSA diagram can be used to represent the ways of combining the four major oceans.

Let's take another classic problem in the field of computer science.
``Composition'' and ``Inheritance'' are two important fundamental means for handling code reuse.

Inheritance specifically refers to a way of creating new classes in object-oriented programming.
A subclass can inherit the attributes and methods of its superclass and thus expand the functions of the superclass.
Inheritance represents an ``Is-a'' relationship.

Composition is not limited to object-oriented programming.
It usually means that a class (or structure) contains an object (or structure) of another class as its member variable,
and the required functions are implemented through these member objects.
Composition represents a ``Has-a'' relationship.

Suppose a class $Class_A$ is composed of three classes
\begin{align}
    Class_A = Class_1 + Class_2 + Class_3
\end{align}
Then their objects are
\begin{align}
    Object_A = Object_1 + Object_2 + Object_3
\end{align}
In this way, through the composition of the three objects, $Object_A$ can implement multiple functions. Obviously, the same class cannot participate in the composition twice.

So we believe that the algebraic properties of composition in the field of programming have many similarities with $\oplus|_{\alpha}^{\beta}$, and the N-dimensional Full CSA diagram can be used.
Since composition can be regarded as addition, could inheritance be a certain kind of multiplication? We think this is possible and worthy of further analysis.
From this perspective, there is still a lot of structural work that has not been done in the field of object-oriented programming in computer science alone.

In conclusion, similar scenarios can be found in many fields, so the applicable range of the N-dimensional Full CSA diagram is very wide.

Another aspect worthy of in-depth study is the diagram itself.
As the number of dimensions increases, many complex and beautiful curves and shapes appear.
What can these shapes and curves be used for, and what special properties do high-dimensional diagrams have?
These are also questions worth investigating.
\fi
    \subsection{Non-classical Mechanics}\label{subsec:impli.nonClassical}\EBL
    \ifincludeFile
The contents discussed in this paper are all within the scope of classical Newtonian mechanics.
So, are they still applicable within the scope of non-classical mechanics?

In fact, the philosophical foundation of this paper is
to shift the starting point of observation in physics from pure objectivity to an epistemological perspective.
In section~\ref{sec:impl},
we showed that the n‑ary finitary operation finally observed in reality from the projection on Complete Sequence Addition
is only one of many possible expressions,
which implies that, within the framework of this paper,
real‑world phenomena may correspond to just one case among many possibilities.
In subsection~\ref{subsec:app.math} and subsection~\ref{subsec:comp.math},
we mentioned the difference between process symmetry and outcome symmetry.
In subsection~\ref{subsec:impli.discretization} and subsection~\ref{subsec:impli.variableization},
we mentioned the discretization of physical quantities.
In subsection~\ref{subsec:impli.probability},
we further mentioned that we can extend the arity of $\bOIE$
and introduce probability or probability density.
Furthermore, as noted in subsection~\ref{subsec:impli.category},
this paper has preliminarily applied category theory.
Combining these existing and extensible ideas yields a consistent view:
\begin{align}
    \begin{aligned}
        & \TT{Cognition influences observational outcomes} \\
        & + \TT{Observable outcomes are non‑unique} \\
        & + \TT{Discretization of physical quantities} \\
        & + \TT{Process symmetry and outcome symmetry} \\
        & + \TT{Probability} \\
        & + \TT{Category theory}.
    \end{aligned}
\end{align}
This is the epistemological core of quantum mechanics.

This is advanced as a conjecture,
to be subjected to rigorous theoretical and empirical scrutiny in future work.
\fi
    \subsection{Optimization of Cartesian Product}\label{subsec:impli.explosion}\EBL
    \ifincludeFile
Both $\oplus|_{\alpha}^{\beta}$ and $\otimes$
involve the flattened Cartesian product of sets.
The time complexity of both sequence operations is $\CAL{O}(n \cdot m^n)$ (Appendix~\ref{sec:appendix.complex}).
When applying sequence operations,
we mostly do not perform flattened Cartesian products with heavy computation loads.
Instead, we only analyze algebraic properties to make decisions or explore the derived conclusions.
This is analogous to how infinite sets can be used to analyze business problems in the computer industry,
yet it does not mean you must implement an algorithm with infinite time complexity in order to use infinite sets.

However, if one insists on traversing the results of the Cartesian product
(for instance, in scheduling or deadlock problems with a small number of nodes but high business importance,
where only low-frequency traversal is employed),
it is still necessary to present the time complexity of the two sequence operations.
Current Cartesian product optimizations cover storage indexing, pruning algorithms and distributed execution.
The mainstream idea is to reduce the cardinality of cross products via algebraic simplification first,
then deploy diverse algorithms tailored to business scenarios to cut computation costs and avoid computational explosion.

In most daily and industrial scenarios,
we skip costly high-complexity calculations by determining whether only feasibility checks or full sequence operation execution are required.
Of course, if the values of \(n\) and \(m\) are not large,
directly following the computational process is also acceptable.
In practice, the exponential overhead of the Cartesian product can be effectively controlled using standard database optimization techniques.
Therefore, integrating the $\bOIE$ framework with sequence operations and database optimization remains a viable research direction.
\fi

    \appendix
    \section{The Simulation of $\bOIE$ and 2 Sequence Operations}\label{sec:appendix.simul}\EBL
    \ifincludeFile
(Property~\ref{prop:event.startEndTs.seq}) Ordering of starting timestamp and ending timestamp:

\url{
    https://github.com/yleen/OIE_simulation/blob/master/src/simulation/optional_intervals_event/event.py#L44
}

(Definition~\ref{def:eventStar}) Event with undetermined interval:

\url{
    https://github.com/yleen/OIE_simulation/blob/master/src/simulation/optional_intervals_event/event_star.py#L14
}

(Definition~\ref{def:oie}) Optional Intervals Event:

\url{
    https://github.com/yleen/OIE_simulation/blob/master/src/simulation/optional_intervals_event/optional_intervals_event.py#L17
}

(Definition~\ref{def:oie.BoundTwoTuple}) The bound 2-tuple of a non-empty finite $\bTwoTplT$ instance:

\url{
    https://github.com/yleen/OIE_simulation/blob/master/src/simulation/base/helper.py#L33
}

(Property~\ref{prop:oie.2and3}) The relationship between the $\CAL{F}$(2nd element) and $\CAL{I}$(3rd element) of $\bOIE$:

\url{
    https://github.com/yleen/OIE_simulation/blob/master/src/simulation/base/helper.py#L12
}

(Definition~\ref{defi:OPs.oie.Equal}) Equality of $\bOIE$ instances:

\url{
    https://github.com/yleen/OIE_simulation/blob/master/src/simulation/optional_intervals_event/optional_intervals_event.py#L42
}

(Definition~\ref{def:oie.oieS.int2tupleSS}) Set of $\I$ of finite $\bOIES$ instance:

\url{
    https://github.com/yleen/OIE_simulation/blob/master/src/simulation/optional_intervals_event/optional_intervals_event_set.py#L59
}

(Definition~\ref{def:pdie.void}) Void $\bOIE$:

\url{
    https://github.com/yleen/OIE_simulation/blob/master/src/simulation/optional_intervals_event/optional_intervals_event.py#L102
}

(Definition~\ref{def:op.fNItCPo2TplSS})Function to get the set that is naturally isomorphic to the Cartesian product of all members of a finite $\bTwoTplSS$ instance in an index order:

\url{
    https://github.com/yleen/OIE_simulation/blob/master/src/simulation/optional_intervals_event/naturally_isomorphic_to_cartesian_product.py#L13
}

(Definition~\ref{def:OPs.infsbleDI}) Set of tuples of infeasible interval 2-tuples of all members of a finite $\bOIES$ instance under an index order:

\url{
    https://github.com/yleen/OIE_simulation/blob/master/src/simulation/optional_intervals_event/optional_intervals_event_set.py#L112
}

(Definition~\ref{def:OPs.add.min1max2ofTwoTplT} / function 1) Get the minimum 1st item of a $\bTwoTplT$ instance:

\url{
    https://github.com/yleen/OIE_simulation/blob/master/src/simulation/base/helper.py#L72
}

(Definition~\ref{def:OPs.add.min1max2ofTwoTplT}  / function 2) Get the maximum 2nd item of a $\bTwoTplT$ instance:

\url{
    https://github.com/yleen/OIE_simulation/blob/master/src/simulation/base/helper.py#L89
}

(Definition~\ref{def:OPs.Add.DomainFilteredSubTwoTplTS}) Domain-filtered subset of a non-empty set of tuples of 2-tuples under a domain-filtering 2-tuple:

\url{
    https://github.com/yleen/OIE_simulation/blob/master/src/simulation/sequence_operation/addition/domain_filtered_2tupleTS.py#L34
}

(Definition~\ref{def:OPs.Add}) Complete Sequence Addition:

\url{
    https://github.com/yleen/OIE_simulation/blob/master/src/simulation/sequence_operation/addition/complete_sequence_addition.py#L26
}

(Definition~\ref{def:OPs.Multi.CompleteAscOrderFilteredSubTwoTplTS}) Ascending ordered filtered subset of the set of tuples of 2-tuples:

\url{
    https://github.com/yleen/OIE_simulation/blob/master/src/simulation/sequence_operation/multiplication/complete_asc_order_filtered_2tupleTS.py#L11
}

(Definition~\ref{def:Ops.Multi}) Complete Sequence Multiplication:

\url{
    https://github.com/yleen/OIE_simulation/blob/master/src/simulation/sequence_operation/multiplication/complete_sequence_multiplication.py#L25
}

(Property~\ref{pty:OPs.ErrorOIE.sameOIE}) Sequence operations involving identical $\bOIE$ instances result in an $\bVoidError$ instance:

\url{
    https://github.com/yleen/OIE_simulation/blob/master/src/simulation/sequence_operation/helper.py#L82
}

(Property~\ref{pty:OPs.ErrorOIE.involved}) The sequence operations involving $\bVoidError$ instances result $\voidError$:

\url{
    https://github.com/yleen/OIE_simulation/blob/master/src/simulation/sequence_operation/helper.py#L89
}

(Definition~\ref{def:OPs.Add.Nat}) Natural Complete Sequence Addition:

\url{
    https://github.com/yleen/OIE_simulation/blob/master/src/simulation/sequence_operation/addition/complete_sequence_addition.py#L79
}

(Definition~\ref{def:implementAndProps.implement1}) The 1st type of implementation of an $\bOIE$ instance:

\url{
    https://github.com/yleen/OIE_simulation/blob/master/src/simulation/implement/first_type.py#L13
}

(Definition~\ref{def:implementAndProps.implement2}) The 2nd type of implementation of an $\bOIE$ instance:

\url{
    https://github.com/yleen/OIE_simulation/blob/master/src/simulation/implement/second_type.py#L14
}

For test cases running of Complete Sequence Addition and Complete Sequence Multiplication, visit \\

\url{
    https://github.com/yleen/OIE_simulation/blob/master/src/main.py
}\fi
    \section{The Generation of N-dimensional Full CSA Diagram}\label{sec:appendix.lzyGraph}\EBL
    \ifincludeFile
The simulated code implementation of Algorithm~\ref{alg:algo}:

Generate the binomial theorem collection as edge of Full CSA diagram:\\
\url{
https://github.com/yleen/OIE_simulation/blob/master/src/fullCsaDiagram/diagram_generator.py#L52
}.

Build Full CSA diagram:\\
\url{
https://github.com/yleen/OIE_simulation/blob/master/src/test/full_CSA_diagram/generate.py#L138
}.
\fi
    \section{The Time Complexity}\label{sec:appendix.complex}\EBL
    \ifincludeFile
\indent Let $n = \CARDI{\oieS}$ denote the cardinality of the finite $\bOIES$ instance $\oieS$,
and let $m_i = \CARDI{\I_{\oie_i}}$ denote the cardinality of the $\I$(the 3rd element) of the $i$-th $\bOIE$ instance.

\textbf{Complete Sequence Addition} (Definition~\ref{def:OPs.Add}):
\begin{itemize}
    \item \textit{Step 1} (Validity check): Checking for $\voidError$ and computing the intersection of atomic event sets $\bigcap_{i=1}^{n}\A_{\oie_{\idx_i}}$ requires $\CAL{O}(n)$ time using hash-based set operations.
    \item \textit{Step 2} (Feasible subset construction): Computing the set naturally isomorphic to the Cartesian product $\fNatIsoToCP{\ISoOieS_\oieS}{\idxT}$ generates $\prod_{i=1}^{n} m_i$ tuples.
        So this step requires $\CAL{O}\left(\prod_{i=1}^{n} m_i\right)$ time.
    \item \textit{Step 3} (Domain filtering): Applying $\mathfrak{f}\RM{DomFlt2tupleTS}$ requires checking the domain constraints $(\alpha, \beta)$ for all $n$ components of each tuple, yielding $O\left(n \cdot \prod_{i=1}^{n} m_i\right)$ time.
    \item \textit{Step 4} (Computing $\I$): Computing the bound 2-tuple via $\mathfrak{f}\RM{Bound2tuple}$ for each remaining tuple requires $\CAL{O}(n)$ per tuple, resulting in $\CAL{O}\left(n \cdot \prod_{i=1}^{n} m_i\right)$ time.
\end{itemize}
Thus, the total time complexity is $\CAL{O}\left(n \cdot \prod_{i=1}^{n} m_i\right)$.
If we let $m = \max_{1 \leq i \leq n} m_i$, this simplifies to $\CAL{O}(n \cdot m^n)$.

\textbf{Complete Sequence Multiplication} (Definition~\ref{def:Ops.Multi}):
The complexity is dominated by Step 3,
where $\mathfrak{f}\RM{AscOrderFlt2tupleTS}$ applies the ascending ordered filter (Definition~\ref{def:OPs.Multi.CompleteAscOrderFilteredSubTwoTplTS}) to enforce $\TSe_i \leq \TSe_{i+1}$ for consecutive events.
Verifying this condition requires $\CAL{O}(n)$ per tuple,
yielding the same overall complexity of $\CAL{O}\left(n \cdot \prod_{i=1}^{n} m_i\right) = \CAL{O}(n \cdot m^n)$.

The time complexity derived above represents the worst-case bound,
where full enumeration of the Cartesian product of interval sets is required.
However, we identify a broad class of practically relevant scenarios
(Specifically, mutually independent $\bOIES$ instances (Definition~\ref{def:OPs.feasibleCP.independentOIES}) over continuous time domains),
where the feasibility verification for either $\oplus|_{\alpha}^{\beta}$ or $\oplus$ reduces to a constant $\CAL{O}(1)$.
In such scenarios, the planner need not perform the full four-step enumeration of the flattened Cartesian product;
instead, the applicability of $\oplus|_{\alpha}^{\beta}$ or $\otimes$ can be determined directly
from the physical independence or sequential constraints of the underlying real-world events
(e.g., independent tasks deployed on independent hosts in a web application, and fully isolated hardware circuits).
The complete construction of the resulting $\bOIE$ instance,
particularly the explicit enumeration of its second element $\F$, remains $\CAL{O}(n \cdot m^n)$ in the general discrete case,
but is bypassed in practice through this $\CAL{O}(1)$ structural judgment based on prior physical knowledge.

For a mutually independent $\bOIES$ instance over a continuous domain $[\alpha, \beta)$ (Definition~\ref{def:OPs.feasibleCP.independentOIES}),
\TT{in scenarios where the left and right boundaries are known a priori to satisfy the operational requirements (e.g., the 100-meter dash where all athletes share the same start and finish time window), there is no need to perform Cartesian product enumeration or complex set intersection operations at all}.
Both \TT{feasibility verifications} (Notice: not computational complexity) reduce directly to $\CAL{O}(1)$ structural judgments:
the implementer merely needs to confirm that all participants can access the left boundary $\alpha$ and the right boundary $\beta$, while $\otimes$ only needs to confirm that the total time window can accommodate the sum of minimum sequential durations, thereby completely eliminating the computational burden of full enumeration.
\fi

\bibliographystyle{unsrt}

\phantomsection
\addcontentsline{toc}{section}{\refname}
\bibliography{arxiv}

\end{document}